\title{Canonical Metrics on Families of Vector Bundles}
\date{\today}
\author{Shing Tak Lam}
\address{Mathematics Institute, University of Warwick, Coventry CV4 7AL, United Kingdom}
\email{Shing-Tak.Lam@warwick.ac.uk}
\renewcommand{\grad}{\nabla}
\renewcommand{\epsilon}{\varepsilon}
\DeclareMathOperator{\id}{id}
\DeclareMathOperator{\End}{End}
\DeclareMathOperator{\Aut}{Aut}
\DeclareMathOperator{\GL}{GL}
\DeclareMathOperator{\pr}{pr}
\DeclareMathOperator{\vol}{vol}
\DeclareMathOperator{\Int}{Int}
\DeclareMathOperator{\Lie}{Lie}
\newcommand{\Z}{\mathbb Z}
\newcommand{\R}{\mathbb R}
\newcommand{\C}{\mathbb C}
\newcommand{\delbar}{\bar\partial}
\newcommand{\del}{\partial}
\newcommand{\mcA}{\mathcal A}
\newcommand{\mcD}{\mathcal D}
\newcommand{\mcE}{\mathcal E}
\newcommand{\mcF}{\mathcal F}
\newcommand{\mcK}{\mathcal K}
\newcommand{\mcL}{\mathcal L}
\newcommand{\mcM}{\mathcal M}
\newcommand{\mcN}{\mathcal N}
\newcommand{\mcP}{\mathcal P}
\newcommand{\mcQ}{\mathcal Q}
\newcommand{\mcR}{\mathcal R}
\newcommand{\msH}{\mathscr H}
\newcommand{\mfg}{\mathfrak g}
\newcommand{\mfk}{\mathfrak k}
\newtheorem{theorem}{Theorem}[section]
\newtheorem{proposition}[theorem]{Proposition}
\newtheorem{lemma}[theorem]{Lemma}
\newtheorem{corollary}[theorem]{Corollary}
\theoremstyle{definition}
\newtheorem{definition}[theorem]{Definition}
\newtheorem{example}[theorem]{Example}
\newtheorem{remark}[theorem]{Remark}
\newtheorem{assumption}[theorem]{Assumption}
\numberwithin{equation}{section}
\begin{document}

\begin{abstract}
    We introduce a geometric partial differential equation for families of holomorphic vector bundles, generalising the theory of Hermite--Einstein metrics. We consider families of holomorphic vector bundles which each admit Hermite--Einstein metrics, together with a first order deformation. On such families, we define the \emph{family Hermite--Einstein equation} for Hermitian metrics, which we view as a notion of a canonical metric in this setting.

    We prove two main results concerning family Hermite--Einstein metrics. Firstly, we construct Hermite--Einstein metrics in adiabatic classes on product manifolds, assuming the existence of a family Hermite--Einstein metric. Secondly, we prove that the associated parabolic flow admits a unique smooth solution for all time, and use this to show that the Dirichlet problem always admits a unique solution.
\end{abstract}

\maketitle

\section{Introduction}

In geometry, one of the central problems is the study of \emph{canonical metrics}, both on manifolds and vector bundles. Let \((X, \omega_X)\) be a compact K\"ahler manifold, \(\mcE \to X\) a holomorphic vector bundle. A Hermitian metric \(h\) on \(\mcE\) is \emph{Hermite--Einstein} if
\[i\Lambda_{\omega_X}F_h = c\id\]
where \(c\) is a topological constant. The Hitchin--Kobayashi correspondence of Donaldson and Uhlenbeck--Yau \cite{donaldsonSelfdualYangMillsConnections1985,uhlenbeckExistenceHermitianYangMillsConnections1986} relates existence of Hermite--Einstein metrics to the slope polystability of the bundle, which is an algebro-geometric notion, andwhich  leads to a moduli theory for holomorphic vector bundles. Finding analogues of the Hitchin--Kobayashi correspondence in different geometric scenarios is a topic of active research.

In this paper, we initiate the study of canonical metrics on a \emph{family} of holomorphic vector bundles, by defining a new partial differential equation, called the \emph{family Hermite--Einstein} equation.

\subsection{Family Hermite--Einstein metrics}

A \emph{family of holomorphic vector bundles} on \((X, \omega_X)\) parametrised by \((B, \omega_B)\) is a holomorphic vector bundle \(\mcE = (E, \delbar) \to X \times B\). We will primarily be interested in the case when \(B\) is compact, with or without boundary.

A Hermitian metric on \(E\) induces Hermitian metrics on the restricted bundles \(\mcE_b = \mcE\vert_{X \times \{b\}} \to X\), which we assume are Hermite--Einstein. We include in addition a deformation \(\delbar_s\) of the Dolbeault operator of \(\mcE\), from which we obtain an expansion of the form
\[p(i\Lambda_V F_{h, \delbar_s}) = -s^2i\nu(a) + \order{s^3},\]
where
\[a = \pdv{s}\bigg\vert_{s=0}\alpha_V(s)\]
is the first order vertical deformation, \(p\) is a projection operator onto the space of fibrewise holomorphic endomorphisms of \(\mcE\), \(\Lambda_V\) is the contraction operator with respect to \(\omega_X\), and \(\nu\) is a moment map. We say that the Hermitian metric \(h\) satisfies the \emph{family Hermite--Einstein} equation if
\[p(i\Lambda_H F_h) - i\lambda\nu(a) = 0.\]
Here, \(\Lambda_H\) is the contraction operator with respect to \(\omega_B\), and \(\lambda \in \R_{>0}\) is a coupling constant.

We will assume that the dimensions of the automorphism groups \(G_b = \Aut(\mcE_b)\) are equal. Under this assumption, we obtain a holomorphic vector bundle \(\mcF \to B\), whose fibre over \(b \in B\) is \(H^0(X, \End(\mcE_b))\).
The map \(p\) is then defined for each \(b \in B\) to be the trace-free part of the \(L^2\)-orthogonal projection \(\mcA^0(X, \End(E_b)) \to H^0(X, \End(\mcE_b))\), and we view the family Hermite--Einstein equation as a partial differential equation on sections of \(\mcF\). We note that if each \(\mcE_b\) is stable, then the projection operator is zero, and hence the family Hermite--Einstein equation is vacuous. Thus, the family Hermite--Einstein equation enters precisely because of the non-uniqueness of Hermite--Einstein metrics in the presence of automorphisms.

\subsection{Motivation}

There are many reasons to be interested in studying the family Hermite--Einstein equation. We briefly outline some of them here, and will discuss them in detail in \Cref{subsec:relation-to-prior-work}.

\begin{enumerate}[(i)]
    \item From moduli theory, we may construct a moduli space or stack \(\mcM\) of semistable bundles over \(X\). A morphism \(B \to \mcM\) corresponds to a family of semistable bundles, each of which has a degeneration to a polystable bundle. The geometry of families of bundles is then be related to the geometry of \(\mcM\), and we view the family Hermite--Einstein equation as producing the correct notion of a canonical metric on families of semistable bundles, noting however that this depends only on the first order deformation from the associated polystable family. Note also that the Hermite--Einstein theory relates to maps from \(B\) to the quotient stack \([\mathrm{pt}/\mathrm{GL}]\), and in our setting, we replace the target stack with the moduli stack of semistable bundles over \(X\).
    \item In gauge theory, the problem of constructing Yang--Mills connections in adiabatic classes has a long history, such as in the work of Dostoglou--Salamon \cite{dostoglouSelfdualInstantonsHolomorphic1994} in relation to the Atiyah--Floer conjecture. Roughly, in this work, families of irreducible bundles play an important role, whereas in our work, for families of semistable or polystable bundles, we encounter a new partial differential equation as an obstruction to the analogous construction.
    \item Conversely, the geometry of (anti-self-dual) Yang--Mills connections on \((X \times B, \omega_X + k\omega_B)\) as \(k \to \infty\) has been shown to be related to maps from \(B\) to an appropriate moduli space. We conjecture that the family Hermite--Einstein equation determines the geometry of such collapsing families, away from singularities.
    \item Finally, a similar question may be posed for families of projective varieties, and this is the theory of \emph{optimal symplectic connections} \cite{dervanOptimalSymplecticConnections2021,ortuOptimalSymplecticConnections2023}. The analytic theory for the family Hermite--Einstein equations is more tractable than for optimal symplectic connections, much as the analytic theory for Hermite--Einstein metrics is more tractable than for constant scalar curvature K\"ahler metrics. This allows us to develop the analytic theory substatially further than the optimal symplectic connection theory, as we shall see.
\end{enumerate}

\subsection{Results}

Our first result involves construction of Hermite--Einstein metrics in adiabatic classes. Let \(\omega_k = \omega_X + k\omega_B\). We are interested in the problem of constructing Hermitian metrics \(h_k\) for \(k \gg 0\), such that
\[i\Lambda_{\omega_k}F_{h_k} = c_k\id.\]
Including a deformation \(\delbar_s\), we consider the problem of constructing Hermite--Einstein metrics on \((E, \delbar_s)\), where we allow \((E, \delbar_s)\) and \((E, \delbar_{s'})\) to be non-isomorphic for \(s \ne s'\). We will relate the parameters \(k\) and \(s\), by requiring \(s^2 = \lambda k^{-1}\).

\begin{theorem}
    [\Cref{thm:hym-total-space}]
    \label{thm:intro-adiabatic-he}
    Let \((X, \omega_X)\) and \((B, \omega_B)\) be compact K\"ahler manifolds. Let \(\mcE = (E, \delbar) \to X \times B\) be a holomorphic vector bundle, \(h\) a Hermitian metric on \(E\). Suppose that for all \(b \in B\), \((\mcE_b, h) \to X\) is Hermite--Einstein. Let \(\delbar_s\) be a deformation of \(\mcE\), with \(a\) the first order deformation corresponding to \(\delbar_s\).

    Suppose \(h\) satisfies the family Hermite--Einstein equation, and that \(\Aut(\mcE, a_V) = \C^*\cdot \id\). Then for \(k \gg 0\), there exists a Hermitian metric \(h_k\) for \(E\), such that
    \[i\Lambda_{\omega_k}F_{h_k, \delbar_s} = c_k\id.\]
\end{theorem}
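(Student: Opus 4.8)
The plan is to run an adiabatic perturbation argument: starting from the fibrewise Hermite--Einstein metric \(h\), I would construct an approximate solution of \(i\Lambda_{\omega_k}F_{h_k,\delbar_s} = c_k\id\) as a formal power series in \(s = (\lambda k^{-1})^{1/2}\), and then correct it to an exact solution by a quantitative inverse function theorem. The organising observation is that the total contraction splits into vertical and horizontal pieces, \(i\Lambda_{\omega_k}F = i\Lambda_V F + k^{-1}i\Lambda_H F\), so with the scaling \(s^2 = \lambda k^{-1}\) the equation arranges itself into powers of \(s\). At order \(s^0\) the equation holds because each \((\mcE_b, h)\) is Hermite--Einstein, so all of the content lies in the higher-order terms.

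I would look for the metric in the form \(h_k = h\exp(\phi_k)\) with \(\phi_k\) a Hermitian endomorphism, expanded as \(\phi_k = \sum_{j\ge 1} s^j\phi_j\). The key structural input is the \(L^2\)-orthogonal splitting of endomorphisms into the image of \(p\), namely the fibrewise holomorphic endomorphisms \(H^0(X,\End(\mcE_b))\), and its complement. On the complement, the fibrewise linearised operator \(\Delta = \delbar^*\delbar\) is invertible with bounds uniform in \(b\), so every term of the series orthogonal to \(H^0\) can be solved for directly. The obstruction to solving order by order is precisely the \(p\)-component, and here the expansion \(p(i\Lambda_V F_{h,\delbar_s}) = -s^2 i\nu(a) + \order{s^3}\) together with the family Hermite--Einstein equation \(p(i\Lambda_H F_h) = i\lambda\nu(a)\) shows that the leading obstruction cancels under the substitution \(s^2 = \lambda k^{-1}\): the vertical term contributes \(-s^2 i\nu(a)\) while the horizontal term contributes \(k^{-1}i\lambda\nu(a) = s^2 i\nu(a)\). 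Proceeding inductively, the obstruction at each subsequent order is resolved by moving within \(H^0\), which is governed by the linearisation of the moment map \(\nu\); the hypothesis \(\Aut(\mcE, a_V) = \C^*\cdot\id\) guarantees that this finite-dimensional operator is invertible modulo scalars, which is exactly what is needed at each step, the scalar ambiguity being absorbed into the undetermined constant \(c_k\).

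With an approximate solution \(h_k\) built so that \(i\Lambda_{\omega_k}F_{h_k,\delbar_s} - c_k\id = \order{s^N}\) for \(N\) large, I would set up the inverse function theorem. The essential difficulty is that the linearised operator \(\mcL_k\) degenerates as \(k\to\infty\): its smallest eigenvalues, coming from the \(H^0\)-directions, are of order \(s^2 = \lambda k^{-1}\), so the operator norm of \(\mcL_k^{-1}\) blows up like \(s^{-2}\). Establishing a uniform weighted bound \(\norm{\mcL_k^{-1}} \lesssim s^{-2}\) --- using invertibility of \(\Delta\) on the complement of \(H^0\) for the large eigenvalues and the \(\Aut(\mcE,a_V) = \C^*\cdot\id\) condition for the small ones --- is the technical heart of the argument. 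Granting such a bound, a standard estimate on the quadratic remainder of the nonlinear operator allows one to apply the inverse function theorem provided the approximate solution is accurate to a high enough order \(N\) relative to the blow-up rate of \(\mcL_k^{-1}\); choosing \(N\) sufficiently large and \(k \gg 0\) then yields the exact solution.

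The step I expect to be the main obstacle is precisely this uniform control of \(\mcL_k^{-1}\) across the degenerating small eigenvalues: one must combine the fibrewise elliptic estimates with the nondegeneracy furnished by \(\Aut(\mcE, a_V) = \C^*\cdot\id\) in a single norm adapted to the adiabatic scaling, and then verify that the error term beats \(s^{-2}\) once the perturbation series has been carried out to the appropriate order.
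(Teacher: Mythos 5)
Your overall strategy is the same as the paper's: build approximate solutions order by order in \(k^{-1/2}\) (equivalently in \(s\)), use the family Hermite--Einstein equation to cancel the leading obstruction under the scaling \(s^2 = \lambda k^{-1}\), and then apply a quantitative inverse function theorem with a right inverse of operator norm \(O(k)\). The correct identification of the cancellation \(-s^2 i\nu(a) + k^{-1}i\lambda\nu(a) = 0\) and of the \(s^{-2}\) blow-up rate of the inverse is exactly right. However, there is a genuine gap in your inductive step. You claim that the obstruction lying in the fibrewise-holomorphic directions is resolved by ``the linearisation of the moment map \(\nu\)'', described as a finite-dimensional operator invertible modulo scalars. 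This is not the right operator. The obstruction at each order is a \emph{section} of the bundle \(H_0 \to B\) of trace-free Hermitian holomorphic endomorphisms, and the operator you must invert on \(\mcA^0(H_0)\) is \(\mcL = p\circ\Delta_H + D_V\): the horizontal Laplacian enters at the same order \(k^{-1}\) as the moment-map term \(D_V\), because the vertical Laplacian degenerates on fibrewise-holomorphic sections and only contributes through the deformation. The operator \(D_V\) alone is an order-zero, fibrewise operator whose kernel at each \(b\) is the centraliser of \(a_V\vert_b\) in \(H^0(X,\End\mcE_b)\), which is generically nontrivial pointwise; it is not invertible on \(\mcA^0(H_0)\). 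What saves the argument is that \(\mcL\) is a self-adjoint second-order \emph{elliptic} operator on \(B\) whose kernel, being the intersection of the kernels of two non-negative pieces, is \(\{\sigma : \grad_H\sigma = 0,\ [a_V,\sigma]=0\} = \mathfrak g_a\); the hypothesis \(\Aut(\mcE, a_V) = \C^*\cdot\id\) then kills the kernel. Without the \(\Delta_H\) term your induction stalls on any \(\sigma\) commuting pointwise with \(a_V\) but varying over \(B\).

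Two further omissions are real but more easily repaired. First, your two-block decomposition leaves the component \(C^\infty(B)\cdot\id_E\) unaccounted for: the fibrewise Laplacian vanishes on \(\phi(b)\id_E\), and only the \emph{constant} part can be absorbed into \(c_k\); the non-constant part must be removed using the base Laplacian \(\Delta_B\), which is invertible modulo constants (this is the source of the constants \(\gamma_j\) in the expansion of \(c_k\)). Second, the uniform bound \(\norm{\mcL_k^{-1}} \le Ck\) does not follow from the two block-diagonal lower bounds alone: one must control the off-diagonal coupling between the \(H\)-block and its complement, which in the paper is done by showing \(\grad_{V}^{1,0}a_V = 0\) via the gauge-fixing condition \(\delbar_V^* a_V = 0\), the Maurer--Cartan equation and the Nakano identity, so that the mixed terms enter at order \(k^{-1}\) relative to the diagonal and can be absorbed. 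You flag this step as the main obstacle, which is fair, but the mechanism that makes it work is a specific structural fact about the deformation, not merely a choice of adapted norm.
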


Thus, the family Hermite--Einstein equation arises as the crucial analytic obstruction to finding Hermite--Einstein metrics in adiabatic classes, and controls the geometry of the resulting Hermite--Einstein metrics, in line with (iii) above. We note that if each \(\mcE_b\) is stable, then the family Hermite--Einstein equation is trivially satisfied, and \Cref{thm:hym-total-space} gives a new construction of Hermite--Einstein metrics on \((\mcE, \delbar_s)\). Our primary interest, however, is in the case when the \(\mcE_b\) are semistable or polystable, which is when the family Hermite--Einstein eqution becomes relevant.

The construction of the Hermitian metrics will follow the general strategy as developed in \cite{fineConstantScalarCurvature2004a,hongConstantHermitianScalar1999,dervanOptimalSymplecticConnections2021,ortuOptimalSymplecticConnections2023} for constant scalar curvature K\"ahler (cscK) metrics, and \cite{sektnanHermitianYangMillsConnections2024} for Hermitian--Yang--Mills connections. A key point is to develop the linearised theory of the family Hermite--Einstein equation, which we may consider through the linearised operator \(\mcL \colon \mcA^0(\mcF) \to \mcA^0(\mcF)\), and which we will prove that this is a self-adjoint second-order elliptic operator. Our assumption
\[\Aut(\mcE, a_V) = \{\sigma \in \Aut(\mcE) \mid \sigma a_V = a_V\sigma\} = \C^*\cdot \id,\]
will imply that \(\mcL\) is invertible. We expect that this assumption may be removed in general, at the expense of a more technical statement.

Next, an important tool in the study of Hermite--Einstein metrics is the associated parabolic flow
\[h^{-1}\pdv{h}{t} = -2(i\Lambda_\omega F_h - c\id).\]
In \cite{donaldsonSelfdualYangMillsConnections1985}, Donaldson showed that when \((X, \omega_X)\) is closed, the flow exists for all time. This was generalised by Simpson \cite{simpsonConstructingVariationsHodge1988} to the case when \(X\) has boundary.

We prove an analogous statement in our setting:

\begin{theorem}
    [\Cref{thm:hhe-flow-exists-for-all-time}]
    \label{thm:intro-hhe-flow}
    Suppose \((X, \omega_X)\) is compact K\"ahler, \((B, \omega_B)\) compact K\"ahler with or without boundary. Let \(\mcE \to X\) be a holomorphic vector bundle, \(\delbar_s\) a deformation of the Dolbeault operator of \(\mcE\). When \(B\) has boundary, impose Dirichlet boundary conditions with \(h = h_\partial\) on \(X \times \partial B\), where \(h_\partial\) is a fixed vertically Hermite--Einstein metric on \(\mcE\vert_{X \times \partial B}\).

    The family Hermite--Einstein flow
    \[h^{-1}\pdv{h}{t} = -2\left(p(i\Lambda_H F_h) - \lambda i\nu_h(a)\right)\]
    has a unique smooth solution for all time.
\end{theorem}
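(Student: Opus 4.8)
The plan is to follow the strategy of Donaldson and Simpson for the ordinary Hermite--Einstein flow, adapted to the horizontal/fibrewise structure here. The starting observation is that the right-hand side is valued in \(\mcF\), that is, in fibrewise holomorphic (and \(h\)-self-adjoint) endomorphisms, so the flow moves \(h\) only in directions tangent to the locus of vertically Hermite--Einstein metrics. Since the relative endomorphism between two vertically Hermite--Einstein metrics on a (fibrewise poly\-stable) bundle is itself a positive self-adjoint fibrewise holomorphic endomorphism, I would fix a background \(h_0\) and write \(h = h_0 H\) with \(H = h_0^{-1}h\) a positive section of \(\mcF\); then \(h^{-1}\partial_t h = H^{-1}\partial_t H\), and the flow descends to a quasilinear evolution equation for \(H\) as a section of the \emph{finite-rank} holomorphic bundle \(\mcF \to B\), with the vertical Hermite--Einstein condition preserved automatically. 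Its linearisation is the operator \(\mcL\), which, as noted above, is a self-adjoint second-order elliptic operator with principal part the horizontal Laplacian \(\Delta_{\omega_B}\); in particular the flow is (forward) parabolic. Short-time existence and uniqueness then follow from standard parabolic theory — in the closed case directly, and in the boundary case from the corresponding theory for the Dirichlet problem, the condition \(h = h_\partial\) on \(X \times \partial B\) being of the admissible type.

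Given short-time existence, long-time existence reduces to a priori estimates uniform on finite time intervals. The central step, and the main obstacle, is a \(C^0\) estimate bounding \(H\) and \(H^{-1}\) away from \(0\) and \(\infty\). Here I would introduce an analogue of the Donaldson functional adapted to the family equation, incorporating the coupling to the moment map term \(\nu_h(a)\) through the constant \(\lambda\), and show that it decreases along the flow; its convexity along geodesics in the space of metrics then yields the required control, anchored on \(\partial B\) by the Dirichlet data. Alternatively one can attempt a direct maximum-principle argument for a quantity such as \(\operatorname{tr}(H) + \operatorname{tr}(H^{-1})\) or \(\abs{\log H}\). The difficulty particular to this setting is that the projection \(p\) is the \(L^2\)-orthogonal projection onto \(H^0(X, \End(\mcE_b))\), hence nonlocal in the \(X\) direction, so the evolution of a pointwise quantity is not governed by a clean scalar parabolic inequality. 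One must use the fibrewise self-adjointness together with the K\"ahler identities to show that, after projecting, \(i\Lambda_H F_h\) still contributes a genuine Laplacian in the \(B\) directions, and that the moment map term enters with the sign compatible with the maximum principle.

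With the \(C^0\) bound in hand, the metric \(h\) is uniformly equivalent to \(h_0\), so the flow is uniformly parabolic with uniformly controlled coefficients, and higher-order bounds follow by a standard bootstrap: interior parabolic Schauder estimates, and estimates up to the boundary using the fixed Dirichlet data, give uniform \(C^k\) bounds on \(H\) for every \(k\) on finite time intervals. These estimates preclude finite-time blow-up, so the short-time solution extends to all \(t \in [0,\infty)\), and the regularity theory gives smoothness.

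Uniqueness for all time then follows from short-time uniqueness together with the estimates, or more directly from a maximum-principle comparison: if \(H\) and \(H'\) are two solutions with the same initial and boundary data, their difference satisfies a parabolic equation to which the maximum principle applies, forcing \(H = H'\). I expect the bulk of the technical work to lie in the \(C^0\) estimate and, relatedly, in verifying that the nonlocal projection \(p\) and the moment map term do not destroy the parabolic maximum principle; once these are established, short-time existence and the bootstrap to smoothness are routine adaptations of the Donaldson--Simpson arguments.
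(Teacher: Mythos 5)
Your overall framework is right --- reduce to a quasilinear parabolic equation for a section of the finite-rank bundle \(\mcF \to B\), get short-time existence from ellipticity of \(\mcL\), and extend via a priori estimates --- and this matches the paper's setup. But the central step is not carried out, and the tool you propose for it is not the one that works. The paper's long-time existence argument, following Donaldson and Simpson, does \emph{not} rest on a Donaldson-type functional: the key a priori estimate is that
\[\theta = \abs{P(\sigma)}^2_{(h\sigma)_\mcF}, \qquad P(\sigma) = p_{h\sigma}(i\Lambda_H F_{h\sigma}) - i\lambda\nu_{h\sigma}(a),\]
is a subsolution of the heat equation on \(B\), so \(\sup_B\theta\) is non-increasing; this bounds the speed of the curve \(\sigma(t)\) in the space of vertically Hermite--Einstein metrics, which (via the heat inequality for \(\eta(\sigma(t),\sigma(t')) = \beta_{\sigma,\tau}+\beta_{\tau,\sigma}-2\vol\cdot\rank\), your ``\(\tr(H)+\tr(H^{-1})\)'' alternative, applied to two time slices rather than to \(H\) against a background) gives uniform \(C^0\) convergence as \(t \to T\) for finite \(T\). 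A decreasing energy functional, even a convex one, does not by itself produce the finite-time \(C^0\) bound, and you neither construct such a functional nor explain how its monotonicity would control \(H\) pointwise. The genuinely novel difficulty --- which you correctly flag but do not resolve --- is precisely why \(\theta\) remains a heat subsolution in the presence of the deformation term: the paper's Lemma on the moment-map evolution shows that along the flow the term \(\nu_{h\sigma}(a)\) contributes \(-2\lambda\abs{[P(\sigma),a]}^2 \le 0\) to \(\partial_t\theta + \Delta_B^{1,0}\theta\), using the moment map property of \(\nu\) and the \(L^2\)-orthogonality of Hermitian and skew-Hermitian parts. Without this sign the maximum-principle argument collapses, and your proposal offers no mechanism for it.

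Two further gaps. First, once the \(C^0\) bound is in hand you invoke ``standard parabolic Schauder bootstrap,'' but the equation is quasilinear and nonlocal (the projection \(p_{h\sigma}\) is an \(L^2\)-projection fibrewise in \(X\)), and a \(C^0\) bound on \(H\) does not give H\"older-continuous coefficients; the paper instead runs a blow-up/rescaling argument to get the \(C^1\) and \(L^p_2\) bounds, and then applies Hamilton's method for polynomial differential operators to bootstrap to \(L^p_k\) for all \(k\). Second, uniqueness is not a routine ``difference of solutions satisfies a parabolic equation'' argument: the paper must again show that the deformation term enters the evolution of \(\eta(\sigma,\tau)\) with a favourable sign, via the expansion \(p_{h\sigma}(i\Lambda_V F_{h\sigma,\delbar_s}) = -s^2 i\nu_{h\sigma}(a) + \order{s^3}\) and a limiting argument in \(s\). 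Both points are subsumed in your proposal under ``routine adaptations,'' but they are where the actual content of the proof lies.
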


The statement is true for \emph{any} \(\lambda > 0\). A key point in our work is to manage the deformation term \(\nu(a)\) along the flow, for which we employ moment map geometry.

Finally, in \cite{donaldsonBoundaryValueProblems1992}, Donaldson uses Simpson's result and the maximum principle to show that the Dirichlet boundary value problem of the Hermite--Einstein equations always has a solution. Using \Cref{thm:intro-hhe-flow} and Donaldson's approach, we prove:

\begin{theorem}
    [\Cref{thm:dirichlet-problem}]
    \label{thm:intro-dirichlet-problem}
    Suppose \(\partial B \ne \emptyset\). Then the Dirichlet problem for the family Hermite--Einstein equation
    \[\begin{cases}
        p_h(i\Lambda_H F_h) - i\lambda \nu_h(a) = 0 & \text{on } X \times \Int(B) \\
        h = h_\partial & \text{on } X \times \partial B,
    \end{cases}\]
    admits a unique solution, for a fixed vertically Hermite--Einstein metric \(h_\partial\) on the smooth vector bundle \(E\vert_{X \times\partial B}\).
\end{theorem}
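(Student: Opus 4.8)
The plan is to follow Donaldson's strategy from \cite{donaldsonBoundaryValueProblems1992}: obtain the solution to the Dirichlet problem as the large-time limit of the family Hermite--Einstein flow with the prescribed boundary data, using the long-time existence from \Cref{thm:intro-hhe-flow} together with a maximum principle argument to control the flow and extract convergence.

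First I would set up the flow with initial data an arbitrary smooth Hermitian metric $h_0$ on $E$ agreeing with $h_\partial$ on $X \times \partial B$, and with the Dirichlet boundary condition $h = h_\partial$ on $X \times \partial B$ maintained for all time. By \Cref{thm:intro-hhe-flow} this flow has a unique smooth solution $h_t$ for all $t \in [0, \infty)$. The heart of the matter is then to prove convergence as $t \to \infty$ to a smooth limit $h_\infty$, which will automatically be a static solution of the flow and hence satisfy the family Hermite--Einstein equation on $X \times \Int(B)$ while retaining the boundary values. The key analytic input is a maximum principle for the scalar quantity controlling the deviation from a solution: writing $\Psi_h = p_h(i\Lambda_H F_h) - i\lambda \nu_h(a)$, one computes the evolution of $|\Psi_{h_t}|^2$ (or $\operatorname{tr}(\Psi_{h_t}^2)$, taking advantage of self-adjointness) and shows it satisfies a differential inequality of the form $(\partial_t - \Delta)|\Psi|^2 \le 0$, so that by the parabolic maximum principle with the boundary term vanishing, $\sup_X |\Psi_{h_t}|^2$ is nonincreasing. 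This, combined with an energy or distance estimate along the flow, gives the a priori control needed.

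Next I would establish convergence itself. Following Donaldson, the strategy is to bound the Donaldson-type distance $d(h_\partial, h_t)$ uniformly in $t$, using that the boundary is fixed and that the maximum principle prevents the metric from degenerating in the interior; here the deformation term $\nu_h(a)$ must be handled via the moment map geometry already employed in the proof of the long-time existence, as emphasised after \Cref{thm:intro-hhe-flow}. With a uniform $C^0$ bound on $h_t$ in hand, parabolic Schauder estimates (interior plus boundary estimates, since $h_\partial$ is fixed and smooth) upgrade this to uniform $C^\infty_{loc}$ bounds, yielding subsequential convergence; the monotonicity of $\sup |\Psi_{h_t}|^2$ forces the limit to be a genuine solution, and a gradient-flow/convexity argument gives convergence of the full family rather than merely a subsequence.

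For uniqueness, I would exploit the convexity of the relevant functional along geodesics in the space of Hermitian metrics, exactly as in the closed and Dirichlet Hermite--Einstein cases. Given two solutions $h$ and $h'$ with the same boundary data $h_\partial$, one considers the geodesic $h_t$ connecting them and shows that the second variation of the associated energy functional is nonnegative, with the first-order terms vanishing at both endpoints because both are critical points; the boundary integrals vanish precisely because $h = h' = h_\partial$ on $X \times \partial B$. Strict convexity away from the locus fixed by $\Aut(\mcE, a_V)$, together with the boundary condition pinning down that ambiguity, then forces $h = h'$. The main obstacle I anticipate is controlling the deformation term $i\lambda\nu_h(a)$ in both the maximum principle and the convexity computation: unlike the pure Hermite--Einstein case, $\nu_h(a)$ depends nonlinearly on $h$, so I expect the moment map interpretation of $\nu$ to be essential in showing that this term has the correct sign (or is suitably controlled) in the evolution of $|\Psi|^2$ and in the second variation, and verifying this will be where the real work lies.
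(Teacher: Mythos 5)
Your overall strategy---run the family Hermite--Einstein flow with Dirichlet boundary data and extract a limit as $t \to \infty$---is the paper's strategy, but there is a genuine gap at the convergence step, and it is precisely the step that makes the Dirichlet problem solvable unconditionally. You assert that the parabolic maximum principle makes $\sup_B \abs{\Psi_{h_t}}^2$ non-increasing and that ``the monotonicity of $\sup\abs{\Psi_{h_t}}^2$ forces the limit to be a genuine solution.'' This is false as stated: on a \emph{closed} base the same monotonicity holds (\Cref{prop:subsolution-to-heat-equation} plus the maximum principle), yet the flow need not converge to a solution at all---this is exactly the situation for an unstable bundle under the ordinary Hermite--Einstein flow. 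Monotone non-increasing does not imply decay to zero, and without decay to zero you cannot obtain the uniform bound on $d(h_\partial, h_t)$ that your compactness argument requires: the speed of the curve $t \mapsto h_t$ in the space of vertically Hermite--Einstein metrics is $\sqrt{\theta}$ with $\theta = \abs{P(\sigma)}^2$, so a mere uniform bound on $\theta$ only gives $d(h_\partial, h_t) \lesssim t$.

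The missing idea is Donaldson's exponential decay mechanism (\Cref{prop:heat-equation-decay}): because $\theta \ge 0$ is a subsolution of the heat equation on $B$ \emph{and vanishes identically on $\partial B$} for $t > 0$ (the boundary metric is fixed and vertically Hermite--Einstein, so $P(\sigma) = 0$ there), the spectral gap of the Dirichlet Laplacian forces $\sup_B \theta \le C e^{-\mu t}$. This gives $\int_0^\infty \sqrt{\theta}\,\dd t < \infty$, so the flow curve has finite length in the space of vertically Hermite--Einstein metrics, which is complete by the fibrewise homogeneous-space description (\Cref{lem:metric-on-homogeneous-spaces}); hence $\sigma(t)$ converges, and the limit solves the equation because $\theta \to 0$. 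No Schauder bootstrapping or subsequential-convergence argument is needed once this is in place. Your uniqueness sketch via convexity of an energy functional along geodesics is a different route from the paper's (which runs the maximum principle on the quantity $\eta(\sigma,\tau)$ from the flow-uniqueness proof, for which the sign of the $\nu$-term has already been established); it is plausible but you have correctly identified that controlling the sign of the contribution of $\nu_h(a)$ in the second variation is unproven work, whereas the paper's computation already supplies exactly that inequality.
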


This produces many non-trivial solutions of the family Hermite--Einstein equations, provided \(B\) has boundary.

\subsection{Relation to prior work}

\label{subsec:relation-to-prior-work}

Motivation for our work comes from the corresponding question for families of polarised varieties. The notion of a canonical metric on a polarised variety is given by \emph{constant scalar curvature K\"ahler} metrics. Given a holomorphic submersion \(X \to B\), let \(\omega_X\) be a relatively K\"ahler metric, which has constant scalar curvature on each fibre. A natural question is then: what is the canonical choice of \(\omega_X\)? Generalising work of Fine \cite{fineConstantScalarCurvature2004a} and Hong \cite{hongConstantHermitianScalar1999} constructing cscK metrics in adiabatic classes, Dervan--Sektnan introduced the notion of an \emph{optimal symplectic connection} \cite{dervanOptimalSymplecticConnections2021}, which they proposed as an answer to this question. This was then generalised by Ortu \cite{ortuOptimalSymplecticConnections2023} to include a first order deformation term, analogously to what is carried out in this paper, which is more natural from the perspective of moduli theory.

The analysis involved in the vector bundle setting is generally more tractable than in the setting of polarised varieties. Our expectation is thus that the analytic theory behind the family Hermite--Einstein equations should be more tractable than for optimal symplectic connections. As a first step, we prove the long-time existence of the associated parabolic flow, while there are no general analytic results for optimal symplectic connections. We note that there is a discrepancy between the linear theory as developed in \cite{ortuOptimalSymplecticConnections2023} and in this paper, which leads to a slightly different geometric interpretation. We refer to \Cref{rem:difference-with-ortu} for details.

When \(X = \mathbb P(\mcE)\) is the projectivisation of a holomorphic vector bundle \(\mcE \to B\), the optimal symplectic connection equation reduces to the Hermite--Einstein equation \cite[Section 3.5]{dervanOptimalSymplecticConnections2021}. Another equation on the total space of a fibration which which reduces to the Hermite--Einstein equation for projective bundles is the \emph{Wess--Zumino--Witten} equation. We refer to \cite{finskiWessZuminoWittenEquationHarderNarasimhan2024,finskiLowerBoundsFibered2024} and references therein for details. A key difference is that the Wess--Zumino--Witten equation does not involve a projection operator, and thus the linearised operator is not elliptic. Many of the results and objects in this paper have analogues in the Wess--Zumino--Witten theory. For example, \cite{finskiWessZuminoWittenEquationHarderNarasimhan2024} describes the obstructions to the existence of (approximate) solutions, and \cite{wuPotentialTheoryWess2024} proves the solvability of the Dirichlet problem, 

On the gauge theory side, constructing Yang--Mills connections using adiabatic limits is a well studied problem. In \cite{dostoglouSelfdualInstantonsHolomorphic1994}, Dostoglou--Salamon used adiabatic limits in relation to the Atiyah--Floer conjecture, and in \cite{sektnanHermitianYangMillsConnections2024}, Sektnan--Tipler studied the problem of constructing Hermite--Einstein metrics on the pullback of a bundle in adiabatic classes.
Conversely, an important question is the limiting behaviour of Yang--Mills connections in adiabatic classes. In \cite{hongHarmonicMapsModuli1999}, Hong considers the problem of Yang--Mills connections on \(X \times B\), where \(X\) is a Riemann surface of genus at least two. Hong shows that harmonic maps \(B \to \mcM_X\) to the moduli space of flat connections are given by adiabatic limits of Yang--Mills connections on \(X \times B\). Relatedly, \cite{chenConvergenceAntiselfdualConnections1998,chenComplexAntiselfdualConnections1999,datarAdiabaticLimitsAntiselfdual2021,datarHermitianYangMillsConnectionsCollapsing2022} considers the convergence of Yang--Mills connections in the adiabatic limit, and studies the \emph{bubbling} behaviour. 

In contrast to prior work, we do not assume that the bundles involved are irreducible, which leads to our new geometric PDE. We conjecture that over an open subset away from the bubbling loci, the limiting connection on \(X \times B\) should solve the family Hermite--Einstein equation. This is analogous to the geometry of collapsing Calabi--Yau manifolds, where the relevant fibres are generally Calabi--Yau manifolds, hence have a \emph{unique} Calabi--Yau metric. By contrast, the family Hermite--Einstein equation enters in the analogous bundle theory, precisely because of the lack of uniqueness of Hermite--Einstein metrics in the presence of automorphisms.

Turning to moduli theory, the geometry in the optimal symplectic connection and family Hermite--Einstein problems are given by the corresponding moduli functors. In both cases, we have a map \(B \to \mcM\), where \(\mcM\) is the appropriate moduli space or stack. The moduli spaces are given by symplectic reduction, and the study of maps to symplectic quotients is related to \emph{gauged sigma models}, \emph{principal pairs} and \emph{symplectic vortices} \cite{gaioJholomorphicCurvesMoment1999,banfieldStablePairsPrincipal2000,mundetirieraHitchinKobayashiCorrespondenceKahler2000,cieliebakJholomorphicCurvesMoment2000,cieliebakSymplecticVortexEquations2002,bradlowRelativeHitchinKobayashiCorrespondences2003}. These are equations of the form
\[\Lambda_\omega F_A + \mu(\Phi) = 0\]
where \(F_A\) is the curvature of a principal \(K\)-bundle \(P \to X\), \(E \to X\) an associated bundle with fibre \(F\), \(\mu \colon F \to \mfk^*\) a moment map for the \(K\)-action on \(F\), and \(\Phi\) is a section of \(E\).

We conjecture that there should be a way to view the family Hermite--Einstein equation as an infinite dimensional version of the vortex equations, for example as considered in \cite{cieliebakJholomorphicCurvesMoment2000} for the anti-self-dual Yang--Mills and Seiberg--Witten equations. Note though the moment map \(\nu\) constructed before is an ``infinitesimal'' moment map, as it is the moment map of the group action on the tangent space of a fixed point.

In \cite{bradlowDimensionalReductionPerturbed2001}, Bradlow--Glazebrook--Kamber consider a holomorphic vector bundle \(\mcE \to M\), where \(M\) is a holomorphic fibre bundle over \(X\), with fibre \(F\). They consider the case when \(\mcE\) is given by an extension of two vector bundles. Letting \(\omega_k = \omega_X + k\omega_F\), they obtain the \emph{perturbed Hermite--Einstein equation}
\[\Lambda_{\omega_k}F + \mathfrak{d}_k(\beta) = 2\pi\lambda_k\id_E,\]
where \(\mathfrak{d}\) is the term representing the deformation \(\beta\). Despite visual similarities, there are many differences. In particular, the family Hermite--Einstein equation depends only on the horizontal component of the connection, and the vertical component of the deformation.

Finally, in \cite{bradlowRelativeHitchinKobayashiCorrespondences2003}, Bradlow--Garc\'ia-Prada--Mundet i Riera develop a general theory where one restricts to a subgroup of the gauge group. In particular, they prove a Hitchin--Kobayashi correspondence in such a setting. Without the deformation term, the family Hermite--Einstein equation may be viewed as a special case of the general equations considered in \cite{bradlowRelativeHitchinKobayashiCorrespondences2003} by considering the total space of the bundle \(\mcE \to X \times B\) and restricting to a subgroup of the gauge group. As far as we can tell, the deformation term cannot be incorporated into their framework. Geometrically, \cite{bradlowRelativeHitchinKobayashiCorrespondences2003} take a ``finite-dimensional'' perspective of considering \(\mcE\) as a bundle over \(X \times B\), whereas in this paper, we focus on the ``infinite-dimensional'' perspective of considering it as a family of bundles over \(X\) parametrised by \(B\).

\subsection{Outlook}

In general, when we have a moduli space or stack \(\mcM\), maps \(B \to \mcM\) correspond to families of semistable objects parametrised by \(B\). Thus, the ideas behind the work on optimal symplectic connections and from this paper should apply very generally: given a notion of a canonical metric, one should be able to define a corresponding canonical metric for families. 

Associated to canonical metrics is a moment map interpretation, as well as an algebro-geometric stability condition. For Hermite--Einstein metrics, the stability condition is given by \emph{slope stability}. We expect there to be an associated stability condition, and a Hitchin--Kobayashi type correspondence for family Hermite--Einstein metrics. As part of the moment map interpretation, we expect there to be a uniqueness statement for family Hermite--Einstein metrics, giving a precise way in which they produce canonical metrics.

\subsection{Outline}

We begin in \Cref{sec:prelim} with preliminary material on Hermite--Einstein metrics, Hermitian--Yang--Mills connections and the moment map interpretations. In \Cref{sec:def-he}, we study moment maps of linear actions of compact groups on vector spaces, and apply this to understanding deformations of Hermite--Einstein bundles. 
Next, in \Cref{sec:family-he-metrics}, we define the family Hermite--Einstein equation, both with and without deformations.
In \Cref{sec:space-of-vertically-he-metrics}, we study the geometry of the space of vertically Hermite--Einstein metrics, and make a formal analogy between the Hermite--Einstein and family Hermite--Einstein equations.

Next, in \Cref{sec:hym-total-space}, we show that given a solution of the family Hermite--Einstein equations, one can construct Hermitian--Yang--Mills connections on the total space, in adiabatic classes, proving \Cref{thm:intro-adiabatic-he}. Finally, in \Cref{sec:family-Hermite--Einstein-flow}, we show that the associated flow exists for all time, proving \Cref{thm:intro-hhe-flow}, and use this to solve the Dirichlet problem, proving \Cref{thm:intro-dirichlet-problem}.

\subsection{Ackowledgements}

I would like to thank my PhD supervisor Ruadha\'i Dervan for suggesting this problem, constant encouragement and many helpful discussions. I would also like to thank Leticia Brambila Paz, Siarhei Finski, Dylan Galt, Annamaria Ortu, Carlo Scarpa, Lars Martin Sektnan and Carl Tipler for helpful discussions. Finally, I would like to thank Joel Fine and Oscar Garc\'ia-Prada for discussions on vortex equations.

I was supported by a PhD studentship associated to Ruadha\'i Dervan's Royal Society University Research Fellowship (URF{\textbackslash}R1{\textbackslash}201041).

\section{Preliminaries}

\label{sec:prelim}

\subsection{Chern connections}

Let \(X\) be a complex manifold, \(\mcE = (E, \delbar)\) as holomorphic vector bundle on \(X\). For a Hermitian metric \(h\) on \(E\), let \(\grad\) denote the associated Chern connection.

We may decompose \(\grad = \grad^{1,0} + \grad^{0, 1}\) into its \((1, 0)\) and \((0, 1)\)-components. Supposing \(X\) has a K\"ahler metric \(\omega\), we may use the metrics \(\omega\) and \(h\) to define the adjoints to \(\grad^{1, 0}\) and \(\grad^{0, 1}\).

\begin{lemma}
    [{\cite[Section 3.2]{kobayashiDifferentialGeometryComplex2014}}] Let \(\grad\) be the Chern connection on \(\mcE\). Then
    \[(\grad^{1, 0})^* = i[\Lambda_\omega, \grad^{0, 1}] \quad\text{and}\quad (\grad^{0, 1})^* = -i[\Lambda_\omega, \grad^{1, 0}].\]
\end{lemma}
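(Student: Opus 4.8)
The plan is to prove these as the bundle-valued Kähler identities, by reducing to the flat model through a pointwise argument. Both sides of each identity are $\C$-linear first-order differential operators on the spaces $\mcA^{p,q}(\mcE)$ of $\mcE$-valued forms: the formal adjoints $(\grad^{1,0})^*$ and $(\grad^{0,1})^*$ are first order, while $\Lambda_\omega$ is a zeroth-order algebraic operator, so each commutator $[\Lambda_\omega, \grad^{0,1}]$ and $[\Lambda_\omega, \grad^{1,0}]$ is again first order. Since equality of two first-order differential operators can be tested at each point---the value $(Du)(x_0)$ depends only on the coefficients of $D$ at $x_0$ together with the $1$-jet of $u$ at $x_0$---it suffices to verify each identity at an arbitrary point $x_0 \in X$.

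First I would set up good local data at $x_0$. Choose holomorphic normal coordinates $(z^1,\dots,z^n)$ for $\omega$ centred at $x_0$, so that the metric coefficients satisfy $g_{j\bar k}(x_0) = \delta_{jk}$ and $dg_{j\bar k}(x_0) = 0$; such coordinates exist precisely because $\omega$ is K\"ahler. Simultaneously choose a holomorphic frame $(e_1,\dots,e_r)$ for $\mcE$ near $x_0$ for which the Hermitian matrix $h_{\alpha\bar\beta} = h(e_\alpha, e_\beta)$ satisfies $h_{\alpha\bar\beta}(x_0) = \delta_{\alpha\beta}$ and $dh_{\alpha\bar\beta}(x_0) = 0$; this is arranged by correcting an arbitrary holomorphic frame by a holomorphic change of basis absorbing the first-order Taylor coefficients of $h$. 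In this frame the Chern connection form $\theta = h^{-1}\del h$ vanishes at $x_0$, so that $\grad^{1,0} = \del + \theta$ with $\theta(x_0) = 0$, while $\grad^{0,1} = \delbar$ identically.

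With this model in place I would carry out the comparison at $x_0$. On the flat side, the classical K\"ahler identities for the Euclidean form $\omega_0 = i\sum_j dz^j \wedge d\bar z^j$ give $\del^* = i[\Lambda_{\omega_0}, \delbar]$ and $\delbar^* = -i[\Lambda_{\omega_0}, \del]$, which follow from the algebraic commutation relations between $\Lambda_{\omega_0}$ and the operators $dz^j \wedge (-)$ and $\iota_{\partial_{z^j}}$. It then remains to check that, evaluated at $x_0$, both sides of each bundle identity reduce to these flat expressions acting on the coefficient functions. For the right-hand sides this is immediate: $\Lambda_\omega$ and its first derivatives at $x_0$ coincide with those of $\Lambda_{\omega_0}$ because $g_{j\bar k}(x_0) = \delta_{jk}$ and $dg_{j\bar k}(x_0) = 0$, and $\grad^{0,1} = \delbar$, $\grad^{1,0}|_{x_0} = \del$ since $\theta(x_0) = 0$.

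The main obstacle, which is the heart of the argument, is the left-hand sides: I must show that the formal adjoints reduce at $x_0$ to the flat adjoints $\del^*$ and $\delbar^*$. This is the step where one tracks how the adjoint depends on the $1$-jet of the metric data. Integrating by parts against the $L^2$ inner product built from the volume form $\omega^n/n!$ and the bundle metric $h$, the adjoint of a first-order operator $\del + \theta$ differs from the flat adjoint by zeroth-order terms proportional to $\theta$ and to the logarithmic derivatives of $\det(g_{j\bar k})$ and $\det(h_{\alpha\bar\beta})$. At $x_0$ every one of these correction terms vanishes, by $\theta(x_0) = 0$, $dg_{j\bar k}(x_0) = 0$ and $dh_{\alpha\bar\beta}(x_0) = 0$; hence $(\grad^{1,0})^*$ and $(\grad^{0,1})^*$ agree at $x_0$ with $\del^*$ and $\delbar^*$. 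Combining this with the flat identities and the right-hand side computation yields the two identities at $x_0$, and since $x_0$ is arbitrary the lemma follows.
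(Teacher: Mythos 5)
The paper does not prove this lemma at all --- it is stated as a citation to \cite[Section 3.2]{kobayashiDifferentialGeometryComplex2014} --- so there is no in-paper argument to compare against. Your proof is correct and is the standard self-contained route: both sides are first-order, $\C$-linear operators, so equality is a pointwise statement; K\"ahler normal coordinates plus a normal holomorphic frame (killing the $1$-jets of $g$ and $h$ at $x_0$, hence $\theta(x_0)=0$) reduce everything to the flat identities $\del^* = i[\Lambda_{\omega_0},\delbar]$ and $\delbar^* = -i[\Lambda_{\omega_0},\del]$, which are purely algebraic. You correctly identify the two places where the $1$-jet of the data actually enters: the derivative of $\Lambda_\omega$ appearing in the commutator, and the correction terms in the formal adjoint. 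One minor imprecision: the adjoint's correction terms are not only the logarithmic derivatives of $\det(g_{j\bar k})$ and $\det(h_{\alpha\bar\beta})$ --- the pointwise pairing on $(p,q)$-forms contracts every index against $g^{-1}$, so first derivatives of all components $g^{j\bar k}$ appear --- but since every first derivative of $g$ and $h$ vanishes at $x_0$ in your chosen gauge, the conclusion is unaffected.
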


Using this, we define corresponding Laplacian operators for sections of \(E\) by
\[\Delta = \grad^*\grad, \quad\quad \Delta^{1, 0} = (\grad^{1, 0})^*\grad^{1, 0} \quad\text{and}\quad \Delta^{0, 1} = (\grad^{0, 1})^*\grad^{0, 1}.\]
These satisfy the relations
\begin{equation}
    \label{eqn:laplacian-relations}
        \Delta^{1, 0} + \Delta^{0, 1} = \Delta \quad\text{and}\quad \Delta^{1, 0} - \Delta^{0, 1} = i\Lambda_\omega F_h,
\end{equation}
where \(F_h = F_\grad\) is the curvature of the Chern connection.

As usual, we obtain induced metrics and induced connections on various associated bundles. On \(\End \mcE\), the metric is given by
\[h(\sigma, \tau) = \tr(\sigma\tau^*)\]
where \(\tau^*\) is the adjoint of \(\tau\) with respect to the metric \(h\). The associated Chern connection on \(\End \mcE\) behaves as follows with respect to taking adjoints.

\begin{lemma}
    \label{lem:grad-of-adjoint}
    Let \(\sigma \in \mcA^0(\End(E))\) be an endomorphism. Then
    \[\grad^{0, 1}(\sigma^*) = (\grad^{1, 0}\sigma)^*.\]
\end{lemma}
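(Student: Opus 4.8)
The plan is to derive the identity from two ingredients: the metric compatibility of the Chern connection on $E$, decomposed into types, and the Leibniz rule for the induced Chern connection on $\End\mcE$. Throughout I recall that the induced connection acts by $(\grad\sigma)(u) = \grad(\sigma u) - \sigma(\grad u)$, that its $(0,1)$-part is the Dolbeault operator of $\End\mcE$, and that the pointwise adjoint is defined by $h(\sigma u, v) = h(u, \sigma^* v)$ for all local sections $u, v$ of $E$.

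First I would record the type decomposition of metric compatibility. Since $h$ is conjugate-linear in its second slot, the $(0,1)$-component of $d\,h(u,v) = h(\grad u, v) + h(u, \grad v)$ reads
\[\delbar\, h(u, v) = h(\grad^{0,1}u, v) + h(u, \grad^{1,0}v),\]
the terms being matched by type because $\grad^{0,1}u$ contributes a $(0,1)$-form while the conjugate-linear pairing with the $(1,0)$-form-valued $\grad^{1,0}v$ also contributes a $(0,1)$-form.

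The core step is to apply $\delbar$ to the two equal functions in $h(\sigma u, v) = h(u, \sigma^* v)$ and expand each side. On the left, the Leibniz rule $\grad^{0,1}(\sigma u) = (\grad^{0,1}_{\End}\sigma)u + \sigma\,\grad^{0,1}u$ gives
\[\delbar\, h(\sigma u, v) = h\!\left((\grad^{0,1}\sigma)u, v\right) + h(\sigma\,\grad^{0,1}u, v) + h(\sigma u, \grad^{1,0}v),\]
while on the right $\grad^{1,0}(\sigma^* v) = (\grad^{1,0}_{\End}\sigma^*)v + \sigma^*\grad^{1,0}v$ gives
\[\delbar\, h(u, \sigma^* v) = h(\grad^{0,1}u, \sigma^* v) + h\!\left(u, (\grad^{1,0}\sigma^*)v\right) + h(u, \sigma^*\grad^{1,0}v).\]
Applying the defining adjoint relation slotwise (extended to form-valued sections, where the form factors simply pass through the Hermitian pairing) yields $h(\sigma\,\grad^{0,1}u, v) = h(\grad^{0,1}u, \sigma^* v)$ and $h(\sigma u, \grad^{1,0}v) = h(u, \sigma^*\grad^{1,0}v)$, so these four terms cancel across the two expansions. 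This leaves $h\!\left((\grad^{0,1}\sigma)u, v\right) = h\!\left(u, (\grad^{1,0}\sigma^*)v\right)$ for all $u, v$, which says precisely that $(\grad^{0,1}\sigma)^* = \grad^{1,0}(\sigma^*)$, where the adjoint of a $(0,1)$-form-valued endomorphism is formed by conjugating the form and taking the fibre adjoint of the endomorphism part. Replacing $\sigma$ by $\sigma^*$, taking adjoints, and using $(\sigma^*)^* = \sigma$ then produces the claimed $\grad^{0,1}(\sigma^*) = (\grad^{1,0}\sigma)^*$.

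I expect the only delicate point to be bookkeeping rather than any genuine difficulty: one must track the conjugate-linearity of $\sigma \mapsto \sigma^*$ and the form-types carefully, in particular checking that $(\grad^{1,0}\sigma)^*$ is interpreted as conjugating the $(1,0)$-form into a $(0,1)$-form while taking the fibrewise adjoint, so that both sides of the identity are genuinely $(0,1)$-form-valued endomorphisms. As a consistency check I would redo the computation in a local holomorphic frame, where $h$ is a positive Hermitian matrix $H$, the connection form is $\theta = H^{-1}\partial H$ with $\grad^{0,1} = \delbar$, and $\sigma^* = H^{-1}\sigma^\dagger H$; a direct matrix computation then confirms $\delbar(H^{-1}\sigma^\dagger H) = \left(\partial\sigma + [\theta,\sigma]\right)^*$.
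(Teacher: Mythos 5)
The paper states this lemma without proof, treating it as a standard property of the Chern connection, so there is no in-text argument to compare against; your proof is correct and is the standard one. Differentiating the defining identity \(h(\sigma u, v) = h(u, \sigma^* v)\), taking the \((0,1)\)-part of metric compatibility, and cancelling the four terms involving \(\grad u\) and \(\grad v\) via the slotwise adjoint relation is exactly the right mechanism, and your final substitution \(\sigma \mapsto \sigma^*\) correctly converts the derived identity \((\grad^{0,1}\sigma)^* = \grad^{1,0}(\sigma^*)\) into the stated one. The local-frame verification \(\delbar(H^{-1}\sigma^\dagger H) = (\partial\sigma + [\theta, \sigma])^*\) with \(\theta = H^{-1}\partial H\) also checks out and is a worthwhile sanity check on the type bookkeeping.
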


\subsection{Hermite--Einstein metrics}

Let \((X, \omega_X)\) be a compact K\"ahler manifold of dimension \(n\), and \(\mcE \to X\) a holomorphic vector bundle. Let \(h\) be a Hermitian metric on \(\mcE\), with Chern connection \(\grad\) and curvature \(F_h\).

\begin{definition}
    The Hermitian metric \(h\) is \emph{Hermite--Einstein} if
    \[i\Lambda_{\omega_X}F_h = c\id_E\]
    for some constant \(c \in \R\).
\end{definition}

\begin{remark}
    By Chern--Weil theory, \(c\) is a cohomological constant, given by
    \[c = \frac{2\pi n c_1(E) \cdot [\omega_X]^{n-1}}{\rank(E)[\omega_X]^n}.\]
\end{remark}

\begin{lemma}
    \label{lem:hermite-einstein-laplacians}
    Suppose \(h\) is a Hermite--Einstein metric on \(\mcE\). Let \(\Delta, \Delta^{1, 0}, \Delta^{0, 1}\) denote the total, \((1,0)\)- and \((0, 1)\)-Laplacians on \(\End \mcE\). Then
    \[\Delta = 2\Delta^{0, 1} = 2\Delta^{1, 0}.\]
    In particular, for any \(\sigma \in \mcA^0(\End \mcE)\), the following are equivalent:
    \begin{enumerate}[(i)]
        \item \(\grad\sigma = 0\),
        \item \(\grad^{1, 0}\sigma = 0\),
        \item \(\grad^{0, 1}\sigma = 0\).
    \end{enumerate}
\end{lemma}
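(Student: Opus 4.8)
The plan is to apply the general identity \eqref{eqn:laplacian-relations} to the bundle $\End\mcE$ equipped with its induced Chern connection, and to exploit the fact that the Hermite--Einstein condition trivialises the relevant curvature term. First I would recall that the second relation in \eqref{eqn:laplacian-relations}, namely $\Delta^{1,0} - \Delta^{0,1} = i\Lambda_\omega F_h$, holds for any holomorphic vector bundle with its Chern connection; applied to $\End\mcE$ it reads $\Delta^{1,0} - \Delta^{0,1} = i\Lambda_\omega F^{\End}_h$, where $F^{\End}_h$ denotes the curvature of the induced connection on $\End\mcE$. The key computation is that this induced curvature acts on $\sigma \in \mcA^0(\End\mcE)$ by the commutator $F^{\End}_h\sigma = [F_h, \sigma]$, so that upon contraction $i\Lambda_\omega F^{\End}_h\sigma = [\,i\Lambda_\omega F_h,\, \sigma\,]$. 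Since $h$ is Hermite--Einstein, $i\Lambda_\omega F_h = c\,\id_E$ is central, and the commutator vanishes identically. Hence the curvature term is zero as an operator on $\End\mcE$, giving $\Delta^{1,0} = \Delta^{0,1}$. Combined with the first relation $\Delta^{1,0} + \Delta^{0,1} = \Delta$, this yields $\Delta = 2\Delta^{1,0} = 2\Delta^{0,1}$, which is the asserted identity.

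For the equivalence of (i)--(iii), I would argue by integration by parts on the compact manifold $X$. Since $\grad\sigma = \grad^{1,0}\sigma + \grad^{0,1}\sigma$ decomposes into forms of distinct bidegree, (i) holds if and only if both (ii) and (iii) do, so it suffices to show (ii) $\iff$ (iii). For this, note that for each Laplacian $D^*D \in \{\Delta, \Delta^{1,0}, \Delta^{0,1}\}$, pairing against $\sigma$ and integrating over the closed manifold $X$ gives $\langle D^*D\sigma, \sigma\rangle_{L^2} = \norm{D\sigma}_{L^2}^2$ with no boundary contributions, so $D^*D\sigma = 0$ if and only if $D\sigma = 0$. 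Applying the identity $\Delta = 2\Delta^{1,0} = 2\Delta^{0,1}$, the three kernels coincide, and therefore $\grad^{1,0}\sigma = 0 \iff \grad^{0,1}\sigma = 0 \iff \grad\sigma = 0$.

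The only genuinely substantive step is the curvature computation in the first paragraph: correctly identifying that the contracted curvature of the induced connection on $\End\mcE$ is the commutator with $i\Lambda_\omega F_h$, which the Einstein condition forces to vanish. Everything else is the standard Bochner--Kodaira formalism together with integration by parts on a compact manifold, so I do not anticipate any difficulty there. One should take minor care that the adjoints defining the Laplacians are formed using both $\omega$ and $h$ as in the preceding lemma, so that the integration-by-parts identities hold with respect to the correct $L^2$ inner product.
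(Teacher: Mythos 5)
Your proof is correct and follows essentially the same route as the paper: the identity $\Delta^{1,0} - \Delta^{0,1} = i\Lambda_\omega F$ from \eqref{eqn:laplacian-relations} applied to $\End\mcE$, whose induced metric is Hermite--Einstein with constant $0$ (your commutator computation $[i\Lambda_\omega F_h, \sigma] = [c\,\id, \sigma] = 0$ is exactly the verification of this fact, which the paper states without proof). The integration-by-parts argument identifying the kernels of the Laplacians with the kernels of $\grad$, $\grad^{1,0}$, $\grad^{0,1}$ is the standard step the paper leaves implicit.
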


\begin{proof}
    This follows from \Cref{eqn:laplacian-relations}, since the induced metric on \(\End \mcE\) is Hermite--Einstein, with constant \(c = 0\).
\end{proof}

The following result is well known, though we include the proof, as we will later need the decomposition of the Lie algebra. We refer to \cite{wangMomentMapFutaki2004} for a general moment map result.

\begin{proposition}
    \label{prop:he-reductive}
    Let \(\mcE\) be a holomorphic vector bundle, with a Hermite--Einstein metric \(h\) on \(\mcE\). Then \(G = \Aut(\mcE)\) is reductive, with maximal compact subgroup
    \[K = \Aut(\mcE) \cap U(E, h).\]
\end{proposition}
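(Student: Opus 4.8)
The plan is to work at the level of the finite-dimensional associative \(\C\)-algebra \(A = H^0(X, \End \mcE)\) of holomorphic endomorphisms, and to show that the Hermite--Einstein condition forces \(A\) to be stable under taking \(h\)-adjoints. Note that \(G = \Aut(\mcE)\) is the group of invertible elements of \(A\), so \(\Lie(G) = A\), and an endomorphism \(\sigma\) lies in \(A\) precisely when \(\grad^{0,1}\sigma = 0\).

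First I would establish the key structural fact: \(A\) is closed under \(\sigma \mapsto \sigma^*\). Indeed, if \(\sigma \in A\) then \(\grad^{0,1}\sigma = 0\), so by \Cref{lem:hermite-einstein-laplacians} also \(\grad^{1,0}\sigma = 0\); \Cref{lem:grad-of-adjoint} then gives \(\grad^{0,1}(\sigma^*) = (\grad^{1,0}\sigma)^* = 0\), i.e. \(\sigma^* \in A\). This is the only place the Hermite--Einstein hypothesis is used, and it is the heart of the argument.

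Next I would extract the decomposition of the Lie algebra. Set \(\mfk = \{\sigma \in A : \sigma^* = -\sigma\}\), the skew-Hermitian holomorphic endomorphisms. Using \(*\)-closure, every \(\sigma \in A\) decomposes as \(\sigma = \tfrac12(\sigma - \sigma^*) + i\bigl(\tfrac{1}{2i}(\sigma + \sigma^*)\bigr)\) with both summands in \(\mfk\), and \(\mfk \cap i\mfk = 0\), so \(A = \mfk \oplus i\mfk\) is the complexification of \(\mfk\). I would then identify \(\mfk\) as the Lie algebra of \(K = G \cap U(E,h)\), and check that \(K\) is compact: since \Cref{lem:hermite-einstein-laplacians} shows holomorphic endomorphisms are \(\grad\)-parallel, evaluation at a point \(x \in X\) embeds \(G \hookrightarrow \GL(E_x)\) (for \(X\) connected) with \(K\) landing in the compact group \(U(E_x, h_x)\), and \(K\) is closed, hence compact.

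Finally I would upgrade reductivity of \(\mfk\) to reductivity of \(G\). The cleanest route is to observe that the \(L^2\)-pairing \(\langle \sigma, \tau\rangle = \int_X \tr(\sigma\tau^*)\,\omega_X^n\) makes \(*\) a positive involution on \(A\), so \(A\) is a semisimple \(\C\)-algebra; writing \(A \cong \prod_j M_{n_j}(\C)\) identifies \(G \cong \prod_j \GL(n_j, \C)\) as reductive, with \(K \cong \prod_j U(n_j)\) a maximal compact subgroup. I expect the main obstacle to be precisely this last step --- passing from the infinitesimal statement \(A = \mfk \oplus i\mfk\) to genuine reductivity of the group \(G\) and maximality of \(K\) --- rather than the \(*\)-closure, which is immediate from the two lemmas. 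An alternative to the algebra argument is a polar decomposition \(G = K\cdot\exp(i\mfk)\) exhibiting \(G\) as the complexification of the compact group \(K\); either way, the real content is the compatibility of the holomorphic structure with the \(h\)-adjoint coming from the Hermite--Einstein equation.
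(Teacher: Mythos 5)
Your proof is correct, and its core --- using \Cref{lem:hermite-einstein-laplacians} together with \Cref{lem:grad-of-adjoint} to show that \(H^0(X, \End\mcE)\) is closed under the \(h\)-adjoint, then splitting into the \(\pm 1\)-eigenspaces of the involution to obtain \(\mfg = \mfk \oplus i\mfk\) --- is exactly the paper's argument. The only difference is at the end: the paper stops at the Lie algebra decomposition and concludes reductivity from it, whereas you go on to justify the group-level claims in full, proving compactness of \(K\) via the evaluation embedding of parallel sections at a point, semisimplicity of \(A = H^0(X,\End\mcE)\) from positivity of the involution with respect to the \(L^2\)-pairing, and the Wedderburn decomposition \(G \cong \prod_j \GL(n_j,\C)\) with \(K \cong \prod_j U(n_j)\) maximal compact. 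That completion is sound and is precisely the step the paper delegates to the cited literature; the proof is reproduced in the paper only because the eigenspace decomposition of \(H^0(X,\End\mcE)\) into Hermitian and skew-Hermitian parts is needed later, so your first two paragraphs already cover everything the paper actually uses.
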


\begin{proof}
    Let \(\sigma \in H^0(X, \End \mcE)\) be a holomorphic endomorphism. By \Cref{lem:grad-of-adjoint,lem:hermite-einstein-laplacians}, \(\sigma^*\) is also holomorphic. Thus, we have a map
    \begin{align*}
        T \colon H^0(X, \End \mcE) &\to H^0(X, \End \mcE) \\
        \sigma &\mapsto \sigma^*
    \end{align*}
    such that \(T^2 = \id\). Thus, \(H^0(X, \End \mcE)\) decomposes into the \(+1\)- and \(-1\)-eigenspaces, which correspond to Hermitian and skew-Hermitian endomorphisms respectively. In other words, we have a decomposition
    \begin{equation}
        \label{eqn:decomposition-of-h0-end}
        \begin{split}
            H^0(X, \End \mcE) &= \left(H^0(X, \End \mcE) \cap \mcA^0(\End_{\mathrm{SH}}(E, h))\right) \\
            &\oplus \left(H^0(X, \End \mcE) \cap \mcA^0(\End_{\mathrm H}(E, h))\right).
        \end{split}
    \end{equation}
    But as \(\mfg = \Lie(G) = H^0(X, \End \mcE)\) and \(\mfk = \Lie(K) = H^0(X, \End \mcE) \cap \mcA^0(\End_{\mathrm{SH}}(E, h))\), we may rewrite \Cref{eqn:decomposition-of-h0-end} as
    \[\mfg = \mfk \oplus i\mfk\]
    as required.
\end{proof}

\subsection{Hermitian--Yang--Mills connections}

So far, we have taken the approach of fixing a holomorphic vector bundle, and varying the Hermitian metric on it. There is an alternative perspective to the problem, which involves fixing the Hermitian metric on a complex vector bundle, and varying the complex structure instead.

Let \((X, \omega_X)\) be a compact K\"ahler manifold, \(E \to X\) a smooth complex vector bundle with a Hermitian metric \(h\).

\begin{definition}
    A \(h\)-unitary connection \(\grad\) is \emph{Hermitian--Yang--Mills} if
    \[\begin{cases}
        i\Lambda_{\omega_X}F_\grad &= c\id_E \\
        F_\grad^{0, 2} &= 0.
    \end{cases}\]
\end{definition}

\begin{remark}
    The equation \(F_{\grad}^{0,2} = 0\) is equivalent to the condition that \((\grad^{0, 1})^2 = 0\), i.e. \(\grad^{0, 1}\) defines an integrable Dolbeault operator on \(E\).
\end{remark}

The gauge group action on the space of unitary connections is defined as follows. For \(g \in \GL(E)\) and a unitary connection \(\grad\) on \(E\), we define
\[g \cdot \grad = (g^{-1})^* \circ \grad^{1, 0} \circ g^* + g \circ \grad^{0, 1} \circ g^{-1}.\]

\begin{remark}
    This is the \emph{opposite} action to the one used in \cite{atiyahYangMillsEquationsRiemann1983,kobayashiDifferentialGeometryComplex2014,sektnanHermitianYangMillsConnections2024}, but agrees with \cite{donaldsonSelfdualYangMillsConnections1985}.
\end{remark}

The relationship with varying the Hermitian metric is as follows. Suppose \(h\) is a Hermitian metric on a holomorphic vector bundle \(\mcE = (E, \delbar)\), and \(\sigma\) is a positive-definite \(h\)-Hermitian endomorphism. Then
\begin{equation}
    \label{eqn:he-to-hym}
    F_{h\sigma, \delbar} = \sigma^{-1/2} \circ F_{h, \sigma^{1/2} \cdot \delbar} \circ \sigma^{1/2}.
\end{equation}

In particular, the solvability of the Hermite--Einstein and Hermitian--Yang--Mills equations are equivalent. As such, we will pass freely between the two situations, depending on which one is more convenient.

\subsection{Linearisations}

It will be useful to understand the linearisation of the Hermite--Einstein and Hermitian--Yang--Mills operators. The following is a simple computation.

\begin{proposition}
    \label{prop:he-hym-linearisation}
    Let \((E, \delbar, h)\) be a holomorphic vector bundle with a Hermitian metric.
    \begin{enumerate}[(i)]
        \item The linearisation of the map
        \begin{align*}
            \mcA^0(\End_{\mathrm H}(E, h)) &\to \mcA^0(\End_{\mathrm{H}}(E, h)) \\
            \sigma &\mapsto i\Lambda_{\omega_X}F_{h\exp(\sigma)} - c\id_E
        \end{align*}
        at \(\sigma = 0\) is
        \[\sigma \mapsto \Delta_h^{1, 0}\sigma.\]
        \item The linearisation of the map
        \begin{align*}
            \mcA^0(\End_{\mathrm H}(E, h)) &\to \mcA^0(\End_{\mathrm H}(E, h)) \\
            \sigma &\mapsto i\Lambda_{\omega_X}F_{\exp(\sigma) \cdot \delbar} - c\id_E
        \end{align*}
        at \(\sigma = 0\) is
        \[\sigma \mapsto \Delta\sigma.\]
    \end{enumerate}
\end{proposition}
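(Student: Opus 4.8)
The plan is to treat both linearisations as instances of the first-variation-of-curvature formula, apply $i\Lambda_{\omega_X}$, and then simplify using the adjoint identities $(\grad^{1,0})^* = i[\Lambda_{\omega_X}, \grad^{0,1}]$ and $(\grad^{0,1})^* = -i[\Lambda_{\omega_X}, \grad^{1,0}]$ from the preceding lemma, together with the fact that $\Lambda_{\omega_X}$ annihilates forms of pure type $(2,0)$, $(0,2)$, $(1,0)$ and $(0,1)$. The only real input is these K\"ahler identities; everything else is a first-order Taylor expansion, and the hypothesis that $\sigma$ is $h$-Hermitian is what makes the relevant exponentials collapse to conjugations.

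For (i), I would first recall the formula for how the Chern curvature transforms under a change of metric $h_t = h\exp(t\sigma)$: writing $s_t = \exp(t\sigma)$ one has $F_{h_t} = F_h + \grad^{0,1}(s_t^{-1}\grad^{1,0}s_t)$, where $\grad^{1,0}s_t$ is the covariant derivative of $s_t$ as a section of $\End\mcE$. Since $s_0 = \id$ and $\grad^{1,0}s_0 = 0$, differentiating at $t=0$ gives $\pdv{t}\big\vert_0 F_{h_t} = \grad^{0,1}\grad^{1,0}\sigma$. Applying $i\Lambda_{\omega_X}$ and expanding $\Delta_h^{1,0}\sigma = (\grad^{1,0})^*\grad^{1,0}\sigma = i\Lambda_{\omega_X}\grad^{0,1}\grad^{1,0}\sigma - i\grad^{0,1}\Lambda_{\omega_X}\grad^{1,0}\sigma$, the second term vanishes because $\grad^{1,0}\sigma$ is of type $(1,0)$. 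Hence $i\Lambda_{\omega_X}\pdv{t}\big\vert_0 F_{h_t} = \Delta_h^{1,0}\sigma$, as claimed (the constant $-c\id_E$ dropping out).

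For (ii), the first step is to unwind the notation: since the $(0,1)$-part of the gauge-transformed connection $\exp(\sigma)\cdot\grad$ is exactly the deformed Dolbeault operator $\exp(\sigma)\cdot\delbar$, and the gauge action is defined so as to preserve $h$-unitarity, $\exp(\sigma)\cdot\grad$ is precisely the Chern connection of $h$ with respect to $\exp(\sigma)\cdot\delbar$, so that $F_{\exp(\sigma)\cdot\delbar} = F_{\exp(\sigma)\cdot\grad}$. Using that $\sigma$ is Hermitian, $\exp(t\sigma)^* = \exp(t\sigma)$, and the action reduces to $\grad_t^{1,0} = \exp(-t\sigma)\grad^{1,0}\exp(t\sigma)$ and $\grad_t^{0,1} = \exp(t\sigma)\grad^{0,1}\exp(-t\sigma)$. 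Differentiating at $t=0$ yields the infinitesimal variation $\dot A = \grad^{1,0}\sigma - \grad^{0,1}\sigma$, so that $\dot F = \grad\dot A$. Extracting the $(1,1)$-part via $i\Lambda_{\omega_X}$ leaves $i\Lambda_{\omega_X}\!\left(\grad^{0,1}\grad^{1,0}\sigma - \grad^{1,0}\grad^{0,1}\sigma\right)$, and by the computation in (i) together with its conjugate $\Delta^{0,1}\sigma = -i\Lambda_{\omega_X}\grad^{1,0}\grad^{0,1}\sigma$, this equals $\Delta^{1,0}\sigma + \Delta^{0,1}\sigma = \Delta\sigma$.

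The main obstacle I anticipate is bookkeeping rather than anything conceptual: keeping the signs and the direction of the gauge action consistent with this paper's convention (which differs from several of the cited references), and being careful that the curvature-variation formula is applied through the covariant exterior derivative on $\End\mcE$ rather than naively. A useful consistency check, which I would carry out to pin down the signs, is that (ii) can alternatively be recovered from (i) by linearising the identity $F_{h\sigma,\delbar} = \sigma^{-1/2}\circ F_{h,\sigma^{1/2}\cdot\delbar}\circ\sigma^{1/2}$ and invoking $\Delta^{1,0} - \Delta^{0,1} = i\Lambda_{\omega_X}F_h$ on $\End\mcE$; the two derivations agreeing confirms that no sign has been dropped.
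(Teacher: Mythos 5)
Your proposal is correct, and it is exactly the computation the paper has in mind (the paper omits the proof, calling it ``a simple computation''): vary the curvature, contract with \(i\Lambda_{\omega_X}\), and use \((\grad^{1,0})^* = i[\Lambda_{\omega_X}, \grad^{0,1}]\) together with the vanishing of \(\Lambda_{\omega_X}\) on pure-type forms. In particular you have correctly used the paper's (left) gauge-action convention \(g\cdot\grad = (g^{-1})^*\circ\grad^{1,0}\circ g^* + g\circ\grad^{0,1}\circ g^{-1}\) to get \(\dot A = \grad^{1,0}\sigma - \grad^{0,1}\sigma\), which is the one place a sign could have gone wrong, and your cross-check via \(F_{h\sigma,\delbar} = \sigma^{-1/2}\circ F_{h,\sigma^{1/2}\cdot\delbar}\circ\sigma^{1/2}\) and \(\Delta^{1,0}-\Delta^{0,1} = i\Lambda_{\omega_X}F\) on \(\End\mcE\) does confirm the two parts are mutually consistent.
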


\subsection{Hermitian--Yang--Mills as a moment map}

In this section, we recall the moment map picture for the Hermitian--Yang--Mills equations. Recall that if \((Y, \omega)\) is a symplectic manifold, \(K\) a Lie group acting on the left on \(\omega\) by symplectomorphisms, a map
\[\mu \colon Y \to \mfk^*\]
is called a \emph{moment map} if \(\mu\) is \(K\)-equivariant, and for any \(\xi \in \mfk\),
\[\dd\langle \mu, \xi \rangle = -\omega(X_\xi, \cdot),\]
where \(X_\xi\) is the infinitesimal action of \(\xi\) on \(Y\). Suppose \(\langle \cdot, \cdot \rangle_\mfk\) is a \(K\)-invariant inner product on \(\mfk\), which defines a map \(\mfk \to \mfk^*\). We also call a map \(\widetilde \mu \colon Y \to \mfk\) a \emph{moment map} if \(\langle \widetilde\mu, \cdot \rangle\) is a moment map as above.

Let \((X, \omega)\) be a compact K\"ahler manifold, \(E \to X\) a smooth complex vector bundle with a Hermitian metric \(h\). Recall that the space \(\mcA = \mcA(E, h)\) of unitary connections is an affine space modelled on \(\mcA^1(\End_{\mathrm{SH}}(E, h))\). In particular, for any \(\grad \in \mcA\), we may identify
\[T_\grad \mcA = \mcA^1(\End_{\mathrm SH}(E, h)).\]
Define a \(2\)-form \(\Omega\) on \(\mcA\) by
\[\Omega(\alpha, \beta) = -\int_X\Lambda_\omega\tr(\alpha \wedge \beta)\omega^n,\]
and an almost complex structure on \(\mcA\) by
\[J(\alpha) = -i\alpha^{1, 0} + i\alpha^{0, 1}.\]

\begin{proposition}
    \((\mcA, J, \Omega)\) is an infinite-dimensional K\"ahler manifold.
\end{proposition}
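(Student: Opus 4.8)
The plan is to verify the three defining properties of a Kähler structure in turn: that $J$ is an integrable almost complex structure on $\mcA$, that $\Omega$ is a closed nondegenerate $2$-form, and that the pair $(J, \Omega)$ is compatible, so that $g(\cdot, \cdot) = \Omega(\cdot, J\cdot)$ is a Riemannian metric. Throughout I would use that $\mcA$ is an affine space modelled on $\mcA^1(\End_{\mathrm{SH}}(E, h))$, so that the tangent space at every point is canonically this fixed vector space, and that both $J$ and $\Omega$ are constant, i.e. translation-invariant, tensors on $\mcA$. This flatness is what makes most of the argument formal.

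First I would check that $J$ is a well-defined endomorphism of $T_\grad\mcA$ with $J^2 = -\id$. For $\alpha \in \mcA^1(\End_{\mathrm{SH}}(E, h))$, decompose $\alpha = \alpha^{1,0} + \alpha^{0,1}$; the skew-Hermitian condition $\alpha^* = -\alpha$ gives $(\alpha^{1,0})^* = -\alpha^{0,1}$ on the $(0,1)$-parts, and from this a short computation (tracking the factors of $i$ under the adjoint, as in \Cref{lem:grad-of-adjoint}) shows that $J\alpha = -i\alpha^{1,0} + i\alpha^{0,1}$ is again skew-Hermitian-valued, so $J$ preserves the tangent space, and that $J^2\alpha = -\alpha$. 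Integrability is then immediate: since $\mcA$ is affine and $J$ is independent of the base point, the Nijenhuis tensor vanishes identically. Equivalently, $J$ is the complex structure induced by the identification of $\mcA$ with the complex affine space of Dolbeault operators $\alpha \mapsto \delbar + \alpha^{0,1}$.

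Next, closedness of $\Omega$ is automatic, as $\Omega$ is a constant $2$-form on the affine space $\mcA$, so $\dd\Omega = 0$. It then remains to establish compatibility and nondegeneracy simultaneously, by computing $g(\alpha, \beta) = \Omega(\alpha, J\beta)$ and identifying it with a positive multiple of the $L^2$-inner product on $\mcA^1(\End_{\mathrm{SH}}(E, h))$. Expanding $\alpha \wedge J\beta$ into types and using that $\Lambda_\omega$ annihilates the $(2,0)$- and $(0,2)$-components, the pairing collapses onto the $(1,1)$-part; the sign in the definition of $\Omega$ together with the $-i$ in $J$ is chosen precisely so that $g(\alpha, \alpha)$ becomes the positive quantity $\norm{\alpha}_{L^2}^2$. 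This shows $g$ is symmetric and positive-definite, hence a genuine metric, and in particular that $\Omega$ is nondegenerate; symmetry of $g$ combined with $J^2 = -\id$ then yields $\Omega(J\cdot, J\cdot) = \Omega(\cdot, \cdot)$, completing the compatibility check.

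The substantive point, and the step I would be most careful about, is this final positivity computation: correctly tracking the interaction between the contraction $\Lambda_\omega$, the wedge of $\End(E)$-valued forms, and the pointwise adjoint on $\End(E)$, so that the minus sign in $\Omega$ and the $-i$ in $J$ combine into a manifestly positive expression. Everything else is formal. The one caveat is that ``manifold'' here is to be read in the sense of a suitable Sobolev or Fréchet completion of $\mcA$; since $J$, $\Omega$ and $g$ are continuous translation-invariant tensors, they extend to such a completion without any further analytic input, so the statement holds as phrased.
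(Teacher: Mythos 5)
Your proof is correct. The paper states this proposition without proof --- it is the classical observation of Atiyah--Bott and Donaldson that the affine space of unitary connections carries a flat K\"ahler structure --- and your verification (constancy of \(J\) and \(\Omega\) making integrability and closedness automatic, plus the type decomposition collapsing \(\Omega(\alpha, J\alpha)\) onto the \((1,1)\)-part and identifying it with the \(L^2\)-norm of \(\alpha^{1,0}\)) is exactly the standard argument one would supply. The sign bookkeeping you rightly flag does work out with the paper's conventions: writing \(a = \alpha^{1,0}\), the skew-Hermitian condition gives \(\alpha^{0,1} = -a^*\), so the \((1,1)\)-part of \(\alpha \wedge J\alpha\) is \(-i(a \wedge a^* - a^* \wedge a)\), and \(-\Lambda_\omega\tr\) of this is a pointwise sum of squares, giving positive-definiteness of \(g\) and hence nondegeneracy of \(\Omega\).
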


Let \(\mcK = U(E, h)\) denote the group of unitary automorphisms of \((E, h)\), which acts on \(\mcA\) by conjugation. The Lie algebra of \(\mcK\) is \(\Lie(\mcK) = \mcA^0(\End_{\mathrm{SH}}(E, h))\), and the adjoint action of \(\mcK\) on \(\Lie(\mcK)\) is by conjugation. We have a natural \(\mcK\)-invariant inner product on \(\Lie(\mcK)\), given by
\[\langle \xi, \eta \rangle_{\Lie(\mcK)} = -\int_X\tr(\xi \eta)\omega_X^n.\]

\begin{proposition}
    [\cite{atiyahYangMillsEquationsRiemann1983,donaldsonSelfdualYangMillsConnections1985}]
    The map
    \begin{align*}
        \mu \colon \mcA &\to \Lie(\mcK), \\
        \mu(\grad) &= -\Lambda_\omega F_\grad - ic\id_E,
    \end{align*}
    is a moment map for the \(\mcK\)-action on \(\mcA\).
\end{proposition}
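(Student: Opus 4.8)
The plan is to verify directly the two defining properties of a moment map in the sense recalled above: first, that $\mu$ is $\mcK$-equivariant, and second, the infinitesimal identity $\dd\langle\mu,\xi\rangle = -\Omega(X_\xi,\cdot)$ for every $\xi\in\Lie(\mcK) = \mcA^0(\End_{\mathrm{SH}}(E,h))$, where the pairing is taken with respect to the invariant inner product $\langle\cdot,\cdot\rangle_{\Lie(\mcK)}$ and $X_\xi$ denotes the infinitesimal action of $\xi$ on $\mcA$. I would first record that $\mu$ is indeed $\Lie(\mcK)$-valued: since $\grad$ is $h$-unitary, $F_\grad$ is valued in $\End_{\mathrm{SH}}(E,h)$, hence so is $\Lambda_\omega F_\grad$, while $ic\id_E$ is skew-Hermitian, so $-\Lambda_\omega F_\grad - ic\id_E\in\mcA^0(\End_{\mathrm{SH}}(E,h))$.

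Equivariance is immediate. As $\mcK$ acts by conjugation, the curvature transforms by $F_{g\cdot\grad} = gF_\grad g^{-1}$. Since $g$ is a fibrewise endomorphism it commutes with the pointwise operator $\Lambda_\omega$, and $\id_E$ is fixed under conjugation, so $\mu(g\cdot\grad) = g\mu(\grad)g^{-1}$. Because the adjoint action of $\mcK$ on $\Lie(\mcK)$ is precisely conjugation, this is the required equivariance.

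For the infinitesimal identity, I would first compute the infinitesimal action: writing $g_t = \exp(t\xi)$ and differentiating $g_t\circ\grad\circ g_t^{-1}$ at $t=0$ gives $X_\xi = -\grad\xi\in\mcA^1(\End_{\mathrm{SH}}(E,h))$. Using the inner product, $\langle\mu(\grad),\xi\rangle_{\Lie(\mcK)} = \int_X\tr(\Lambda_\omega F_\grad\,\xi)\omega^n + ic\int_X\tr(\xi)\omega^n$, and the second term is independent of $\grad$, so it does not contribute to the differential. Varying $\grad$ in the direction $\alpha\in T_\grad\mcA = \mcA^1(\End_{\mathrm{SH}}(E,h))$ and using $\frac{d}{dt}\big|_{t=0}F_{\grad+t\alpha} = \grad\alpha$, I obtain $\dd\langle\mu,\xi\rangle(\alpha) = \int_X\tr(\Lambda_\omega(\grad\alpha)\,\xi)\omega^n$. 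The crux is then an integration by parts based on the Leibniz identity $\dd\tr(\xi\alpha) = \tr(\grad\xi\wedge\alpha) + \tr(\xi\,\grad\alpha)$, valid since the Chern connection is compatible with the trace. Wedging with the closed form $\omega^{n-1}$ and applying Stokes' theorem on the closed manifold $X$ yields
\[\int_X\tr(\grad\xi\wedge\alpha)\wedge\omega^{n-1} = -\int_X\tr(\xi\,\grad\alpha)\wedge\omega^{n-1}.\]
Converting between $\beta\wedge\omega^{n-1}$ and $(\Lambda_\omega\beta)\omega^n$, which differ only by a fixed dimensional constant, and using that $\Lambda_\omega$ commutes with the trace and with the pointwise endomorphism $\xi$, this rearranges to $\dd\langle\mu,\xi\rangle(\alpha) = -\int_X\Lambda_\omega\tr(\grad\xi\wedge\alpha)\omega^n = \Omega(\grad\xi,\alpha) = -\Omega(X_\xi,\alpha)$, which is exactly the moment map condition.

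The only real difficulties here are bookkeeping rather than conceptual: fixing the sign of $X_\xi$ consistently with the stated action convention, and correctly tracking the dimensional constant relating $\wedge\,\omega^{n-1}$ to $\Lambda_\omega(\cdot)\,\omega^n$ (it is uniform, so it cancels in the final identity). The integration by parts itself is clean precisely because $X$ is closed, so no boundary terms arise from Stokes' theorem; this is the step I would be most careful to set up, as it is where all the signs must line up.
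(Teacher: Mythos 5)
The paper does not prove this proposition at all --- it is recalled from Atiyah--Bott and Donaldson with a citation and no argument --- so there is nothing to compare against except the stated conventions, and your direct verification checks out against them. The key sign-sensitive steps are all handled correctly: for the unitary subgroup the stated gauge action reduces to conjugation (since \(g^* = g^{-1}\)), giving \(X_\xi = -\grad\xi\); the variation of curvature is \(\grad\alpha\); and the integration by parts against the closed form \(\omega^{n-1}\), together with the uniform constant relating \(\beta\wedge\omega^{n-1}\) to \(\tfrac1n(\Lambda_\omega\beta)\,\omega^n\), produces exactly \(\dd\langle\mu,\xi\rangle(\alpha) = \Omega(\grad\xi,\alpha) = -\Omega(X_\xi,\alpha)\), which is the convention \(\dd\langle\mu,\xi\rangle = -\Omega(X_\xi,\cdot)\) adopted in the paper. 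This is worth having spelled out, since the paper explicitly remarks that the sign of this moment map depends on three independent convention choices; your computation confirms the stated sign is the consistent one for this particular combination (left conjugation action, inner product \(-\int\tr(\xi\eta)\,\omega^n\), and the minus sign in the moment map condition). The only cosmetic imprecision is the claim that \(\Lambda_\omega F_\grad\) is skew-Hermitian ``hence'' \(\mu\) is \(\Lie(\mcK)\)-valued: this is a pointwise linear-algebra fact about the \((1,1)\)-part of a \(\mathfrak u(E,h)\)-valued \(2\)-form rather than an immediate consequence of \(F_\grad\) being skew-Hermitian-valued, but it is standard and true, so nothing is at stake.
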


\begin{remark}
    As the sign convention will play an important role in our results, we mention that there are many sign conventions which are used in the literature. In particular, the sign depends on
    \begin{enumerate}[(i)]
        \item whether the gauge group acts on connections on the left or on the right,
        \item whether K\"ahler metrics are given by \(g(u, v) = \omega(u, Jv)\) or \(\omega(Ju, v)\),
        \item whether moment maps satisfy \(\dd\langle\mu, \xi\rangle = \omega(X_\xi, \cdot)\) or \(\omega(\cdot, X_\xi)\).
    \end{enumerate}
    Each one of these choices will introduce a minus sign.
\end{remark}

Finally, we note that the linear isomorphism
\begin{align*}
    \mcA^1(\End_{\mathrm{SH}}(E, h)) &\to \mcA^{0, 1}(\End(E)), \\
    \alpha &\mapsto \alpha^{0, 1},
\end{align*}
gives an isomorphism of affine spaces between the space \(\mcA\) of unitary connections on \(E\), and the space \(\mcD\) of (not necessarily integrable) Dolbeault operators on \(E\). Moreover, this isomorphism is \(\mcK\)-equivariant, where \(\mcK\) acts on \(\mcD\) by
\[g \cdot \delbar = g \circ \delbar \circ g^{-1},\]
and so we may give \(\mcD\) the structure of a K\"ahler manifold with a \(\mcK\)-action, with moment map
\[\mu(\delbar) = -\Lambda_\omega F_{\delbar} - ic\id_E.\]

\subsection{Families of vector bundles}

Let \(X, B\) be complex manifolds. We are interested in holomorphic vector bundles over \(X\), parametrised by \(B\).

\begin{definition}
    A \emph{family of holomorphic vector bundles} on \(X\) parametrised by \(B\) is a holomorphic vector bundle on \(X \times B\).
\end{definition}

Given a family \(\mcE \to X \times B\), we will write \(\mcE_b = \mcE\vert_{X \times \{b\}} \to X\) for the corresponding bundle over \(X\).

Now suppose \((X, \omega_X), (B, \omega_B)\) are K\"ahler, of dimensions \(n\) and \(m\) respectively. On \(X \times B\), we may define two contraction operators \(\mcA^2(X \times B) \to \mcA^0(X \times B)\) as follows:
\begin{align*}
    \Lambda_V\alpha &= \frac{n\alpha \wedge \omega_X^{n-1} \wedge \omega_B^m}{\omega_X^n \wedge \omega_B^m} \\
    \Lambda_H\alpha &= \frac{m\alpha \wedge \omega_X^n \wedge \omega_B^{m-1}}{\omega_X^n \wedge \omega_B^m}.
\end{align*}

On \(X \times B\), we have a natural one-parameter family of K\"ahler metrics, given by
\[\omega_k = \omega_X + k\omega_B\]
for \(k > 0\).

\begin{lemma}
    The contraction operator with respect to \(\omega_k\) is given by
    \[\Lambda_k \coloneqq \Lambda_{\omega_k} = \Lambda_V + k^{-1}\Lambda_H.\]
\end{lemma}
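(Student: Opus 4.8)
The plan is to reduce everything to the standard pointwise formula for the contraction operator on a K\"ahler manifold: on a K\"ahler manifold of complex dimension \(N\) with K\"ahler form \(\omega\), a two-form \(\alpha\) satisfies
\[\Lambda_\omega\alpha = \frac{N\,\alpha \wedge \omega^{N-1}}{\omega^N},\]
which is precisely the convention used in the definitions of \(\Lambda_V\) and \(\Lambda_H\). On \(X \times B\) we have \(N = n+m\) and \(\omega = \omega_k = \omega_X + k\omega_B\), so the entire computation amounts to expanding \(\omega_k^{n+m}\) and \(\omega_k^{n+m-1}\) by the binomial theorem and tracking which terms survive.

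First I would record the two facts that collapse the expansion: since \(\omega_X\) and \(\omega_B\) are pulled back from the factors \(X\) and \(B\) of dimensions \(n\) and \(m\), they commute and satisfy \(\omega_X^{n+1} = 0\) and \(\omega_B^{m+1} = 0\). Hence \(\omega_k^{n+m} = \binom{n+m}{n} k^m\, \omega_X^n \wedge \omega_B^m\) is a single term, while \(\omega_k^{n+m-1}\) has exactly two surviving terms, \(\binom{n+m-1}{n} k^{m-1}\, \omega_X^n \wedge \omega_B^{m-1}\) and \(\binom{n+m-1}{n-1} k^m\, \omega_X^{n-1} \wedge \omega_B^m\). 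Next I would decompose \(\alpha\) by bidegree along the product, \(\alpha = \alpha_{VV} + \alpha_{VH} + \alpha_{HH}\), where the three pieces have degree \((2,0)\), \((1,1)\) and \((0,2)\) in the \(X\)- and \(B\)-directions respectively. Selecting the top-degree part of \(\alpha \wedge \omega_k^{n+m-1}\), only \(\alpha_{VV}\) pairs with \(\omega_X^{n-1} \wedge \omega_B^m\) and only \(\alpha_{HH}\) pairs with \(\omega_X^n \wedge \omega_B^{m-1}\); the mixed part \(\alpha_{VH}\) drops out, since no term of \(\omega_k^{n+m-1}\) has odd degree in either factor. By the same degree argument \(\Lambda_V\alpha\) sees only \(\alpha_{VV}\) and \(\Lambda_H\alpha\) only \(\alpha_{HH}\), so the identity can be checked term by term.

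Finally, writing \(\alpha_{VV} \wedge \omega_X^{n-1} \wedge \omega_B^m = A\,\omega_X^n \wedge \omega_B^m\) and \(\alpha_{HH} \wedge \omega_X^n \wedge \omega_B^{m-1} = C\,\omega_X^n \wedge \omega_B^m\), dividing the numerator by \(\omega_k^{n+m}\) reduces the claim to the two binomial identities
\[\frac{(n+m)\binom{n+m-1}{n-1}}{\binom{n+m}{n}} = n \quad\text{and}\quad \frac{(n+m)\binom{n+m-1}{n}}{\binom{n+m}{n}} = m,\]
which yield \(\Lambda_{\omega_k}\alpha = nA + k^{-1}mC = \Lambda_V\alpha + k^{-1}\Lambda_H\alpha\), since \(\Lambda_V\alpha = nA\) and \(\Lambda_H\alpha = mC\). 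I do not expect a genuine obstacle here: the only delicate point is the bookkeeping of the powers of \(k\), and in particular checking that the factor \(k^{-1}\) arises precisely because the surviving \(C\)-term in \(\omega_k^{n+m-1}\) carries \(k^{m-1}\), one power fewer than the \(k^m\) in the denominator \(\omega_k^{n+m}\). An essentially equivalent alternative, which I would use as a sanity check, is to work at a point in simultaneously \(\omega_X\)- and \(\omega_B\)-orthonormal coframes, so that all three operators become explicit traces against the block-diagonal metric \(g_X + k g_B\) and the relation \(\Lambda_{\omega_k} = \Lambda_V + k^{-1}\Lambda_H\) is manifest.
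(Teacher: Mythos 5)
Your argument is correct and complete: the paper states this lemma without proof, treating it as a routine computation, and your binomial expansion of \(\omega_k^{n+m}\) and \(\omega_k^{n+m-1}\) using \(\omega_X^{n+1} = 0\), \(\omega_B^{m+1} = 0\), together with the two binomial identities \((n+m)\binom{n+m-1}{n-1}/\binom{n+m}{n} = n\) and \((n+m)\binom{n+m-1}{n}/\binom{n+m}{n} = m\), is exactly the computation being suppressed. The bookkeeping of the single power \(k^{m-1}\) versus \(k^m\) producing the factor \(k^{-1}\) is right, and the bidegree decomposition correctly accounts for why the mixed component of \(\alpha\) contributes to neither side.
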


Recall we defined the Laplacians
\[
    \Delta^{1, 0} = i\Lambda \grad^{0, 1}\grad^{1, 0}, \quad
    \Delta^{0, 1} = -i\Lambda \grad^{1, 0}\grad^{0, 1} \quad \text{and}\quad
    \Delta = \Delta^{1, 0} + \Delta^{0, 1}.
\]
Replacing \(\Lambda\) by \(\Lambda_k\), \(\Lambda_V, \Lambda_H\), we may define various Laplacian operators such as \(\Delta_V^{1, 0}, \Delta_V^{0, 1}\) etc., which satisfy analogues of \Cref{eqn:laplacian-relations}, as well as
\[\Delta_k = \Delta_V + k^{-1}\Delta_H\]
and the \((1, 0)\) and \((0, 1)\)-analogues.

In our adiabatic situation, the Einstein constant in the definition of the Hermite--Einstein and Hermitian--Yang--Mills equations will now be \(k\)-dependent.

\begin{lemma}
    The Einstein constant of \(\mcE\) with respect to \(\omega_k\) is given by
    \[c_k = c_V + k^{-1}c_H.\]
\end{lemma}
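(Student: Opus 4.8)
The plan is to treat the Einstein constant purely cohomologically, since by Chern--Weil theory it depends only on the K\"ahler class and on topological data. The evident generalisation of the formula in the remark following the definition of Hermite--Einstein metrics says that, on a compact K\"ahler manifold of dimension \(N\) with K\"ahler class \([\omega]\), the constant associated to \(\mcE\) is
\[c = \frac{2\pi N\, c_1(\mcE)\cdot[\omega]^{N-1}}{\rank(E)\,[\omega]^{N}},\]
obtained by taking the trace of \(i\Lambda_\omega F_h\), pairing against \(\omega^{N}/N!\), and recognising \(\tfrac{i}{2\pi}\tr F_h\) as a representative of \(c_1(\mcE)\). I would apply this on \(X\times B\), which has dimension \(N=n+m\), with \([\omega_k]=[\omega_X]+k[\omega_B]\), to obtain
\[c_k=\frac{2\pi(n+m)\,c_1(\mcE)\cdot[\omega_k]^{n+m-1}}{\rank(E)\,[\omega_k]^{n+m}},\]
so that the whole statement reduces to expanding numerator and denominator as polynomials in \(k\).

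First I would expand \([\omega_k]^{n+m}\) and \([\omega_k]^{n+m-1}\) by the binomial theorem. The essential observation is that, since \(\omega_X\) and \(\omega_B\) are pulled back from \(X\) and \(B\), a monomial \([\omega_X]^{a}[\omega_B]^{b}\) vanishes unless \(a\le n\) and \(b\le m\), because under the K\"unneth decomposition \([\omega_X]^{a}\in H^{2a}(X)\) and \([\omega_B]^{b}\in H^{2b}(B)\). In the top power \([\omega_k]^{n+m}\) this forces \(a=n,\ b=m\), so only the single term \(\binom{n+m}{m}k^{m}[\omega_X]^n[\omega_B]^m\) survives, while in \([\omega_k]^{n+m-1}\) exactly two terms survive, with \(b=m-1\) and \(b=m\). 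Writing \(c_1(\mcE)=\alpha+\beta+\gamma\) for the K\"unneth components in \(H^2(X)\), \(H^2(B)\) and \(H^1(X)\otimes H^1(B)\), I note that \(\gamma\) never contributes to a top-degree intersection number for parity reasons, and that pairing against the two surviving monomials selects \(\alpha\) from the \(b=m\) term and \(\beta\) from the \(b=m-1\) term. Hence the numerator is a sum of a \(k^{m}\)-term involving \(\alpha\) and a \(k^{m-1}\)-term involving \(\beta\).

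Dividing by the denominator, whose only surviving term has order \(k^m\), then produces exactly a constant term and a term of order \(k^{-1}\), giving \(c_k=c_V+k^{-1}c_H\). The final step is the bookkeeping of binomial coefficients: using \(\binom{n+m-1}{m}/\binom{n+m}{m}=n/(n+m)\) and \(\binom{n+m-1}{m-1}/\binom{n+m}{m}=m/(n+m)\), the factor \((n+m)\) cancels and I would identify
\[c_V=\frac{2\pi n\,\alpha\cdot[\omega_X]^{n-1}}{\rank(E)\,[\omega_X]^{n}},\qquad c_H=\frac{2\pi m\,\beta\cdot[\omega_B]^{m-1}}{\rank(E)\,[\omega_B]^{m}},\]
where \(\alpha=c_1(\mcE_b)\) is the fibrewise first Chern class; these are precisely the Hermite--Einstein constants in the \(X\)- and \(B\)-directions and match the original formula. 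There is no genuine analytic obstacle here: the only point requiring care is the combinatorial and type bookkeeping guaranteeing that the expansion truncates to just these two powers of \(k\) and that the coefficients assemble into the stated fibrewise and horizontal constants. One could alternatively argue from \(\Lambda_k=\Lambda_V+k^{-1}\Lambda_H\) together with the averaged characterisation of \(c_k\), but then one must separately track the \(k\)-dependence of the volume form \(\omega_k^{n+m}\), which makes the cohomological route cleaner.
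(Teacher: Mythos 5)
Your argument is correct: the paper states this lemma without proof, and the cohomological route you take (Chern--Weil formula for the Einstein constant, binomial expansion of \([\omega_k]^{n+m-1}\) and \([\omega_k]^{n+m}\) with only the K\"unneth-admissible monomials surviving, and the ratio of binomial coefficients cancelling the factor \(n+m\)) is exactly the standard computation the paper leaves implicit, consistent with the earlier remark giving \(c\) cohomologically and with the identities \(\Lambda_k = \Lambda_V + k^{-1}\Lambda_H\) and \(\omega_k^{n+m} = \binom{n+m}{m}k^m\,\omega_X^n\wedge\omega_B^m\). Your identifications of \(c_V\) and \(c_H\) as the fibrewise and horizontal constants are the intended ones, so there is nothing to correct.
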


Similarly, the linearisations in \Cref{prop:he-hym-linearisation} will now be \(k\)-dependent. Moreover, we may also consider the linearisations with respect to \(\Lambda_V\) and \(\Lambda_H\).

\begin{proposition}
    Let \((E, \delbar, h)\) be a holomorphic vector bundle on \(X \times B\) with a Hermitian metric.
    \begin{enumerate}[(i)]
        \item The linearisation of the map
        \begin{align*}
            \mcA^0(\End_{\mathrm H}(E, h)) &\to \mcA^0(\End(E, h)) \\
            \sigma &\mapsto i\Lambda_V F_{h \exp(\sigma)} - c_V \id_E
        \end{align*}
        at \(\sigma = 0\) is given by
        \[\sigma \mapsto \Delta_V^{1, 0}\sigma.\]
        \item The linearisation of the map
        \begin{align*}
            \mcA^0(\End_{\mathrm H}(E, h)) &\to \mcA^0(\End_{\mathrm H}(E, h)) \\
            \sigma &\mapsto i\Lambda_V F_{\exp(\sigma)\cdot \delbar} - c_V\id_E
        \end{align*}
        at \(\sigma = 0\) is given by
        \[\sigma \mapsto \Delta_V\sigma.\]
    \end{enumerate}
    Moreover, the corresponding statements for \(\Lambda_H\) also hold.
\end{proposition}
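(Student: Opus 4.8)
The plan is to recognise that this proposition is exactly the $\Lambda_V$/$\Lambda_H$ analogue of \Cref{prop:he-hym-linearisation}, and that its proof goes through essentially verbatim: the only place the contraction operator enters either computation is at the very last step. The key conceptual point is that the variation of the Chern curvature is a single $(1,1)$-form-valued endomorphism, computed purely from the connection data and without reference to any K\"ahler metric on $X \times B$; the operators $\Lambda_V$ and $\Lambda_H$ are then merely fibrewise algebraic contractions against $\omega_X$ and $\omega_B$. So there is nothing to redo beyond substituting the contraction operator and reading off the relevant definition of the partial Laplacian.

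Concretely, for part (i) I would take $h_t = h\exp(t\sigma)$ with $\sigma$ an $h$-Hermitian endomorphism and recall the standard curvature-variation formula
\[\frac{d}{dt}\bigg\vert_{t=0} F_{h\exp(t\sigma), \delbar} = \grad^{0,1}\grad^{1,0}\sigma,\]
where $\grad$ is the Chern connection on $\End(E)$ over $X \times B$. Since $c_V$ is constant, its derivative vanishes, and applying $i\Lambda_V$ gives $i\Lambda_V\grad^{0,1}\grad^{1,0}\sigma = \Delta_V^{1,0}\sigma$ by the definition of $\Delta_V^{1,0}$ recorded in the excerpt. Replacing $\Lambda_V$ by $\Lambda_H$ yields $\Delta_H^{1,0}\sigma$ with no other change. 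For part (ii), I would take $\delbar_t = \exp(t\sigma)\cdot\delbar$; differentiating the gauge action gives the infinitesimal deformation $-\grad^{0,1}\sigma$ of $\grad^{0,1}$, and holding $h$ fixed forces the compensating deformation of $\grad^{1,0}$ dictated by $h$-unitarity, using \Cref{lem:grad-of-adjoint} and that $\sigma$ is Hermitian. Assembling both contributions produces a curvature variation of the form $\grad^{0,1}\grad^{1,0}\sigma - \grad^{1,0}\grad^{0,1}\sigma$, and contracting with $i\Lambda_V$ gives $\Delta_V^{1,0}\sigma + \Delta_V^{0,1}\sigma = \Delta_V\sigma$ directly from the definitions $\Delta_V^{1,0} = i\Lambda_V\grad^{0,1}\grad^{1,0}$ and $\Delta_V^{0,1} = -i\Lambda_V\grad^{1,0}\grad^{0,1}$; again the $\Lambda_H$ statement is identical.

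I expect no genuine obstacle here, as the computation is routine. The only points requiring care are the sign bookkeeping in the definitions of $\Delta_V^{1,0}$ and $\Delta_V^{0,1}$, and verifying that the curvature-variation formulas are the same ones used in \Cref{prop:he-hym-linearisation} so that no step secretly depended on $\Lambda$ arising from an honest K\"ahler metric. A useful consistency check, which I would include, is that summing the $\Lambda_V$ statement with $k^{-1}$ times the $\Lambda_H$ statement must reproduce the $\omega_k$-linearisations, via the relations $\Lambda_k = \Lambda_V + k^{-1}\Lambda_H$ and $\Delta_k = \Delta_V + k^{-1}\Delta_H$ already established in the excerpt.
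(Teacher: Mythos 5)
Your proposal is correct and follows exactly the route the paper intends: the paper offers no written proof (it labels the base case \Cref{prop:he-hym-linearisation} ``a simple computation'' and states the family version without further comment), and the standard curvature-variation formulas $\delta F = \grad^{0,1}\grad^{1,0}\sigma$ for metric variations and $\delta F^{1,1} = \grad^{0,1}\grad^{1,0}\sigma - \grad^{1,0}\grad^{0,1}\sigma$ for the gauge action, followed by contraction with $i\Lambda_V$ or $i\Lambda_H$, are precisely what is needed. Your observation that the contraction operator enters only at the final step, together with the consistency check against $\Lambda_k = \Lambda_V + k^{-1}\Lambda_H$, correctly identifies why the vertical and horizontal statements require no new argument.
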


\section{Deformation theory of Hermite--Einstein bundles}

\label{sec:def-he}

We next study the deformation theory of Hermite--Einstein bundles, following the general strategy as developed in \cite{szekelyhidiKahlerRicciFlowKpolystability2010,bronnleDeformationConstructionsExtremal2011} for constant scalar curvature K\"ahler metrics. We refer to \cite[Section 48]{krieglConvenientSettingGlobal1997} for more details on moment maps in infinite dimensions.

The general set-up is as follows. Let \(V\) be a Fr\'echet space. A \emph{symplectic form} on \(V\) is a continuous bilinear map
\[\Omega \colon V \times V \to \R\]
which is skew-symmetric and non-degenerate. Here, non-degenerate means that the induced map \(V \to V^*\) is injective.

Let \(K\) be a Lie group acting linearly on \((V, \Omega)\) by symplectomorphisms, and suppose \(\mu \colon V \to \mfk^*\) is a moment map for this action. Define a map
\begin{align*}
    \nu \colon V &\to \mfk^* \\
    \langle \nu(a), \xi \rangle &= -\frac{1}{2}\Omega(X_\xi(a), a),
\end{align*}
where
\[X_\xi(a) = \dv{t}\bigg\vert_{t=0}\exp(t\xi) \cdot a\]
is the infinitesimal action. Since the \(K\)-action on \(V\) is linear, so is the infinitesimal action \(X_\xi \colon V \to V\).

\begin{lemma}
    \(\nu\) is a moment map for the \(K\)-action on \(V\).
\end{lemma}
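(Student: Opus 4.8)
The plan is to verify directly the two defining properties of a moment map in the sense recalled above: the derivative condition \(\dd\langle\nu,\xi\rangle = -\Omega(X_\xi, \cdot)\) for every \(\xi \in \mfk\), together with \(K\)-equivariance of \(\nu\). Since the \(K\)-action is linear and symplectic, both reduce to elementary bilinear manipulations, and the only point requiring a word of care is differentiability in the Fr\'echet setting, which is benign because \(a \mapsto \langle\nu(a),\xi\rangle\) is a continuous quadratic form.

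First I would record the two structural facts to be used. Because \(K\) acts by symplectomorphisms, differentiating \(\Omega(\exp(t\xi)u, \exp(t\xi)v) = \Omega(u,v)\) at \(t = 0\) shows each generator preserves \(\Omega\) infinitesimally:
\[\Omega(X_\xi u, v) + \Omega(u, X_\xi v) = 0 \quad\text{for all } u, v \in V,\ \xi \in \mfk.\]
Second, because the action is linear, \(X_\xi\) is the derivative of the representation \(\rho \colon K \to \GL(V)\), so \(\rho(g)X_\xi\rho(g)^{-1} = X_{\mathrm{Ad}_g\xi}\), equivalently \(X_\xi \circ \rho(g) = \rho(g) \circ X_{\mathrm{Ad}_{g^{-1}}\xi}\).

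For the derivative condition, I fix \(\xi\) and differentiate the quadratic function \(a \mapsto \langle\nu(a),\xi\rangle = -\tfrac12\Omega(X_\xi a, a)\) at a point \(a\) in a direction \(v\). Since \(X_\xi\) is linear and \(\Omega\) bilinear and continuous, the G\^ateaux derivative exists and is given by
\[\dd\langle\nu,\xi\rangle_a(v) = -\tfrac12\left(\Omega(X_\xi v, a) + \Omega(X_\xi a, v)\right).\]
Applying the infinitesimal symplectic identity and skew-symmetry of \(\Omega\) gives \(\Omega(X_\xi v, a) = -\Omega(v, X_\xi a) = \Omega(X_\xi a, v)\), so the two terms coincide and the expression collapses to \(-\Omega(X_\xi a, v)\), which is exactly \(-\Omega(X_\xi(a), \cdot)\) as required.

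For equivariance, I would show \(\langle\nu(\rho(g)a),\xi\rangle = \langle\nu(a), \mathrm{Ad}_{g^{-1}}\xi\rangle\), which is the statement that \(\nu\) intertwines the \(K\)-action with the coadjoint action. Substituting \(X_\xi(\rho(g)a) = \rho(g)X_{\mathrm{Ad}_{g^{-1}}\xi}a\) into the definition and then using \(\Omega(\rho(g)u, \rho(g)v) = \Omega(u,v)\) to cancel the factors of \(\rho(g)\) yields the claim at once. I do not anticipate any genuine obstacle: the whole argument rests on the linearity and symplectic invariance of the action, and the only technical check is that the polynomial map \(\nu\) is differentiable with continuous linear derivative, which is immediate from the assumed continuity of \(\Omega\) and the smoothness of the linear action.
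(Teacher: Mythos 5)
Your verification is correct, and it is the standard argument: the paper itself omits the proof, simply citing a reference for this well-known fact, and your direct check of the derivative condition (using infinitesimal invariance of \(\Omega\) to collapse the two terms of the quadratic differential) and of coadjoint equivariance is exactly what that standard proof consists of.
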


The result is standard, see for example \cite[Section 4.2]{chenCalabiFlowGeodesic2014}. Using this moment map, given a curve \(\alpha(s)\) in \(V\), we may expand \(\mu(\alpha(s))\) around \(s = 0\).

\begin{proposition}
    [{\cite{szekelyhidiKahlerRicciFlowKpolystability2010,inoueModuliSpaceFano2019}}]
    Let \(\alpha(s)\) be a smooth curve in \(V\), with \(\alpha(0) = 0\). Then
    \[\mu(\alpha(s)) = \mu(0) + s^2\nu(\alpha'(0)) + \order{s^3}.\]
\end{proposition}

\begin{proof}
    Fix \(\xi \in \mfk\). Let \(\mu^\xi = \langle \mu, \xi\rangle\) and \(F(s) = \mu^\xi(\alpha(s))\). Then
    \[\dv{F}{s} = \dd\mu^\xi_{\alpha(s)}(\alpha'(s)) = -\Omega(X_\xi(\alpha(s)), \alpha'(s)),\]
    which is zero at \(s = 0\). Next,
    \[\dv[2]{F}{s} = -\Omega(X_\xi(\alpha(s)), \alpha''(s)) - \Omega(X_\xi(\alpha'(s)), \alpha'(s))\]
    and so
    \[\dv[2]{F}{s}\bigg\vert_{s=0} = -\Omega(X_\xi(\alpha'(0)), \alpha'(0)) = 2\langle\nu(\alpha'(0)), \alpha'(0)\rangle.\]
\end{proof}

Let \((X, \omega)\) be a compact K\"ahler manifold, \(\mcE = (E, \delbar_0) \to X\) a holomorphic vector bundle, with a Hermite--Einstein metric \(h\). Let \(G = \Aut(\mcE)\), which is reductive and has maximal compact subgroup \(K = \Aut(\mcE) \cap U(E, h)\). Using the \(\mcK\)-equivariant inner product on \(\Lie(\mcK)\), we may define a projection \(\pr_\mfk \colon \Lie(\mcK) \to \mfk\), and
\[\delbar \mapsto -\pr_\mfk(\Lambda_\omega F_{\delbar}) - ic\id_E\]
is a moment map for the \(K\)-action on the space \(\mcD\) of Dolbeault operators. Thus, we obtain
\begin{corollary}
    \label{cor:deformation-expansion}
    Let \(\alpha(s)\) be a smooth curve in \(\mcD\) with \(\alpha(0) = \delbar_0\). Then
    \begin{equation}
        \label{eqn:deformation-expansion}
        \pr_\mfk(\Lambda_\omega F_{\alpha(s)}) = -ic\id_E - s^2\nu(\alpha'(0)) + \order{s^3}
    \end{equation}
    where
    \[\nu \colon \mcA^{0, 1}(\End(E)) \to \mfk\]
    is defined by
    \[\langle\nu(a), \xi\rangle = -\frac{1}{2}(X_\xi(a), a) = \frac{1}{2}\Omega([\xi, a], a).\]
\end{corollary}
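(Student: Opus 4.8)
The plan is to realise the corollary as a direct application of the preceding Proposition to the moment map for the \(K\)-action on the space \(\mcD\) of Dolbeault operators, once we have reduced from the affine space \(\mcD\) to the linear model on which the Proposition is phrased.

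First I would fix \(\delbar_0\) as a base point and identify \(\mcD\) with its tangent space \(V = \mcA^{0,1}(\End(E))\) via \(\delbar_0 + a \leftrightarrow a\). Under this identification the conjugation action \(g \cdot \delbar = g \circ \delbar \circ g^{-1}\) of \(K = \Aut(\mcE) \cap U(E, h)\) becomes linear: since every \(g \in K\) is holomorphic, \(g\) commutes with \(\delbar_0\), so \(g \cdot (\delbar_0 + a) = \delbar_0 + g a g^{-1}\), and the base point \(\delbar_0\) is a fixed point. Consequently the identification is \(K\)-equivariant, and the infinitesimal action is the linear map \(X_\xi(a) = [\xi, a]\) for \(\xi \in \mfk\), exactly as required by the hypotheses of the Proposition.

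Next I would transport the moment map. The map \(\mu(\delbar) = -\Lambda_\omega F_\delbar - ic\id_E\) is the moment map for the full group \(\mcK\), and projecting with the \(\mcK\)-invariant inner product gives \(\mu_K = -\pr_\mfk(\Lambda_\omega F_\delbar) - ic\id_E\), a moment map for the restricted \(K\)-action, as recorded above. The key computation is its value at the base point: since \(h\) is Hermite--Einstein, \(i\Lambda_\omega F_{\delbar_0} = c\id_E\), so \(\Lambda_\omega F_{\delbar_0} = -ic\id_E\); as \(-ic\id_E\) is a holomorphic skew-Hermitian endomorphism it lies in \(\mfk\) and is fixed by \(\pr_\mfk\), whence \(\mu_K(\delbar_0) = ic\id_E - ic\id_E = 0\). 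This vanishing is precisely what makes the constant term in the expansion come out as \(-ic\id_E\).

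With these checks in place, applying the Proposition to the translated curve \(\tilde\alpha(s) = \alpha(s) - \delbar_0\) (so \(\tilde\alpha(0) = 0\) and \(\tilde\alpha'(0) = \alpha'(0)\)) gives \(\mu_K(\alpha(s)) = \mu_K(\delbar_0) + s^2\nu(\alpha'(0)) + \order{s^3} = s^2\nu(\alpha'(0)) + \order{s^3}\). Unwinding the definition of \(\mu_K\) rearranges this to \(\pr_\mfk(\Lambda_\omega F_{\alpha(s)}) = -ic\id_E - s^2\nu(\alpha'(0)) + \order{s^3}\). Finally, substituting \(X_\xi(a) = [\xi, a]\) into the general formula \(\langle \nu(a), \xi \rangle = -\frac{1}{2}\Omega(X_\xi(a), a)\) yields the stated expression for \(\nu\) in terms of the bracket. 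The only genuinely substantive point --- and the one I would be most careful about --- is the affine-to-linear reduction: one must check that \(\delbar_0\) is a fixed point and that the resulting identification is \(K\)-equivariant, so that the transported \(\mu_K\) really is a moment map for a linear action and the Proposition applies verbatim; everything else is bookkeeping.
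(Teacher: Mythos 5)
Your proposal is correct and follows exactly the route the paper intends: the corollary is stated as an immediate consequence ("Thus, we obtain") of the preceding Proposition applied to the moment map \(\delbar \mapsto -\pr_\mfk(\Lambda_\omega F_{\delbar}) - ic\id_E\) for the \(K\)-action on \(\mcD\), and your write-up supplies precisely the implicit bookkeeping --- the \(K\)-equivariant identification of the affine space \(\mcD\) with \(\mcA^{0,1}(\End(E))\) based at the fixed point \(\delbar_0\), and the vanishing of the moment map at \(\delbar_0\) via the Hermite--Einstein condition. No gaps.
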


We will later make use of the following result.

\begin{lemma}
    For all \(a \in \mcA^{0, 1}(\End(E))\),
    \[\int_X\tr(\nu(a))\omega^n = 0.\]
\end{lemma}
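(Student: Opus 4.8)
The plan is to exploit the central element $i\id_E$, which lies in $\mfk$ and acts trivially. First I would note that $i\id_E$ is a holomorphic, skew-Hermitian endomorphism, so $i\id_E \in \mfk = \Lie(K)$. Since the $\mcK$-action on $\mcD$ is by conjugation, its infinitesimal action on the tangent space is (up to sign) the commutator $a \mapsto [\,\cdot\,, a]$, and $i\id_E$ is central, so $[i\id_E, a] = 0$ for every $a \in \mcA^{0,1}(\End(E))$. Pairing the moment map against this element via the defining formula from \Cref{cor:deformation-expansion} gives
\[\langle \nu(a), i\id_E\rangle = \tfrac{1}{2}\Omega([i\id_E, a], a) = 0.\]

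On the other hand, I would unwind the same pairing using the $\mcK$-invariant inner product $\langle \xi, \eta\rangle = -\int_X \tr(\xi\eta)\,\omega^n$ on $\Lie(\mcK)$, now with $\xi = \nu(a)$ and $\eta = i\id_E$:
\[\langle \nu(a), i\id_E\rangle = -\int_X \tr(\nu(a)\, i\id_E)\,\omega^n = -\,i\int_X \tr(\nu(a))\,\omega^n.\]
Comparing the two computations yields $-\,i\int_X \tr(\nu(a))\,\omega^n = 0$, which is exactly the claim. The conceptual content is simply that $\nu$, being the moment map of a \emph{linear} action, is homogeneous of degree two in $a$ with no additive constant, so its component in any trivially-acting direction of $\mfk$ vanishes identically; the trace functional $\tr$ picks out precisely the central $U(1)$ direction $i\id_E$.

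I do not expect a genuine obstacle here, as the argument is two lines of bookkeeping. The only points requiring care are conventional rather than substantive: confirming that $i\id_E$ indeed sits in $\mfk$ (it is skew-Hermitian and, being parallel, holomorphic, hence lies in $H^0(X,\End\mcE)\cap\mcA^0(\End_{\mathrm{SH}}(E,h))$), and tracking the sign conventions in both the inner product on $\Lie(\mcK)$ and the formula $\langle \nu(a),\xi\rangle = \tfrac12\Omega([\xi,a],a)$ so that the two evaluations of $\langle \nu(a), i\id_E\rangle$ are compared consistently. Since the right-hand side is zero regardless of sign, even these conventions are immaterial to the conclusion.
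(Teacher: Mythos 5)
Your proposal is correct and matches the paper's own proof essentially verbatim: both evaluate the pairing \(\langle \nu(a), i\id_E\rangle\) in two ways, once via the moment map formula (where centrality of \(i\id_E\) forces \(\Omega([i\id_E,a],a)=0\)) and once via the trace inner product on \(\Lie(\mcK)\). No further comment is needed.
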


\begin{proof}
    This follows from a direct computation:
    \[\int_X\tr(\nu(a))\omega_X^n = -i\langle \nu(a), i\id_E\rangle = \frac{i}{2}\Omega([i\id_E, a], a) = 0.\]
\end{proof}

\subsection{Kuranishi theory}

\label{subsec:kuranishi-theory}

The results of this paper will not directly make use of Kuranishi theory, though it will be useful to have the results in mind. We refer to \cite{kobayashiDifferentialGeometryComplex2014} for more details on Kuranishi theory for holomorphic vector bundles, and \cite[Section 4]{ortuOptimalSymplecticConnections2023} for Kuranishi theory in families.

Let \((X, \omega_X)\) be a compact K\"ahler manifold, \(\mcE = (E, \delbar) \to X\) a holomorphic vector bundle, with Hermitian metric \(h\). Suppose \((\mcE, h)\) is Hermite--Einstein. Deformations of \(\mcE\) are parametrised by an elliptic complex
\[\begin{tikzcd}
    \mcA^{0}(\End E) \arrow{r}{\delbar} & \mcA^{0, 1}(\End E) \arrow{r}{\delbar} & \mcA^{0, 2}(\End E).
\end{tikzcd}\]
Let \(H^{0, 1}(X, \End \mcE)\) denote the space of \(\delbar\)-harmonic \((0, 1)\)-forms with value in \(\End E\). 

\begin{theorem}
    [{\cite{kuranishiNewProofExistence1965}}] There exists an open neighbourhood \(U\) of \(0 \in H^{0, 1}(X, \End \mcE)\), and a holomorphic map
    \[\Psi \colon U \to \mcA^{0, 1}(\End E)\]
    such that \(\Psi(0) = 0\), \(\dd\Psi_0 = \id\), and for any \(a \in \mcA^{0, 1}(\End E)\) sufficiently close to \(0\), with \(\delbar + a\) integrable, there exists a \(u \in U\) and a gauge transformation \(f \in \GL(E)\) such that \(f \cdot (\delbar + a) = \delbar + u\).
\end{theorem}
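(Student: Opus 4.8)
The plan is to realise $\delbar + \Psi(u)$ as a slice for the action of the complex gauge group $\GL(E)$ on the space of Dolbeault operators, following the classical construction of Kuranishi. Throughout I would pass to suitable Sobolev completions $L^2_\ell$ (with $\ell$ large) of the spaces $\mcA^{0,q}(\End E)$ and of $\GL(E)$, so that all spaces are Banach, the gauge action is smooth, and the implicit function theorem applies; smoothness of the final objects is recovered at the end by elliptic regularity. Integrability of $\delbar + a$ is the Maurer--Cartan equation $\delbar a + \tfrac12[a,a] = 0$, where $[\cdot,\cdot]$ combines the wedge product with the commutator on $\End E$, so the whole problem is to parametrise solutions of this quadratic equation modulo gauge.

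The analytic input is Hodge theory for the $\delbar$-Laplacian $\Delta^{0,1}$ on $\mcA^{0,1}(\End E)$, computed with respect to the Hermite--Einstein metric $h$; by \Cref{lem:hermite-einstein-laplacians} this Laplacian agrees up to a factor with the total Laplacian, and its kernel is by definition $H^{0,1}(X,\End\mcE)$. I would write $\Pi$ for the $L^2$-orthogonal projection onto the harmonic space and $G$ for the associated Green's operator, so that $\id = \Pi + \delbar^*\delbar G + \delbar\delbar^* G$. The Kuranishi map is then defined implicitly by
\[\Psi(u) = u + \tfrac12\,\delbar^* G\,[\Psi(u),\Psi(u)].\]
For $u$ in a small ball $U \subset H^{0,1}(X,\End\mcE)$ the right-hand side is a contraction, since the nonlinearity is quadratic and $\delbar^* G$ is bounded, so the fixed point $\Psi(u)$ exists, is unique, and—because the defining equation is polynomial in $u$ with complex-linear coefficients—depends holomorphically on $u$. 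As the quadratic term vanishes to second order, $\Psi(0) = 0$ and $d\Psi_0 = \id$. Moreover $\Psi(u) - u \in \operatorname{im}\delbar^*$ while $u$ is harmonic, so $\delbar^*\Psi(u) = 0$; that is, $\Psi(u)$ lies in the Coulomb gauge slice.

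The remaining, and hardest, step is local completeness: every integrable $\delbar + a$ with $a$ small should be gauge equivalent to some $\delbar + \Psi(u)$. For this I would seek $f = \exp(\gamma)$ with $\gamma \in \mcA^0(\End E)$ and $u \in U$ such that $f\cdot(\delbar + a) - \delbar = \Psi(u)$, and analyse the map $(\gamma, u)\mapsto f\cdot(\delbar+a)-\delbar-\Psi(u)$ near $(0,0)$. Its linearisation in $\gamma$ is the infinitesimal gauge action $\delbar$, and its linearisation in $u$ is the inclusion of $H^{0,1}$; imposing the gauge-fixing condition $\delbar^*(\cdot) = 0$, the relevant operator is governed by $\delbar\delbar^* + \Pi$, which is an isomorphism by Hodge theory. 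The implicit function theorem then produces $f$ and $u$ depending continuously on $a$, and a bootstrapping argument using elliptic regularity of $\delbar^*\delbar$ shows that a smooth $a$ yields smooth $f$ and $\Psi(u)$.

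The main obstacle is precisely this completeness step: one must simultaneously solve for the gauge transformation and the slice parameter while keeping the gauge-fixing condition $\delbar^*\Psi(u) = 0$ consistent with integrability, and verify that the linearised operator is an isomorphism in the chosen Sobolev setting. By contrast, the construction of $\Psi$ itself, together with the properties $\Psi(0)=0$ and $d\Psi_0 = \id$, is a routine contraction-mapping argument once Hodge theory for the Hermite--Einstein metric is in place.
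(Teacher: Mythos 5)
The paper does not prove this statement at all: it is quoted directly from Kuranishi, with \cite{kobayashiDifferentialGeometryComplex2014} given as a reference for details, so there is no in-paper argument to compare against. Your sketch is the standard Kuranishi slice construction and is essentially correct: the fixed-point definition $\Psi(u) = u \pm \tfrac12\delbar^*G[\Psi(u),\Psi(u)]$, the contraction-mapping argument, holomorphic dependence of the fixed point on $u$, the properties $\Psi(0)=0$, $\dd\Psi_0=\id$, and $\delbar^*\Psi(u)=0$ are all right. Note that the statement as printed reads $f\cdot(\delbar+a)=\delbar+u$; comparison with the family version stated just below it (where the conclusion is $\delbar_b+\Psi_V(u)\vert_b$) shows this is a typo for $\delbar+\Psi(u)$, which is the version you have correctly set out to prove.

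Two points in the completeness step deserve more care. First, the linearisation in the gauge direction, $\gamma\mapsto -\delbar\gamma$, has kernel $H^0(X,\End\mcE)$, which is \emph{not} reduced to scalars in the setting of this paper (the whole point of the family Hermite--Einstein equation is the presence of automorphisms), so the implicit function theorem must be applied with $\gamma$ restricted to a complement of $\ker\delbar$ in $\mcA^0(\End E)$, e.g.\ $\operatorname{im}\delbar^*$; your phrase ``imposing the gauge-fixing condition'' gestures at this but the operator you name, $\delbar\delbar^*+\Pi$, is not surjective onto $\mcA^{0,1}(\End E)$ (its image misses $\operatorname{im}(\delbar^*\colon\mcA^{0,2}\to\mcA^{0,1})$), so the combined map $(\gamma,u)\mapsto f\cdot(\delbar+a)-\delbar-\Psi(u)$ cannot be inverted directly. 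The cleaner and standard route is to split the step in two: first use the implicit function theorem on $\gamma\in\operatorname{im}\delbar^*\subseteq\mcA^0(\End E)$ to solve the single equation $\delbar^*\bigl(\exp(\gamma)\cdot(\delbar+a)-\delbar\bigr)=0$ (linearisation $-\delbar^*\delbar$, an isomorphism on that slice), and then observe that any integrable $a'$ in Coulomb gauge satisfies $a'=\Pi a'\mp\tfrac12\delbar^*G[a',a']$ by applying the Hodge decomposition to the Maurer--Cartan equation, so by uniqueness of the contraction fixed point $a'=\Psi(\Pi a')$. With that repair your argument is the classical proof.
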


\begin{remark}
    In fact, the Kuranishi map \(\Psi\) is \(K = \Aut(\mcE) \cap U(E, h)\)-equivariant. We refer to \cite{szekelyhidiKahlerRicciFlowKpolystability2010,chenCalabiFlowGeodesic2014,buchdahlPolystableBundlesRepresentations2022} for more details on this.
\end{remark}

Following \cite{ortuOptimalSymplecticConnections2023}, we will now develop Kuranishi theory for families. Suppose \((X, \omega_X)\) and \((B, \omega_B)\) are compact K\"ahler manifolds. Let \(\mcE \to X \times B\) be a family of holomorphic vector bundles, \(h\) a Hermitian metric on \(\mcE\), such that each \((\mcE_b, h_b)\) is Hermite--Einstein.

The integrability condition \(\delbar \circ \delbar = 0\) for \(\mcE\) may be written as
\[\delbar_V \circ \delbar_V = 0, \quad \delbar_H \circ \delbar_H = 0 \quad\text{and}\quad \delbar_V \circ \delbar_H + \delbar_H \circ \delbar_V = 0.\]
We will consider deformations of \(\mcE\) which preserve \(\delbar_H\). Define the space
\[H_V = \{\alpha \in \mcA^{0}(T^*X^{0, 1} \otimes \End E) \mid \delbar_V\alpha = 0, \delbar_V^*\alpha = 0, \delbar_H\alpha = 0\}.\]
This is finite-dimensional, as the operator \(\delbar_V^*\delbar_V + \delbar_V\delbar_V^* + \delbar_H^*\delbar_H\) is elliptic on \(T^*X^{0, 1} \otimes \End E\).

\begin{theorem}
    There exists an open neighbourhood \(U\) of \(0 \in H_V\), and a holomorphic map
    \[\Psi_V \colon U \to \mcA^{0, 1}(\End E)\]
    such that \(\Psi_V(0) = 0\), \((\dd\Psi_V)_0 = \id\), and for any \(\alpha \in \mcA^{0, 1}(\End E)\) sufficiently close to \(0\), with \(\delbar + \alpha\) integrable and \(\alpha_H = 0\), there exists \(u \in U\), such that for all \(b \in B\), there exists a gauge transformation \(f_b \in \GL(E_b)\) such that \(f_b \cdot (\delbar_b + \alpha_b) = \delbar_b + \Psi_V(u)\vert_b\).
\end{theorem}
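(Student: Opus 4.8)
The plan is to adapt the classical Kuranishi construction (the single-bundle theorem stated just above) to the family setting, treating the horizontal operator $\delbar_H$ as a constraint that is preserved throughout. Let me sketch the construction and identify where the family structure enters.

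The plan is to transplant the classical Kuranishi construction (the single-bundle theorem stated above) to the family setting, using the elliptic operator $L = \delbar_V^*\delbar_V + \delbar_V\delbar_V^* + \delbar_H^*\delbar_H$ in place of the fibrewise Laplacian. Since $L$ is elliptic on $\mcA^0(T^*X^{0,1}\otimes\End E)$ and $X\times B$ is compact, it admits a Green's operator $G_V$ and an $L^2$-orthogonal decomposition with $\ker L = H_V$; write $\Pi_V$ for the projection onto $H_V$. First I would define $\Psi_V$ as the solution of the fixed-point equation
\[\Psi_V(u) = u - \tfrac12\,\delbar_V^* G_V\,[\Psi_V(u), \Psi_V(u)],\]
where $[\cdot,\cdot]$ is the vertical bracket on $\End E$-valued vertical $(0,1)$-forms. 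For $u$ in a small ball $U\subset H_V$ this is solved by the contraction mapping principle in suitable H\"older (or Sobolev) norms, exactly as in the compact case; elliptic regularity upgrades the solution to a smooth one, $\Psi_V(0)=0$ is immediate, differentiating at $u=0$ gives $(\dd\Psi_V)_0=\id$, and holomorphicity follows because $G_V$ and $\delbar_V^*$ are complex linear while the bracket is holomorphic-quadratic.

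Two properties of $\Psi_V(u)$ are then needed. The first is the fibrewise Coulomb condition $\delbar_V^*\Psi_V(u)=0$, which is automatic from $\delbar_V^* u = 0$ (as $u\in H_V$) together with $(\delbar_V^*)^2=0$. The second, and the genuinely family-theoretic, property is that $\Psi_V(u)$ is $\delbar_H$-closed, so that it defines an honest deformation of $\mcE$ with vanishing horizontal part rather than merely a fibrewise one. I would prove $\delbar_H\Psi_V(u)=0$ by showing that the iteration preserves $\ker\delbar_H$: one has $\delbar_H u = 0$ since $u\in H_V$; the bracket preserves $\delbar_H$-closedness because $\delbar_H[a,a]=0$ whenever $\delbar_H a = 0$, by the graded Leibniz rule and the mixed integrability relation $\delbar_V\delbar_H + \delbar_H\delbar_V = 0$; and $\delbar_V^*$ and $G_V$ preserve $\ker\delbar_H$ via the commutation relations afforded by the product K\"ahler structure on $X\times B$. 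On $\ker\delbar_H$ one moreover has $L=\delbar_V^*\delbar_V+\delbar_V\delbar_V^*$, so there $G_V$ agrees with the fibrewise Green's operator, which is what links the family slice to the fibrewise theory.

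For the slice property, let $\delbar+\alpha$ be integrable with $\alpha_H=0$ and $\alpha$ small; the mixed integrability then gives $\delbar_H\alpha=0$, so $\alpha$ is already a $\delbar_H$-closed vertical form. I would gauge-fix $\alpha$ into fibrewise Coulomb gauge by a family gauge transformation $g=\exp(\phi)$ solving $\delbar_V^*(g\cdot\alpha)=0$, arranging (again through the commutation of $G_V$ and $\delbar_V^*$ with $\delbar_H$) that $\phi$, and hence $\beta\coloneqq g\cdot\alpha$, remain $\delbar_H$-closed. Then $\beta$ is fibrewise Coulomb, $\delbar_H$-closed and fibrewise integrable, with family-harmonic part $u\coloneqq\Pi_V\beta\in H_V$. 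The identity $\beta = u - \tfrac12\delbar_V^* G_V[\beta,\beta]$, valid because $G_V$ commutes with $\delbar_V$ on $\ker\delbar_H$ and $\delbar_V\beta=-\tfrac12[\beta,\beta]$, together with uniqueness of the fixed point, forces $\beta=\Psi_V(u)$. Restricting to each fibre, $f_b\coloneqq g|_{X\times\{b\}}$ satisfies $f_b\cdot(\delbar_b+\alpha_b)=\delbar_b+\Psi_V(u)|_b$, as required.

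The main obstacle I expect is precisely the interaction of the horizontal operator $\delbar_H$ with the vertical Green's operator and gauge-fixing: both the vanishing $\delbar_H\Psi_V(u)=0$ and the fact that the harmonic part $u$ produced by the gauge-fixing genuinely lies in $H_V$ (rather than merely in the fibrewise harmonic spaces) rest on establishing that $\delbar_V^*$ and $G_V$ commute with $\delbar_H$, which I would derive from the K\"ahler identities on the product $X\times B$ and the relation $\delbar_V\delbar_H+\delbar_H\delbar_V=0$. Making the ellipticity and contraction estimates uniform in $b$, so that $U$ may be chosen independent of the fibre, is where the compactness of $B$ enters.
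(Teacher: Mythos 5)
The paper does not actually prove this theorem: it states that ``the proof is similar to the non-family case'' and defers entirely to \cite[Theorem 4.6]{ortuOptimalSymplecticConnections2023}. Your proposal --- running the classical Kuranishi fixed-point construction with the family operator \(L = \delbar_V^*\delbar_V + \delbar_V\delbar_V^* + \delbar_H^*\delbar_H\) and its Green's operator --- is in the spirit of that reference, and the overall architecture (contraction mapping, fibrewise Coulomb gauge fixing, identification of the harmonic part) is the right one.

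There is, however, a genuine gap at the step you yourself flag as the main obstacle. You repeatedly need \(\delbar_V^*\) and \(G_V\) to commute with \(\delbar_H\) --- to get \(\delbar_H\Psi_V(u)=0\), to keep the gauge-fixed form \(\beta\) in \(\ker\delbar_H\), and to identify \(\Pi_V\beta\) with an element of \(H_V\) --- and you propose to derive this from the K\"ahler identities on the product together with \(\delbar_V\delbar_H+\delbar_H\delbar_V=0\). This does not go through. Writing the fibrewise K\"ahler identity \(\delbar_V^* = -i[\Lambda_V,\del_V]\) for the Chern connection of \((\End\mcE,h)\) and applying the graded Jacobi identity, the commutator \([\delbar_H,\delbar_V^*]\) reduces to \(-i[\Lambda_V,[\delbar_H,\del_V]]\), and \([\delbar_H,\del_V]\) is wedging with the mixed horizontal-\((0,1)\)/vertical-\((1,0)\) component of the curvature \(F_h\); acting on a vertical \((0,1)\)-form this is a generally nonzero zeroth-order operator (the vertically Hermite--Einstein hypothesis controls only \(\Lambda_V\) of the purely vertical curvature, not the mixed components). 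Consequently \(L\) does not commute with \(\delbar_V\), \(\delbar_V^*\) or \(\delbar_H\) individually, \(G_V\) need not preserve \(\ker\delbar_H\), and neither the closedness of \(\Psi_V(u)\) nor the identity \(\beta = \Pi_V\beta - \tfrac12\delbar_V^*G_V[\beta,\beta]\) follows as written. Since the statement only asserts a \emph{fibrewise} gauge equivalence, the natural repair is to run the construction fibrewise with the fibrewise Green's operators and then prove separately that everything depends holomorphically on \(b\) and that the fibrewise harmonic parts of the gauge-fixed form assemble into an element of \(H_V\) --- but the latter is exactly where the interaction with \(\delbar_H\) must be confronted by a genuine argument rather than a formal commutation. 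Two smaller points: your fixed-point equation needs a Green's operator on vertical \((0,2)\)-forms, which your \(L\) does not provide, and the smooth dependence on \(b\) of the Coulomb gauge transformation \(g\) requires control of \(\ker(\delbar_V^*\delbar_V)\) on \(\mcA^0(\End E_b)\), i.e.\ of \(\dim H^0(X,\End\mcE_b)\), which is not assumed at this point in the paper.
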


The proof is similar to the non-family case, and we refer to \cite[Theorem 4.6]{ortuOptimalSymplecticConnections2023} for more details. We note that as \((\dd\Psi_V)_0 = \id\), first order deformations arising from the Kuranishi map satisfy the gauge fixing condition \(\delbar_V^*\alpha = 0\).

\section{Family Hermite--Einstein metrics}

\label{sec:family-he-metrics}

With everything set-up, we can now define the family Hermite--Einstein equation. Let \((X, \omega_X)\) be compact K\"ahler, and let \((B, \omega_B)\) be a K\"ahler manifold.  We will primarily be interested in the case \(B\) is compact, but we do not assume this for the moment. Suppose \(\mcE^0 = (E, \delbar_0) \to X \times B\) is a holomorphic vector bundle, with a Hermitian metric \(h\) which is vertically Hermite--Einstein, in the sense that each \((\mcE_b^0, h_b)\) is Hermite--Einstein.

For \(b \in B\), we let \(\delbar_b\) denote the Dolbeault operator on \(\mcE_b\). By elliptic operator theory, the kernel of \(\delbar_b\) is finite-dimensional, and so we may define an \(L^2\)-projection map
\[\pi_{h, b} \colon \mcA^0(\End(E_b)) \to H^0(\End(\mcE_b)),\]
for a fixed Hermitian metric \(h\) on \(\mcE\). Combining these fibrewise projection maps, we obtain a map
\[\pi_h \colon \mcA^0(\End(E)) \to \mcA^0(\End(E)).\]
Similarly, we let \(p_{h, b}(\sigma)\) denote the trace-free part of \(\pi_{h, b}(\sigma)\), so that this defines a map \(p_h \colon \mcA^0(\End(E)) \to \mcA^0(\End(E))\).

\begin{remark}
    In \Cref{sec:space-of-vertically-he-metrics}, we will build a holomorphic vector bundle \(\mcF\) on \(B\), and show that \(\pi_h\) defines a map \(\mcA^0(\End(E)) \to \mcA^0(\mcF)\), under the assumption that \(\dim(H^0(\End(\mcE_b)))\) is constant.
\end{remark}

Given a curve \(\alpha\) in \(\mcA^{0, 1}(\End(E))\) with \(\delbar_0 + \alpha(s)\) integrable for all \(s\), we may decompose \(\alpha = \alpha_V + \alpha_H\). The corresponding Dolbeault operator is then
\[\delbar_0 + \alpha = (\delbar_{0, V} + \alpha_V) + (\delbar_{0, H} + \alpha_H).\]
Let \(\alpha_b = \alpha_V\vert_{X \times \{b\}}\). Then for each \(b \in B\), by \Cref{cor:deformation-expansion}, we have an expansion
\[\pr_{h, \mfk_b}\left(\Lambda_{\omega_X}F_{\delbar_0 + \alpha_b(s)}\right) = -ic\id_E - s^2\nu_b(\alpha_b'(0)) + \order{s^3}.\]

We note that the left hand side depends smoothly on \(b \in B\), and so
\[b \mapsto s^2\nu_{h, b}(\alpha_b'(0))\]
is also smooth. We will write \(\nu_h(\alpha_V'(0))\) for this endomorphism. Working globally, this means that
\[p_h(i\Lambda_V F_{\delbar_0 + \alpha(s)}) = -s^2i\nu_h(\alpha_V'(0)) + \order{s^3}.\]

Fix a first order vertical deformation \(a \in \mcA^0(T^*X^{0, 1} \otimes \End(E))\).

\begin{definition}
    A Hermitian metric \(h\) on \(E\) which is vertically Hermite--Einstein with respect to \(\delbar_0\) satisfies the \emph{family Hermite--Einstein equation (with respect to the first order deformation \(a\))} if
    \[p_h(i\Lambda_H F_h) - i\lambda \nu_h(a) = 0\]
    for some constant \(\lambda > 0\).
\end{definition}

We view this as the correct notion of a canonical metric on a family of semistable vector bundles (where \(a\) is the associated first order deformation from the corresponding polystable family), much as the Hermite--Einstein equation defines a notion of a canonical metric on a holomorphic vector bundle. We note, however, that the equation depends only on the first order deformation \(a\).

\begin{example}
    When \(X\) is a point, \(\pi_h = \id\), and so the family Hermite--Einstein equation is simply the \emph{weak} Hermite--Einstein equation. If \(B\) is compact, then the solvability of this equation is equivalent to the solvability of the Hermite--Einstein equation \cite[Proposition 4.1.4]{kobayashiDifferentialGeometryComplex2014}.
\end{example}

\begin{remark}
    The constant \(\lambda\) is a coupling constant. In the adiabatic setting, \(\lambda\) controls the rate at which the metric \(\omega_k = \omega_X + k\omega_B\) grows, versus the rate of the degeneration to a family of Hermite--Einstein bundles.
\end{remark}

\begin{remark}
    We will construct in \Cref{sec:space-of-vertically-he-metrics} a holomorphic vector bundle \(\mcF\) on \(B\), under the assumption that \(\dim(\End \mcE_b)\) is constant, such that the family Hermite--Einstein equation is a partial differential equation for sections of \(\mcF\).
\end{remark}

\section{The space of vertically Hermite--Einstein metrics}

\label{sec:space-of-vertically-he-metrics}

In this section, we will develop the Riemannian geometry of the space of fibrewise Hermite--Einstein metrics. The corresponding results for families of cscK manifolds were developed by Hallam \cite{hallamGeodesicsSpaceRelatively2023}. We refer to \cite[Sections 6.1-6.2]{kobayashiDifferentialGeometryComplex2014} for more details about the space of Hermitian metrics.

\subsection{The space of Hermitian metrics}

Let \((B, \omega_B)\) be a compact K\"ahler manifold, and let \(E\) be a smooth complex vector bundle on \(B\). 
Let \(\msH\) denote the space of Hermitian forms on \(E\), and \(\msH^+ \subseteq \msH\) the space of positive definite Hermitian forms, which is an open convex cone in \(\msH\). For \(h \in \msH^+\), we can then identify
\[T_h\msH^+ = \msH.\]
On this cone, we have a Riemannian metric
\[\langle u, v \rangle_h = \int_B\tr(h^{-1}u \circ h^{-1}v)\omega_B^m.\]
We note that this Riemannian metric is \(\GL(E)\)-invariant, and that it is the metric which we obtain on \(\msH^+\) by viewing it as a homogeneous space. In particular, this is non-positively curved, with geodesics given by
\[h(t) = h_0 \exp(ta)\]
for \(a \in \mcA^0(\End_{\mathrm H}(E, h_0))\), and the map
\begin{equation}
    \label{eqn:Hermitian-metrics-diffeomorphism}
    \begin{split}
        \mcA^0(\End_{\mathrm H}(E, h_0)) &\to \msH^+, \\
            a &\mapsto h_0 \exp(a),
    \end{split}
\end{equation}
is a diffeomorphism.

\subsection{The space of Hermite--Einstein metrics}

Now suppose \(\mcE = (E, \delbar)\) is a holomorphic vector bundle on a compact K\"ahler manifold \((X, \omega_X)\). It then makes sense to consider the space \(\msH^+_{\mathrm{HE}} \subseteq \msH^+\) of Hermite--Einstein metrics.

\begin{proposition}
    Let \(h\) be a Hermitian metric on \(E\). Suppose \(h\) is Hermite--Einstein. Let \(\sigma\) be a \(h\)-Hermitian endomorphism. Then \(h\exp(\sigma)\) is Hermite--Einstein if and only if \(\sigma\) is holomorphic.
\end{proposition}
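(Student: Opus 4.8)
The plan is to analyse the whole geodesic \(h(t) = h\exp(t\sigma)\) in \(\msH^+\) joining \(h = h(0)\) to \(h\exp(\sigma) = h(1)\), and to exploit that \(\sigma\) commutes with \(\exp(t\sigma)\). This commutation has two consequences I will use throughout: first, \(\sigma\) is \(h(t)\)-Hermitian for every \(t\) (since \(h(t)(\sigma u,v) = h(\exp(t\sigma)\sigma u,v) = h(\exp(t\sigma)u,\sigma v) = h(t)(u,\sigma v)\)); and second, the geodesic based at \(h(t_0)\) in the direction \(\sigma\) is again \(h(t) = h(t_0)\exp((t-t_0)\sigma)\). Both directions of the equivalence will be read off from this one-parameter family.

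For the \enquote{if} direction, suppose \(\sigma\) is holomorphic. By \Cref{lem:hermite-einstein-laplacians}, \(\grad^{0,1}\sigma = 0\) is equivalent to \(\grad\sigma = 0\), so \(\sigma\), and hence \(\exp(\sigma/2)\), is parallel for the Chern connection of \(h\); in particular \(\delbar\exp(\sigma/2) = 0\). Then the gauge transformation \(g = \exp(\sigma/2)\) acts trivially on the Dolbeault operator, \(g\cdot\delbar = g\circ\delbar\circ g^{-1} = \delbar\), so substituting into \Cref{eqn:he-to-hym} gives \(F_{h\exp(\sigma),\delbar} = \exp(-\sigma/2)\circ F_h\circ\exp(\sigma/2)\). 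Contracting with \(\Lambda_{\omega_X}\) and using that \(h\) is Hermite--Einstein yields \(i\Lambda_{\omega_X}F_{h\exp(\sigma)} = \exp(-\sigma/2)(c\,\id)\exp(\sigma/2) = c\,\id\), so \(h\exp(\sigma)\) is Hermite--Einstein.

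For the \enquote{only if} direction I would run a convexity argument. Set \(f(t) = i\Lambda_{\omega_X}F_{h(t)} - c\,\id\) and consider the pairing \(g(t) = \int_X \tr(f(t)\sigma)\,\omega_X^n\), which is well defined since \(\tr(f(t)\sigma)\) is metric-independent. Applying the linearisation of \Cref{prop:he-hym-linearisation} at the basepoint \(h(t_0)\)---legitimate because \(\sigma\) is \(h(t_0)\)-Hermitian and the geodesic through \(h(t_0)\) in direction \(\sigma\) is \(h(t)\)---gives \(f'(t_0) = \Delta^{1,0}_{h(t_0)}\sigma\). Hence \(g'(t) = \langle \Delta^{1,0}_{h(t)}\sigma,\sigma\rangle_{L^2,h(t)} = \|\grad^{1,0}_{h(t)}\sigma\|^2_{h(t)} \geq 0\), using \(\sigma^{*_{h(t)}} = \sigma\), so \(g\) is non-decreasing. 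Since the Einstein constant is cohomological, it agrees for \(h\) and \(h\exp(\sigma)\), so both being Hermite--Einstein forces \(f(0) = f(1) = 0\) and thus \(g(0) = g(1) = 0\). A non-decreasing function with equal endpoint values is constant, so \(g' \equiv 0\); evaluating at \(t = 0\) gives \(\grad^{1,0}_h\sigma = 0\), and \Cref{lem:hermite-einstein-laplacians} then shows \(\sigma\) is holomorphic.

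The main obstacle is justifying the identity \(f'(t) = \Delta^{1,0}_{h(t)}\sigma\) along the geodesic. The delicate points are (a) verifying that \(\sigma\) remains \(h(t)\)-Hermitian for all \(t\), so that \Cref{prop:he-hym-linearisation} applies at each basepoint and the adjoint in the \(L^2\)-pairing is trivial; (b) the reparametrisation identifying \(h(t)\) with the geodesic based at \(h(t_0)\), so that the linearisation formula stated at \(\sigma = 0\) transfers to arbitrary \(t_0\); and (c) the integration-by-parts identity \(\langle \Delta^{1,0}_{h(t)}\sigma,\sigma\rangle_{h(t)} = \|\grad^{1,0}_{h(t)}\sigma\|^2_{h(t)}\). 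Each of these is routine once the commutation of \(\sigma\) with \(\exp(t\sigma)\) is in hand, which is what makes the one-parameter family the natural setting for the proof.
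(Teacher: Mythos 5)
Your proof is correct and is essentially the paper's own argument: the quantity \(g(t) = \int_X \tr\bigl((i\Lambda_{\omega_X}F_{h(t)} - c\,\id)\sigma\bigr)\omega_X^n\) is precisely the derivative of the Donaldson functional along the geodesic \(h\exp(t\sigma)\), and your computation \(g'(t) = \norm{\grad^{1,0}_{h(t)}\sigma}^2 \ge 0\) is exactly the convexity statement the paper cites from \cite[Section 6.3]{kobayashiDifferentialGeometryComplex2014}. You have simply written out in full the argument the paper delegates to that reference.
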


\begin{proof}
    This follows using the convexity of the Donaldson functional, see \cite[Section 6.3]{kobayashiDifferentialGeometryComplex2014} for more details.
\end{proof}

The following restatement will be the one which we will later use.

\begin{corollary}
    Let \(h\) be a Hermite--Einstein metric on \(\mcE\). Let \(\sigma\) be a positive-definite \(h\)-Hermitian endomorphism. Then \(h\sigma\) is Hermite--Einstein if and only if \(\sigma\) is holomorphic.
\end{corollary}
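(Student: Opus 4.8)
The plan is to reduce the corollary to the preceding proposition via the fibrewise exponential map, so that the only genuinely new input is the equivalence ``$\tau$ holomorphic $\iff \exp(\tau)$ holomorphic'' for an $h$-Hermitian endomorphism $\tau$. First I would observe that every positive-definite $h$-Hermitian endomorphism $\sigma$ is \emph{uniquely} of the form $\sigma = \exp(\tau)$ for an $h$-Hermitian endomorphism $\tau$; this is the fibrewise statement underlying the diffeomorphism \eqref{eqn:Hermitian-metrics-diffeomorphism}, since the matrix exponential restricts to a bijection from Hermitian to positive-definite Hermitian endomorphisms on each fibre, depending smoothly on the basepoint. With $\sigma = \exp(\tau)$ we have $h\sigma = h\exp(\tau)$, and the preceding proposition says this metric is Hermite--Einstein precisely when $\tau$ is holomorphic. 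It therefore suffices to prove that $\tau$ is holomorphic if and only if $\sigma = \exp(\tau)$ is.

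For the forward implication I would use that $\grad^{0,1}$ is a derivation on $\End\mcE$: if $\grad^{0,1}\tau = 0$ then $\grad^{0,1}(\tau^n) = 0$ for all $n$, and summing the convergent exponential series gives $\grad^{0,1}\sigma = 0$, so $\sigma$ is holomorphic. The converse is where the Hermite--Einstein hypothesis is used essentially. Assuming $\sigma$ holomorphic, \Cref{lem:hermite-einstein-laplacians} upgrades $\grad^{0,1}\sigma = 0$ to $\grad\sigma = 0$, i.e. $\sigma$ is $\grad$-parallel. Parallel transport along any path (which is $h$-unitary, hence preserves positivity and Hermiticity) then intertwines the positive-definite endomorphism $\sigma$ at the two endpoints; since the logarithm is natural with respect to conjugation by invertible maps, $\tau = \log\sigma$ is intertwined in the same way, hence also $\grad$-parallel, hence holomorphic. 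Combining this equivalence with the proposition yields the corollary.

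The step requiring care is the converse direction, namely recovering holomorphicity of $\tau$ from that of $\exp(\tau)$: this genuinely uses the Hermite--Einstein condition, via \Cref{lem:hermite-einstein-laplacians}, to pass from the Dolbeault-level statement $\grad^{0,1}\sigma = 0$ to the full parallelism $\grad\sigma = 0$, which is what makes the functional calculus compatible with the connection. (Alternatively, one could note that $\grad\sigma = 0$ forces each $\tr(\sigma^k)$, hence the spectrum of $\sigma$, to be constant on $X$, express $\log$ as a polynomial $p$ agreeing with it on this finite spectrum, and conclude $\tau = p(\sigma)$ is parallel since $\grad$ is a derivation annihilating $\sigma$.) Without the Hermite--Einstein assumption this implication can fail, so it is precisely here that the hypothesis enters.
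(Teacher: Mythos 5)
Your proof is correct and takes the same route the paper implicitly intends: the paper offers no proof, presenting the corollary as a direct restatement of the preceding proposition via the substitution \(\sigma = \exp(\tau)\). You supply the one detail the paper leaves unsaid --- that \(\tau\) is holomorphic if and only if \(\exp(\tau)\) is, with the converse correctly resting on \Cref{lem:hermite-einstein-laplacians} to upgrade \(\grad^{0,1}\sigma = 0\) to \(\grad\sigma = 0\) --- and your argument for that step is sound.
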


From this, we obtain that \(\msH^+_{\mathrm{HE}}\) is a finite-dimensional totally geodesic submanifold of \(\msH^+\). In particular, the geodesics are of the form
\[h(t) = h_0\exp(ta)\]
with
\[a \in \mcA^0(\End_H(E, h_0)) \cap H^0(X, \End \mcE).\]
The same logic implies that we have a diffeomorphism
\[\msH^+_{\mathrm{HE}} \cong \mcA^0(\End_{\mathrm H}(E, h_0)) \cap H^0(X, \End \mcE)\]
for any Hermite--Einstein metric \(h_0\).

\subsection{The space of vertically Hermite--Einstein metrics}

\label{subsec:space-of-vertically-he-metrics}

We will now consider the case of families. Let \((X, \omega_X)\) and \((B, \omega_B)\) be compact K\"ahler manifolds, \(\mcE \to X \times B\) a holomorphic vector bundle. We may then consider the space \(\msH^+_{\mathrm V}\) of Hermitian metrics on \(\mcE\) which are Hermite--Einstein on each \(\mcE_b\). We would like to use the same considerations as above, replacing \(H^0(\End \mcE)\) with \(\ker(\delbar_V)\). A priori, \(b \mapsto \ker(\delbar_b)\) does not define a vector bundle on \(B\). We thus make the following assumption.

\begin{assumption}
    \label{assumption:constant-dimension}
    The map
    \[b \mapsto \dim H^0(X, \End(\mcE_b))\]
    is constant.
\end{assumption}

Under \Cref{assumption:constant-dimension}, we have a smooth vector bundle \(\mcF \to B\), whose fibre at \(b \in B\) is
\[\mcF_b = H^0(X, \End(\mcE_b)).\]
The vector bundle \(\mcF\) is in fact holomorphic, as it is given by the direct image
\[\mcF = (\pr_B)_*\left(\End \mcE\right),\]
where the Dolbeault operator on \(\mcF\) is given by \(\delbar_H\). 

With all of this in mind, \(\msH^+_{\mathrm V}\) is a totally geodesic submanifold of \(\msH^+\), and we have a diffeomorphism
\[\msH^+_{\mathrm V} \cong \mcA^0(\End_H(E, h_0)) \cap \ker(\delbar_V),\]
for any vertically Hermite--Einstein metric \(h_0\) on \(\mcE\). Our next goal is to express the right hand side as the global sections of a real subbundle of \(\mcF\).

From the proof of \Cref{prop:he-reductive}, we have a decomposition
\[\mcF_b = H_b \oplus S_b,\]
corresponding to Hermitian and skew-Hermitian endomorphisms respectively. Since the Hermitian metric, and so the conjugation map, varies smoothly with \(b \in B\), this decomposition gives us a decomposition
\[\mcF = H \oplus S\]
of smooth vector bundles over \(B\). Using this, we obtain a diffeomorphism
\begin{equation}
    \label{eqn:Hermite--Einstein-vertical-metrics-diffeomorphism}
    \msH^+_{\mathrm V} \cong \mcA^0(H).
\end{equation}

Finally, we note that we may also express \(\msH^+_V\) as a homogeneous space, where
\begin{align*}
    G &= \GL(E) \cap \ker(\delbar_V), \\
    K &= U(E, h_0) \cap \ker(\delbar_V),
\end{align*}
which have Lie algebras \(\mcA^0(\mcF)\) and \(\mcA^0(S)\) respectively.

\subsection{Comparisons between Hermitian metrics and vertically Hermite--Einstein metrics}

The pictures in \Cref{eqn:Hermitian-metrics-diffeomorphism} and \Cref{eqn:Hermite--Einstein-vertical-metrics-diffeomorphism} look quite similar. In both cases, the space of metrics is diffeomorphic to the space of sections of a bundle. We will now make this analogy more precise. This will be used in \Cref{sec:family-Hermite--Einstein-flow} where we show that the family Hermite--Einstein flow exists for all time.

Recall we defined a projection map
\[\pi_{h, b} \colon \mcA^0(\End(E_b)) \to H^0(\End(\mcE_b)),\]
for a fixed Hermitian metric \(h\) on \(\mcE\). Combining these fibrewise projection maps, we obtain a map
\[\pi_h \colon \mcA^0(\End(E)) \to \mcA^0(\mcF).\]

\begin{lemma}
    The projection map \(\pi_h \colon \mcA^0(\End(E)) \to \mcA^0(\mcF)\) restricts to maps
    \begin{align*}
        \mcA^0(\End_{\mathrm H}(E, h)) &\to \mcA^0(H) \\
        \mcA^0(\End_{\mathrm{SH}}(E, h)) &\to \mcA^0(S).
    \end{align*}
\end{lemma}

\begin{proof}
    Since the projection map is defined fibrewise, and we know that \(\mcF_b = H_b \oplus S_b\), the result follows from the fact that Hermitian and skew-Hermitian endomorphisms are orthogonal with respect to the induced \emph{real} \(L^2\)-inner product on \(\End(E)\).
\end{proof}

Given any Hermitian metric \(h\) on \(\mcE\), we have an induced \(L^2\)-Hermitian metric \(h_\mcF\) on \(\mcF\), defined by
\[h_\mcF(\sigma, \tau) = \int_{X \times B/B}h(\sigma, \tau)\omega_X^n.\]
Define an operator
\[\grad_\mcF \colon \mcA^0(\mcF) \to \mcA^1(\mcF)\]
by
\[(\grad_\mcF \sigma)(v) = \pi_h((\grad_H \sigma)(v))\]
for any \(\sigma \in \mcA^0(\mcF)\) and vector field \(v\) on \(B\).

\begin{lemma}
    \(\grad_\mcF\) is the Chern connection on \(\mcF\) with respect to the Hermitian metric \(h_\mcF\).
\end{lemma}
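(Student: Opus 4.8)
The plan is to invoke the uniqueness characterisation of the Chern connection: on a holomorphic Hermitian vector bundle there is a unique connection that is simultaneously compatible with the Hermitian metric and whose \((0,1)\)-part equals the Dolbeault operator. Since \(\mcF\) carries the Dolbeault operator \(\delbar_H\) and the \(L^2\)-metric \(h_\mcF\), it suffices to check three things for \(\grad_\mcF\): that it is a genuine connection (Leibniz rule), that it is \(h_\mcF\)-compatible, and that its \((0,1)\)-part is \(\delbar_H\). The first two are formal; the compatibility is the one requiring care.

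For the Leibniz rule, given a function \(f\) on \(B\) and \(\sigma \in \mcA^0(\mcF)\), we have \(\grad_\mcF(f\sigma) = \pi_h(\grad_H(f\sigma)) = \pi_h(\dd f \otimes \sigma) + f\,\pi_h(\grad_H\sigma)\); since \(\pi_h\) acts only on the endomorphism factor and fixes \(\sigma\) (which is fibrewise holomorphic), this gives \(\grad_\mcF(f\sigma) = \dd f \otimes \sigma + f\grad_\mcF\sigma\). For the \((0,1)\)-part, decompose \(\grad_H = \grad_H^{1,0} + \grad_H^{0,1}\) with \(\grad_H^{0,1} = \delbar_H\), so that \((\grad_\mcF)^{0,1}\sigma = \pi_h(\delbar_H\sigma)\). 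The key point is that \(\delbar_H\) preserves fibrewise holomorphy: from the integrability relation \(\delbar_V\delbar_H + \delbar_H\delbar_V = 0\) together with \(\delbar_V\sigma = 0\) (which holds because \(\sigma\) is a section of \(\mcF\)), we obtain \(\delbar_V(\delbar_H\sigma) = -\delbar_H(\delbar_V\sigma) = 0\). Thus \(\delbar_H\sigma\) is again valued in \(\mcF = \ker\delbar_V\), so \(\pi_h\) fixes it and \((\grad_\mcF)^{0,1}\sigma = \delbar_H\sigma\), matching the Dolbeault operator of \(\mcF\).

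The substantive step, and the main obstacle, is metric compatibility. I would start from the compatibility of the Chern connection \(\grad\) on \(\End\mcE\) with \(h\), namely \(\dd\, h(\sigma,\tau) = h(\grad\sigma,\tau) + h(\sigma,\grad\tau)\), and restrict this identity of \(1\)-forms to a horizontal direction \(v\) on \(B\); only \(\grad_H\) survives, giving \(v\,h(\sigma,\tau) = h((\grad_H\sigma)(v),\tau) + h(\sigma,(\grad_H\tau)(v))\). Integrating over the fibre \(X\) against \(\omega_X^n\), the left-hand side becomes \(v\,h_\mcF(\sigma,\tau)\), since \(\omega_X^n\) is pulled back from \(X\) and hence independent of \(b\), so the base derivative passes through the fibre integral. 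On the right-hand side, the crux is that \(\pi_h\) is the fibrewise \(L^2\)-orthogonal projection onto \(\mcF\), hence self-adjoint, and that \(\sigma,\tau\) already lie in its image: this lets me insert \(\pi_h\) next to each of them without changing the pairing, converting \(\int_X h((\grad_H\sigma)(v),\tau)\omega_X^n\) into \(h_\mcF(\grad_\mcF\sigma,\tau)\) and likewise for the other slot. This yields exactly \(v\,h_\mcF(\sigma,\tau) = h_\mcF(\grad_\mcF\sigma,\tau) + h_\mcF(\sigma,\grad_\mcF\tau)\), completing the verification. I expect this projection-insertion to be the delicate point — it is precisely where the definition \(\grad_\mcF = \pi_h\circ\grad_H\) meshes with the \(L^2\)-metric — while the commutation of the base derivative with the fibre integral is routine given the product structure and that \(\omega_X\) is pulled back from \(X\).
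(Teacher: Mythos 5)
Your proposal is correct and follows essentially the same route as the paper: verify the Leibniz rule, identify the $(0,1)$-part with $\delbar_H$ via the integrability relation $\delbar_V\delbar_H = -\delbar_H\delbar_V$, and deduce metric compatibility by differentiating the fibre integral and using compatibility of the Chern connection on $\End\mcE$. Your explicit justification of the projection-insertion step (self-adjointness of $\pi_h$ together with $\sigma, \tau$ lying in its image) is a welcome elaboration of a point the paper's proof passes over silently, but it is not a different argument.
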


\begin{proof}
    We need to check that it defines a connection, that it is compatible with the complex structure, and that it is compatible with the Hermitian metric \(h_\mcF\). It is easy to see that
    \[(\grad_\mcF \sigma)(v) = \pi_h((\grad_H \sigma)(v))\]
    satisfies \(C^\infty(B)\)-linearity in \(v\) and the Leibniz rule. Next, since \(\grad_H\) is the horizontal component of the Chern connection on \(\End \mcE\), the \((0, 1)\)-part is the operator \(\delbar_H\). Now we note that by integrability,
    \[\delbar_V \circ \delbar_V + \delbar_V \circ \delbar_H + \delbar_H \circ \delbar_V + \delbar_H \circ \delbar_H = 0.\]
    Splitting by type,
    \[\delbar_V \circ \delbar_V = 0, \quad \delbar_V \circ \delbar_H = -\delbar_H \circ \delbar_V, \quad\text{and}\quad \delbar_H \circ \delbar_H = 0.\]
    The second equation then implies that \(\delbar_H\) sends elements of \(\ker(\delbar_V)\) to itself. Recalling that the Dolbeault operator on \(\mcF\) is given by \(\delbar_H\), this gives the compatibility with the complex structure.

    Finally, we need to check that \(\grad_\mcF\) is compatible with the Hermitian metric \(h_\mcF\). Let \(\sigma_1, \sigma_2 \in \mcA^0(\mcF)\), and let \(v\) be a vector field on \(B\). Then
    \begin{align*}
        \dd h_\mcF(\sigma_1, \sigma_2) &= \dd\int_{X \times B/B}h(\sigma_1, \sigma_2)\omega_X^n, \\
        &= \int_{X \times B/B}\dd_H h(\sigma_1, \sigma_2)\omega_X^n, \\
        &=\int_{X \times B/B}h(\grad_H \sigma_1, \sigma_2)\omega_X^n + \int_{X \times B/B}h(\sigma_1, \grad_H \sigma_2)\omega_X^n.
    \end{align*}
    Contracting with the vector field \(v\), we obtain that
    \begin{align*}
        \dd h_\mcF(\sigma_1, \sigma_2)(v) &= \int_{X \times B/B}\left(h((\grad_H\sigma_1)v, \sigma_2) + h(\sigma_1, (\grad_H\sigma_2)(v))\right)\omega_X^n, \\
        &= \int_{X \times B/B}\left(h((\grad_F\sigma_1)(v), \sigma_2) + h(\sigma_1, (\grad_\mcF\sigma_2)(v))\right)\omega_X^n, \\
        &= h_\mcF((\grad_\mcF\sigma_1)(v), \sigma_2) + h_\mcF(\sigma_1, (\grad_\mcF\sigma_2)(v))
    \end{align*}
    as required.
\end{proof}

With all of this in mind, we can now state the formal analogy between the space of Hermitian metrics and the space of vertically Hermite--Einstein metrics.

\begin{center}
    \captionof{table}{Analogy between Hermitian and vertically Hermite--Einstein metrics\label{tab:analogy}}
    
    \begin{tabular}{c c c}
        & Hermitian & vertically Hermite--Einstein \\
        \hline
        Geometry & \(\mcE \to B\) & \(\mcE \to X \times B\) \\
        Endomorphism bundle & \(\End \mcE \to B\) & \(\mcF \to B\) \\
        Fibrewise inner products & \(\End_{\mathrm H}(\mcE_b, h)\) & \(H_b\) \\
        Connection on endomorphism bundle & \(\grad_{\End \mcE}\) & \(\grad_\mcF\) \\
        Inner product on endomorphisms & \(h_{\End}\) & \(h_\mcF\).
    \end{tabular}
\end{center}

\begin{remark}
    With this picture in mind, we may think of \Cref{assumption:constant-dimension} as a locally free type assumption. We expect that this assumption may be removed in the theory of family Hermite--Einstein metrics, analogous to the generalisation of the Hermite--Einstein theory to reflexive sheaves by Bando--Siu \cite{bandoStableSheavesEinsteinHermitian1994}.
\end{remark}

\subsection{Riemannian geometry of the space of vertically Hermite--Einstein metrics}

To conclude this section, we will consider the natural Riemannian metric on \(\msH^+_{\mathrm{V}}\), given by the fibrewise homogeneous space structure. Let \(\mcE \to X \times B\) be a holomorphic vector bundle, and let \(h_0\) be a vertically Hermite--Einstein metric on \(\mcE\). Define
\begin{align*}
    G_b &= \Aut(\mcE_b), \\
    K_b &= G_b \cap \mathrm U(E_b, h_0).
\end{align*}
Let \(\mfg_b\) and \(\mfk_b\) denote their respective Lie algebras. On \(\mfk_b\), we have a \(K_b\)-invariant inner product, given by
\[\langle u_1, u_2 \rangle_{\mfk_b} = -\int_X\tr(u_1u_2)\omega_X^n.\]
Letting \(\pi_b \colon G_b \to G_b/K_b\) denote the quotient map, we have an induced Riemannian metric on \(G_b/K_b\), given by
\[\langle v_1, v_2 \rangle_{\pi_b(g)} = \langle u_1, u_2\rangle_{\mfk_b},\]
where \(v_i = \dd\pi_g(gi u_i)\).

\begin{lemma}
    \label{lem:metric-on-homogeneous-spaces}
    Let \(d\) denote the metric on \(\msH^+_{\mathrm{V}}\) induced by the embedding into \(\msH^+\), and let \(d_b\) denote the metric on the space of Hermite--Einstein metrics on \(\mcE_b\). Then
    \[d(\sigma_1, \sigma_2)^2 = \int_B d_b(\sigma_1, \sigma_2)^2\omega_B^m.\]
\end{lemma}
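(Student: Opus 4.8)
The plan is to reduce the statement to the explicit geodesics on the two symmetric-space-type manifolds and then apply Fubini's theorem. Fix $\sigma_1 \in \msH^+_{\mathrm V}$ as a basepoint. Using the diffeomorphism of \Cref{eqn:Hermite--Einstein-vertical-metrics-diffeomorphism}, I would write $\sigma_2 = \sigma_1\exp(a)$ for a unique $a \in \mcA^0(H)$; by definition of $H$ this means $a$ is $\sigma_1$-Hermitian and satisfies $\delbar_V a = 0$. The curve $h(t) = \sigma_1\exp(ta)$ is then a geodesic of $\msH^+$ which lies entirely in $\msH^+_{\mathrm V}$, since $a$ is vertically holomorphic and Hermitian. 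Because $\msH^+$ is non-positively curved, geodesics are unique length minimisers, and as $\msH^+_{\mathrm V}$ is totally geodesic this geodesic also realises the induced distance $d(\sigma_1,\sigma_2)$; in particular the intrinsic distance on $\msH^+_{\mathrm V}$ agrees with the restriction of the ambient distance.

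Next I would compute the squared length of $h(t)$ with respect to the ambient $L^2$-metric on $\msH^+$, which over $X \times B$ takes the form $\langle u, v\rangle_h = \int_{X\times B}\tr(h^{-1}u\,h^{-1}v)\,\omega_X^n\wedge\omega_B^m$. Since $h^{-1}\dot h = a$ is constant along the geodesic, the speed is constant and
\[d(\sigma_1,\sigma_2)^2 = \int_{X\times B}\tr(a^2)\,\omega_X^n\wedge\omega_B^m.\]
I would then analyse the fibrewise picture. For each $b \in B$, the restriction $a_b = a\vert_{X\times\{b\}}$ lies in $H_b = \mcA^0(\End_{\mathrm H}(E_b,\sigma_1\vert_b)) \cap H^0(X,\End\mcE_b)$, so $t \mapsto \sigma_1\vert_b\exp(t a_b)$ is precisely the geodesic in the space of Hermite--Einstein metrics on $\mcE_b$ joining $\sigma_1\vert_b$ to $\sigma_2\vert_b$. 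Writing $a_b = iu_b$ with $u_b \in \mfk_b$, the definition of the homogeneous metric on $G_b/K_b$ gives squared speed $\langle u_b,u_b\rangle_{\mfk_b} = -\int_X\tr(u_b^2)\omega_X^n = \int_X\tr(a_b^2)\omega_X^n$, whence $d_b(\sigma_1,\sigma_2)^2 = \int_X \tr(a_b^2)\omega_X^n$. Applying Fubini's theorem to the displayed integral over $X \times B$ then yields $d(\sigma_1,\sigma_2)^2 = \int_B\left(\int_X\tr(a_b^2)\omega_X^n\right)\omega_B^m = \int_B d_b(\sigma_1,\sigma_2)^2\,\omega_B^m$, as claimed.

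The main obstacle is bookkeeping rather than conceptual. First, I must verify that the ambient $L^2$-metric on $\msH^+$ over $X \times B$ is normalised so that its volume form factors as $\omega_X^n\wedge\omega_B^m$, matching the product of the fibrewise normalisation $\omega_X^n$ (used in $\langle\cdot,\cdot\rangle_{\mfk_b}$) with $\omega_B^m$, so that Fubini produces the identity with no stray constants; relatedly, one checks that the homogeneous metric $d_b$ on $G_b/K_b$ coincides with the restriction of the cone metric, which is exactly the sign computation $-\tr(u_b^2) = \tr(a_b^2)$ above. Second, the equality of the intrinsic distance on $\msH^+_{\mathrm V}$ with the restriction of the ambient distance requires care: it rests on uniqueness of minimising geodesics in the non-positively curved space $\msH^+$ together with the already established fact that $\msH^+_{\mathrm V}$ is totally geodesic, so that the minimiser $\sigma_1\exp(ta)$ never leaves $\msH^+_{\mathrm V}$.
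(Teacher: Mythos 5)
Your proposal is correct and follows essentially the same route as the paper: both arguments identify the minimising geodesic $\sigma_1\exp(ta)$ in $\msH^+_{\mathrm V}$, observe that it restricts to a geodesic in the space of Hermite--Einstein metrics on each $\mcE_b$, express each distance as the (constant) speed of the corresponding geodesic, and conclude by comparing the inner product on $\mfk_b$ with the ambient $L^2$-inner product and integrating over $B$. Your write-up simply makes explicit two points the paper leaves implicit, namely the sign bookkeeping $\tr(a_b^2)=-\tr(u_b^2)$ and the use of non-positive curvature plus total geodesy to identify the induced distance with the length of that geodesic.
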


\begin{proof}
    This follows from the fact that a geodesic in \(\msH^+_{\mathrm V}\) gives a geodesic in the space of Hermitian metrics on \(\mcE\), which in turn gives geodesics in the space of Hermitian metrics on \(\mcE_b\) for all \(b \in B\), which is a geodesic in the space of Hermite--Einstein metrics on \(\mcE_b\). The distance is then given by the speed of the geodesic \(\gamma\) with \(\gamma(0) = \sigma_1\) and \(\gamma(1) = \sigma_2\). The result then follows by comparing the inner products on \(\mfk_b\) and on \(\mcA^0(S)\).
\end{proof}

\subsection{Decomposition of the space of Hermitian endomorphisms}

\label{subsec:decomposition-hermitian-endomorphisms}

Using all that we have shown, we obtain a decomposition of the space of Hermitian endomorphisms on \((E, h)\) into
\[\mcA^0(\End_{\mathrm H}(E, h)) = \mcA^0(H) \oplus R\]
where \(R_b\) is the \(L^2\)-orthogonal complement of \(H_b\). Moreover, \(\id_E\) defines a section of \(H\), and so we may further decompose \(\mcA^0(H)\), to obtain
\begin{equation}
    \label{eqn:decomposition-of-Hermitian-endomorphisms}
    \mcA^0(\End_{\mathrm H}(E, h)) = C^\infty(B, \R)\id_E \oplus \mcA^0(H_0) \oplus R,
\end{equation}
where we have an orthogonal splitting
\[H = \langle \id_E \rangle \oplus H_0.\]
We denote by \(p_h \colon \mcA^0(\End_\mathrm{H}(E, h)) \to \mcA^0(H_0)\) the resulting orthogonal projection map.

\section{Hermitian--Yang--Mills connections on the total space}

\label{sec:hym-total-space}

In this section, we will construct Hermitian--Yang--Mills connections on the total space of a family of holomorphic vector bundles in adiabatic classes, under the assumption of the solvability of the family Hermite--Einstein equation.

Let \((X, \omega_X)\) and \((B, \omega_B)\) be compact K\"ahler manifolds. Suppose \(\mcE^0 = (E, \delbar_0) \to X \times B\) is a holomorphic vector bundle, with a Hermitian metric \(h\) which is vertically Hermite--Einstein. Let \(\alpha\) be a curve in \(\mcA^{0,1}(\End(E))\), and we write \(\delbar_s = \delbar_0 + \alpha(s)\). We will assume that \(\delbar_s\) is integrable for all \(s\). Let
\[a = \pdv{\alpha}{s}\bigg\vert_{s=0}\]
denote the associated first order deformation.

Recall \Cref{assumption:constant-dimension}, which states that
\[b \mapsto \dim(H^0(X, \End(\mcE_b^0)))\]
is constant. Let \(G = \Aut(\mcE^0)\) denote the automorphism group of the holomorphic vector bundle \(\mcE^0\), which has Lie algebra \(\mfg = H^0(\End(\mcE^0))\), and which acts on \(\mcA^0(T^*X \otimes \End(E))\) by conjugation. We define
\[G_a = \{g \in G \mid ga_Vg^{-1} = a_V\}\]
to be the stabiliser of \(a_V\), which has Lie algebra
\[\mfg_a = \{\xi \in \mfg \mid [\xi, a_V] = 0\}.\]
We note that \(\dim \mfg_a \ge 1\), since \(\id_E\) defines an element of \(\mfg_a\).

\begin{theorem}
    \label{thm:hym-total-space}
    Under the above hypotheses, suppose further that 
    \begin{enumerate}[(i)]
        \item \(h\) satisfies the family Hermite--Einstein equation with respect to the first order deformation \(a\), 
        \item \((E, \delbar_s)\) is simple for \(\abs{s} \ll 1\),
        \item \(\dim(\mfg_a) = 1\),
        \item \(\delbar_{V, 0}^*a_V = 0\).
    \end{enumerate}
    Then for all \(k \gg 0\), there exists Hermite--Einstein metrics \(h_k\) on \((E, \delbar_s)\) with respect to \(\omega_k\), where \(s^2 = \lambda k^{-1}\).
\end{theorem}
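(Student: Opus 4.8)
The plan is to carry out an adiabatic limit construction in the spirit of \cite{fineConstantScalarCurvature2004a,hongConstantHermitianScalar1999,dervanOptimalSymplecticConnections2021,ortuOptimalSymplecticConnections2023,sektnanHermitianYangMillsConnections2024}: first build approximate solutions to arbitrarily high order in \(k^{-1}\), then perturb to a genuine solution by a quantitative implicit function theorem. I would write the unknown metric as \(h_k = h\exp(\sigma)\) and study the operator
\[\Phi_k(\sigma) = i\Lambda_{\omega_k}F_{h\exp(\sigma), \delbar_s} - c_k\id_E,\]
with \(s^2 = \lambda k^{-1}\), using the decompositions \(\Lambda_k = \Lambda_V + k^{-1}\Lambda_H\) and \(c_k = c_V + k^{-1}c_H\). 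The key structural input is the splitting from \Cref{eqn:decomposition-of-Hermitian-endomorphisms},
\[\mcA^0(\End_{\mathrm H}(E, h)) = C^\infty(B, \R)\id_E \oplus \mcA^0(H_0) \oplus R,\]
together with the two relevant linearisations: by \Cref{prop:he-hym-linearisation} the vertical part linearises to the vertical Laplacian \(\Delta_V\), invertible on \(R\) with cokernel \(\mcA^0(H)\), while the family Hermite--Einstein operator linearises to the self-adjoint elliptic operator \(\mcL\) on \(\mcA^0(\mcF)\), which hypothesis (iii) renders invertible on the trace-free part \(\mcA^0(H_0)\).

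For the approximate solutions I would induct on the order in \(k^{-1}\). The leading vertical equation \(i\Lambda_V F - c_V\id = 0\) is solved by the given \(h\); at each subsequent order the error decomposes according to the splitting above, the \(R\)-component being removed by inverting \(\Delta_V\), the \(\id_E\)-component absorbed into \(c_k\) (consistently, thanks to the earlier vanishing \(\int_X\tr(\nu(a))\omega_X^n = 0\)), and the \(\mcA^0(H_0)\)-component being the genuine obstruction. The crucial order is \(k^{-1}\): combining the horizontal term \(k^{-1}i\Lambda_H F_h\) with the deformation expansion \Cref{eqn:deformation-expansion}, which contributes \(-s^2 i\nu_h(a) = -\lambda k^{-1} i\nu_h(a)\) to the vertical part, the projected obstruction is exactly
\[k^{-1}\big(p_h(i\Lambda_H F_h) - i\lambda\nu_h(a)\big),\]
which vanishes precisely by hypothesis (i). At all higher orders the \(\mcA^0(H_0)\)-obstruction is killed by inverting \(\mcL\), hypothesis (iv) keeping the deformation in the Kuranishi slice so that these expansions stay clean. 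Iterating produces, for every \(N\), a metric \(h_{k,N}\) with \(\Phi_k(\sigma_{k,N}) = \order{k^{-N}}\).

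For the perturbation step I would analyse the linearisation of \(\Phi_k\) at the approximate solution, which is block-diagonal to leading order as \(\Delta_V \oplus k^{-1}\mcL\) relative to \(R \oplus \mcA^0(H_0)\). This operator has a genuine two-scale structure: its eigenvalues on \(R\) are bounded below uniformly, whereas those on \(\mcA^0(H_0)\) are of size \(k^{-1}\), controlled from below by the smallest eigenvalue of \(\mcL\). Establishing a uniform estimate in suitable Sobolev norms showing that the inverse of this linearisation grows no faster than \(Ck\) is the technical heart of the argument, and is the step I expect to be the main obstacle, since one must track the interaction of the two scales together with the \(k\)-dependence of the elliptic constants. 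Given such an estimate, choosing \(N\) large relative to this polynomial growth and to the (controlled) quadratic nonlinearity of \(\Phi_k\), the quantitative implicit function theorem yields a genuine zero \(\sigma_k\) near \(\sigma_{k,N}\) for \(k \gg 0\). Finally, hypothesis (ii), simplicity of \((E, \delbar_s)\), guarantees that the only holomorphic endomorphisms of the total space are scalars, so the resulting metric is an honest Hermite--Einstein metric whose sole ambiguity is the scale already pinned by the topological constant \(c_k\); note that this total-space rigidification for \(s \ne 0\) is the counterpart, at the genuine level, of the infinitesimal rigidity making \(\mcL\) invertible via hypothesis (iii).
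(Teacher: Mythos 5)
Your proposal follows the same overall strategy as the paper: approximate solutions built inductively against the three-way splitting \(C^\infty(B,\R)\id_E \oplus \mcA^0(H_0) \oplus R\), with the family Hermite--Einstein equation appearing as the projected obstruction at order \(k^{-1}\), the operator \(\mcL\) rendered invertible on \(\mcA^0(H_0)\) by \(\dim(\mfg_a)=1\), a \(Ck\) bound on the right inverse reflecting the two-scale structure \(\Delta_V \oplus k^{-1}\mcL\), simplicity giving surjectivity of the linearisation, and a quantitative implicit function theorem to close. Two points, however, fall short of a complete argument.

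First, your induction is phrased in integer powers of \(k^{-1}\), but since \(s = \sqrt{\lambda}\,k^{-1/2}\) the perturbation \(\alpha(s)\) of the Dolbeault operator is itself \(\order{k^{-1/2}}\), so error terms genuinely appear at half-integer orders and the expansion must be carried out in powers of \(k^{-1/2}\). This is not merely cosmetic: it is why the paper needs the careful statement of \Cref{prop:laplacian-expansion}, which shows that for sections in \(i\mfk_b\) the vertical Laplacian of the deformed connection has no \(k^{-1/2}\) term and a well-defined \(k^{-1}\) term \(D_2 = A_1^*A_1\) that is stable under the gauge transformations used in the induction. That \(D_2\) is precisely the zeroth-order piece \(D_V\) of \(\mcL\); without tracking the half-integer orders you cannot even identify \(\mcL\) correctly as \(p\circ\Delta_H + D_V\), nor verify that its kernel is \(\mfg_a\) (via \(\grad_H\sigma = 0\) and \([a_V,\sigma]=0\)), which is what makes hypothesis (iii) do its job.

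Second, you correctly identify the interaction of the two scales in the inverse estimate as the main obstacle, but you leave it unresolved, and you assign hypothesis (iv) a vague Kuranishi-slice role rather than its actual function. The a priori size of the mixed term \(\langle \Delta_{k,r}\sigma, \tau\rangle\) for \(\sigma \in \mcA^0(H)\), \(\tau \in R\) is \(\order{k^{-1/2}}\) times first-order norms, which is too large to be absorbed against the \(k^{-1}\) lower bound on the \(H\)-block. The paper improves this to \(\order{k^{-1}}\) by writing the leading cross term as \(\langle \grad_{V,0}^*[A_V,\sigma], \tau\rangle\) and showing it vanishes: the Nakano identity reduces this to \(\grad_{V,0}^{1,0}a_V = 0\), which follows from the Maurer--Cartan equation \(\delbar_V a_V = 0\), the gauge-fixing condition \(\delbar_{V,0}^*a_V = 0\) of hypothesis (iv), and the equality of vertical Laplacians on the fibrewise Hermite--Einstein central fibre. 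This is the one genuinely new estimate compared with the irreducible setting of \cite{sektnanHermitianYangMillsConnections2024}, where the deformation enters at a later order, and it is the step your proposal would need to supply.
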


It will be more convenient to construct Hermitian--Yang--Mills connections, and so we will take the perspective of Hermitian--Yang--Mills connections throughout.

\begin{remark}
    Geometrically, we view the family Hermite--Einstein equation as an equation for a family of polystable bundles, along with a first order deformation. As such, \(\mfg_a\) is the natural Lie algebra controlling automorphisms of this, and we will show that \(\mfg_a\) is the kernel of the linearisation of the family Hermite--Einstein equation. In particular, we do not assume that \((E, \delbar_s)\) and \((E, \delbar_s')\) are isomorphic for \(s \ne s'\), \(0 < \abs{s}, \abs{s'} \ll 1\).
\end{remark}

\begin{remark}
    Using Kuranishi theory as sketched in \Cref{subsec:kuranishi-theory}, and choosing a basis of \(\mfg\), the condition that \(\dim(\mfg_a) = 1\) may be phrased as a matrix depending on \(a\) having maximal rank. In particular, this is an open condition on the first order deformation \(a\). Similarly, the condition \(\delbar_{V, 0}^*a_V = 0\) is a gauge-fixing condition, which again is satisfied for a generic choice of first order deformation \(a\) in the Kuranishi space. The key assumption in \Cref{thm:hym-total-space} is the existence of a family Hermite--Einstein metric.
\end{remark}

The general strategy follows the work of \cite{hongConstantHermitianScalar1999,fineConstantScalarCurvature2004a,dervanOptimalSymplecticConnections2021,ortuOptimalSymplecticConnections2023,sektnanHermitianYangMillsConnections2024}. In \Cref{subsec:approximate-solutions}, we construct approximate solutions to all orders, by understanding the linearisation of the family Hermite--Einstein equation. Following this, in \Cref{subsec:perturb-to-genuine-solution} we use the quantitative implicit function theorem to perturb our approximate solutions to genuine solutions.

\subsection{Expansion of the Hermitian--Yang--Mills operator}

\begin{proposition}
    Suppose we set \(s^2 = \lambda k^{-1}\). Then
    \[p_h(i\Lambda_k F_{\delbar_s}) = p_h(i\Lambda_V F_{\delbar_0}) + k^{-1}\left(p_h(i\Lambda_H F_{\delbar_0}) - i\lambda \nu_h(a)\right) + \order{k^{-3/2}}.\]
\end{proposition}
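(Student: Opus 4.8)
The statement is an asymptotic expansion of the family HYM operator $p_h(i\Lambda_k F_{\delbar_s})$ as $k \to \infty$, with $s$ tied to $k$ by $s^2 = \lambda k^{-1}$. The proof should combine three ingredients: the decomposition $\Lambda_k = \Lambda_V + k^{-1}\Lambda_H$ from the lemma on contraction operators; the deformation expansion of $\Lambda_V F_{\delbar_s}$ in the parameter $s$ from \Cref{cor:deformation-expansion}; and the substitution $s^2 = \lambda k^{-1}$ to collect terms by powers of $k^{-1}$. First I would write
\[
p_h(i\Lambda_k F_{\delbar_s}) = p_h(i\Lambda_V F_{\delbar_s}) + k^{-1}p_h(i\Lambda_H F_{\delbar_s}),
\]
using linearity of $p_h$ and the contraction decomposition. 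The two summands are then treated separately.

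**Leading ($\Lambda_V$) term.** For the first summand, the key input is \Cref{cor:deformation-expansion}, which in the family form (applied fibrewise and assembled, as in \Cref{sec:family-he-metrics}) gives
\[
p_h(i\Lambda_V F_{\delbar_s}) = -s^2 i\nu_h(a) + \order{s^3}.
\]
Here the $\order{s^0}$ term vanishes after applying $p_h$ because $h$ is vertically Hermite--Einstein, so $i\Lambda_V F_{\delbar_0} = c\,\id_E$ is projected out by $p_h$ (recall $p_h$ is the trace-free part, and $\id_E$ lies in the excluded $C^\infty(B,\R)\id_E$ summand of \Cref{eqn:decomposition-of-Hermitian-endomorphisms}). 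Substituting $s^2 = \lambda k^{-1}$ turns this into $-\lambda k^{-1} i\nu_h(a) + \order{k^{-3/2}}$, since $s^3 = (\lambda k^{-1})^{3/2} = \order{k^{-3/2}}$. I would keep the bare $p_h(i\Lambda_V F_{\delbar_0})$ displayed as written in the statement rather than replacing it by zero, to match the stated form; strictly it equals the $\order{s^0}$ term, which is what the leading term of the displayed identity records.

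**Subleading ($\Lambda_H$) term and conclusion.** For the second summand, I would Taylor-expand $F_{\delbar_s}$ in $s$ about $s=0$ inside $k^{-1}p_h(i\Lambda_H F_{\delbar_s})$. The $\order{s^0}$ contribution is $k^{-1}p_h(i\Lambda_H F_{\delbar_0})$, which is exactly the horizontal term appearing in the statement. The $\order{s}$ correction to $F_{\delbar_s}$, when multiplied by the prefactor $k^{-1}$ and with $s = (\lambda k^{-1})^{1/2} = \order{k^{-1/2}}$, contributes at order $k^{-1}\cdot k^{-1/2} = k^{-3/2}$, hence is absorbed into the error. Collecting the surviving pieces gives
\[
p_h(i\Lambda_k F_{\delbar_s}) = p_h(i\Lambda_V F_{\delbar_0}) + k^{-1}\left(p_h(i\Lambda_H F_{\delbar_0}) - i\lambda\nu_h(a)\right) + \order{k^{-3/2}},
\]
as claimed.

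**Main obstacle.** The routine parts are the algebra of collecting $k$-powers; the genuine care is needed in justifying that the expansion in \Cref{cor:deformation-expansion} holds \emph{after} applying the global projection $p_h$ and \emph{uniformly} in $b \in B$, so that the error terms are controlled in an appropriate norm on sections of $\mcF$ rather than merely pointwise on each fibre. Concretely, I would verify that $b \mapsto \nu_{h,b}(a_b)$ assembles to a smooth section (as established in \Cref{sec:family-he-metrics}), and that the $\order{s^3}$ remainder is smooth and bounded in $k$ after the substitution, so that the notation $\order{k^{-3/2}}$ genuinely refers to a family of endomorphisms bounded in (say) $C^\ell$ for every $\ell$. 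This uniformity is what makes the expansion usable as the starting point for the approximate-solution construction in the following subsection.
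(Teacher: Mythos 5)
Your proposal is correct and follows essentially the same route as the paper: split $\Lambda_k = \Lambda_V + k^{-1}\Lambda_H$, apply the deformation expansion of \Cref{cor:deformation-expansion} to the vertical term with the substitution $s^2 = \lambda k^{-1}$, and note that the $\order{s} = \order{k^{-1/2}}$ perturbation of the horizontal term is absorbed into the $\order{k^{-3/2}}$ error after multiplying by $k^{-1}$. Your additional remarks on the vanishing of $p_h(i\Lambda_V F_{\delbar_0})$ and on uniformity of the remainder in $b \in B$ are consistent with the surrounding discussion in the paper.
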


\begin{proof}
    Recall that \(\Lambda_k = \Lambda_V + k^{-1}\Lambda_H\), and so 
    \[p_h(i\Lambda_k F_{\delbar_s}) = p_h(i\Lambda_V F_{\delbar_s}) + k^{-1}p_h(i\Lambda_H F_{\delbar_s}).\]
    By definition of \(\nu_h\), we obtain that
    \[p_h(i\Lambda_V F_{\delbar_s}) = p_h(i\Lambda_V F_{\delbar_0}) - k^{-1}i\lambda \nu_h(a) + \order{k^{-3/2}}.\]
    As we have an \(\order{k^{-1/2}}\) perturbation,
    \[p_h(i\Lambda_H F_{\delbar_s}) = p_h(i\Lambda_H F_{\delbar_0}) + \order{k^{-1/2}}.\]
    Combining these two expansions gives the result.
\end{proof}

\subsection{Approximate solutions}

\label{subsec:approximate-solutions}

In this section, we will construct approximate solutions to the Hermite--Einstein equation on the total space. The notation follows \Cref{subsec:decomposition-hermitian-endomorphisms}.

\begin{proposition}
    \label{prop:approximate-solutions}
    Suppose \Cref{assumption:constant-dimension} holds, \(\dim(\mfg_a) = 1\). Then for each nonnegative integer \(r\), there exist
    \begin{enumerate}[(i)]
        \item \(\phi_1, \dots, \phi_r \in C^\infty(B)\),
        \item \(\sigma_1, \dots, \sigma_r \in \mcA^0(H_0)\),
        \item \(\tau_1, \dots, \tau_r \in R\),
        \item and constants \(\gamma_0, \dots, \gamma_r \in \R\),
    \end{enumerate}
    such that if we let
    \[f_{k, 0} = \id\]
    and
    \[f_{k, r} = \exp(k^{-r/2}\phi_r)\exp(k^{-r/2}\tau_r)\exp(k^{-r/2 + 1}\sigma_r)f_{k, r-1},\]
    then
    \[i\Lambda_k F_{f_{k, r} \cdot \delbar_s} = \left(\sum_{j=0}^r k^{-j/2}\gamma_j\right)\id_E + \order{k^{-(r + 1)/2}}.\]
\end{proposition}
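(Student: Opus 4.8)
The plan is to argue by induction on $r$, at each stage improving the error in the Hermite--Einstein operator by one power of $k^{-1/2}$ using the three-way decomposition
\[
\mcA^0(\End_{\mathrm H}(E, h)) = C^\infty(B, \R)\id_E \oplus \mcA^0(H_0) \oplus R
\]
of \Cref{eqn:decomposition-of-Hermitian-endomorphisms}. The base case $r = 0$ is immediate from the preceding expansion: since $h$ is vertically Hermite--Einstein we have $i\Lambda_V F_{\delbar_0} = c_V\id_E$, and as $s = (\lambda k^{-1})^{1/2} = \order{k^{-1/2}}$ and the $\Lambda_H$ term is weighted by $k^{-1}$, we get $i\Lambda_k F_{\delbar_s} = c_V\id_E + \order{k^{-1/2}}$, so we set $\gamma_0 = c_V$. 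For the inductive step, suppose $f_{k,r-1}$ has been constructed, and write the leading error at order $k^{-r/2}$ as $\Psi_r = \psi_0\id_E + \psi_{H_0} + \psi_R$ according to the decomposition above, where $\psi_0 \in C^\infty(B,\R)$, $\psi_{H_0}\in \mcA^0(H_0)$ and $\psi_R \in R$ (the $X$-dependent part of the trace being absorbed into $R$). We then choose $\phi_r, \tau_r, \sigma_r$ and $\gamma_r$ so that the three new factors in $f_{k,r}$ cancel the three components of $\Psi_r$.

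The mechanism for each component is dictated by the relevant linearisation. For the $R$-component, the factor $\exp(k^{-r/2}\tau_r)$ is a complex-structure change whose leading effect on $i\Lambda_k F$ is governed by the $\Lambda_V$-linearisation $\Delta_V$ (\Cref{prop:he-hym-linearisation}), contributing $k^{-r/2}\Delta_V\tau_r$. By \Cref{lem:hermite-einstein-laplacians} the kernel of $\Delta_V$ on Hermitian endomorphisms is exactly $\mcA^0(H)$, so $\Delta_V$ is self-adjoint and restricts to an isomorphism of $R$ (with uniformly bounded inverse over $B$, using \Cref{assumption:constant-dimension}); we solve $\Delta_V\tau_r = -\psi_R$. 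For the $H_0$-component the crucial point is that every $\sigma \in \mcA^0(H_0)$ is fibrewise holomorphic, so $\Delta_V\sigma = 0$ by \Cref{lem:hermite-einstein-laplacians}, and its leading nonzero effect occurs only at the $\Lambda_H$-level through the linearisation $\mcL$ of the family Hermite--Einstein operator. This is precisely why $\sigma_r$ carries the boosted coefficient $k^{-r/2+1}$: the factor of $k^{-1}$ accompanying $\Lambda_H$ converts $k^{-r/2+1}\mcL\sigma_r$ into a contribution at the correct order $k^{-r/2}$, and we solve $\mcL\sigma_r = -\psi_{H_0}$ using that $\mcL$ is invertible on $\mcA^0(H_0)$.

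The $\id_E$-direction is handled by the metric change $\exp(k^{-r/2}\phi_r)$ together with the constant $\gamma_r$. Since $\phi_r$ is $X$-independent and $\id_E$ is vertically parallel, the $\Lambda_V$-linearisation again vanishes and the effective operator is the base Laplacian $\Delta_{\omega_B}$ coming from $\Lambda_H$; the $\psi_0$ that $\phi_r$ must remove is itself a one-step-lagged quantity produced by the $\Lambda_H$-action of the earlier corrections, and solvability of $\Delta_{\omega_B}\phi_r = \gamma_r - \psi_0$ on the closed base $B$ forces the choice of $\gamma_r$ as the average of $\psi_0$, which is the expected Chern--Weil / topological normalisation of the trace. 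Finally, the Baker--Campbell--Hausdorff cross-terms arising from composing the three exponentials, together with the higher-order Taylor terms of each exponential, are all of order at least $k^{-(r+1)/2}$ and are therefore swept into the error to be treated at the next stage of the induction.

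The main obstacle is the invertibility of $\mcL$ on $\mcA^0(H_0)$, which is where the hypothesis $\dim(\mfg_a) = 1$ enters: $\mcL$ is self-adjoint and elliptic, and its kernel consists of the trace-free Hermitian holomorphic endomorphisms lying in $\mfg_a$, which vanishes precisely when $\mfg_a = \C\cdot\id_E$. The second delicate point is the order bookkeeping itself, namely verifying that the $\Lambda_V$-contributions of the holomorphic directions $\sigma_r$ and $\phi_r$ genuinely vanish so that their effects appear at the shifted order, and tracking how the deformation term $\nu_h(a)$ and the projection $p_h$ propagate through the expansion; I expect the careful identification of the error $\Psi_r$ with the output of the three linear operators $\Delta_V$, $\mcL$ and $\Delta_{\omega_B}$ at matching orders to be the most technical part of the argument.
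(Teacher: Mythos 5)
Your overall strategy -- induction on $r$, the three-way decomposition of \Cref{eqn:decomposition-of-Hermitian-endomorphisms}, inverting $\Delta_V$ on $R$, the linearised operator $\mcL$ on $\mcA^0(H_0)$ with kernel controlled by $\dim(\mfg_a)=1$, and $\Delta_{\omega_B}$ modulo constants in the $\id_E$-direction -- is the paper's strategy. But there is a genuine gap at the step $r=2$, which is the one step your uniform induction cannot perform by linear algebra. At that stage the coefficient of $\sigma_2$ in $f_{k,2}$ is $k^{-2/2+1}=k^0$, so the would-be correction $\exp(\sigma_2)$ is an $O(1)$ change of metric, and the cancellation $\mcL\sigma_2=-\psi_{2,H}$ is not justified: the linearisation only controls the effect of $\exp(\sigma_2)$ up to errors quadratic in $\sigma_2$, which here are themselves $O(1)$ and hence contribute at the same order $k^{-1}$ you are trying to kill. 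Demanding that the $H_0$-component of the $k^{-1}$ term vanish is exactly demanding that $p_h(i\Lambda_H F_h)-i\lambda\nu_h(a)=0$, i.e.\ it is the family Hermite--Einstein equation itself, not a consequence of invertibility of $\mcL$. The paper's proof handles this by \emph{invoking the hypothesis} that $h$ solves the family Hermite--Einstein equation, concluding $\psi_{2,H}=0$ and setting $\sigma_2=0$; your proposal never uses this hypothesis, and without it the induction does not close. For $r\ge 3$ the corrections $\sigma_r$ enter at order $k^{-r/2+1}\le k^{-1/2}$ and your linearised argument is then valid.

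A secondary imprecision: you assert that for $\sigma\in\mcA^0(H_0)$ the ``leading nonzero effect occurs only at the $\Lambda_H$-level.'' In fact the deformed vertical term also contributes at the same order: since $\delbar_s=\delbar_0+\order{k^{-1/2}}$, the vertical Laplacian on fibrewise holomorphic endomorphisms satisfies $\langle\Delta_{V,k}\sigma,\sigma\rangle = k^{-1}\langle D_2\sigma,\sigma\rangle+\order{k^{-3/2}}$ with $D_2$ built from $[a_V,\cdot]$ (this is \Cref{prop:laplacian-expansion}), and the operator you must invert is $\mcL = p\circ\Delta_H + D_V$, not $p\circ\Delta_H$ alone. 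This matters: without the $D_V$ piece the kernel of the operator would consist of all $\grad_H$-parallel sections of $H_0$, and the hypothesis $\dim(\mfg_a)=1$ would not give invertibility; it is precisely $D_V$ that forces $[a_V,\sigma]=0$ for kernel elements and hence places them in $\mfg_a$. Your later, correct description of $\ker\mcL$ is therefore inconsistent with your earlier claim that the vertical contribution of $\sigma_r$ vanishes; you would need to carry out the expansion of \Cref{prop:laplacian-expansion} to reconcile the two.
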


The proof of \Cref{prop:approximate-solutions} will be by induction on \(r\). More precisely, we have an expansion of \(i\Lambda_k F_{k, r \cdot \delbar_s}\) of the form
\[i\Lambda_k F_{f_{k, r} \cdot \delbar_s} = \left(\sum_{j=0}^r k^{-j/2}\gamma_j\right)\id_E + k^{-(r + 1)/2}\psi_{r + 1} + \order{k^{-(r + 2)/2}}.\]
From our decomposition in \Cref{eqn:decomposition-of-Hermitian-endomorphisms}, we obtain that
\[\psi_{r+1} = \psi_{r+1, B} + \psi_{r+1, H} + \psi_{r+1, R}.\]
We will then choose \(\phi_{r+1}, \sigma_{r+1}, \tau_{r+1}\) to eliminate these three components respectively.

\subsubsection{Base case}

The base case of \(r = 0\) is immediate, since
\[i\Lambda_k F_{\delbar_s} = i\Lambda_V F_{\delbar_0} + \order{k^{-1/2}}.\]
Indeed, the first term is exactly \(c_V\id_E\), since we assumed that \((E^0_b, h)\) is Hermite--Einstein for all \(b \in B\).

Similarly, as there are no \(k^{-1/2}\) terms, the case \(r = 1\) is automatic, and we can set \(\phi_1 = \sigma_1 = \tau_1 = 0\).

\subsubsection{Case \(r = 2\)}

The \(k^{-1}\) term is given by
\[\psi_2 = i\Lambda_HF_{\delbar_0} + i\lambda\nu_V(a).\]
Applying the decomposition from \Cref{eqn:decomposition-of-Hermitian-endomorphisms}, we may write
\[\psi_2 = \psi_{2, B} + \psi_{2, H} + \psi_{2, R}.\]

In fact,
\[\psi_{2, H} = p(i\Lambda_H(F_{\delbar_0})) - i\lambda\nu_V(a).\]
But we assumed that we have a solution to the family Hermite--Einstein equation, and so
\[\psi_{2, H} = 0.\]

In particular, we will set \(\sigma_2 = 0\). Next, we explain how to find \(\tau_2\). By the linearisation of the Hermitian--Yang--Mills equation,
\[i\Lambda_VF_{\exp(k^{-1}\tau_2)\cdot \delbar_s} = i\Lambda_V F_{\delbar_0} + k^{-1}(\Delta_{V, k}\tau_2 - i\lambda\nu_V(a)) + \order{k^{-3/2}}.\]
Here, we note that \(\Delta_{V, k}\) is the vertical Laplacian defined with respect to the Dolbeault operator \(\delbar_s\). In our case, as \(\delbar_s = \delbar_0 + \order{k^{-1/2}}\), it follows that
\[\Delta_{V, k} = \Delta_{V, 0} + \order{k^{-1/2}}.\]
As such, we may rewrite the above expansion as
\[i\Lambda_VF_{\exp(k^{-1}\tau_2)\cdot \delbar_s} = i\Lambda_V F_{\delbar_0} + k^{-1}(\Delta_{V, 0}\tau_2 - i\lambda\nu_V(a)) + \order{k^{-3/2}}.\]

\begin{lemma}
    The operator
    \[\Delta_{V, 0} \colon R \to R\]
    is invertible.
\end{lemma}

\begin{proof}
    Restricting to each \(E_b\), the statement is that the Laplacian is invertible orthogonal to its kernel, which follows from it being a self-adjoint elliptic operator. Thus, it remains to show that the solutions over \(E_b\) vary smoothly with \(b\). This follows from the fact that the \(\Delta_b\) are a smooth family of differential operators.
\end{proof}

In particular, projecting onto \(R\), we obtain that
\[\pr_R(i\Lambda_V F_{\exp(k^{-1}\tau_2)\cdot \delbar_s}) = k^{-1}(\psi_{2, R} + \Delta_V\tau_2) + \order{k^{-3/2}}.\]
Thus, we define \(\tau_2 \in R\) to be the unique solution to
\[\Delta_V\tau_2 = -\psi_{2, R}.\]

Next, we note that for any Dolbeault operator \(\delbar\) and \(\phi \in C^\infty(B)\),
\[i\Lambda_V F_{\exp(\phi) \cdot \delbar} = i\Lambda_V F_{\delbar}\]
and
\[i\Lambda_H F_{\exp(\phi) \cdot \delbar} = i\Lambda_H F_{\delbar} - \Delta_B \phi,\]
where \(\Delta_B \colon C^\infty(B) \to C^\infty(B)\) is the full Laplacian on \((B, \omega_B)\). As \(B\) is compact, \(\Delta_B\) is invertible modulo constants, and so we may find a \(\phi_2 \in C^\infty(B)\) and \(\gamma_2 \in \R\) such that
\[\Delta_B\phi_2 = -\psi_{2, B} + \gamma_2.\]

Using all of this, we then obtain that
\[i\Lambda_k F_{f_{k, 2} \cdot \delbar_s} = (\gamma_0 + \gamma_2 k^{-1})\id_E + \order{k^{-3/2}}.\]

\subsubsection{Inductive case}

Next, we will construct approximate solutions inductively, using the linearisation of the family Hermite--Einstein equation. Let
\[\psi_3 = \psi_{3, B} + \psi_{3, H} + \psi_{3, R}\]
denote the \(k^{-3/2}\) term in \(i\Lambda_k F_{f_{k, 2} \cdot \delbar_s}\). Our next goal is to understand
\[i\Lambda_k F_{\exp(k^{-1/2}\sigma_3) \cdot f_{k, 1} \cdot \delbar_s} = i\Lambda_V F_{\exp(k^{-1/2}\sigma_3) \cdot f_{k, 1} \cdot \delbar_s} + k^{-1}i\Lambda_H F_{\exp(k^{-1/2}\sigma_3) \cdot f_{k, 1} \cdot \delbar_s},\]
and we will understand these two terms separately. For the horizontal term, we obtain
\[k^{-1}i\Lambda_H F_{\exp(k^{-1/2}\sigma_3) \cdot f_{k, 1} \cdot \delbar_s} = k^{-1}i\Lambda_H F_{f_{k, 1} \cdot \delbar_s} - k^{-3/2}\Delta_{H, k, 1}\sigma_3 + \order{k^{-2}}.\]

As before, \(f_{k, 1} \cdot \delbar_s = \delbar_0 + \order{k^{-1/2}}\), and so we obtain
\[k^{-1}i\Lambda_H F_{\exp(k^{-1/2}\sigma_3) \cdot f_{k, 1} \cdot \delbar_s} = k^{-1}i\Lambda_H F_{f_{k, 1} \cdot \delbar_s} - k^{-3/2}\Delta_H\sigma_3 + \order{k^{-2}}.\]
In fact, we are only interested in the \(H_0\)-component, and so applying the projection map \(p\), we obtain
\[k^{-1}p(i\Lambda_H F_{\exp(k^{-1/2}\sigma_3) \cdot f_{k, 1} \cdot \delbar_s}) = k^{-1}p(i\Lambda_H F_{f_{k, 1} \cdot \delbar_s}) - k^{-3/2}p(\Delta_H \sigma_3) + \order{k^{-2}}.\]

For the vertical term, we first note that it suffices to calculate fibrewise.

\begin{proposition}
    \label{prop:laplacian-expansion}
    Let \(\grad_k\) be any connection on \(E_b\) such that 
    \[\grad_k = \grad_0 + k^{-1/2}A_1 + k^{-1}A_2 + \order{k^{-3/2}}\]
    and let \(\Delta_k\) be the Laplacian computed with respect to \(\grad_k\) and \(\omega_X\). 
    
    \begin{enumerate}[(i)]
        \item For any \(\sigma_1, \sigma_2 \in i\mfk_b\), we have an expansion
        \[\langle \Delta_k \sigma_1, \sigma_2 \rangle_{L^2} = k^{-1}\langle D_2 \sigma_1, \sigma_2 \rangle_{L^2} + \order{k^{-3/2}}\]
        for some fixed operator \(D_2\). 
        \item Moreover, for any \(j \ge 2\) and any \(\tau \in \mcA^0(\End(E_b))\), let \(f_k = \exp(k^{-j/2}\tau)\), and \(\widetilde\grad_k = f_k \cdot \grad_k\). Let \(\widetilde\Delta_k\) denote the corresponding Laplacian. Then for any \(\sigma_1, \sigma_2 \in i\mfk_b\), we have an expansion
        \[\langle \Delta_k\sigma_1, \sigma_2 \rangle_{L^2} = k^{-1}\langle D_2\sigma_1, \sigma_2 \rangle_{L^2} + \order{k^{-3/2}},\]
        where the operator \(D_2\) is the same as in (i).
        \item The same result as (ii) holds for \(j = 1\) and \(\tau \in i\mfk_b\).
    \end{enumerate}
\end{proposition}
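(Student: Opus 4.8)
The plan is to reduce everything to the single identity
\[\langle \Delta_k \sigma_1, \sigma_2 \rangle_{L^2} = \langle \grad_k \sigma_1, \grad_k \sigma_2 \rangle_{L^2},\]
which holds on the closed manifold \(X\) by integration by parts, with \(\grad_k^*\) the formal \(L^2\)-adjoint. The essential input is that \(\sigma_1, \sigma_2 \in i\mfk_b\) are Hermitian holomorphic endomorphisms of \((\mcE_b, h)\): since \((\mcE_b, h)\) is Hermite--Einstein, \Cref{lem:hermite-einstein-laplacians} gives \(\grad_0 \sigma_j = 0\). Feeding the expansion \(\grad_k = \grad_0 + k^{-1/2}A_1 + k^{-1}A_2 + \order{k^{-3/2}}\) into the induced connection on \(\End(E_b)\), the \(\grad_0\)-term drops out, so that \(\grad_k \sigma_j = k^{-1/2}[A_1, \sigma_j] + \order{k^{-1}}\). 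Substituting into the identity above, the \(k^0\)-term vanishes and the leading contribution sits at order \(k^{-1}\):
\[\langle \Delta_k \sigma_1, \sigma_2 \rangle_{L^2} = k^{-1}\langle [A_1, \sigma_1], [A_1, \sigma_2] \rangle_{L^2} + \order{k^{-3/2}}.\]
This defines \(D_2\) by \(\langle D_2 \sigma_1, \sigma_2\rangle_{L^2} = \langle [A_1, \sigma_1], [A_1, \sigma_2]\rangle_{L^2}\), a self-adjoint operator (essentially \(\ad_{A_1}^*\ad_{A_1}\)), proving (i).

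For (ii), I would expand the gauge action. Writing \(\widetilde\grad_k = \exp(k^{-j/2}\tau) \cdot \grad_k\), a first-order expansion gives \(\widetilde\grad_k = \grad_k + k^{-j/2}\delta\grad + \order{k^{-j}}\), where \(\delta\grad\) is a fixed \(\End\)-valued one-form of order \(\order{1}\). Acting on the \(\sigma_i\) and using the same integration-by-parts identity for \(\widetilde\Delta_k\),
\[\langle \widetilde\grad_k \sigma_1, \widetilde\grad_k \sigma_2\rangle_{L^2} = \langle \grad_k\sigma_1, \grad_k\sigma_2\rangle_{L^2} + \order{k^{-(j+1)/2}},\]
since each cross term pairs the \(\order{k^{-j/2}}\) gauge correction against \(\grad_k\sigma_i = \order{k^{-1/2}}\). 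For \(j \ge 2\) we have \(k^{-(j+1)/2} \le k^{-3/2}\), so these corrections are absorbed into the error term and \(D_2\) is unchanged.

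The crux is (iii), where \(j = 1\). Here the naive bound on the cross term is only \(\order{k^{-1}}\), which would alter the leading operator \(D_2\); the whole point is that this apparent contribution cancels. A short computation of the first-order gauge change gives \(\delta\grad = \grad_k^{1,0}\tau^* - \grad_k^{0,1}\tau\) (up to the sign conventions of the gauge action). For \(\tau \in i\mfk_b\) we have \(\tau^* = \tau\) and, as above, \(\grad_0 \tau = 0\); hence both \(\grad_k^{1,0}\tau^*\) and \(\grad_k^{0,1}\tau\) lose their leading \(\grad_0\)-terms, so that \(\delta\grad = \order{k^{-1/2}}\). The effective connection perturbation is therefore \(k^{-1/2}\delta\grad = \order{k^{-1}}\), the cross terms drop to \(\order{k^{-3/2}}\), and \(D_2\) is again unchanged. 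I expect this last cancellation --- forced precisely by the holomorphicity of \(\tau\) --- to be the main obstacle, since it is exactly what separates the admissible \(j = 1\) deformations (those lying in \(i\mfk_b\)) from general gauge directions, and is what keeps \(D_2\) stable across all the gauge corrections employed in \Cref{prop:approximate-solutions}.
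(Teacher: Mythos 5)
Your proof is correct and follows essentially the same route as the paper: expand \(\langle \grad_k\sigma_1, \grad_k\sigma_2\rangle_{L^2}\), kill the \(\grad_0\)-terms using the fact that \(\sigma_i \in i\mfk_b\) are parallel on a Hermite--Einstein fibre, and note that the gauge corrections perturb the connection only at order \(k^{-1}\) (trivially for \(j \ge 2\), and for \(j = 1\) because \(\grad_0\tau = 0\)). Your explicit first-order computation of \(\delta\grad\) makes precise a step the paper leaves implicit, but the argument and the resulting operator \(D_2 = \ad_{A_1}^*\ad_{A_1}\) are the same.
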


\begin{proof}
    For (i), we may compute
    \begin{align*}
        \langle \Delta_k \sigma_1, \sigma_2 \rangle_{L^2} &= \langle \grad_k \sigma_1, \grad_k \sigma_2 \rangle_{L^2}, \\
        &= \langle \grad_0 \sigma_1, \grad_0 \sigma_2 \rangle_{L^2} \\
        &+ k^{-1/2}\left(\langle A_1\sigma_1, \grad_0\sigma_2 \rangle_{L^2} + \langle \grad_0\sigma_1, A_1\sigma_2 \rangle_{L^2}\right) \\
        &+ k^{-1}\left(\langle \grad_0\sigma_1, A_2 \sigma_2\rangle_{L^2} + \langle A_1\sigma_1, A_1\sigma_2\rangle_{L^2} + \langle A_2\sigma_1, \grad_0\sigma_2\rangle_{L^2}\right) \\
        &+ \order{k^{-3/2}}.
    \end{align*}
    But as we are assuming that \(\sigma_1, \sigma_2 \in i\mfk_b\), and so \(\grad_0\sigma_1 = \grad_0\sigma_2 = 0\). Most of the terms vanish, leaving us with
    \[\langle \Delta_k \sigma_1, \sigma_2\rangle_{L^2} = k^{-1}\langle A_1^*A_1\sigma_1, \sigma_2 \rangle_{L^2} + \order{k^{-3/2}}.\]

    For (ii), for \(j \ge 2\), \(j/2 \ge 1\), and \(\widetilde \grad_k = \grad_k + \order{k^{-1}}\). In particular, the \(A_1\) term is unchanged. For (iii), when \(j = 1\) and \(\tau \in i\mfk_b\), it follows that \(\grad_0\tau = 0\), which then implies that \(\widetilde\grad_k = \grad_k + \order{k^{-1}}\) as well.
\end{proof}

\begin{remark}
    A similar result is proven in \cite[Proposition 4.11]{dervanOptimalSymplecticConnections2021} in the optimal symplectic connection theory. The complication in our setting is that we need the \(\order{k^{-1}}\) term, but instead of an expansion in powers of \(k^{-1}\), we have an expansion in powers of \(k^{-1/2}\). This necessitates the more careful statement above.
\end{remark}

In particular, we may apply (i) with the Dolbeault operator \(\delbar_s = \delbar_0 + \alpha(s)\) on \(\End(E_b)\), and \(f_{k, 1}\vert_{E_b}\) satisfies the requirements in (ii). Thus, by the linearisation of the Hermitian--Yang--Mills equation,
\[p\left(i\Lambda_V F_{\exp(k^{-1/2}\sigma_3) \cdot f_{k, 1} \cdot \delbar_s}\right) = p\left(i\Lambda_V F_{f_{k, 1} \cdot \delbar_s}\right) - k^{-3/2}D_V\sigma_3 + \order{k^{-2}},\]
for some operator \(D_V\). Putting everything together, we obtain that
\[p\left(i\Lambda_k F_{\exp(k^{-1/2}\sigma_3) \cdot f_{k, 1} \cdot \delbar_s}\right) = k^{-3/2}(\psi_{3, H} - p(\Delta_H \sigma_3) - D_V\sigma_3) + \order{k^{-2}}.\]

As such, what we will now need to understand is the operator
\[\mcL = p \circ \Delta_H + D_V,\]
which we will consider as an operator
\[\mcL \colon \mcA^0(H_0) \to \mcA^0(H_0).\]

\begin{proposition}
    \label{prop:adiabatic-linearisation-elliptic}
    \(\mcL\) is a self-adjoint second order elliptic operator.
\end{proposition}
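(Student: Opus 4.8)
The plan is to split $\mcL = p \circ \Delta_H + D_V$ into its second-order part $p \circ \Delta_H$ and a zeroth-order remainder, treat the principal symbol and the self-adjointness separately, and reduce everything to standard properties of the horizontal Bochner Laplacian on $\End \mcE$ together with the fact that the geometry of $\mcF$ is induced from that of $\End \mcE$.

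First I would record that $D_V$ is a zeroth-order operator. By \Cref{prop:laplacian-expansion}, $D_V$ is the $H_0$-component of the operator $D_2$, which on each fibre is $A_1^* A_1$ for the leading deformation tensor $A_1$; this is algebraic in $\sigma$ (it involves no derivatives of $\sigma$, only the fixed tensor $A_1$ and the metric). Hence $D_V$ contributes nothing to the principal symbol, and, being of the form $\sigma \mapsto p(A_1^* A_1 \sigma)$, it is manifestly self-adjoint and non-negative since $\langle D_V \sigma_1, \sigma_2\rangle = \langle A_1 \sigma_1, A_1 \sigma_2\rangle$. Thus the second-order and elliptic claims reduce to $p \circ \Delta_H$, and self-adjointness of $\mcL$ reduces to self-adjointness of $p \circ \Delta_H$ together with that of $D_V$.

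The key structural observation is that the $L^2$ inner product on $\mcA^0(\mcF)$ induced by $h_\mcF$ coincides with the restriction of the global $L^2$ inner product on $\mcA^0(\End E)$ over $(X \times B, \omega_X^n \wedge \omega_B^m)$, since $\int_B h_\mcF(\sigma,\tau)\,\omega_B^m = \int_{X\times B} h(\sigma,\tau)\,\omega_X^n\wedge\omega_B^m$. Consequently $p$ is $L^2$-orthogonal, hence self-adjoint, and $\grad_H$ and $\grad_H^*$ remain formal adjoints when restricted to sections of $\mcF$. Self-adjointness of $p\circ\Delta_H$ on $\mcA^0(H_0)$ then follows formally: for $\sigma_1,\sigma_2 \in \mcA^0(H_0)$ we have $p\sigma_i = \sigma_i$, so using self-adjointness of the non-negative Bochner Laplacian $\Delta_H = \grad_H^*\grad_H$ and of $p$, one gets $\langle p\Delta_H\sigma_1,\sigma_2\rangle = \langle\Delta_H\sigma_1,\sigma_2\rangle = \langle\sigma_1,\Delta_H\sigma_2\rangle = \langle\sigma_1,p\Delta_H\sigma_2\rangle$. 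For ellipticity I would compute the principal symbol: since $\Delta_H$ differentiates only in the $B$-directions, its symbol at $0 \ne \xi \in T^*_b B$ is the scalar $\abs{\xi}^2_{\omega_B}\,\id$ on $\End E$, and for $\sigma \in H_0$ this scalar commutes with $p$, giving $\sigma_{\mcL}(\xi)\sigma = p(\abs{\xi}^2_{\omega_B}\sigma) = \abs{\xi}^2_{\omega_B}\sigma$. Hence $\sigma_{\mcL}(\xi) = \abs{\xi}^2_{\omega_B}\,\id_{H_0}$ is invertible for $\xi \ne 0$, so $\mcL$ is second order and elliptic over $B$.

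The main obstacle is not the symbol computation but making the identifications of the previous paragraph fully rigorous. Specifically, one must verify that the intrinsic objects on $\mcF$ — the Chern connection $\grad_\mcF = \pi_h \circ \grad_H$, the projection $p$, and the metric $h_\mcF$ — are precisely the restrictions of the corresponding objects on $\End \mcE$, so that $p$ is genuinely orthogonal and $\grad_H^*$ is genuinely the adjoint of $\grad_H$ in the $h_\mcF$-inner product. Once this compatibility is established (using the lemma identifying $\grad_\mcF$ with the Chern connection on $\mcF$ and the definition of $h_\mcF$), both the self-adjointness and the scalar principal symbol follow cleanly, and the zeroth-order term $D_V$ does not disturb either conclusion.
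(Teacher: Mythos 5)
Your proposal is correct and takes essentially the same approach as the paper: both prove self-adjointness by identifying the \(L^2(H_0)\) pairing with the global \(L^2\) pairing on \(\End E\) and reducing \(\langle \mcL\sigma, \tau\rangle\) to the manifestly symmetric expression \(\int \langle \grad_H\sigma, \grad_H\tau\rangle + \langle [A_V,\sigma],[A_V,\tau]\rangle\), and both obtain ellipticity by noting that \(D_V\) has order zero so the principal symbol is that of the Laplacian on \(B\). The only difference is presentational: you treat the two summands separately and write out the symbol as \(\abs{\xi}^2_{\omega_B}\id_{H_0}\), whereas the paper packages everything into a single integral identity.
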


\begin{proof}
    Recall that \(H_0\) is a vector bundle over \(B\), and so the inner product on \(\mcA^0(H_0)\) is given by a fibre integral. In particular,
    \begin{align*}
        \langle \mcL \sigma, \tau \rangle_{L^2(H_0)} &= \int_B\left(\int_{X \times B/B}\left(\langle \Delta_H\sigma, \tau\rangle + \langle D_V\sigma, \tau\rangle\right)\omega_X^n\right)\omega_B^m \\
        &= \int_{X \times B} \langle \grad_H \sigma, \grad_H\tau \rangle + \langle A_{\End, V} \sigma, A_{\End, V}\tau \rangle \omega_X^n \wedge \omega_B^m,
    \end{align*}
    where \(A_{\End, V}\sigma = [-a_V^* + a_V, \sigma]\). From this expression it is clear that \(\mcL\) is self-adjoint. 
    
    For ellipticity, we note that \(D_V\) has order \(0\), and
    \[\Delta_H(f\sigma) = (\Delta_Bf)\sigma + Q(f, \dd f, \sigma, \grad_H \sigma, \grad_H^2\sigma).\]
    Thus, working in a local trivialisation, the symbol is given by the symbol of the Laplacian, hence \(\mcL\) is elliptic.
\end{proof}

Our next goal is to show that \(\mcL\) is invertible, and so we must understand the kernel of \(\mcL\). From the proof of the proposition, we see that
\[\langle \mcL\sigma, \sigma \rangle_{L^2(H_0)} = \norm{\grad_H\sigma}^2_{L^2(\End(E))} + \norm{A_{\End, V}\sigma}^2_{L^2(\End(E))}.\]
Each of the terms on the right hand side is nonnegative, and so \(\mcL(\sigma) = 0\) if and only if \(\grad_H\sigma = 0\) and \([a_V,\sigma] = 0\).

\begin{corollary}
    The operator \(\mcL\) is invertible.
\end{corollary}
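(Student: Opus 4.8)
The plan is to deduce invertibility from the Fredholm theory of elliptic operators on the closed manifold \(B\), combined with a computation of \(\ker\mcL\). By \Cref{prop:adiabatic-linearisation-elliptic}, \(\mcL\) is a self-adjoint second order elliptic operator acting on sections of the finite-rank bundle \(H_0 \to B\); since \(B\) is compact, it is therefore Fredholm of index zero, and self-adjointness identifies \(\coker\mcL \cong \ker\mcL\). Hence it suffices to show that \(\ker\mcL = 0\), after which elliptic regularity upgrades the resulting bijection on Sobolev completions to an isomorphism with smooth inverse.

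From the quadratic form identity established just before the statement, \(\sigma \in \ker\mcL\) if and only if \(\grad_H\sigma = 0\) and \([a_V, \sigma] = 0\). The first condition yields in particular \(\delbar_H \sigma = 0\), since \(\delbar_H\) is the \((0,1)\)-part of \(\grad_H\). As \(\sigma \in \mcA^0(H_0) \subseteq \mcA^0(\mcF)\) is fibrewise a holomorphic endomorphism, we also have \(\delbar_V\sigma = 0\), and combining these gives \(\delbar\sigma = 0\). Thus \(\sigma\) is a global holomorphic endomorphism, so \(\sigma \in H^0(X \times B, \End\mcE) = \mfg\).

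Next I would invoke the algebraic hypotheses. The condition \([a_V, \sigma] = 0\) places \(\sigma\) in \(\mfg_a\), and since \(\id_E \in \mfg_a\) and \(\dim\mfg_a = 1\) by assumption, we conclude \(\mfg_a = \C \cdot \id_E\), so \(\sigma = c\,\id_E\) for some \(c \in \C\). But \(\sigma\) is Hermitian, forcing \(c \in \R\), while \(\sigma \in \mcA^0(H_0)\) is \(L^2\)-orthogonal to \(\id_E\) in the splitting \(H = \langle\id_E\rangle \oplus H_0\); a nonzero real multiple of \(\id_E\) cannot be orthogonal to \(\id_E\), so \(c = 0\) and \(\sigma = 0\). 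This shows \(\ker\mcL = 0\) and completes the argument.

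The main obstacle is the passage from the analytic kernel to this algebraic description: one must check carefully that \(\grad_H\sigma = 0\) genuinely forces \(\sigma\) to be globally holomorphic on \(X \times B\), rather than merely horizontally flat, which uses implicitly that \(\delbar_H\) preserves \(\ker\delbar_V\) via the integrability relation \(\delbar_V \circ \delbar_H = -\delbar_H \circ \delbar_V\). A related subtlety, already exploited in the quadratic form identity, is that \(A_{\End,V}\sigma = [a_V - a_V^*, \sigma]\) splits by type into a \((0,1)\)-part \([a_V,\sigma]\) and a \((1,0)\)-part \([a_V^*,\sigma]\), which are pointwise orthogonal; for Hermitian \(\sigma\) these are adjoint to one another, so the single condition \([a_V,\sigma]=0\) is equivalent to \(A_{\End,V}\sigma = 0\), justifying the kernel characterisation used above.
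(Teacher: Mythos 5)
Your proof is correct and follows essentially the same route as the paper: both characterise the kernel via the quadratic form identity \(\langle \mcL\sigma, \sigma\rangle = \norm{\grad_H\sigma}^2 + \norm{A_{\End, V}\sigma}^2\), identify kernel elements as lying in \(\mfg_a = \C\cdot\id_E\), and conclude from the \(L^2\)-orthogonality of \(H_0\) to \(\id_E\) that the kernel vanishes. The only differences are that you make explicit two steps the paper leaves implicit --- the Fredholm index-zero argument reducing invertibility to injectivity, and the verification that \(\grad_H\sigma = 0\) together with fibrewise holomorphy forces \(\sigma\) to be a global holomorphic endomorphism, hence an element of \(\mfg\) --- both of which are correct.
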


\begin{proof}
    By the above computation, any elements of the kernel of \(\mcL\) must lie in the Lie algebra \(\mfg_a\), which we assumed to consist only of scalar multiples of \(\id\). Since we are assuming that \(\sigma\vert_{E_b}\) is orthogonal to \(\id\), we must then have that \(\sigma = 0\), and so \(\mcL\) is injective.
\end{proof}

\begin{remark}
    \label{rem:difference-with-ortu}
    We note a gap in \cite[Lemma 5.7]{ortuOptimalSymplecticConnections2023}, which is the corresponding statement for optimal symplectic connections. There, Ortu claims that the kernel of the linearisation is given by holomorphy potentials with respect to \(J_s\) for all \(s\). However, the proof only shows that elements of the kernel are holomorphy potentials with respect to \(J_0\) and the first order deformation. This leads us to the slightly different geometric setup in comparison with hers.
\end{remark}

With all of this in mind, we may choose \(\sigma_3\) such that
\[\mcL(\sigma_3) = \psi_{3, H},\]
and choose \(\tau_3, \phi_3\) as in the \(r = 2\) case. This then proves the \(r = 3\) case. The argument above is general, and works for any \(r \ge 3\). As such, we may construct approximate solutions, and so we have proven \Cref{prop:approximate-solutions}.

\subsection{Perturbation to a genuine solution}

\label{subsec:perturb-to-genuine-solution}

The proof of \Cref{thm:hym-total-space} will be by perturbing the approximate solutions constructed in \Cref{prop:approximate-solutions} to genuine solutions. The strategy is similar to \cite{sektnanHermitianYangMillsConnections2024}, and relies on a quantitative version of the implicit function theorem.

\begin{theorem}
    \label{thm:qual-impl-fn-thm}

    Let \(\mcR \colon V \to W\) be a differentiable map between Banach spaces. Suppose the derivative \(D\mcR\) at \(0\) is surjective, with right inverse \(\mcQ\). Let
    \begin{enumerate}[(i)]
        \item \(\delta_1\) denote the radius of the closed ball in \(V\) such that \(\mcR - D\mcR\) is \((2 \norm{\mcQ})^{-1}\)-Lipschitz,
        \item \(\delta = \delta_1/(2\norm{\mcQ})\).
    \end{enumerate}
    Then for all \(w \in W\) with \(\norm{w - \mcR(0)} < \delta\), there exists a \(v \in V\) such that \(\mcR(v) = w\).
\end{theorem}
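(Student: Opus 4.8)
The plan is to solve $\mcR(v) = w$ by a contraction-type fixed-point iteration built from the right inverse $\mcQ$. Writing $L = D\mcR|_0$ for the derivative at the origin, so that $L\mcQ = \id_W$, I would start from $v_0 = 0$ and define
\[v_{n+1} = v_n - \mcQ(\mcR(v_n) - w).\]
The goal is to show that the residuals $\mcR(v_n) - w$ decay geometrically and that $(v_n)$ is Cauchy, so that the limit $v$ solves $\mcR(v) = w$ by continuity.

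The key step is a computation of the residual. Since $v_{n+1} - v_n = -\mcQ(\mcR(v_n) - w)$, applying $L$ and using $L\mcQ = \id_W$ gives $L(v_{n+1}-v_n) = -(\mcR(v_n) - w)$. Substituting the exact decomposition $\mcR(v_{n+1}) - \mcR(v_n) = L(v_{n+1}-v_n) + N(v_{n+1}) - N(v_n)$, where $N = \mcR - L$ is the nonlinear part, the linear contributions cancel and I obtain the telescoping identity
\[\mcR(v_{n+1}) - w = N(v_{n+1}) - N(v_n).\]
Provided the iterates lie in the ball of radius $\delta_1$ on which $N$ is $(2\norm{\mcQ})^{-1}$-Lipschitz, this yields
\[\norm{\mcR(v_{n+1}) - w} \le \tfrac{1}{2\norm{\mcQ}}\norm{v_{n+1}-v_n} \le \tfrac{1}{2\norm{\mcQ}}\norm{\mcQ}\,\norm{\mcR(v_n) - w} = \tfrac12\norm{\mcR(v_n) - w},\]
so the residuals halve at each step, and consequently $\norm{v_{n+1}-v_n} \le \norm{\mcQ}\,2^{-n}\norm{\mcR(0) - w}$.

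Finally I would close the argument by verifying the iterates never leave the ball of radius $\delta_1$, which is where the choice $\delta = \delta_1/(2\norm{\mcQ})$ is used: summing the geometric bound gives $\norm{v_n} \le \sum_j \norm{v_{j+1}-v_j} \le 2\norm{\mcQ}\,\norm{\mcR(0)-w} < 2\norm{\mcQ}\,\delta = \delta_1$. Thus the Lipschitz estimate is valid at every step, $(v_n)$ is Cauchy and converges to some $v$ with $\norm{v} \le \delta_1$, and since $\norm{\mcR(v_n) - w} \to 0$, continuity of $\mcR$ forces $\mcR(v) = w$. The main obstacle is the bookkeeping ensuring the iterates remain inside the Lipschitz ball; everything hinges on the identity $L\mcQ = \id_W$, which converts the residual into the genuinely small nonlinear increment $N(v_{n+1}) - N(v_n)$, and on the fact that the geometric series sums to exactly $2$, matching the factor in the definition of $\delta$.
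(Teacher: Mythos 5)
Your proof is correct and complete: the identity $\mcR(v_{n+1}) - w = N(v_{n+1}) - N(v_n)$ obtained by cancelling the linear part via $L\mcQ = \id_W$, the resulting halving of residuals, and the geometric-series bound $\norm{v_n} \le 2\norm{\mcQ}\,\norm{\mcR(0)-w} < \delta_1$ keeping the iterates inside the Lipschitz ball fit together into a valid simultaneous induction (ball membership of $v_{n+1}$ uses only the residual bounds up to step $n$, which in turn only need the Lipschitz property on points already confirmed to be in the ball). The paper states this quantitative implicit function theorem without proof, treating it as standard, and your Newton--Picard iteration is exactly the standard argument it implicitly relies on; it is equivalent to applying the Banach fixed point theorem to $v \mapsto \mcQ(w - N(v))$ on the closed ball of radius $\delta_1$, since all iterates lie in the image of $\mcQ$.
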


For brevity of notation, let
\[\delbar_{k, r} = f_{k, r} \cdot \delbar_s, \quad \grad_{k, r} = f_{k, r} \cdot \grad_s.\]
As the implicit function theorem requires us to be working with Banach spaces, define
\[V_{k, d} = L^2_d(\End_{\mathrm H}(E, h), \omega_k).\]
Define maps
\begin{align*}
    \Psi_{k, r} \colon V_{k, d + 2} \times \R &\to V_{k, d} \\
    \Psi_{k, r}(\sigma, \lambda) &= i\Lambda_{\omega_k} F_{\exp(\sigma) \cdot \delbar_{k, r}} - \lambda\id_E.
\end{align*}
The linearisation of \(\Psi_{k, r}\) at \(0\) is given by
\[(\sigma, \lambda) \mapsto -\Delta_{k, r}\sigma - \lambda\id_E.\]

By assumption, \((E, \delbar_s)\) is simple for \(\abs{s} \ll 1\), and so for \(k \gg 0\), this operator is surjective. Let \(\mcP_{k, r}\) denote the right inverse to this operator. Throughout, we set \(s^2 = \lambda k^{-1}\), and assume that \(k\) is sufficiently large (resp. \(s\) is sufficiently small), so that \((E, \delbar_s)\) is simple.

\begin{proposition}
    \label{prop:operator-norm-of-right-inverse}
    For each \(d\), there exists \(C > 0\) independent of \(k\), such that the right inverse \(\mcP_{k, r} \colon V_{k, d} \to V_{k, d + 2} \times \R\) has operator norm
    \[\norm{\mcP_{k, r}}_{\mathrm{op}} \le C k.\]
\end{proposition}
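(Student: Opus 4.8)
The plan is to reduce the operator-norm bound to an a priori lower bound for the linearised operator, and then prove that lower bound using the two-scale structure coming from \(\Lambda_k = \Lambda_V + k^{-1}\Lambda_H\). Write \(L_{k,r}(\sigma,\lambda) = -\Delta_{k,r}\sigma - \lambda\id_E\) for the linearisation at \(0\). Since \(\id_E\) is parallel we have \(\Delta_{k,r}\id_E = 0\), and pairing the equation \(L_{k,r}(\sigma,\lambda) = 0\) with \(\id_E\) forces \(\lambda = 0\); simplicity of \((E,\delbar_s)\) then gives \(\ker L_{k,r} = \R\id_E \times \{0\}\). As \(L_{k,r}\) is surjective and \(\mcP_{k,r}\) maps into the \(L^2\)-orthogonal complement of this kernel, the claim \(\norm{\mcP_{k,r}}_{\mathrm{op}} \le Ck\) is equivalent to the a priori estimate
\[\norm{(\sigma,\lambda)}_{L^2_{d+2} \times \R} \le Ck\,\norm{L_{k,r}(\sigma,\lambda)}_{L^2_d},\]
with \(C\) independent of \(k\), for all \((\sigma,\lambda) \perp \ker L_{k,r}\). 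I would first prove this for \(d = 0\) and then bootstrap.

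For the \(L^2\) estimate I would decompose \(\sigma\) according to \Cref{eqn:decomposition-of-Hermitian-endomorphisms} as \(\sigma = \phi\,\id_E + \sigma_0 + \tau\) with \(\phi \in C^\infty(B,\R)\), \(\sigma_0 \in \mcA^0(H_0)\) and \(\tau \in R\). The key structural fact is that the fibrewise vertical Laplacian \(\Delta_{V,0}\) is block-diagonal for this splitting: it annihilates \(\mcA^0(H)\), and by invertibility of \(\Delta_{V,0}\colon R \to R\) it satisfies \(\langle\Delta_{V,0}\tau,\tau\rangle \ge c_0\norm{\tau}_{L^2}^2\) on \(R\) for some \(c_0 > 0\). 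Since \(\delbar_{k,r} = \delbar_0 + \order{k^{-1/2}}\) and the horizontal term \(k^{-1}\Delta_H\) contributes at order \(\order{k^{-1}}\), the quadratic form of \(\Delta_{k,r}\) on \(R\) remains bounded below by \(c_0/2\) for \(k \gg 0\) once the lower-order corrections are absorbed. This controls \(\tau\) with a \(k\)-independent constant, and is where the spectral gap of \(\Delta_{V,0}\) does the work.

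The factor of \(k\) appears on the directions where \(\Delta_{V,0}\) degenerates. On \(\mcA^0(H_0)\), \Cref{prop:laplacian-expansion} gives \(\langle \Delta_{k,r}\sigma_0,\sigma_0\rangle = k^{-1}\langle \mcL\sigma_0,\sigma_0\rangle + \order{k^{-3/2}}\) with \(\mcL = p\circ\Delta_H + D_V\); since \(\mcL\) is invertible and, as computed in \Cref{prop:adiabatic-linearisation-elliptic}, has nonnegative quadratic form, it is bounded below, forcing \(\norm{\sigma_0}_{L^2} \le Ck\,\norm{L_{k,r}(\sigma,\lambda)}_{L^2}\). On the line \(C^\infty(B,\R)\id_E\) one has \(\Delta_{k,r}(\phi\,\id_E) = k^{-1}(\Delta_B\phi)\id_E + \order{k^{-3/2}}\), and solving \(k^{-1}\Delta_B\phi + \lambda = w\) against the constants recovers \(\lambda\) and then a \(\phi\) of size \(\order{k}\), using that \(\Delta_B\) is invertible modulo constants on the compact base. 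The remaining point at this stage is to absorb the off-diagonal coupling between the three blocks, which is uniformly \(\order{k^{-1/2}}\) smaller than the diagonal terms: splitting \(\langle L_{k,r}(\sigma,\lambda),(\sigma,\lambda)\rangle\) and applying Cauchy--Schwarz together with the uniform gap on \(R\) lets one absorb the cross terms into the diagonal estimates.

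The main obstacle is upgrading the \(L^2\) estimate to \(L^2_{d+2}\) with constants independent of \(k\). Since the Sobolev norms are defined with respect to the degenerating metric \(\omega_k = \omega_X + k\omega_B\), derivatives in the base directions carry hidden powers of \(k\), and a naive elliptic estimate would produce \(k\)-dependent constants. The hard part is therefore to establish \(k\)-uniform elliptic (G\aa rding/Schauder) estimates for the family \(\Delta_{k,r}\), which I would obtain by localising over \(B\) and tracking the weights in \(\omega_k\) explicitly, following the scheme of \cite{fineConstantScalarCurvature2004a,dervanOptimalSymplecticConnections2021,ortuOptimalSymplecticConnections2023,sektnanHermitianYangMillsConnections2024}. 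Feeding the \(k\)-uniform elliptic estimate into the \(d = 0\) lower bound and iterating on \(d\) then yields the full a priori estimate, and hence \(\norm{\mcP_{k,r}}_{\mathrm{op}} \le Ck\).
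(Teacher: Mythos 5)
Your overall architecture is the same as the paper's: reduce the operator norm bound to a $k$-uniform lower bound for the quadratic form of $\Delta_{k,r}$, split $\sigma$ according to \Cref{eqn:decomposition-of-Hermitian-endomorphisms}, obtain an $O(1)$ spectral gap on $R$ from the fibrewise vertical Laplacian, an $O(k^{-1})$ gap on $\mcA^0(H_0)$ from the positivity and invertibility of $\mcL$, and then upgrade to higher Sobolev norms by localising over $B$ and tracking the scaling of $\omega_k$; this is exactly the content of \Cref{prop:poincare-inequality} and \Cref{prop:schauder-estimate}.

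The genuine gap is in your treatment of the off-diagonal coupling between $\mcA^0(H)$ and $R$. You assert the cross terms are ``uniformly $\order{k^{-1/2}}$ smaller than the diagonal terms'' and can be absorbed by Cauchy--Schwarz; they cannot, as stated. For $\sigma \in \mcA^0(H)$ one has $\grad_{V,0}\sigma = 0$, so the leading contribution to $\langle \Delta_{k,r}\sigma, \tau\rangle$ for $\tau \in R$ is $k^{-1/2}\langle [A_V,\sigma], \grad_{V,0}\tau\rangle$ with $A_V = a_V - a_V^*$. This has size $k^{-1/2}\norm{\sigma}\,\norm{\grad_{V,0}\tau}$, which is precisely the geometric mean of the two diagonal lower bounds $k^{-1}\norm{\sigma}^2$ and $\norm{\grad_{V,0}\tau}^2$: the absorption is exactly borderline, and Young's inequality only closes if the operator norm of $[A_V,\cdot]$ is small relative to the spectral constants of $\mcL$ and $\Delta_{V,0}$, which is not assumed (the deformation $a$ is not small). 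The paper's proof instead shows that this leading cross term \emph{vanishes identically}: writing it as $\langle \grad_{V,0}^*[A_V,\sigma], \tau\rangle$ and applying the Nakano identity fibrewise, vanishing reduces to $\grad_{V,0}^{1,0}a_V = 0$, which follows from the Maurer--Cartan equation, the gauge-fixing hypothesis $\delbar_{V,0}^*a_V = 0$ (assumption (iv) of \Cref{thm:hym-total-space}, which your argument never invokes --- a sign something is missing), and the equality of the vertical $(1,0)$- and $(0,1)$-Laplacians on the vertically Hermite--Einstein central fibre. Only after this cancellation does the cross term drop to $\order{k^{-1}\norm{\sigma}_{L^2_1}\norm{\tau}_{L^2_1}}$, where the asymmetric splitting $k^{-1} = k^{-3/4}\cdot k^{-1/4}$ and Young's inequality produce terms of size $k^{-3/2}\norm{\sigma}^2_{L^2_1}$ and $k^{-1/2}\norm{\tau}^2_{L^2_1}$ that are genuinely subordinate to the diagonal bounds (\Cref{lem:combining-bounds}). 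Without this vanishing, your $d=0$ estimate does not close, and the rest of the argument does not start.
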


The proof of this proposition will be in two parts. In \Cref{prop:poincare-inequality}, we will prove a Poincar\'e inequality, which will prove the statement for \(d = 0\). Then, we will prove a Schauder estimate in \Cref{prop:schauder-estimate}, which will allow us to prove the general case.

\begin{proposition}
    \label{prop:poincare-inequality}
    For each \(r\), there exists \(C > 0\) independent of \(k\), such that for \(k \gg 0\), and any section \(\sigma \in \mcA^0(X \times B, \End_H(E, h))\) with
    \[\int_{X \times B}\tr(\sigma)\omega_1^{m+n} = 0,\]
    we have
    \[\norm{\grad_{k, r} \sigma}^2_{L^2(\omega_k)} \ge C\norm{\sigma}^2_{L^2(\omega_k)}.\]
\end{proposition}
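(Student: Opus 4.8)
The plan is to prove this by contradiction, using a compactness/concentration argument adapted to the adiabatic scaling. Suppose no such constant exists. Then for each $k$ there is a sequence, and more to the point, a family of counterexamples $\sigma_k$ with the normalisation $\int_{X \times B}\tr(\sigma_k)\omega_1^{m+n} = 0$, which after rescaling we may take to satisfy $\norm{\sigma_k}_{L^2(\omega_k)} = 1$ while $\norm{\grad_{k, r}\sigma_k}^2_{L^2(\omega_k)} \to 0$. The key structural fact I would exploit is the decomposition of the connection and of the contraction operator into vertical and horizontal parts: writing $\grad_{k,r} = \grad_{V} + \grad_{H}$ (up to lower-order terms coming from the deformation $f_{k,r}\cdot\delbar_s = \delbar_0 + \order{k^{-1/2}}$), the norm $\norm{\grad_{k,r}\sigma}^2_{L^2(\omega_k)}$ splits, relative to $\omega_k = \omega_X + k\omega_B$, so that the vertical derivatives are weighted differently from the horizontal ones as $k \to \infty$.

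The main steps I would carry out are as follows. First, I would make the volume scaling explicit: $\omega_k^{m+n}$ expands in powers of $k$, and the $L^2(\omega_k)$-norm is comparable, up to $k$-dependent but controlled constants, to a fixed reference norm on $X \times B$. Second, from $\norm{\grad_{k,r}\sigma_k}_{L^2(\omega_k)} \to 0$ I would extract separate control on the vertical and horizontal pieces. The vertical part $\grad_V\sigma_k \to 0$ forces $\sigma_k$ to concentrate, in the limit, on sections that are fibrewise covariant constant, i.e. (using \Cref{lem:hermite-einstein-laplacians}, since each $(\mcE_b, h)$ is Hermite--Einstein) fibrewise holomorphic; thus the limit lies in $\mcA^0(\mcF)$, in fact in $\mcA^0(H)$ after accounting for the Hermitian condition. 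Third, the horizontal part, together with the deformation term $[a_V, \cdot]$ entering through $f_{k,r}\cdot\delbar_s$, would force the limiting section to satisfy $\grad_H\sigma = 0$ and $[a_V,\sigma]=0$ — exactly the conditions identified in the kernel analysis of $\mcL$ preceding this statement. By the hypothesis $\dim(\mfg_a)=1$, the limit must be a multiple of $\id_E$. Finally, the trace normalisation $\int_{X\times B}\tr(\sigma_k)\omega_1^{m+n}=0$ passes to the limit and kills the $\id_E$ component, forcing the limit to be zero, contradicting $\norm{\sigma_k}_{L^2(\omega_k)}=1$ once I establish that the normalisation survives the limit with unit mass.

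The hard part, and where I would concentrate the technical effort, is making the compactness argument rigorous \emph{uniformly in $k$}, since the metrics $\omega_k$ degenerate as $k \to \infty$ and the relevant elliptic estimates a priori have $k$-dependent constants. The clean way around this is to avoid a genuine subsequential limit in a single fixed space and instead argue directly with the two-term lower bound
\[
\norm{\grad_{k,r}\sigma}^2_{L^2(\omega_k)} \gtrsim \norm{\grad_V\sigma}^2_{L^2} + k^{-1}\norm{\grad_H\sigma}^2_{L^2} + \text{(deformation term)},
\]
applying the fibrewise Poincaré inequality (valid by ellipticity of $\Delta_V$ with a constant depending smoothly, hence uniformly, on $b\in B$) to the component of $\sigma$ orthogonal to $\ker\grad_V$, and then applying a Poincaré inequality on $B$ to the remaining $\mcA^0(H)$-valued part, where the coercivity is guaranteed precisely by the invertibility of $\mcL$ established above. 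Reconciling the two different $k$-weights — the vertical term controls the $R$-component with no $k$-loss, while the horizontal term controls the $\mcA^0(H_0)$-component only at order $k^{-1}$ — is the delicate bookkeeping that yields the uniform constant $C$, and this is why the resulting bound on $\norm{\mcP_{k,r}}_{\mathrm{op}}$ in \Cref{prop:operator-norm-of-right-inverse} grows like $k$ rather than staying bounded.
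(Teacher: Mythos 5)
Your second paragraph is, in substance, the paper's argument: decompose \(\mcA^0(\End_{\mathrm H}(E, h)) = \mcA^0(H) \oplus R\) as in \Cref{eqn:decomposition-of-Hermitian-endomorphisms}, prove a fibrewise Poincar\'e inequality on \(R\) with a constant uniform in \(b\) (continuity of the first nonzero eigenvalue over compact \(B\), which is \Cref{lem:lower-bound-on-R}), and obtain coercivity at order \(k^{-1}\) on the \(H\)-component from the positivity of \(\mcL\) and the hypothesis \(\mfg_a = \C\cdot\id_E\) (which is \Cref{lem:lower-bound-on-H}). The initial compactness-by-contradiction framing is abandoned by you yourself, so I set it aside.

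There is, however, a genuine gap at exactly the point you defer to ``delicate bookkeeping'': the cross terms. Writing a general \(\sigma + \tau\) with \(\sigma \in \mcA^0(H)\) and \(\tau \in R\), the two summands are \(L^2\)-orthogonal but their covariant derivatives are not, so you must control \(\langle \Delta_{k,r}\sigma, \tau\rangle_{L^2(\omega_k)}\). Expanding \(\grad_{k,r} = \grad_0 + k^{-1/2}A_V + \cdots\) and using \(\grad_{V,0}\sigma = 0\), the leading contribution is \(k^{-1/2}\langle [A_V, \sigma], \grad_{V,0}\tau\rangle_{L^2(\omega_k)}\), a \emph{vertical--vertical} pairing carrying full weight in the \(\omega_k\)-metric. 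A term of genuine size \(k^{-1/2}\norm{\sigma}\,\norm{\grad_{V,0}\tau}\) cannot be absorbed by Young's inequality against a bound of the form \(k^{-1}\norm{\sigma}^2 + \norm{\tau}^2\): any split \(k^{-1+2t}\norm{\sigma}^2 + k^{-2t}\norm{\grad_{V,0}\tau}^2\) requires \(t > 0\) to absorb the second piece and \(t < 0\) to absorb the first. The proof therefore needs this \(k^{-1/2}\) term to \emph{vanish}, and this is where hypothesis (iv) of \Cref{thm:hym-total-space} enters: writing \(\langle [A_V,\sigma], \grad_{V,0}\tau\rangle = \langle \grad_{V,0}^*[A_V,\sigma], \tau\rangle\), one applies the Nakano identity fibrewise and reduces to showing \(\grad_{V,0}^{1,0}a_V = 0\), which follows from the Maurer--Cartan equation \(\grad_{V,0}^{0,1}a_V = 0\), the gauge-fixing condition \(\delbar_{V,0}^*a_V = 0\), and the equality of vertical Laplacians on the vertically Hermite--Einstein central fibre. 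Only after this does the remaining \(\order{k^{-1}}\) cross term split by Young's inequality into pieces that \Cref{lem:combining-bounds} can absorb. Your proposal neither identifies this obstruction nor invokes the gauge-fixing hypothesis anywhere, so as written the combination step would fail.
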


We follow the general strategy of \cite{sektnanHermitianYangMillsConnections2024,dervanZcriticalConnectionsBridgeland2024} of considering an orthogonal decomposition. 
A crucial difference is that in \cite{sektnanHermitianYangMillsConnections2024,dervanZcriticalConnectionsBridgeland2024}, the deformation term enters at a later order in the adiabatic limit, whereas it plays a more significant role here. 

From \cref{eqn:decomposition-of-Hermitian-endomorphisms}, we have a decomposition
\[\mcA^0(\End_H(E, h)) = \mcA^0(H) \oplus R.\]
Recall that \(H\) is a real vector bundle over \(B\), with fibre over \(b\) given by \(H_b = \mcA^0(\End_H(E_b, h)) \cap H^0(X, \End\mcE_b)\). We will prove a lower bound on the two components, and then combine them.

\begin{lemma}
    \label{lem:lower-bound-on-R}
    For each \(r\), there exists \(C > 0\) independent of \(k\), such that for \(k \gg 0\), and any section \(\sigma \in R\), we have
    \[\norm{\grad_{k, r}\sigma}^2_{L^2(\omega_k)} \ge C\norm{\sigma}^2_{L^2(\omega_k)}.\]
\end{lemma}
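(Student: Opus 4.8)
The plan is to exploit the adiabatic scaling of $\omega_k$ to reduce the estimate to a purely fibrewise (vertical) one, and then to invoke a uniform spectral gap for the fibrewise Chern Laplacian. Since $\omega_X$ and $\omega_B$ are pulled back from the two factors of the product $X \times B$, all cross terms vanish and the volume form factors exactly as $\omega_k^{m+n} = \binom{m+n}{n}k^m\,\omega_X^n \wedge \omega_B^m$, with no lower-order corrections in $k$. Likewise, decomposing an $\End(E)$-valued one-form into vertical and horizontal parts $\beta = \beta_V + \beta_H$, the pointwise norm splits as $\abs{\beta}^2_{\omega_k} = \abs{\beta_V}^2_{\omega_X} + k^{-1}\abs{\beta_H}^2_{\omega_B}$, since the cotangent metric dual to $k\omega_B$ scales by $k^{-1}$. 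Writing $(\grad_{k,r})_V$ for the vertical part of $\grad_{k,r}$, applying this to $\beta = \grad_{k,r}\sigma$ and discarding the nonnegative horizontal term, I would obtain
\[\norm{\grad_{k,r}\sigma}^2_{L^2(\omega_k)} \ge \binom{m+n}{n}k^m\int_{X \times B}\abs{(\grad_{k,r})_V\sigma}^2_{\omega_X}\,\omega_X^n \wedge \omega_B^m,\]
while $\norm{\sigma}^2_{L^2(\omega_k)} = \binom{m+n}{n}k^m\int_{X \times B}\abs{\sigma}^2\,\omega_X^n\wedge\omega_B^m$. The common factor $k^m$ cancels, so it suffices to bound the vertical gradient below by $\abs{\sigma}^2$ in the product measure, which by Fubini reduces to a fibrewise statement.

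For the fibrewise estimate, I would first work with the unperturbed connection $\grad_{V,0}$. On each fibre $\mcE_b$, which is Hermite--Einstein, \Cref{lem:hermite-einstein-laplacians} identifies the kernel of $\grad_{V,0}$ acting on $\mcA^0(\End_{\mathrm H}(E_b, h))$ with the space $H_b$ of Hermitian holomorphic endomorphisms. Since $\sigma \in R$ is by definition fibrewise $L^2$-orthogonal to $H_b$, the Rayleigh quotient gives
\[\int_X\abs{\grad_{V,0}\sigma}^2_{\omega_X}\,\omega_X^n = \langle \Delta_{V,0}\sigma, \sigma\rangle_{L^2(X)} \ge \lambda_1(b)\int_X\abs{\sigma}^2\,\omega_X^n,\]
where $\lambda_1(b)$ is the smallest nonzero eigenvalue of the fibrewise Laplacian $\Delta_{V,0}$ on $\End_{\mathrm H}(E_b,h)$. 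The crucial point is that \Cref{assumption:constant-dimension} keeps $\dim\ker\Delta_{V,0}\vert_b = \dim H^0(X, \End\mcE_b)$ constant, so no eigenvalue crosses zero and $\lambda_1(b)$ varies continuously in $b$; compactness of $B$ then yields a uniform lower bound $\lambda_1(b) \ge c > 0$.

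It remains to transfer this gap from $\grad_{V,0}$ to the perturbed connection $(\grad_{k,r})_V$. By construction $f_{k,r}\cdot\delbar_s = \delbar_0 + \order{k^{-1/2}}$, so $(\grad_{k,r})_V = \grad_{V,0} + \order{k^{-1/2}}$, and the difference acts as $\sigma \mapsto [A,\sigma]$ for a pointwise $\order{k^{-1/2}}$ endomorphism-valued form $A$; hence $\norm{(\grad_{k,r})_V\sigma - \grad_{V,0}\sigma}_{L^2} \le C'k^{-1/2}\norm{\sigma}_{L^2}$. Combining with the triangle inequality and the uniform gap yields
\[\norm{(\grad_{k,r})_V\sigma}_{L^2} \ge (\sqrt{c} - C'k^{-1/2})\norm{\sigma}_{L^2} \ge \tfrac{1}{2}\sqrt{c}\,\norm{\sigma}_{L^2}\]
for $k \gg 0$; squaring and integrating over $B$ then gives the claim. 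The main obstacle is securing the uniformity of the spectral gap in both variables: uniformity in $b$ rests on \Cref{assumption:constant-dimension} preventing the kernel dimension from jumping, together with compactness of $B$, while uniformity in $k$ rests on the product structure making the horizontal contribution strictly subleading and on the $\order{k^{-1/2}}$ control of the connection. Once these are in place, the remaining Rayleigh-quotient and perturbation estimates are routine.
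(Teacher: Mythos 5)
Your proof is correct and follows essentially the same route as the paper: reduce to the fibrewise Poincar\'e inequality on $R_b$ via the adiabatic scaling of $\omega_k$, obtain uniformity in $b$ from continuity of the first nonzero eigenvalue of the fibrewise Laplacian together with compactness of $B$, and absorb the $\order{k^{-1/2}}$ perturbation $\grad_{k,r} = \grad_0 + \order{k^{-1/2}}$ for $k \gg 0$. Your explicit remark that \Cref{assumption:constant-dimension} prevents the kernel dimension from jumping (so the spectral gap cannot close) is a slightly more self-contained justification than the paper's citation for eigenvalue continuity, but it is the same argument in substance.
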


\begin{proof}
    As \(R_b\) is the orthogonal complement of \(\ker(\grad_{b, 0})\), for any \(\sigma \in R\), we have a pointwise over \(b \in B\) Poincar\'e inequality
    \[\norm{\grad_{b, 0}\sigma_b}^2_{L^2(\omega_X)} \ge C_b\norm{\sigma_b}^2_{L^2(\omega_X)}.\]
    A priori, \(C_b\) may not be bounded below as we vary \(b\). However, \(C_b\) can be taken to be the first non-zero eigenvalue of the Laplacian \(\Delta_{b, 0}\), and this depends continuously on \(b\) \cite[Theorem 50.16]{krieglConvenientSettingGlobal1997}. As \(B\) is compact, we have that
    \[\inf_{b \in B}C_b > 0.\]
    Thus, integrating over \(B\), we obtain
    \[\norm{\grad_{V, 0}\sigma}^2_{L^2(\omega_1)} \ge C\norm{\sigma}^2_{L^2(\omega_1)}.\]
    Note that by definition of metric \(\omega_k = \omega_X + k\omega_B\), we have the scaling relations
    \begin{align*}
        \norm{\grad_{V, 0}\sigma}^2_{L^2(\omega_k)} &= k^m\norm{\grad_{V, 0}\sigma}^2_{L^2(\omega_1)}, \\
        \norm{\grad_{H, 0}\sigma}^2_{L^2(\omega_k)} &= k^{m - 1}\norm{\grad_{H, 0}\sigma}^2_{L^2(\omega_1)}, \\
        \norm{\sigma}^2_{L^2(\omega_k)} &= k^m\norm{\sigma}^2_{L^2(\omega_1)}.
    \end{align*}
    Thus, as \(\grad_{k, r} = \grad_0 + \order{k^{-1/2}}\), we obtain that for \(k \gg 0\),
    \[\norm{\grad_{k, r}\sigma}^2_{L^2(\omega_k)} \ge C\norm{\sigma}^2_{L^2(\omega_k)}.\]

\end{proof}

\begin{lemma}
    \label{lem:lower-bound-on-H}
    For each \(r\), there exists \(C > 0\) independent of \(k\), such that for \(k \gg 0\), and any section \(\sigma \in \mcA^0(H)\) with
    \[\int_{X \times B}\tr(\sigma)\omega_1^{m+n} = 0,\]
    we have
    \[\norm{\grad_{k, r}\sigma}^2_{L^2(\omega_k)} \ge Ck^{-1}\norm{\sigma}^2_{L^2(\omega_k)}.\]
\end{lemma}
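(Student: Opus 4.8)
The plan is to use the scaling relations for $\omega_k$ to reduce the claim to a single $k$-independent Poincar\'e inequality on $(X \times B, \omega_1)$, which then splits into a Poincar\'e inequality on $B$ and the spectral gap of $\mcL$. First I would decompose $\sigma$ according to $H = \langle \id_E\rangle \oplus H_0$, writing $\sigma = \phi\,\id_E + \sigma_0$ with $\phi \in C^\infty(B, \R)$ and $\sigma_0 \in \mcA^0(H_0)$. Since each $\sigma_b$ lies in $H_b \subseteq H^0(X, \End \mcE_b)$, it is fibrewise holomorphic, hence $\grad_{V, 0}\sigma = 0$ by \Cref{lem:hermite-einstein-laplacians}. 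The trace constraint $\int_{X \times B}\tr(\sigma)\,\omega_1^{m+n} = 0$ becomes $\int_B \phi\,\omega_B^m = 0$, since $\sigma_0$ is fibrewise trace-free.

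Next I would expand $\grad_{k, r} = \grad_0 + \order{k^{-1/2}}$. Because $\grad_{V, 0}\sigma = 0$, the vertical part of $\grad_{k, r}\sigma$ is of order $k^{-1/2}$, with leading term $\sqrt{\lambda}\,k^{-1/2}A_{\End, V}\sigma$ coming from the deformation (using $s^2 = \lambda k^{-1}$); the gauge corrections in $f_{k, r}$ contribute nothing vertically at this order, as the leading correction $\sigma_3 \in \mcA^0(H_0)$ is fibrewise holomorphic. The horizontal part has leading term $\grad_{H, 0}\sigma$. Applying the scaling relations, the vertical contribution to $\norm{\grad_{k, r}\sigma}^2_{L^2(\omega_k)}$ is $k^{-1} \cdot k^m \lambda \norm{A_{\End, V}\sigma}^2_{L^2(\omega_1)} + \order{k^{m - 3/2}}$, the horizontal contribution is $k^{m-1}\norm{\grad_{H, 0}\sigma}^2_{L^2(\omega_1)} + \order{k^{m - 3/2}}$, and the target $k^{-1}\norm{\sigma}^2_{L^2(\omega_k)}$ is $k^{m-1}\norm{\sigma}^2_{L^2(\omega_1)}$. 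After factoring out $k^{m-1}$, the claim reduces, modulo $\order{k^{-1/2}}$ errors absorbed for $k \gg 0$, to the $k$-independent inequality
\[
\norm{\grad_{H, 0}\sigma}^2_{L^2(\omega_1)} + \lambda\norm{A_{\End, V}\sigma}^2_{L^2(\omega_1)} \ge C\norm{\sigma}^2_{L^2(\omega_1)}.
\]

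To prove this reduced inequality I would observe that the cross terms between $\phi\,\id_E$ and $\sigma_0$ vanish: $A_{\End, V}(\phi\,\id_E) = 0$, while $\tr(\grad_{H, 0}\sigma_0) = \grad_{H, 0}\tr(\sigma_0) = 0$ kills the horizontal cross term, and $\id_E \perp H_0$ fibrewise. Thus the left-hand side splits as $\rank(E)\norm{\dd\phi}^2_{L^2(B)} + \big(\norm{\grad_{H, 0}\sigma_0}^2 + \lambda\norm{A_{\End, V}\sigma_0}^2\big)$. The first summand is bounded below by $C\norm{\phi}^2_{L^2(B)}$ by the Poincar\'e inequality on the compact manifold $(B, \omega_B)$, using $\int_B \phi\,\omega_B^m = 0$. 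The second summand equals $\langle \mcL' \sigma_0, \sigma_0\rangle$ for the operator $\mcL' = p \circ \Delta_H + \lambda D_V$, which is self-adjoint, non-negative, and elliptic, with trivial kernel precisely because $\dim \mfg_a = 1$; since $B$ is compact its spectrum is discrete, so the smallest eigenvalue is positive and yields $\ge C\norm{\sigma_0}^2$.

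The main obstacle will be controlling the $\order{k^{-1/2}}$ error terms uniformly in $k$. These arise both from the subleading parts of $\grad_{k, r}$ and from cross terms between the leading and subleading contributions; they must be absorbed into the strictly positive leading quadratic form via Young's inequality. This absorption is possible only because the reduced form is bounded strictly below by $C\norm{\sigma}^2_{L^2(\omega_1)}$, so the crucial input is precisely the positivity furnished by the invertibility of $\mcL$ together with the assumption $\dim \mfg_a = 1$.
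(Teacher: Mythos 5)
Your proposal is correct and follows essentially the same route as the paper: expand \(\langle \Delta_{k,r}\sigma, \sigma\rangle_{L^2(\omega_k)}\) using the scaling relations and the expansion of the vertical Laplacian (whose order-\(k^{-1}\) term is the deformation contribution \(\norm{A_{\End, V}\sigma}^2\)), identify the leading \(k^{-1}\) coefficient with \(\langle \mcL\sigma, \sigma\rangle\), and conclude from the spectral gap of \(\mcL\) on the orthogonal complement of its kernel, which is \(\R\cdot\id_E\) by the assumption \(\dim\mfg_a = 1\). The only cosmetic difference is that you split off the \(\phi\,\id_E\) component explicitly and treat it via the scalar Poincar\'e inequality on \(B\), whereas the paper absorbs this into the statement that the trace-zero condition makes \(\sigma\) orthogonal to \(\ker\mcL\); the substance is identical.
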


\begin{proof}
    Recall that \(\Delta_{k, r} = \Delta_{V, k, r} + k^{-1}\Delta_{H, k, r}\). Then
    \begin{align*}
        \langle \Delta_{k, r}\sigma, \sigma \rangle_{L^2(\omega_k)} &= \langle \Delta_{V, k, r}\sigma, \sigma\rangle_{L^2(\omega_k)} + k^{-1}\langle \Delta_{H, k, r}\sigma, \sigma\rangle_{L^2(\omega_k)}.
    \end{align*}
    By \Cref{prop:laplacian-expansion}, and the fact that the \(L^2(\omega_k)\) inner product on \(X \times B\) is given by integrating the \(L^2(\omega_X)\) inner product over \(B\), we obtain that for \(\sigma \in \mcA^0(H)\),
    \[\langle\Delta_{V, k, r}\sigma, \sigma\rangle_{L^2(\omega_k)} = k^{-1}\langle D_2\sigma, \sigma\rangle_{L^2(\omega_k)} + \order{k^{-3/2}\norm{\sigma}_{L^2(\omega_k)}^2},\]
    for some operator \(D_2\). On the other hand,
    \[\langle\Delta_{H, k, r}\sigma, \sigma\rangle_{L^2(\omega_k)} = \langle \Delta_{H, 0}\sigma, \sigma\rangle_{L^2(\omega_k)} + \order{k^{-1/2}\norm{\sigma}_{L^2(\omega_k)}^2}.\]
    Combining these, we see that the leading order term in the expansion of \(\langle \Delta_{k, r}\sigma, \sigma\rangle_{L^2(\omega_k)}\) is given by
    \[k^{-1}\left(\langle D_2\sigma, \sigma\rangle_{L^2(\omega_k)} + \langle \Delta_{H, 0}\sigma, \sigma\rangle_{L^2(\omega_k)}\right).\]

    Recalling the definition of the linearisation \(\mcL\) of the family Hermite--Einstein equation, this is exactly \(k^{-1}\langle \mcL\sigma, \sigma\rangle_{L^2(\omega_k)}\).
    In particular, \(\mcL \colon \mcA^0(H) \to \mcA^0(H)\) is self-adjoint, elliptic and has nonnegative eigenvalues. By our assumption that \(\mfg_a = \C \cdot \id_E\), \(\sigma\) is orthogonal to the kernel of \(\mcL\), and so there exists \(C > 0\) such that
    \[\langle \mcL\sigma, \sigma\rangle_{L^2(\omega_k)} \ge C\norm{\sigma}^2_{L^2(\omega_k)}.\]
    Thus, for \(k \gg 0\),
    \[\norm{\grad_{k, r}\sigma}^2_{L^2(\omega_k)} \ge Ck^{-1}\norm{\sigma}^2_{L^2(\omega_k)}.\]

\end{proof}

\begin{lemma}
    \label{lem:combining-bounds}
    Let \(\alpha(k), \beta(k), \gamma(k), \delta(k), \epsilon(k)\) be functions from \(\Z_{> 0}\) to \(\R\). Suppose we have that
    \begin{align*}
        \alpha(k) &\ge Ck^{-1}\delta(k), \\
        \beta(k) &\ge C\epsilon(k), \\
        \abs{\gamma(k)} &\le C_\gamma k^{-3/2}(\alpha(k) + \delta(k)) + C_\gamma k^{-1/2}(\beta(k) + \epsilon(k)), \\
        \alpha(k), \beta(k), \delta(k), \epsilon(k) &\ge 0.
    \end{align*}
    Then there exists \(C > 0\) such that for \(k \gg 0\),
    \[\alpha(k) + \beta(k) + 2\gamma(k) \ge Ck^{-1}(\delta(k) + \epsilon(k)).\]
\end{lemma}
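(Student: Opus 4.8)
The plan is to eliminate \(\delta(k)\) and \(\epsilon(k)\) using the first two hypotheses, thereby reducing the entire estimate to the two nonnegative quantities \(\alpha(k)\) and \(\beta(k)\), and then to show that the cross term \(\gamma(k)\) is a lower-order perturbation that can be absorbed. First I would rearrange the hypotheses \(\alpha \ge Ck^{-1}\delta\) and \(\beta \ge C\epsilon\) into the upper bounds \(\delta \le C^{-1}k\,\alpha\) and \(\epsilon \le C^{-1}\beta\), which are valid since all the quantities are nonnegative. Substituting these into the right hand side of the bound on \(\abs{\gamma}\), and using \(k \ge 1\) to write \(\alpha + \delta \le (1 + C^{-1})k\,\alpha\) and \(\beta + \epsilon \le (1 + C^{-1})\beta\), the weightings by \(k^{-3/2}\) and \(k^{-1/2}\) respectively produce a uniform bound of the form \(\abs{\gamma(k)} \le C_0 k^{-1/2}(\alpha(k) + \beta(k))\), for a constant \(C_0\) depending only on \(C\) and \(C_\gamma\).

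The crucial observation is that both terms in the bound on \(\gamma\) carry an extra factor of \(k^{-1/2}\) relative to \(\alpha + \beta\): the first term gains a factor of \(k^{-1/2}\) because trading \(\delta\) for \(\alpha\) costs only a factor of \(k\), which is overwhelmed by the \(k^{-3/2}\) weight, while the second already carries \(k^{-1/2}\) since \(\epsilon\) is comparable to \(\beta\). With this estimate in hand, I would bound \(2\gamma(k) \ge -2\abs{\gamma(k)} \ge -2C_0 k^{-1/2}(\alpha(k)+\beta(k))\), giving
\[\alpha(k) + \beta(k) + 2\gamma(k) \ge \left(1 - 2C_0 k^{-1/2}\right)\left(\alpha(k) + \beta(k)\right).\]
For \(k\) large enough that \(2C_0 k^{-1/2} \le \tfrac12\), the prefactor is bounded below by \(\tfrac12\).

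Finally, I would relate \(\alpha + \beta\) back to \(k^{-1}(\delta + \epsilon)\). The first hypothesis already gives \(\alpha \ge Ck^{-1}\delta\), and since \(k \ge 1\) the second gives \(\beta \ge C\epsilon \ge Ck^{-1}\epsilon\); adding these yields \(\alpha(k) + \beta(k) \ge Ck^{-1}(\delta(k) + \epsilon(k))\). Combining this with the previous display produces the claim with constant \(C/2\). This argument is entirely elementary, so there is no genuine obstacle; the only point requiring care is the bookkeeping of the powers of \(k\), and in particular verifying that the loss of a factor of \(k\) incurred when trading \(\delta\) for \(\alpha\) is exactly compensated by the \(k^{-3/2}\) weight, so that \(\gamma\) remains strictly lower order than \(\alpha + \beta\).
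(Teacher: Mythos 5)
Your proof is correct and is essentially the same argument as the paper's: both absorb the cross term \(\gamma\) using the hypotheses together with the extra factor of \(k^{-1/2}\), and differ only in bookkeeping (you eliminate \(\delta,\epsilon\) in favour of \(\alpha,\beta\) before absorbing \(\gamma\) and convert back at the end, whereas the paper substitutes \(\alpha \ge Ck^{-1}\delta\) and \(\beta \ge C\epsilon\) directly into the expanded lower bound and works in the \(\delta,\epsilon\) variables throughout). The resulting constants and the threshold on \(k\) are of the same nature, so there is nothing further to flag.
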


\begin{proof}
    Using the bound on \(\abs{\gamma(k)}\), we have that
    \[\alpha(k) + \beta(k) + 2\gamma(k) \ge (1 - 2C_\gamma k^{-3/2})\alpha(k) + (1 - 2C_\gamma k^{-1/2})\beta(k) - 2C_\gamma k^{-3/2}\delta(k) - 2C_\gamma k^{-1/2}\epsilon(k).\]
    Using the lower bounds of \(\alpha(k)\) and \(\beta(k)\), we then obtain that for \(k \gg 0\),
    \begin{align*}
        \alpha(k) + \beta(k) + 2\gamma(k) &\ge (Ck^{-1} - 2C_\gamma k^{-3/2} - 2CC_\gamma k^{-5/2})\delta(k) \\
        &+ (C - 2C_\gamma k^{-1/2} - 2CC_\gamma k^{-3/2})\epsilon(k).
    \end{align*}
    Thus, for \(k \gg 0\),
    \[\alpha(k) + \beta(k) + 2\gamma(k) \ge Ck^{-1}\delta(k) + C\epsilon(k) \ge Ck^{-1}(\delta(k) + \epsilon(k))\]
    as required.
\end{proof}

\begin{proof}[Proof of \Cref{prop:poincare-inequality}]
    To conclude the result from the previous lemmas, we need to consider the mixed terms. Let \(\sigma \in \mcA^0(H)\) and \(\tau \in R\). Then we obtain that
    \[\langle \Delta_{k, r}\sigma, \tau\rangle_{L^2(\omega_k)} = k^{-1/2}\langle [A_V, \sigma], \grad_{V, 0}\tau\rangle_{L^2(\omega_k)} + \order{k^{-1}\norm{\sigma}_{L^2_1(\omega_k, \grad_0)}\norm{\tau}_{L^2_1(\omega_k, \grad_0)}}.\]
    Here, \(A_V = -a_V^* + a_V\) is the vertical first-order deformation term. Note that we have \(L^2_1\)-norms, as derivatives may appear in the remaining terms. For concreteness, we define the \(L^2_1(\omega_k, \grad_0)\)-norm by
    \begin{align*}
        \norm{\sigma}^2_{L^2_1(\omega_k, \grad_0)} &= \norm{\sigma}^2_{L^2(\omega_k)} + \norm{\grad_0\sigma}^2_{L^2(\omega_k)} \\
        &= k^m\norm{\sigma}^2_{L^2(\omega_1)} + k^m\norm{\grad_{V, 0}\sigma}^2_{L^2(\omega_1)} + k^{m - 1}\norm{\grad_{H, 0}\sigma}^2_{L^2(\omega_1)}.
    \end{align*}
    
    Now
    \[\langle[A_V, \sigma], \grad_{V, 0}\tau\rangle_{L^2(\omega_k)} = \langle \grad_{V, 0}^*[A_V, \sigma], \tau\rangle_{L^2(\omega_k)}.\]
    By the Nakano identity, applied for the operators \(\grad_{b, 0}\) on each fibre \(E_b\), it suffices to show that \(\grad_{V, 0}^{1, 0}a_V = 0\) and \(\grad_{V, 0}^{0, 1}a_V^* = 0\). As \(\grad_{b, 0}\) is the Chern connection on each \(E_b\), it is unitary and so the two conditions are equivalent. Thus, we will study \(\grad^{1,0}_{V, 0}a_V\).

    By the Maurer--Cartan equations, we have that \(\grad^{0, 1}_{V, 0}a_V = 0\). By our gauge fixing assumption, \(\left(\grad^{0, 1}_{V, 0}\right)^*a_V = 0\). Thus, \(\Delta^{0, 1}_{V, 0}a_V = 0\). But now as the central fibre \((\mcE, \delbar_0)\) of our deformation is vertically Hermite--Einstein, the vertical Laplacians are equal, and so \(\grad^{1, 0}_{V, 0}a_V = 0\). In particular, we have shown that for \(k \gg 0\),
    \[\abs{\langle\Delta_{k, r}\sigma, \tau\rangle_{L^2(\omega_k)}} \le Ck^{-1}\norm{\sigma}_{L^2_1(\omega_k, \grad_0)}\norm{\tau}_{L^2_1(\omega_k, \grad_0)} = Ck^{-3/4}\norm{\sigma}_{L^2_1(\omega_k, \grad_0)}k^{-1/4}\norm{\tau}_{L^2_1(\omega_k, \grad_0)}.\]
    Young's inequality with \(p = q = 2\) then implies that
    \[\abs{\langle\Delta_{k, r}\sigma, \tau\rangle_{L^2(\omega_k)}} \le C\left(k^{-3/2}\norm{\sigma}_{L^2_1(\omega_k, \grad_0)}^2 + k^{-1/2}\norm{\tau}_{L^2_1(\omega_k, \grad_0)}^2\right).\]

    As \(X\) and \(B\) are compact, the \(L^2_1(\omega_k, \grad_0)\) and \(L^2_1(\omega_k, \grad_{k, r})\) norms are equivalent, but the constants of equivalence may depend on \(k\). Computing explicitly, we have \(C_k\) such that
    \[C_k^{-1}\norm{\grad_0\sigma}^2_{L^2(\omega_k)} \le \norm{\grad_{k, r}\sigma}_{L^2(\omega_k)}^2 \le C_k\norm{\grad_0\sigma}_{L^2(\omega_k)}^2,\]
    where \(C_k = 2 + \order{k^{-1/2}}\). Thus, for \(k \gg 0\), we may take the same constant \(C\) for all \(k\). Thus, we obtain that for some \(C_m > 0\),
    \begin{align*}
        \abs{\langle\Delta_{k, r}\sigma, \tau\rangle_{L^2(\omega_k)}} &\le C_m\left(k^{-3/2}\norm{\sigma}_{L^2_1(\omega_k, \grad_{k, r})}^2 + k^{-1/2}\norm{\tau}_{L^2_1(\omega_k, \grad_{k, r})}^2\right) \\
        &= C_m\left(k^{-3/2}\norm{\sigma}_{L^2(\omega_k)}^2 + k^{-1/2}\norm{\tau}_{L^2(\omega_k)}^2\right) \\
        &+ C_m\left(k^{-3/2}\norm{\grad_{k, r}\sigma}_{L^2(\omega_k)}^2 + k^{-1/2}\norm{\grad_{k, r}\tau}_{L^2(\omega_k)}^2\right).
    \end{align*}
    The result then follows from \Cref{lem:lower-bound-on-R,lem:lower-bound-on-H,lem:combining-bounds}.
\end{proof}

\begin{proposition}
    \label{prop:schauder-estimate}
    There exists \(C > 0\) independent of \(k\), such that
    \[\norm{\sigma}_{L^2_{d+2}(\omega_k)} \le Ck^{-1}\left(\norm{\sigma}_{L^2(\omega_k)} + \norm{\Delta_{k, r}\sigma}_{L^2_d(\omega_k)}\right).\]
\end{proposition}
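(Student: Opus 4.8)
The plan is to prove a G\r{a}rding-type interior elliptic estimate for the second-order operator \(\Delta_{k, r}\), and then to track the powers of \(k\) introduced by the degenerating metric \(\omega_k\). For each fixed \(k\) the operator \(\Delta_{k, r}\) is elliptic on the compact manifold \(X \times B\), so a standard elliptic estimate holds with \emph{some} constant; the whole content of the proposition is that this constant can be controlled uniformly as \(k \to \infty\), where the horizontal directions of \((X \times B, \omega_k)\) blow up.

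First I would localise, using a partition of unity subordinate to a finite cover of \(X \times B\) by product charts \(U \times W\); by compactness it suffices to prove the estimate on each chart with constants independent of \(k\) and of the chart. On such a chart I would rescale the base coordinates by \(\sqrt{k}\), so that \(\omega_k = \omega_X + k\omega_B\) becomes uniformly comparable to the fixed product metric \(\omega_X + \omega_B\). Under this rescaling the operator \(\Delta_{k, r} = \Delta_{V, k, r} + k^{-1}\Delta_{H, k, r}\) is carried to a uniformly elliptic operator whose coefficients, by \Cref{prop:laplacian-expansion} together with \(\grad_{k, r} = \grad_0 + \order{k^{-1/2}}\), agree with those of the fixed model operator \(\Delta_{V, 0} + \Delta_{H, 0}\) up to terms of size \(\order{k^{-1/2}}\). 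The interior elliptic estimate for the model operator holds with a uniform constant, and for \(k \gg 0\) the \(\order{k^{-1/2}}\) perturbation is absorbed.

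It then remains to translate the rescaled estimate back to the original coordinates, for which I would use the scaling relations for \(\omega_k\) already recorded in the proof of \Cref{lem:lower-bound-on-R}: each horizontal covariant derivative contributes a factor of \(k^{-1}\) to the squared \(L^2(\omega_k)\)-norm, while vertical derivatives and the volume form scale as in the unrescaled metric. Matching these weights on both sides of the model estimate produces the claimed dependence on \(k\). To reach general \(d\) I would bootstrap, differentiating the equation and commuting \(\grad_{k, r}\) past \(\Delta_{k, r}\); the resulting commutators are of lower order and, by the same expansions, carry strictly higher powers of \(k^{-1/2}\), so they are controlled by the previous step and absorbed.

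The \emph{main obstacle} is the anisotropy of \(\Delta_{k, r}\): its horizontal part is weighted by \(k^{-1}\), so the operator is only uniformly elliptic after the base rescaling, and one must check that the cross terms mixing vertical and horizontal derivatives — precisely the terms handled delicately in the proof of \Cref{prop:poincare-inequality} using the gauge-fixing condition \(\delbar_{V, 0}^* a_V = 0\) — enter at subleading order. A secondary point is uniformity in \(b \in B\), which follows as in \Cref{prop:poincare-inequality} from compactness of \(B\) and the smooth dependence of the model operator on \(b\).
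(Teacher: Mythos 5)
Your rescaling strategy is a viable but genuinely different route from the one taken in the paper. The paper does not rescale: it proves a local-on-\(B\) estimate by splitting the seminorm pointwise into vertical, mixed and horizontal pieces, \(\abs{\dd^2\sigma}^2_{\omega_k} = \abs{\dd_V^2\sigma}^2 + \abs{\dd_V\dd_H\sigma}^2 + \abs{\dd_H^2\sigma}^2\), and bounds each piece by a separate elliptic estimate: fibrewise for \(\Delta_V\) on \(X\) (uniform in \(b\) by compactness), slicewise for \(\Delta_H\) on the base (uniform in \(x \in X\)), and for \(\Delta_1 = \Delta_V + \Delta_H\) on the product for the mixed term, the last requiring a comparison of \(\norm{\Delta_k\sigma}\) with \(\norm{k^{-1}\Delta_1\sigma}\) that produces the \(k^{-1/2}\norm{\sigma}_{L^2_{d+2}}\) error absorbed at the end; the global statement is then obtained by patching as in Sektnan--Tipler. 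Your blow-up of the base by \(\sqrt{k}\) packages all three pieces into a single uniformly elliptic model and is arguably cleaner. Its cost is that the rescaled base chart has diameter of order \(\sqrt{k}\), so you cannot apply a single interior estimate on a fixed domain: you must cover the rescaled disc by unit balls with bounded overlap, apply the translated interior estimate on each, and sum. That step is routine, but it is the step that replaces the paper's three separate estimates and should be stated.

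Two further points. First, the gauge-fixing condition \(\delbar_{V,0}^*a_V = 0\) plays no role in this proposition: it is used in \Cref{prop:poincare-inequality} to control the cross terms between the subspaces \(\mcA^0(H)\) and \(R\) in the decomposition of \(\mcA^0(\End_{\mathrm H}(E,h))\), which is a different kind of ``cross term'' from the mixed derivatives \(\dd_V\dd_H\sigma\) appearing here; the latter are controlled purely by ellipticity, and in your rescaled picture they are handled automatically by the uniform ellipticity of the model operator. Second, your remark that ``matching the weights produces the claimed dependence on \(k\)'' glosses over the fact that the rescaled argument yields a constant \emph{independent} of \(k\), with no factor of \(k^{-1}\): the \(\omega_k\)-Sobolev norms are exactly the standard norms of the rescaled picture, so nothing is gained in translating back. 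Indeed the prefactor \(k^{-1}\) in the statement cannot be correct as printed: for a fixed nonzero section pulled back from a slice \(X \times \{b\}\), every norm appearing is comparable to \(k^{m/2}\) times a \(k\)-independent quantity, so the inequality with the \(k^{-1}\) prefactor fails for \(k \gg 0\). A \(k\)-independent constant is what both arguments produce, and it is exactly what is needed to combine with \Cref{prop:poincare-inequality} to obtain \(\norm{\mcP_{k,r}}_{\mathrm{op}} \le Ck\) in \Cref{prop:operator-norm-of-right-inverse}; you should state that this is the estimate you are actually proving.
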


To prove this, we will first prove a result which is local on \(B\). For \(U \subseteq B\) open, define \(L^2_d(U, \omega_k)\) for the Sobolev space of sections of \(\mcE \vert_{X \times U}\).

\begin{lemma}
    Fix a point \(b \in B\), and a coodinate system \(U\) centred at \(b\). Then for every sufficiently small coordinate disc \(D_{2\rho} \subseteq U\) of radius \(2\rho\), there exists a constant \(C = C(b, U, \rho, d)\) such that for all \(\sigma \in L^2_{d+2}(D_{2\rho}, \omega_k)\),
    \[\norm{\sigma}_{L^2_{d+2}(D_{2\rho}, \omega_k)} \le C\left(\norm{\sigma}_{L^2(D_\rho, \omega_k)} + \norm{\Delta_{k, r}\sigma}_{L^2_d(D_{2\rho}, \omega_k)} + k^{-1/2}\norm{\sigma}_{L^2_{d+2}(D_{2\rho}, \omega_k)}\right)\]
    for all \(k \gg 0\).
\end{lemma}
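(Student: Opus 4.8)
The plan is to deduce the estimate from the standard interior $L^2$ elliptic estimate for a \emph{uniformly} elliptic operator, by rescaling the base directions so as to neutralise the degeneration of $\omega_k = \omega_X + k\omega_B$ as $k \to \infty$. Working in the coordinate disc $D_{2\rho}$ centred at $b$, I would introduce rescaled base coordinates $w = \sqrt{k}\,z$, so that $\omega_k$ becomes, to leading order, the fixed product metric $\omega_X + \omega_{B,b}$, where $\omega_{B,b}$ is the constant metric at $b$. Under this rescaling the base variation of the metric coefficients becomes slowly varying, with $w$-derivatives of size $\order{k^{-1/2}}$, so the family of operators remains uniformly elliptic with coefficients bounded uniformly in $k$ on a domain which \emph{expands} (the disc $D_{2\rho}$ becomes a disc of radius $\sim 2\rho\sqrt{k}$ in $w$) rather than degenerating.

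The rescaling plays two roles. First, by the scaling relations for $\norm{\grad_V\cdot}_{L^2(\omega_k)}$ and $\norm{\grad_H\cdot}_{L^2(\omega_k)}$ used in the proof of \Cref{lem:lower-bound-on-R}, the weighted norms $\norm{\cdot}_{L^2_d(\omega_k)}$ on $X \times D_{2\rho}$ correspond, up to uniformly bounded constants, to ordinary Sobolev norms in the $w$ coordinates for the model metric: each vertical derivative is left unchanged, while each horizontal $\omega_k$-derivative becomes an ordinary $w$-derivative, since $|\partial_z\sigma|_{\omega_k} = k^{-1/2}|\partial_z\sigma|_{\omega_B} = |\partial_w\sigma|_{\omega_{B}}$, and the volume form $\omega_k^{n+m}$ becomes the model volume. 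Second, using the decomposition $\Delta_{k,r} = \Delta_{V,k,r} + k^{-1}\Delta_{H,k,r}$ together with $\grad_{k,r} = \grad_0 + \order{k^{-1/2}}$, the rescaled operator takes the form $\widetilde\Delta_k = \Delta_V + \Delta_H^{\mathrm{model}} + k^{-1/2}P_1 + \order{k^{-1}}$, where $\Delta_V + \Delta_H^{\mathrm{model}}$ is a fixed second-order uniformly elliptic operator and $P_1$ is first order with uniformly bounded coefficients. The crucial bookkeeping is that the second-order horizontal part $k^{-1}\Delta_H$ rescales to $\order{1}$ (since $\partial_z^2 = k\,\partial_w^2$), whereas the first-order horizontal terms rescale only to $\order{k^{-1/2}}$ (since $\partial_z = \sqrt{k}\,\partial_w$); it is exactly these first-order terms that will produce the $k^{-1/2}$ correction in the statement.

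With these reductions in hand, I would apply the standard interior elliptic estimate for $\widetilde\Delta_k$ on the concentric discs $D_\rho \Subset D_{2\rho}$, controlling the $(d+2)$-norm on the inner region by the $L^2$ data and $\norm{\widetilde\Delta_k\sigma}_{L^2_d}$ on the outer region. Because $\widetilde\Delta_k$ is uniformly elliptic with uniformly $C^d$-bounded, slowly varying coefficients, and because the gap between the discs grows like $\rho\sqrt{k}$ (so the cut-off function entering the estimate has derivatives of size $\order{(\rho\sqrt{k})^{-1}}$), the constant can be taken independent of $k$ for $k \gg 0$. Finally I would move the $k^{-1/2}P_1$ and $\order{k^{-1}}$ terms of $\widetilde\Delta_k$ to the right-hand side: since $P_1$ is first order, $\norm{k^{-1/2}P_1\sigma}_{L^2_d} \le Ck^{-1/2}\norm{\sigma}_{L^2_{d+1}} \le Ck^{-1/2}\norm{\sigma}_{L^2_{d+2}}$, which is precisely the absorbable error term. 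Translating back to the $z$ coordinates and the $\omega_k$-norms then gives the claimed inequality, with \Cref{prop:laplacian-expansion} providing the structural input on how the vertical Laplacian depends on $k$.

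The main obstacle is the uniformity of the constant in $k$. One must verify that the interior estimates for the rescaled operators $\widetilde\Delta_k$ hold with a single constant, which rests on the observation that, after rescaling, the $\widetilde\Delta_k$ form a uniformly elliptic family with uniformly controlled coefficients on an expanding (not collapsing) domain. The careful separation of the second-order and first-order horizontal contributions — the former surviving at $\order{1}$ and the latter appearing only at $\order{k^{-1/2}}$ — is what simultaneously secures the uniform ellipticity and pins down the precise form of the error term, and is the step demanding the most care.
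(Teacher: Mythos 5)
Your proposal is correct, but it takes a genuinely different route from the paper. The paper never rescales coordinates: it splits the pointwise seminorm as \(\abs{\dd^2\sigma}^2_{\omega_k} = \abs{\dd_V^2\sigma}^2_{\omega_k} + \abs{\dd_V\dd_H\sigma}^2_{\omega_k} + \abs{\dd_H^2\sigma}^2_{\omega_k}\) and estimates each piece with a different fixed elliptic operator --- the vertical piece via fibrewise ellipticity of \(\Delta_V\) on each \(\End(E_b) \to X\), the horizontal piece via ellipticity of \(\Delta_H\) on \(U\) for each fixed \(x \in X\), and the mixed piece via ellipticity of \(\Delta_1 = \Delta_V + \Delta_H\) on \(X \times U\) --- with uniformity of constants coming from compactness and the \(\omega_k\)-scaling relations converting \(\omega_1\)-bounds into \(\omega_k\)-bounds; its \(k^{-1/2}\) error term arises from comparing \(\norm{k^{-1}\Delta_1\sigma}\) with \(\norm{\Delta_k\sigma}\) (the cross term \(\langle \Delta_V\sigma, \Delta_H\sigma\rangle\)) and from the perturbation \(\Delta_{k,r} - \Delta_k\). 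Your rescaling \(w = \sqrt{k}\,z\) handles all three components in a single application of the interior estimate and in particular sidesteps the paper's separate, most delicate treatment of the mixed term; the price is that you must justify a uniform interior estimate on an expanding domain (a covering by unit-scale balls with bounded overlap, using that the rescaled coefficients are uniformly bounded with \(w\)-derivatives of size \(\order{k^{-1/2}}\)) and the uniform equivalence of the rescaled Sobolev norms with the \(L^2_d(\omega_k)\)-norms, both of which you correctly identify and both of which are routine. Two small remarks: on the expanding disc \(\abs{w} < 2\rho\sqrt{k}\) the coefficients \(g_B^{\alpha\bar\beta}(w/\sqrt{k})\) sweep out all of \(D_{2\rho}\) and so do not converge uniformly to the frozen metric at \(b\), so the limit operator is not literally the fixed model \(\Delta_V + \Delta_H^{\mathrm{model}}\) --- but since you only use uniform ellipticity and uniform coefficient bounds, nothing is lost; and the inequality your argument produces controls the \((d+2)\)-norm on the \emph{inner} disc by data on the \emph{outer} disc, which is the standard interior form and is what the paper's own proof actually establishes (the displayed statement appears to have the roles of \(D_\rho\) and \(D_{2\rho}\) partially transposed).
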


\begin{proof}
    We first establish a bound for the operator
    \[\Delta_k = \Delta_V + k^{-1}\Delta_H,\]
    with respect to the connection \(\grad_0\). That is, we will show that there exists \(C > 0\) such that
    \begin{equation}
        \label{eqn:unperturbed-estimate}
        \norm{\sigma}_{L^2_{d+2}(D_{2\rho}, \omega_k)} \le C\left(\norm{\sigma}_{L^2(D_\rho, \omega_k)} + \norm{\Delta_k\sigma}_{L^2_d(D_{2\rho}, \omega_k)} + k^{-1/2}\norm{\sigma}_{L^2_{d+2}(D_{2\rho}, \omega_k)}\right)
    \end{equation}
    for all \(k \gg 0\). We will prove this for \(d = 0\). With respect to \(\omega_k\), the \(C^2\)-seminorm \(\abs{\dd^2\sigma}_{\omega_k}\) splits pointwise as
    \[\abs{\dd^2\sigma}^2_{\omega_k} = \abs{\dd_V^2\sigma}^2_{\omega_k} + \abs{\dd_V\dd_H\sigma}^2_{\omega_k} + \abs{\dd_H^2\sigma}^2_{\omega_k},\]
    where \(\dd_V\) denotes differentiating in the \(X\)-direction and \(\dd_H\) in the \(B\)-direction. Since \(\omega_k\) scales the \(B\) direction by \(k\), 
    \begin{align*}
        \abs{\dd_H^2\sigma}_{\omega_k} &= k^{-2}\abs{\dd_H^2\sigma}_{\omega_1}, \\
        \abs{\dd_V\dd_H\sigma}_{\omega_k} &= k^{-1}\abs{\dd_V\dd_H\sigma}_{\omega_1}. 
    \end{align*}

    Since \(\Delta_V\) restricted to each \(\End(E_b) \to X\) is elliptic, for each \(b \in U\) we obtain the estimate
    \[\left(\int_{X \times \{b\}}\abs{\dd_V^2\sigma}^2_{\omega_k}\omega_X^n\right)^{1/2} \le C_b\left(\norm{\sigma}_{L^2(X, \omega_X)} + \norm{\Delta_V\sigma}_{L^2(X, \omega_X)}\right).\]

    A priori, \(C_b\) depends on \(b\). But since \(D_\rho\) is relatively compact in \(U\), the constants of ellipticity for the \(\Delta_b\) may be uniformly bounded above and below. Since the constant \(C_b\) depends only on the constants of ellipticity and the norm of the coefficients of the operator, we may choose \(C > 0\) such that the above equality holds for all \(b \in B\). Integrating over \(D_\rho\), we obtain
    \[\left(\int_{X \times D_\rho}\abs{\dd_V^2\sigma}_{\omega_k}^2\omega_k^{m + n}\right)^{1/2} \le C\left(\norm{\sigma}_{L^2(D_\rho, \omega_k)} + \norm{\Delta_V\sigma}_{L^2(D_\rho, \omega_k)}\right).\]
    Applying the same argument to the \(C^1\)-seminorm, we obtain
    \[\left(\int_{X \times D_\rho}\abs{\dd_V\sigma}_{\omega_k}^2\omega_k^{m + n}\right)^{1/2} \le C\left(\norm{\sigma}_{L^2(D_\rho, \omega_k)} + \norm{\Delta_V\sigma}_{L^2(D_\rho, \omega_k)}\right).\]

    Next, we estimate the horizontal term. Since \(\Delta_H\) is elliptic on \(U\), for all \(x \in X\) we obtain an estimate
    \[\norm{\sigma}_{L^2_2(\{x\} \times D_\rho, \omega_B)} \le C_x\left(\norm{\sigma}_{L^2(\{x\} \times D_{2\rho}, \omega_B)} + \norm{\Delta_H\sigma}_{L^2(\{x\} \times D_{2\rho}, \omega_B)}\right).\]
    The same argument as above implies that we may choose \(C_x\) independent of \(x \in X\). Using the scaling properties, we obtain that
    \begin{align*}
        \left(\int_{X \times D_\rho}\abs{\dd_H^2\sigma}^2_{\omega_k}\omega_k^{m+n}\right)^{1/2} &\le Ck^{-2}\left(\norm{\sigma}_{L^2(D_{2\rho}, \omega_k)} + \norm{\Delta_H\sigma}_{L^2(D_{2\rho}, \omega_k)}\right), \\
        &\le Ck^{-1}\left(\norm{\sigma}_{L^2(D_{2\rho}, \omega_k)} + \norm{k^{-1}\Delta_H\sigma}_{L^2(D_{2\rho}, \omega_k)}\right)
    \end{align*}
    for \(k \gg 0\). Similarly, as the horizontal \(C^1\)-seminorm pointwise scales like \(k^{-1}\), we obtain
   \begin{align*}
        \left(\int_{X \times D_\rho}\abs{\dd_H\sigma}^2_{\omega_k}\omega_k^{m+n}\right)^{1/2} &\le Ck^{-1}\left(\norm{\sigma}_{L^2(D_{2\rho}, \omega_k)} + \norm{\Delta_H\sigma}_{L^2(D_{2\rho}, \omega_k)}\right), \\
        &\le C\left(\norm{\sigma}_{L^2(D_{2\rho}, \omega_k)} + \norm{k^{-1}\Delta_H\sigma}_{L^2(D_{2\rho}, \omega_k)}\right).
    \end{align*}

    Finally, we will bound the mixed second derivative term. Since \(\Delta_1 = \Delta_V + \Delta_H\) is elliptic on \(X \times U\), there exists \(C > 0\) such that
    \[\left(\int_{X \times D_\rho}\abs{\dd_V\dd_H\sigma}^2_{\omega_1}\omega_1^{m + n}\right) \le C \left(\norm{\sigma}_{L^2(D_{2\rho}, \omega_1)} + \norm{\Delta_1\sigma}_{L^2(D_{2\rho}, \omega_1)}\right).\]
    By the scaling properties, this implies that
    \[\left(\int_{X \times D_\rho}\abs{\dd_V\dd_H\sigma}^2_{\omega_k}\omega_k^{m + n}\right) \le Ck^{-1}\left(\norm{\sigma}_{L^2(D_{2\rho}, \omega_k)} + \norm{\Delta_1\sigma}_{L^2(D_{2\rho}, \omega_k)}\right).\]
    Now for \(k \ge 1\),
    \begin{align*}
        \norm{\Delta_k\sigma}^2_{L^2(D_{2\rho}, \omega_k)} - \norm{k^{-1}\Delta_1\sigma}^2_{L^2(D_{2\rho}, \omega_k)}
        &= \left(1 - \frac{1}{k^2}\right)\norm{\Delta_V(\sigma)}^2_{L^2(D_{2\rho}, \omega_k)} \\ 
        &+ 2\left(\frac{1}{k} - \frac{1}{k^2}\right)\langle \Delta_V\sigma, \Delta_H\sigma\rangle_{L^2(D_{2\rho}, \omega_k)}, \\
        &\ge -2\left(\frac{1}{k} - \frac{1}{k^2}\right)\abs{\langle \Delta_V\sigma, \Delta_H\sigma \rangle_{L^2(D_{2\rho}, \omega_k)}}, \\
        &\ge -2\left(\frac{1}{k} - \frac{1}{k^2}\right)\norm{\Delta_V\sigma}_{L^2(D_{2\rho}, \omega_k)}\norm{\Delta_H\sigma}_{L^2(D_{2\rho}, \omega_k)}.
    \end{align*}
    But \(\Delta_V, \Delta_H \colon L^2_2 \to L^2\) are bounded operators, and so we obtain that
    \[\norm{k^{-1}\Delta_1\sigma}_{L^2(D_{2\rho}, \omega_k)} \le \norm{\Delta_k\sigma}_{L^2(D_{2\rho}, \omega_k)} + C k^{-1/2}\norm{\sigma}_{L^2_2(D_{2\rho}, \omega_k)}.\]
    Thus, we have some \(C > 0\) such that
    \[\left(\int_{X \times D_{\rho}}\abs{\dd_V\dd_H\sigma}^2_{\omega_k}\omega_k^{m + n}\right)^{1/2} \le C\left(\norm{\sigma}_{L^2_2(D_{2\rho}, \omega_k)} + \norm{\Delta_k\sigma}_{L^2(D_{2\rho}, \omega_k)} + k^{-1/2}\norm{\sigma}_{L^2_{d+2}(D_{2\rho}, \omega_k)}\right).\]

    \Cref{eqn:unperturbed-estimate} follows by combining the estimates which we have proven. To conclude, we note that \(\delbar_{k, r}\) is an \(\order{k^{-1/2}}\) perturbation of \(\delbar_k\), and so we obtain an estimate of the form
    \[\norm{\Delta_{k, r}\sigma - \Delta_k\sigma}_{L^2_d(D_{2\rho}, \omega_k)} \le Ck^{-1/2}\norm{\sigma}_{L^2_{d + 2}(D_{2\rho}, \omega_k)}.\]
\end{proof}

With this lemma in hand, the proof of \Cref{prop:schauder-estimate} follows by an identical proof as \cite[Proposition 27]{sektnanHermitianYangMillsConnections2024}, and the proof of \Cref{prop:operator-norm-of-right-inverse} follows as in the proof of \cite[Proposition 25]{sektnanHermitianYangMillsConnections2024}. To conclude, we will need a Lipschitz bound for the non-linear term \(\mcN_{k, r} = \Psi_{k, r} - \dd\Psi_{k, r}\).

\begin{lemma}
    \label{lem:non-linear-part}
    There exists \(c, C > 0\) such that for \(k \gg 0\), if \(s_1, s_2 \in V_{k, d + 2}\) are such that \(\norm{s_i}_{L^2_{d + 2}} \le c\), then
    \[\norm{\mcN_{k,r}(s_1) - \mcN_{k, r}(s_2)}_{L^2_d} \le C\left(\norm{s_1}_{L^2_{d+2}} + \norm{s_2}_{L^2_{d+2}}\right)\norm{s_1 - s_2}_{L^2_{d+2}}.\]
\end{lemma}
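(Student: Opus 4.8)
The plan is to reduce to a \(k\)-uniform bound on the second derivative of \(\Psi_{k,r}\) and then integrate. Since \(\mcN_{k,r} = \Psi_{k,r} - \dd\Psi_{k,r}\) is by construction the nonlinear remainder, \(D\mcN_{k,r}(0) = 0\), and because both \(\dd\Psi_{k,r}\) and the term \(-\lambda\id_E\) are affine, \(D^2\mcN_{k,r} = D^2\Psi_{k,r}\). Writing \(s_t = s_2 + t(s_1 - s_2)\) and applying the mean value theorem twice gives
\[\mcN_{k,r}(s_1) - \mcN_{k,r}(s_2) = \int_0^1\!\!\int_0^1 D^2\Psi_{k,r}(us_t)\big(s_t,\, s_1 - s_2\big)\,\dd u\,\dd t.\]
Hence it suffices to produce \(c, C > 0\) with \(C\) independent of \(k\) such that \(\norm{D^2\Psi_{k,r}(\sigma)(\eta_1,\eta_2)}_{L^2_d} \le C\norm{\eta_1}_{L^2_{d+2}}\norm{\eta_2}_{L^2_{d+2}}\) whenever \(\norm{\sigma}_{L^2_{d+2}} \le c\); taking \(\eta_1 = s_t\), \(\eta_2 = s_1 - s_2\), and using \(\norm{s_t}_{L^2_{d+2}} \le \norm{s_1}_{L^2_{d+2}} + \norm{s_2}_{L^2_{d+2}}\) then yields the claim.

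For the second-derivative bound I would first record the algebraic form of the operator. Writing \(g = \exp(\sigma)\), which is \(h\)-positive Hermitian, the Chern connection of \(\exp(\sigma)\cdot\delbar_{k,r}\) differs from \(\grad_{k,r}\) by the endomorphism-valued one-form \(A(\sigma) = g^{-1}(\grad_{k,r}^{1,0}g) - (\grad_{k,r}^{0,1}g)g^{-1}\), so that
\[i\Lambda_{\omega_k}F_{\exp(\sigma)\cdot\delbar_{k,r}} = i\Lambda_{\omega_k}F_{k,r} + i\Lambda_{\omega_k}\grad_{k,r}A(\sigma) + i\Lambda_{\omega_k}\big(A(\sigma)\wedge A(\sigma)\big).\]
Here \(A(\sigma) = \mathcal{B}(\sigma)[\grad_{k,r}\sigma]\) is linear in \(\grad_{k,r}\sigma\) with coefficient \(\mathcal{B}(\sigma)\) a convergent power series in \(\sigma\) (the derivative-of-exponential series). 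Differentiating twice in \(\sigma\) therefore expresses \(D^2\Psi_{k,r}(\sigma)(\eta_1,\eta_2)\) as a finite sum of terms of the schematic shape \(\Lambda_{\omega_k}\big(P(\sigma)\ast\grad_{k,r}^{\,i}\eta_1\ast\grad_{k,r}^{\,j}\eta_2\ast\grad_{k,r}^{\,l}\sigma\ast Q\big)\) with \(i + j + l \le 2\), where \(P(\sigma)\) is an analytic bundle map that is uniformly bounded for \(\norm{\sigma}_{C^0}\) small, \(Q\) collects the background data \(F_{k,r}\) and \(\grad_{k,r}\), and \(\ast\) denotes pointwise contraction. The key structural point is that no more than two derivatives ever fall on the dynamic variables, which is exactly what lets \(L^2_{d+2}\)-inputs produce an \(L^2_d\)-output.

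Fixing \(d\) large enough that \(L^2_{d+2}\) embeds in \(C^0\) and acts by multiplication on \(L^2_d\), the Sobolev multiplication inequalities bound each such term by a constant times \(\norm{\eta_1}_{L^2_{d+2}}\norm{\eta_2}_{L^2_{d+2}}\), the factor \(\norm{\sigma}_{L^2_{d+2}} \le c\) and the background norms being absorbed. The substantive — and genuinely the only hard — issue is that the resulting constant must be independent of \(k\). This is the main obstacle, since the spaces \(L^2_d(\omega_k)\), the embedding and multiplication constants, and the contraction \(\Lambda_{\omega_k} = \Lambda_V + k^{-1}\Lambda_H\) all depend on \(k\). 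I would resolve it exactly as in the Schauder estimate of \Cref{prop:schauder-estimate} and in \cite{sektnanHermitianYangMillsConnections2024}: the metric \(\omega_k\) stretches the \(B\)-directions by \(k\), so every horizontal derivative and every horizontal contraction enters weighted by a favourable negative power of \(k\), while \(\grad_{k,r} = \grad_0 + \order{k^{-1/2}}\) and \(F_{k,r}\) is uniformly bounded, so all coefficient norms are controlled uniformly. Tracking these powers shows the multiplication constants may be chosen independent of \(k\) for \(k \gg 0\); granting this uniformity, the conclusion follows by the same reasoning as the corresponding Lipschitz estimate in \cite{sektnanHermitianYangMillsConnections2024}.
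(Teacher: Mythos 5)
Your proposal is correct and follows essentially the same route as the paper, which simply notes that the proof ``uses the mean value inequality'' and is identical to \cite[Lemma 29]{sektnanHermitianYangMillsConnections2024}: reduce via the mean value theorem to a bound on \(D^2\Psi_{k,r}\), expand the curvature of \(\exp(\sigma)\cdot\delbar_{k,r}\) to see that at most two derivatives fall on the dynamic variables, and control the constants uniformly in \(k\) by the adiabatic scaling of \(\omega_k\). You have in fact supplied more detail than the paper does, and you correctly isolate the \(k\)-uniformity of the Sobolev multiplication constants as the only substantive point.
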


The proof of this lemma uses the mean value inequality, and is identical to \cite[Lemma 29]{sektnanHermitianYangMillsConnections2024}. Finally, we need to show that the estimates we have proven also hold in the various Sobolev spaces.

\begin{lemma}
    [{\cite[Lemma 30]{sektnanHermitianYangMillsConnections2024}}]
    \label{lem:norm-bounds}
    The approximate solutions satisfy
    \[\norm{i\Lambda_{\omega_k}F_{\delbar_{k, r}} - \sum_{j=0}^rk^{-j/2}\gamma_j\id_E}_{C^d} = \order{k^{-(r + 2)/2}},\]
    and
    \[\norm{i\Lambda_{\omega_k}F_{\delbar_{k, r}} - \sum_{j=0}^rk^{-j/2}\gamma_j\id_E}_{L^2_d} = \order{k^{-(r + 1)/2}}.\]
\end{lemma}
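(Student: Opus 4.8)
The plan is to upgrade the formal adiabatic expansion underlying \Cref{prop:approximate-solutions} into genuine norm estimates; the substance lies entirely in converting the formal powers of \(k^{-1/2}\) into the two stated orders using the \(k\)-dependent geometry of \(\omega_k\).

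First I would record the precise structure of the error. Writing \(E_{k,r} = i\Lambda_{\omega_k}F_{\delbar_{k,r}} - \sum_{j=0}^r k^{-j/2}\gamma_j\id_E\), the inductive construction exhibits \(E_{k,r}\) as a finite sum \(\sum_{j} k^{-j/2}\Theta_j\) plus a higher-order tail, in which each \(\Theta_j\) is a fixed smooth section of \(\End E\) over \(X\times B\) that is independent of \(k\). This uses only that \(\Lambda_{\omega_k} = \Lambda_V + k^{-1}\Lambda_H\), that \(s = \sqrt\lambda\,k^{-1/2}\) so that \(F_{\delbar_s}\) expands in powers of \(k^{-1/2}\) with fixed coefficients, and that each \(f_{k,r}\) is obtained by exponentiating the fixed sections \(\phi_j,\sigma_j,\tau_j\) scaled by powers of \(k^{-1/2}\); hence \(F_{f_{k,r}\cdot\delbar_s}\) likewise expands as a power series in \(k^{-1/2}\) with fixed smooth coefficients. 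Because \(X\times B\) is compact, every \(\Theta_j\) is bounded in all \(C^d\) and \(L^2_d\) norms with respect to the fixed reference metric \(\omega_1\), uniformly in \(k\); this reduces both claimed estimates to a scaling computation for a single fixed section.

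Next I would set up the scaling dictionary relating \(\omega_k\)-norms to \(\omega_1\)-norms, exactly as in the scaling relations already used in the proof of \Cref{lem:lower-bound-on-R}. For a fixed endomorphism-valued section \(\Theta\) the pointwise norm \(\abs{\Theta}_{\omega_k} = \abs{\Theta}_h\) is independent of \(k\); a vertical covariant derivative is unchanged, while each horizontal covariant derivative is multiplied by \(k^{-1/2}\); and the volume form obeys \(\omega_k^{n+m} \sim k^{m}\,\omega_X^n\wedge\omega_B^m\). The \(C^d\) estimate then follows termwise: since horizontal derivatives can only improve the order, \(\norm{\Theta_j}_{C^d(\omega_k)} \le \norm{\Theta_j}_{C^d(\omega_1)}\) for \(k \ge 1\), and reading off the order of the first coefficient not eliminated by the construction gives the bound \(\order{k^{-(r+2)/2}}\). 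The \(L^2_d\) estimate follows by integrating the same pointwise bounds against \(\omega_k^{n+m}\); here the volume growth enlarges the exponent relative to the pointwise estimate, yielding \(\order{k^{-(r+1)/2}}\).

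The main difficulty is purely bookkeeping: one must keep the vertical and horizontal scalings, the half-integer expansion in \(s\), and the volume growth mutually consistent across all coefficients \(\Theta_j\) and all derivatives up to order \(d\), and then verify that the implied constants do not depend on \(k\). This last point is where compactness enters twice — compactness of \(X\times B\) to bound the fixed coefficient sections, and continuous dependence on \(b\in B\) of the fibrewise data (as invoked in \Cref{lem:lower-bound-on-R}) to prevent any constant from degenerating as \(b\) varies. With the scaling dictionary in hand the two estimates drop out immediately, which is why the argument is the same as in \cite[Lemma 30]{sektnanHermitianYangMillsConnections2024}.
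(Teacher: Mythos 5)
Your overall strategy --- expand the error $E_{k,r}$ as a sum of half-integer powers of $k^{-1}$ with $k$-independent smooth coefficient sections, bound those coefficients using compactness of $X \times B$, and convert to $\omega_k$-norms via the scaling dictionary --- is the right one, and it is essentially the argument behind the cited \cite[Lemma 30]{sektnanHermitianYangMillsConnections2024} (the paper itself gives no proof beyond the citation). The gap is in the exponent bookkeeping, which is precisely the part you describe as dropping out immediately. First, \Cref{prop:approximate-solutions} only eliminates the coefficients up to and including order $k^{-r/2}$, so the first surviving term in your sum $\sum_j k^{-j/2}\Theta_j$ sits at order $k^{-(r+1)/2}$, with coefficient $\psi_{r+1}$ a fixed, generically non-zero section. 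Since the $C^0$-norm of a fixed endomorphism does not see the base metric at all, the termwise estimate you describe gives $\norm{E_{k,r}}_{C^d} = \order{k^{-(r+1)/2}}$, not $\order{k^{-(r+2)/2}}$; you offer no mechanism for the extra half power, and none is available unless $\psi_{r+1} \equiv 0$, which the construction does not provide. Second, for the $L^2_d$ bound your own scaling relations (the third identity in the proof of \Cref{lem:lower-bound-on-R}) give $\norm{\Theta}_{L^2(\omega_k)} = k^{m/2}\norm{\Theta}_{L^2(\omega_1)}$ for a fixed section, with $m = \dim_{\C} B$; combined with your claimed pointwise bound this yields $\order{k^{(m-r-2)/2}}$, which agrees with the stated $\order{k^{-(r+1)/2}}$ only when $m = 1$. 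Neither you nor the paper restricts to one-dimensional base.

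The two slips do not cancel: starting from the correct pointwise order and the correct volume factor, the method you outline actually proves $\norm{E_{k,r}}_{C^d} = \order{k^{-(r+1)/2}}$ and $\norm{E_{k,r}}_{L^2_d} = \order{k^{(m-r-1)/2}}$, which is the form in which the estimate appears in \cite{sektnanHermitianYangMillsConnections2024}, where the loss of $k^{\dim B/2}$ in passing to the $L^2$-norm is explicit. These weaker bounds still suffice for \Cref{thm:hym-total-space} --- one needs $r \ge 4 + m$ rather than $r \ge 4$ when invoking \Cref{thm:qual-impl-fn-thm} --- but a correct write-up must either state the bounds with the $m$-dependent exponent and adjust the threshold on $r$ accordingly, or supply a genuine reason for the claimed half-power improvements. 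The remaining ingredients of your sketch (finiteness and $k$-independence of the coefficients, control of the Taylor remainder in $s$, uniformity in $b \in B$ by compactness) are fine.
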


With all of this, we can now prove \Cref{thm:hym-total-space}.

\begin{proof}
    [Proof of \Cref{thm:hym-total-space}]

    We wish to show that when \(r\) is sufficiently large, then \(\Psi_{k, r}\) has a zero for \(k \gg 0\). \Cref{lem:non-linear-part} implies that there exists a constant \(c > 0\) such that for all \(\rho > 0\) sufficiently small and \(k \gg 0\), \(\mcN_{k, r}\) is \(c\rho\)-Lipschitz on the ball of radius \(\rho\). By \Cref{prop:operator-norm-of-right-inverse}, \((2\norm{\mcP_{k, r}})^{-1}\) is bounded below by \(Ck^{-1}\) for some \(C > 0\). Combining these two, we obtain that there is \(C' > 0\) such that the radius \(\delta_1\) on which \(\mcN_{j, k}\) is \((2\norm{\mcP_{j, k}})^{-1}\)-Lipschitz satisfies
    \[\delta_1 \ge C' k^{-1}.\]
    Combining this with the bound for \(\norm{\mcP_{j, k}}\), we obtain that there exists a \(C'' > 0\) such that the \(\delta\) in \Cref{thm:qual-impl-fn-thm} satisfies
    \[\delta \ge C''k^{-2}.\]
    Thus, we can use \Cref{thm:qual-impl-fn-thm} to find a root of \(\Psi_{k, r}\), provided that \(\norm{\Psi_{k, r}(0)} \le C''k^{-2}\). By \Cref{lem:norm-bounds}, this holds for all \(k \gg 0\) provided \(r \ge 4\). Thus, we can find a zero in \(L^2_{d+2}\). Elliptic regularity implies that we in fact have a smooth solution.
\end{proof}

\section{The family Hermite--Einstein flow}

\label{sec:family-Hermite--Einstein-flow}

In the study of the Hermite--Einstein equations, one of the most important tools is the Hermite--Einstein flow, which is the parabolic PDE on Hermitian metrics, given by
\begin{equation}
    \label{eqn:Hermite--Einstein-flow}
    h^{-1}\pdv{h}{t} = -2\left(i\Lambda_\omega F_h - c\id_E\right).
\end{equation}

For us, it will be more convenient to write this in terms of endomorphisms. Fix a Hermitian metric \(h_0\) on \(E\), and write any other Hermitian metric as \(h = h_0\sigma\), where \(\sigma\) is a positive definite, self-adjoint endomorphism with respect to \(h_0\). Then the Hermite--Einstein flow may be written as the evolution equation on endomorphisms

\begin{equation}
    \label{eqn:Hermite--Einstein-flow-endomorphism}
    \pdv{\sigma}{t} = -2\sigma\left(i\Lambda_\omega F_{h_0\sigma} - c\id_E\right).
\end{equation}

The Hermite--Einstein flow was introduced by Donaldson \cite{donaldsonSelfdualYangMillsConnections1985}, who proved the long-time existence of the flow. Using this, he proved the existence of Hermite--Einstein metrics on stable bundles over projective surfaces. The heat flow approach has been extended to other settings, in particular, to Higgs bundles and to manifolds with boundary by Simpson \cite{simpsonConstructingVariationsHodge1988}, and to reflexive sheaves by Bando--Siu \cite{bandoStableSheavesEinsteinHermitian1994}. 

Even in the unstable case, the flow plays a key role. In \cite{daskalopoulosConvergencePropertiesYangMills2004,daskalopoulosBlowupSetYangMills2007}, Daskalopoulos--Wentworth shows that for K\"ahler surfaces, the corresponding connections converge away from an analytic bubbling set and along a subsequence, to the graded object of the Harder--Narasimhan--Seshadri filtration. This was generalised by Jacob \cite{jacobLimitYangMillsFlow2015,jacobYangMillsFlowAtiyahBott2016} and Sibley \cite{sibleyAsymptoticsYangMillsFlow2015} to higher dimensions.

In this section, we will define a flow associated to the family Hermite--Einstein equation, and prove long-time existence of the flow. In \Cref{subsec:dirichlet-problem}, we use the long time existence of the flow to prove the existence of solutions to the Dirichlet problem for the family Hermite--Einstein equation.

Let \((X, \omega_X)\) and \((B, \omega_B)\) be compact K\"ahler manifolds. Let \(\mcE \to X \times B\) be a holomorphic vector bundle, and fix a Hermitian metric \(h\) on \(\mcE\). Suppose that \((\mcE_b, h)\) is Hermite--Einstein for all \(b \in B\). Finally, let \(a \in \mcA^0(T^*X^{0, 1} \otimes \End E)\) be a first order vertical deformation of \(\delbar_{\mcE}\).

\begin{definition}
    The \emph{family Hermite--Einstein flow} is the evolution equation
    \begin{equation}
        \label{eqn:family-Hermite--Einstein-flow}
        \pdv{\sigma}{t} = -2\sigma\left(p_{h\sigma}(i\Lambda_H F_{h\sigma}) - \lambda i\nu_{h\sigma}(a)\right),
    \end{equation}
    where \(\sigma(t)\) is a path in \(\mcA^0(H_0)\). In other words, \(h\sigma\) is a fibrewise Hermite--Einstein metric on \(\mcE\).
\end{definition}

Define
\[P_\lambda(\sigma) = p_{h\sigma}(i\Lambda_H F_{h\sigma}) - \lambda i\nu_{h\sigma}(a),\]
so that the family Hermite--Einstein flow may be written as
\[\pdv{\sigma}{t} = -2\sigma P_\lambda(\sigma).\]
We will be considering \(\lambda > 0\) as fixed, and so we will write \(P = P_\lambda\).

Our goal will be to prove that the solution to the family Hermite--Einstein flow exists for all time, by adapting the arguments of Donaldson and Simpson \cite{donaldsonSelfdualYangMillsConnections1985,simpsonConstructingVariationsHodge1988}, using the analogy between the family Hermite--Einstein equation and the Hermite--Einstein equation outlined in \Cref{sec:space-of-vertically-he-metrics}. The main novelty is \Cref{lem:moment-map-evolution}, which describes how the deformation term evolves in time, using the moment map property.

Recall \Cref{assumption:constant-dimension}, which states that
\[\dim(H^0(X, \End(\mcE_b))) \text{ is constant,}\]
and which will be assumed throughout.

For short time existence, we will appeal to the general theory of parabolic equations.

\begin{lemma}
    The linearisation of \(P\) at \(\sigma = \id_E\) is elliptic.
\end{lemma}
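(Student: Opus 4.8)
The plan is to reduce the statement to a principal-symbol computation: since ellipticity depends only on the principal symbol, it suffices to isolate the terms in the linearisation $DP_{\id}$ that can produce second derivatives of the variation, and to check that the resulting second-order symbol is invertible. I expect this symbol to be $\tfrac12\abs{\xi}_{\omega_B}^2\,\id_{H_0}$, the same positive-definite form obtained for the operator $\mcL$ in \Cref{prop:adiabatic-linearisation-elliptic}.

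First I would differentiate. Take a path $\sigma_t = \exp(t\rho)$ with $\rho \in \mcA^0(H_0)$ a Hermitian endomorphism, so that $h\sigma_t = h\exp(t\rho)$ stays vertically Hermite--Einstein, and recall $P(\sigma_t) = p_{h\sigma_t}(i\Lambda_H F_{h\sigma_t}) - \lambda i \nu_{h\sigma_t}(a)$. Differentiating the curvature term by the Leibniz rule gives
\[\frac{d}{dt}\bigg\vert_{0} p_{h\sigma_t}(i\Lambda_H F_{h\sigma_t}) = \dot p_\rho\big(i\Lambda_H F_{h}\big) + p_h\left(\frac{d}{dt}\bigg\vert_{0} i\Lambda_H F_{h\exp(t\rho)}\right),\]
where $\dot p_\rho$ denotes the variation of the $L^2$-projection. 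By the $\Lambda_H$-analogue of \Cref{prop:he-hym-linearisation}(i), the second factor equals $p_h(\Delta_H^{1,0}\rho)$, and this is the only place where second derivatives of $\rho$ can enter, since the curvature $F_{h\exp(t\rho)}$ is the sole source of two derivatives of the metric variation while $i\Lambda_H F_h$ is held fixed in the first term.

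Next I would argue that the remaining contributions are of lower order. The term $\dot p_\rho(i\Lambda_H F_h)$ is the variation of a finite-rank $L^2$-projection applied to a fixed section; expressed through the Green's operator it depends on $\rho$ only via the metric and produces no second-order derivatives of $\rho$. Similarly, $\nu_{h\sigma_t}(a)$ depends on $h\sigma_t$ only algebraically, through the adjoint $a^*$ taken with respect to the metric, so $\frac{d}{dt}\vert_0 \nu_{h\sigma_t}(a)$ is zeroth order in $\rho$ and contributes nothing to the symbol. Consequently the principal part of $DP_{\id}$ is $p_h \circ \Delta_H^{1,0}$, acting on $\mcA^0(H_0)$.

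Finally I would compute the symbol exactly as in the proof of \Cref{prop:adiabatic-linearisation-elliptic}. In a local trivialisation of $H_0 \to B$ by fibrewise-holomorphic frame sections $e_i$, write a section as $f^i e_i$; then $\Delta_H^{1,0}(f^i e_i)$ has leading term $(\Delta_B^{1,0} f^i)e_i$, and since each $e_i \in H_0$ the projection $p_h$ fixes this leading term. As $\Delta_B^{1,0}$ is the $(1,0)$-Laplacian on $(B,\omega_B)$, the principal symbol of $DP_{\id}$ at $0 \ne \xi \in T^*_b B$ is the invertible endomorphism $\tfrac12\abs{\xi}_{\omega_B}^2\,\id_{H_0}$, and hence $DP_{\id}$ is elliptic. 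The main obstacle is the bookkeeping around the nonlocal projection $p_h$: one must confirm that neither $\dot p_\rho$ nor the way $p_h$ returns $\Delta_H^{1,0}\rho$ to $\mcA^0(H_0)$ disturbs the top-order symbol, which is precisely why the local-frame argument of \Cref{prop:adiabatic-linearisation-elliptic} is the right tool to invoke.
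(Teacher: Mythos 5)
Your proposal is correct and follows essentially the same route as the paper: identify the principal part of the linearisation as \(p_h \circ \Delta_H^{1,0}\), note that the variations of the projection and of the deformation term are order zero, and then compute the symbol by the local-frame argument of \Cref{prop:adiabatic-linearisation-elliptic}. The only difference is the level of detail (and an immaterial constant factor), so nothing further is needed.
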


\begin{proof}
    The linearisation of \(P\) at \(\sigma = \id_E\) is given by
    \[L(\tau) = -2p_h(\Delta_H^{1, 0}\tau) + Q(\tau)\]
    where \(Q(\tau)\) is an order-zero operator, given by linearising the projection operator \(p_{h\sigma}\) and the deformation term \(i\nu_{h\sigma}(a)\). The same argument as in \Cref{prop:adiabatic-linearisation-elliptic} shows that the leading order term is elliptic.
\end{proof}

\begin{corollary}
    The linearisation of the family Hermite--Einstein flow is parabolic. In particular, solutions exist for short time.
\end{corollary}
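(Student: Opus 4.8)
The plan is to deduce parabolicity directly from the preceding lemma and then quote the standard short-time existence theory for nonlinear parabolic equations, by analogy with the classical Hermite--Einstein flow \eqref{eqn:Hermite--Einstein-flow-endomorphism}. Writing the flow as \(\pdv{\sigma}{t} = -2\sigma P(\sigma)\), I would first linearise the right-hand side at the initial metric \(\sigma = \id_E\). By the product rule this linearisation is \(\tau \mapsto -2\tau P(\id_E) - 2\,DP_{\id_E}(\tau)\). The first summand is algebraic (order zero), since \(P(\id_E)\) is a fixed section, so it contributes nothing to the principal symbol; the entire second-order part is carried by \(DP_{\id_E}\), which the preceding lemma already computed.

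Next I would verify the sign. By the lemma, the principal part of the linearisation is, up to a positive multiple, \(-p_h \circ \Delta_H^{1,0}\) acting on \(\mcA^0(H_0)\). Since \(\Delta_H^{1,0} = (\grad_H^{1,0})^*\grad_H^{1,0}\) has nonnegative principal symbol, the overall minus sign makes this principal part negative-definite on \(\mcA^0(H_0)\), which is precisely the condition for forward parabolicity. This is the exact analogue of the classical case, where \(\pdv{\sigma}{t} = -2\sigma(i\Lambda_\omega F - c\,\id_E)\) linearises to the negative-definite operator \(-2\Delta^{1,0}\). The one point worth recording is that the symbol is that of the horizontal Laplacian composed with the projection \(p_h\) onto \(H_0\); since \(H_0 \subseteq \mcF\) is the \(b\)-dependent subbundle on which the flow actually moves, the relevant notion of ellipticity is ellipticity on \(\mcA^0(H_0)\), which is exactly what \Cref{prop:adiabatic-linearisation-elliptic} supplies.

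With parabolicity of the linearisation established, short-time existence and uniqueness of a smooth solution follow from the general theory of quasilinear parabolic systems on vector bundles, for instance via the inverse function theorem in parabolic H\"older or Sobolev spaces. I would also note, as part of the set-up, that the flow remains within the space of vertically Hermite--Einstein metrics: since \(P(\sigma)\) takes values in \(H \subseteq \ker(\delbar_V)\) and \(\sigma\) is fibrewise holomorphic, the evolution keeps \(h\sigma\) fibrewise Hermite--Einstein, so \(\pdv{\sigma}{t} = -2\sigma P(\sigma)\) is genuinely a flow on \(\mcA^0(H_0)\) rather than on all of \(\mcA^0(\End_{\mathrm H}(E,h))\).

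The main obstacle, such as it is, is bookkeeping rather than a genuine analytic difficulty: one must confirm that the two nonlocal, metric-dependent ingredients of \(P\)---the \(L^2\)-projection \(p_{h\sigma}\) and the moment-map term \(\nu_{h\sigma}(a)\)---depend on \(\sigma\) only through the metric used to form fibrewise adjoints and orthogonal projections, and hence contribute only to the zeroth-order operator \(Q\) of the lemma, leaving the principal symbol undisturbed. Granting this, which is what validates the lemma's computation, parabolicity is immediate and the corollary reduces to a citation of the standard parabolic existence theory.
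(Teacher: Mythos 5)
Your argument is correct and follows essentially the same route as the paper: the preceding lemma identifies the linearisation of \(P\) as a second-order elliptic operator on \(\mcA^0(H_0)\) with principal part \(-p_h\circ\Delta_H^{1,0}\) (the nonlocal terms \(p_{h\sigma}\) and \(\nu_{h\sigma}(a)\) contributing only at order zero), the sign makes it negative-definite, and short-time existence is then quoted from the standard theory of parabolic equations. Your additional observations --- that the relevant ellipticity is as an operator on sections of the finite-rank bundle \(H_0\to B\), and that the flow genuinely stays in \(\mcA^0(H_0)\) --- are consistent with, and make explicit, what the paper leaves implicit.
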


Define the function \(\theta = \theta(t, b)\) by
\[\theta = \abs{P(\sigma)}_{(h\sigma)_\mcF}^2 = \int_{X \times B/B}\tr(P(\sigma)^2)\omega_X^n.\]

\begin{proposition}
    \label{prop:subsolution-to-heat-equation}
    Suppose \(\sigma = \sigma(t)\) is a solution to the family Hermite--Einstein flow. Then \(\theta\) is a subsolution to the heat equation on \(B\):
    \[\pdv{\theta}{t} + \Delta_B^{1, 0}\theta \le 0.\]
\end{proposition}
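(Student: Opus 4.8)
The plan is to adapt the classical Donaldson--Simpson subsolution estimate for the Hermite--Einstein flow, exploiting the formal analogy of \Cref{tab:analogy} in which $P(\sigma)$ plays the role of $i\Lambda_\omega F - c\,\id_E$ and the triple $(\mcF, h_\mcF, \grad_\mcF)$ plays the role of $(\End\mcE, h, \grad)$. Since $\tr(P^2)$ is a trace and hence independent of the evolving metric, differentiating the fibre integral gives $\partial_t\theta = 2\int_{X\times B/B}\tr\!\big(P(\sigma)\,\partial_t P(\sigma)\big)\omega_X^n$, so the first task is to compute $\partial_t P(\sigma)$. Writing $g = h\sigma$, the flow reads $g^{-1}\partial_t g = -2P(\sigma)$, and the exact variation formula for the horizontally contracted curvature, namely $\partial_t\big(i\Lambda_H F_g\big) = \Delta_H^{1,0}(g^{-1}\partial_t g)$ (the $\Lambda_H$-analogue of \Cref{prop:he-hym-linearisation}(i)), produces a leading term $-2\,p(\Delta_H^{1,0}P)$. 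Two correction terms remain: one from differentiating the projection $p_{g}$, and one from differentiating the deformation term $\nu_{g}(a)$.

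Next I would compute $\Delta_B^{1,0}\theta$. Because $X\times B$ is a product and $\omega_X$ is purely vertical, the base Laplacian commutes with the fibre integral, so $\Delta_B^{1,0}\theta = \int_{X\times B/B}\Delta_H^{1,0}\tr(P^2)\,\omega_X^n$, where on the right $\Delta_H^{1,0}$ acts on the function $\tr(P^2) = \abs{P}_g^2$. Applying the Bochner--Weitzenb\"ock identity for $(\End\mcE, \grad_H, g)$ in the horizontal directions rewrites this in terms of a pairing $\langle\Delta_H P, P\rangle$ against a nonpositive gradient term $-\abs{\grad_H P}^2$. A crucial simplification is that $P(\sigma)$ is a section of $\mcF$, so each $P_b$ is fibrewise holomorphic; as the central fibres are Hermite--Einstein, \Cref{lem:hermite-einstein-laplacians} forces $\grad_V P = 0$, so every vertical derivative drops out and only horizontal gradient terms survive. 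Combining the two computations, the second-order Laplacian contributions from $\partial_t\theta$ and from $\Delta_B^{1,0}\theta$ cancel exactly, as in Donaldson--Simpson, leaving the nonpositive gradient term $-2\abs{\grad_\mcF P}^2$ together with the contributions of the two correction terms.

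It remains to control these correction terms, which is the main obstacle. The projection correction is handled using that $p$ is an $L^2$-orthogonal, hence self-adjoint, idempotent ($p^2 = p$) operator with $P = p(P)$, so that its variation, when paired against $P$ and fibre-integrated, is either absorbed or cancels against the first correction produced in differentiating $p(i\Lambda_H F)$. The genuinely new difficulty is the deformation term $\partial_t\nu_{g}(a)$: a priori it is not governed by the Laplacian and could spoil the sign. This is precisely the role of \Cref{lem:moment-map-evolution}, whose moment-map identity expresses $\partial_t\nu_{g}(a)$ through the infinitesimal action of $P$ on $a_V$, yielding a manifestly nonnegative quantity that enters $\partial_t\theta$ with a favourable sign for every $\lambda > 0$. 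With both corrections shown to contribute nonpositively, one concludes $\partial_t\theta + \Delta_B^{1,0}\theta \le 0$. The delicate points throughout are tracking the order of the correction terms relative to the cancelling second-order terms, and checking that the moment-map identity supplies the correct sign uniformly in $\lambda$, which is exactly what allows the statement to hold for all $\lambda > 0$.
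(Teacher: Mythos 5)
Your proposal is correct and follows essentially the same route as the paper: the curvature-evolution and moment-map lemmas give \(\pdv{\theta}{t}\), the variation of the projection drops out, and \(\Delta_B^{1,0}\theta\) computed via the induced connection \(\grad_\mcF\) cancels the second-order term, leaving \(-2\abs{\grad^{1,0}_{\mcF, h\sigma}P(\sigma)}^2 - 2\lambda\abs{[P(\sigma),a]}^2 \le 0\). The one place you are vaguer than the paper is the projection correction: the mechanism is not idempotence alone but that, after reducing to the reference metric, the variation of \(p_{h\sigma}\) applied to a Hermitian endomorphism is a sum of commutators of Hermitian endomorphisms, hence skew-Hermitian and therefore \(L^2\)-orthogonal to the Hermitian section \(P(\sigma)\) and annihilated by \(p_h\).
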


To prove this, we will need to understand how the various terms evolve. The main novelty compared to results for the Hermite--Einstein flow is \Cref{lem:moment-map-evolution}, which describes how the deformation term evolves in time.

\begin{lemma}
    \label{lem:curvature-evolution}
    Suppose \(\sigma = \sigma(t)\) is a solution to the family Hermite--Einstein flow. Then
    \[\pdv{F_{h\sigma}}{t} = -2\grad^{0, 1}\grad^{1, 0}_{h\sigma}P(\sigma).\]
\end{lemma}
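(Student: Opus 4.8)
The plan is to reduce the claim to the standard variational formula for the curvature of a Chern connection, which here takes a particularly clean form because the family Hermite--Einstein flow is written in terms of the logarithmic derivative of the metric. First I would recall that for any smooth family $h_t$ of Hermitian metrics on the fixed holomorphic bundle $\mcE = (E, \delbar)$, the Chern curvature satisfies
\[\pdv{F_{h_t}}{t} = \delbar\left(\pdv{t}\left(h_t^{-1}\del h_t\right)\right),\]
since $\grad^{0, 1} = \delbar$ is independent of the metric and only the $(1,0)$-connection form $h_t^{-1}\del h_t$ varies in $t$. Writing $v = h_t^{-1}\pdv{h_t}{t}$ for the logarithmic derivative, a computation in a local holomorphic frame — using that the Chern connection on $\End \mcE$ acts by $\grad^{1, 0}\phi = \del\phi + [h_t^{-1}\del h_t, \phi]$ — identifies
\[\pdv{t}\left(h_t^{-1}\del h_t\right) = \grad^{1, 0}_{h_t}v,\]
and hence $\pdv{F_{h_t}}{t} = \grad^{0, 1}\grad^{1, 0}_{h_t}v$.

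Next I would specialise to $h_t = h\sigma(t)$, with $h$ fixed and $\sigma$ evolving. Since $h$ does not depend on $t$, the logarithmic derivative computes as
\[v = (h\sigma)^{-1}\pdv{t}(h\sigma) = \sigma^{-1}\pdv{\sigma}{t}.\]
Substituting the family Hermite--Einstein flow $\pdv{\sigma}{t} = -2\sigma P(\sigma)$ then gives $v = -2P(\sigma)$, and combining with the formula above yields
\[\pdv{F_{h\sigma}}{t} = \grad^{0, 1}\grad^{1, 0}_{h\sigma}v = -2\grad^{0, 1}\grad^{1, 0}_{h\sigma}P(\sigma),\]
as required.

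The only genuinely computational step is the identity $\pdv{t}(h^{-1}\del h) = \grad^{1, 0}_h v$, which follows by expanding $\pdv{t}(h^{-1}\del h) = -h^{-1}\dot h\,h^{-1}\del h + h^{-1}\del\dot h$ and recognising the right-hand side as the Chern connection on $\End\mcE$ applied to $v = h^{-1}\dot h$. I expect the main point requiring care to be the bookkeeping of conventions — in particular the meaning of the composite metric $h\sigma$ (so that $v = \sigma^{-1}\pdv{\sigma}{t}$ rather than $\pdv{\sigma}{t}\,\sigma^{-1}$), and the fact that $\grad^{1, 0}_{h\sigma}$ denotes the $(1,0)$-part of the Chern connection of the \emph{evolving} metric $h\sigma$, while $\grad^{0, 1} = \delbar$ is fixed. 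Apart from replacing the Hermite--Einstein term $i\Lambda_\omega F_h - c\id_E$ by the nonlinear operator $P(\sigma)$, this computation is identical to the classical one for the Hermite--Einstein flow, so no new analytic input is needed.
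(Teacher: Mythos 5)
Your proposal is correct and is essentially the same computation as the paper's: the paper expands $F_{h\sigma} = F_h + \grad^{0,1}(\sigma^{-1}\grad_h^{1,0}\sigma)$ relative to the fixed reference metric, differentiates, and reassembles the terms using $\grad^{1,0}_{h\sigma} = \grad^{1,0}_h + \sigma^{-1}\grad_h^{1,0}\sigma$, which is precisely your identity $\pdv{t}(h_t^{-1}\del h_t) = \grad^{1,0}_{h_t}v$ with $v = \sigma^{-1}\pdv{\sigma}{t} = -2P(\sigma)$ unpacked in a local frame. Your convention bookkeeping (left composition, so $v = \sigma^{-1}\dot\sigma$) matches the paper's.
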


\begin{proof}
    We recall that
    \[F_{h\sigma} = F_h + \grad^{0, 1}\left(\sigma^{-1}\grad_h^{1, 0}\sigma\right).\]
    Thus,
    \[\pdv{F_{h\sigma}}{t} = \grad^{0, 1}\left(-\sigma^{-1}\pdv{\sigma}{t}\sigma^{-1}\grad_h^{1, 0}\sigma + \sigma^{-1}\grad_h^{1, 0}\pdv{\sigma}{t}\right).\]
    Substituting in the right hand side of \Cref{eqn:family-Hermite--Einstein-flow}, we then obtain
    \begin{align*}
        \pdv{F_{h\sigma}}{t} &= 2\grad^{0, 1}\left(P(\sigma)\sigma^{-1}\grad_h^{1, 0}\sigma - \sigma^{-1}\grad_h^{1,0}\left(\sigma P(\sigma)\right)\right), \\
        &= 2\grad^{0, 1}\left(P(\sigma)\sigma^{-1}\grad_h^{1,0}\sigma - \sigma^{-1}(\grad_h^{1, 0}\sigma)P(\sigma) - \grad_h^{1, 0}P(\sigma)\right).
    \end{align*}
    Now we recall that on sections of \(E\),
    \[\grad^{1, 0}_{h\sigma} = \grad^{1, 0}_h + \sigma^{-1}\grad_h^{1,0}\sigma.\]
    Using this formula, we obtain the corresponding result.
\end{proof}

\begin{lemma}
    \label{lem:projected-curvature-evolution}
    Suppose \(\sigma = \sigma(t)\) is a solution to the family Hermite--Einstein flow. Then
    \[p_{h\sigma}\left(\pdv{t}P(\sigma)\right) = -2p_{h\sigma}\left(i\Lambda_H\grad^{0, 1}\grad^{1, 0}_{h\sigma}P(\sigma)\right)\]
\end{lemma}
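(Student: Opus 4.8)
The plan is to differentiate the two pieces of \(P(\sigma) = p_{h\sigma}(i\Lambda_H F_{h\sigma}) - \lambda i \nu_{h\sigma}(a)\) along the flow and read off the curvature contribution using \Cref{lem:curvature-evolution}. The first point to record is that the outer projection on the left-hand side is harmless: for every \(t\) the endomorphism \(P(\sigma(t))\) is trace-free and fibrewise holomorphic, i.e. a section of (the trace-free part of) the fixed bundle \(\mcF\), so its time derivative \(\pdv{t}P(\sigma)\) is again such a section; since \(p_{h\sigma}\) restricts to the identity on trace-free fibrewise holomorphic endomorphisms, we have \(p_{h\sigma}(\pdv{t}P(\sigma)) = \pdv{t}P(\sigma)\). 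It therefore suffices to compute \(\pdv{t}P(\sigma)\) directly.

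Differentiating term by term,
\[\pdv{t}P(\sigma) = \dot p_{h\sigma}(i\Lambda_H F_{h\sigma}) + p_{h\sigma}\!\left(i\Lambda_H \pdv{t}F_{h\sigma}\right) - \lambda i\,\pdv{t}\nu_{h\sigma}(a),\]
where \(\dot p_{h\sigma}\) is the \(t\)-derivative of the projection operator and we have used that \(\Lambda_H\) does not depend on \(t\). For the middle term I substitute the curvature evolution \(\pdv{t}F_{h\sigma} = -2\grad^{0,1}\grad^{1,0}_{h\sigma}P(\sigma)\) from \Cref{lem:curvature-evolution}, which produces exactly the asserted right-hand side \(-2p_{h\sigma}\!\left(i\Lambda_H\grad^{0,1}\grad^{1,0}_{h\sigma}P(\sigma)\right)\). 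Thus the lemma is equivalent to the cancellation
\[\dot p_{h\sigma}(i\Lambda_H F_{h\sigma}) = \lambda i\,\pdv{t}\nu_{h\sigma}(a).\]

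The main obstacle is establishing this last identity, and it is here that the moment map geometry is essential. Writing the defining relation of the moment map from \Cref{cor:deformation-expansion} as \(\langle \nu_{h\sigma}(a), \xi\rangle = \tfrac12\Omega([\xi,a],a)\) and observing that \(\Omega([\,\cdot\,,a],a)\) is a \emph{metric-independent} linear functional, one sees that \(\nu_{h\sigma}(a)\) is the \((h\sigma)\)-orthogonal projection onto \(\mfk_{h\sigma}\) of a fixed endomorphism \(w(a)\); hence \(\pdv{t}\nu_{h\sigma}(a)\) is purely the variation of this projection as the real form \(\mfk_{h\sigma} \subseteq \mcF\) rotates under \(\dot\sigma = -2\sigma P(\sigma)\). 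Likewise the left-hand side is the variation of the projection \(p_{h\sigma} = \pr_{\mcF_0}\) applied to the curvature \(i\Lambda_H F_{h\sigma}\). Matching these two projection-variations — keeping careful track of the \(L^2\)-adjoints and of the motion of \(\mfk_{h\sigma}\) along the flow — is the delicate step, and is exactly what the moment map evolution lemma (\Cref{lem:moment-map-evolution}) supplies. I expect this cancellation, rather than the routine differentiations above, to be the crux of the argument.
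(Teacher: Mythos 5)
There is a genuine gap: your reduction of the lemma to the identity \(\dot p_{h\sigma}(i\Lambda_H F_{h\sigma}) = \lambda i\,\partial_t\nu_{h\sigma}(a)\) is a reduction to a \emph{false} statement, and the ``crux'' you defer to \Cref{lem:moment-map-evolution} is not what that lemma provides. \Cref{lem:moment-map-evolution} computes the pairing \(\Re\int_{X\times B/B}\tr(P(\sigma)\cdot i\partial_t\nu)\,\omega_X^n = \abs{[P(\sigma),a]}^2\), which is strictly positive whenever \([P(\sigma),a]\ne 0\); since \(P(\sigma)\) lies in the image of the self-adjoint projection \(p_{h\sigma}\), this forces \(p_{h\sigma}(i\partial_t\nu)\ne 0\) in general, so \(\partial_t\nu\) cannot be cancelled against anything. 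The correct mechanism for the projection term is instead that \(p_{h\sigma}\bigl(\dot p_{h\sigma}(i\Lambda_H F_{h\sigma})\bigr)=0\) on the nose: writing \(p_{h\sigma}(\alpha)=\tau^{-1}p_h(\tau\alpha\tau^{-1})\tau\) with \(\tau=\sigma^{1/2}\) and differentiating at \(\tau=\id\) gives \(\dot p(\alpha)=p_h([\dot\tau,\alpha])+[p_h(\alpha),\dot\tau]\), a sum of (images of) commutators of Hermitian endomorphisms, hence skew-Hermitian, hence annihilated by the outer projection onto the Hermitian part \(H_0\). So the two terms you try to play off against each other are each handled on their own: the variation of the projection dies after the outer projection, while the \(\partial_t\nu\) term genuinely survives --- it is exactly what produces the good-sign contribution \(-2\lambda\abs{[P(\sigma),a]}^2\) in \(\partial_t\theta\) in \Cref{prop:subsolution-to-heat-equation}, and any argument that removes it from \(\partial_t P\) would break the subsolution estimate downstream.

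Your opening step is also where the argument goes astray. Declaring the outer \(p_{h\sigma}\) ``harmless'' because \(\partial_t P\) is again trace-free and fibrewise holomorphic discards precisely the structure the proof needs: \(P(t)\) is \(h\sigma(t)\)-Hermitian for a \emph{moving} metric, so \(\partial_t P\) acquires skew-Hermitian commutator corrections, and the outer projection (onto the \(h\sigma\)-Hermitian holomorphic part \(H_0\)) is what removes them. It is true that the statement as printed is slightly loose --- its proof, and its use in \Cref{prop:subsolution-to-heat-equation}, treat only the curvature part \(p_{h\sigma}(i\Lambda_H F_{h\sigma})\) of \(P\), with the deformation term handled separately in \Cref{lem:moment-map-evolution} --- but the repair is to read the left-hand side that way, not to posit a cancellation between \(\dot p_{h\sigma}\) and \(\partial_t\nu\). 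Your substitution of \Cref{lem:curvature-evolution} for the middle term is correct; the remaining two terms need the argument above, which your proposal does not supply.
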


\begin{proof}
    When we compute the derivative, we obtain two terms, one from the variation of \(p_{h\sigma}\), and the other from the variation of \(i\Lambda_H F_{h\sigma}\). For the first term, we will need to understand how \(p_{h\sigma}\) depends on \(\sigma\). For any \(\alpha\) which is \(h\sigma\)-Hermitian, 
    \[p_{h\sigma}(\alpha) = \sigma^{-1/2}p_h\left(\sigma^{1/2}\alpha \sigma^{-1/2}\right)\sigma^{1/2}.\]
    Let \(\tau = \sigma^{1/2}\). By changing our reference metric to \(h\sigma(t_0)\), we may assume without loss of generality that \(\sigma(t_0) = \id_E\). Now
    \[p_\tau(\alpha) = \tau^{-1}p_h(\tau\alpha \tau^{-1})\tau.\]
    The derivative of this, at \(\tau = \id_E\), is given by
    \[-\pdv{\tau}{t}p_h(\alpha) + p_h\left(\pdv{\tau}{t}\alpha\right) - p_h\left(\alpha\pdv{\tau}{t}\right) + p_h(\alpha)\pdv{\tau}{t} = p_h\left(\left[\pdv{\tau}{t}, \alpha\right]\right) + \left[p_h(\alpha), \pdv{\tau}{t}\right].\]
    Now as the commutator of Hermitian endomorphisms is skew-Hermitian, it follows that
    \[p_h\left(\left[\pdv{\tau}{t}, \alpha\right]\right) = 0\]
    and
    \[p_h\left(\left[p_h(\alpha), \pdv{\tau}{t}\right]\right) = 0.\]

    Thus, the only contribution is from the variation of \(i\Lambda_H F_{h\sigma}\). By \Cref{lem:curvature-evolution}, and that \(p_{h\sigma}^2 = p_{h\sigma}\), we obtain that
    \[p_{h\sigma}\left(\pdv{t}P(\sigma)\right) = -2p_{h\sigma}\left(i\Lambda_H\grad^{0, 1}\grad^{1, 0}_{h\sigma}P(\sigma)\right)\]
    as required.
\end{proof}

Next, we will need to understand how the deformation term evolves. For this, we will use the moment map interpretation of \(\nu_h\).

\begin{lemma}
    \label{lem:moment-map-evolution}
    Suppose \(\sigma = \sigma(t)\) is a solution to the family Hermite--Einstein flow. Then
    \[\Re\int_{X \times B/B}\tr(P(\sigma) \cdot i\pdv{\nu}{t})\omega_X^n = \abs{[P(\sigma), a]}^2.\]
\end{lemma}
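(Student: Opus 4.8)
The plan is to isolate the metric-dependence of the moment map $\nu$ and then convert its time-derivative into a norm using the defining property of $\nu$. I work fibrewise over $b \in B$ and integrate over the $X$-fibres at the end; all objects act on, or live in, $V_b = \mcA^{0,1}(\End E_b)$, on which $K_b$ acts linearly with moment map $\nu$ as in \Cref{sec:def-he}. First I would normalise at the time $t_0$ of interest: exactly as in the proof of \Cref{lem:projected-curvature-evolution}, change the reference metric to $h\sigma(t_0)$ so that $\sigma(t_0) = \id_E$. Then $\pdv{\sigma}{t}\big|_{t_0} = -2P$, where $P := P(\sigma(t_0)) \in \mcA^0(H_0)$ is a fibrewise holomorphic Hermitian endomorphism, so that $iP \in \mfk$.

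The key step is an equivariance property of $\nu$ under change of the Hermitian metric. Writing $g = \sigma^{1/2}$, I would establish
\[\nu_{h\sigma}(a) = \mathrm{Ad}_{g^{-1}}\,\nu_h\!\left(\mathrm{Ad}_{g}\,a\right).\]
This follows from the explicit description of $\nu_h$ afforded by \Cref{cor:deformation-expansion}, in which $\nu_h(a)$ is built from $a$ and its $h$-adjoint $a^{*}$, contracted by $\Lambda_{\omega_X}$ and projected to $\mfk$, together with the transformation law $a^{*_{h\sigma}} = \sigma^{-1}a^{*_h}\sigma$ and the fact that conjugation by the $h$-Hermitian $g$ carries $\mfk_h$ to $\mfk_{h\sigma}$ compatibly with the inner products; equivalently one reads it off from the correspondence \Cref{eqn:he-to-hym}. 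Differentiating at $t_0$, where $g = \id_E$ and $\dot g = \tfrac12\dot\sigma = -P$, gives
\[\pdv{\nu}{t}\bigg|_{t_0} = [P, \nu_h(a)] - \dd\nu_{a}([P,a]),\]
where $\dd\nu_a \colon V \to \mfk$ is the differential of the moment map at $a$.

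It then remains to pair with $iP$ and integrate over the $X$-fibres. The commutator term contributes nothing, since $\tr\!\left(P[P,\nu_h(a)]\right) = 0$ by cyclicity of the trace. For the second term I would invoke the defining property of the moment map: differentiating $\langle \nu(a),\xi\rangle = \tfrac12\Omega([\xi,a],a)$ and using $K$-invariance of $\Omega$ yields $\langle \dd\nu_a(\dot a),\xi\rangle_{\mfk} = \Omega([\xi,a],\dot a)$ for all $\xi \in \mfk$. Taking $\xi = iP$ and $\dot a = [P,a]$, and writing $\langle\cdot,\cdot\rangle_\mfk = -\int_X \tr(\cdot\,\cdot)\,\omega_X^n$, the pairing becomes $-\Omega([iP,a],[P,a]) = -\Omega(J[P,a],[P,a])$, since the complex structure on $V$ is multiplication by $i$. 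By the Kähler relation between $\Omega$, $J$ and the metric this equals $\pm\abs{[P,a]}^2$; the sign of the $-\dd\nu_a([P,a])$ term together with the fixed conventions for $\Omega$, $J$ and the flow make the total $+\abs{[P(\sigma),a]}^2$. As this is already real, the real part is redundant, and fibre-integration gives the claim.

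The main obstacle is twofold: proving the equivariance identity cleanly (identifying the metric variation $h \mapsto h\sigma$ with the complexified action $\mathrm{Ad}_{\sigma^{1/2}}$ on the deformation), and the bookkeeping of the complex structure in the final pairing. The crucial subtlety is that pairing $\pdv{\nu}{t}$ against the \emph{Hermitian} generator $P$ routes the moment-map identity through the $J$-direction, so that the skew-symmetric $\Omega$ evaluates to the nonzero norm $\Omega(Ju,u) = \pm\abs{u}^2$ rather than to $\Omega(u,u) = 0$. Once the sign conventions are pinned down, the vanishing of the $[P,\nu_h(a)]$ term and the reduction to $\abs{[P(\sigma),a]}^2$ are formal.
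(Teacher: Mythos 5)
Your proposal is correct and follows essentially the same route as the paper: establish the equivariance $\nu_{h\sigma}(a) = \sigma^{-1/2}\nu_h(\sigma^{1/2}a\sigma^{-1/2})\sigma^{1/2}$ via \Cref{eqn:he-to-hym} and the fibrewise holomorphy of $\sigma^{1/2}$, differentiate at $\sigma(t_0)=\id_E$ to get $\pdv{\nu}{t} = [P,\nu_h(a)] - (\dd\nu_h)_a([P,a])$, discard the commutator term (you use cyclicity of the trace, the paper uses Hermitian/skew-Hermitian orthogonality — both work), and convert the remaining pairing into $\abs{[P,a]}^2$ via the moment map identity $\langle(\dd\nu_h)_a(\dot a),\xi\rangle = \Omega([\xi,a],\dot a)$ with $\xi = iP$. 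Your treatment of the final sign is slightly more hand-wavy than the paper's, but the structure and all key steps coincide.
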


\begin{proof}
    Recall that
    \[p_{h\sigma}(i\Lambda_VF_{h\sigma, \delbar_s}) = -s^2\nu_{h\sigma}(a) + \order{s^3}.\]
    We also have that
    \[F_{h\sigma, \delbar_s} = \sigma^{-1/2} \circ F_{h, \sigma^{1/2}\cdot\delbar_s} \circ \sigma^{1/2},\]
    and so
    \[p_{h\sigma}(i\Lambda_VF_{h\sigma, \delbar_s}) = \sigma^{-1/2}p_h(i\Lambda_V F_{h, \sigma^{1/2}\cdot\delbar_s})\sigma^{1/2}.\]
    Next, since \(\sigma^{1/2}\) is \(\delbar_0\)-holomorphic,
    \[\sigma^{1/2}\cdot\delbar_s = \delbar_0 + \sigma^{1/2}\circ\alpha(s)\circ\sigma^{-1/2}\]
    and so
    \[\nu_{h\sigma}(a) = \sigma^{-1/2}\nu_h(\sigma^{1/2}a\sigma^{-1/2})\sigma^{1/2}.\]
    By changing our reference metric, we may assume that at \(t = t_0\), \(\sigma(t_0) = \id\). In this case,
    \[\pdv{\nu}{t} = -\frac{1}{2}\left[\pdv{\sigma}{t}, \nu_h(a)\right] + \frac{1}{2}(\dd\nu_h)_a\left(\left[\pdv{\sigma}{t}, a\right]\right).\]
    As we assumed \(\sigma(t_0) = \id\),
    \[\pdv{\sigma}{t} = -2P,\]
    where \(P = P(\id)\). Thus, we obtain
    \[\pdv{\nu}{t} = [P, \nu_h(a)] + -(\dd\nu_h)_a([P, a]).\]
    Since \(P\) is Hermitian and \(i\nu_h(a)\) is Hermitian, \([P, i\nu_h(a)]\) is skew-Hermitian. As Hermitian and skew-Hermitian endomorphisms are \(L^2(\Re h)\)-orthogonal, we may ignore the first term. Thus, what we wish to compute is
    \[-\int_{X \times \{b\}}\tr(iP \cdot (\dd\nu_h)_a([P, a]))\omega_X^n.\]
    For this, we will use the fact that \(\nu_h\) is a moment map, to obtain that
    \begin{align*}
        -\int_{X \times \{b\}}\tr(iP \cdot (\dd\nu_h)_a([P, a]))\omega_X^n &= \Omega([P, a], [iP, a]), \\
        &= \Omega([P, a], i[P, a]), \\
        &= \abs{[P, a]}^2.
    \end{align*}
    
\end{proof}

\begin{proof}
    [Proof of \Cref{prop:subsolution-to-heat-equation}] Using \Cref{lem:projected-curvature-evolution} and \Cref{lem:moment-map-evolution},
    \begin{align*}
        \pdv{\theta}{t} &= 2\int_{X \times B/B}\tr(P(\sigma) \cdot p_{h\sigma}\left(-i\Lambda_H\left(\grad^{0, 1}\grad^{1, 0}_{h\sigma}P(\sigma)\right)\right))\omega_X^n  \\
        &-2\lambda\Re\int_{X \times B/B}\tr(P(\sigma) \cdot i\pdv{\nu}{t})\omega_X^n, \\
        &= -2i\int_{X \times B/B}\tr(P(\sigma) \cdot \Lambda_H\left(\grad^{0, 1}\grad^{1, 0}_{h\sigma}P(\sigma)\right))\omega_X^n -2\lambda\abs{[P(\sigma), a]}^2.
    \end{align*}

    To compute the Laplacian, we will need to view \(P(\sigma)\) as a section of \(\mcF\), and \(\theta\) as the \((h\sigma)_\mcF\)-norm-squared of this section. Thus,
    \begin{align*}
        \del_B\theta &= 2\int_{X \times B/B}\tr(P(\sigma) \cdot \grad^{0, 1}_{\mcF, h\sigma}P(\sigma))\omega_X^n, \\
        &= 2\int_{X \times B/B}\tr(P(\sigma) \cdot \grad^{0, 1}_{H, h\sigma}P(\sigma))\omega_X^n.
    \end{align*}
    Similarly, 
    \begin{align*}
        \delbar_B\del_B\theta &= 2\int_{X \times B/B}\tr(P(\sigma) \cdot \grad^{0, 1}_H\grad^{1, 0}_{H, h\sigma}P(\sigma))\omega_X^n, \\
        &+ 2\int_{X \times B/B}(h\sigma)(\pi_{h\sigma}(\grad^{0, 1}_H P(\sigma)), \pi_{h\sigma}(\grad^{1, 0}_{H, h\sigma}P(\sigma)))\omega_X^n.
    \end{align*}

    By the K\"ahler identities,
    \[\Delta_B^{1, 0} = \del_B^*\del_B = i\Lambda_B\delbar_B\del_B,\]
    and so combining all of the above and, we obtain that
    \[\pdv{\theta}{t} + \Delta_B^{1, 0}\theta = -2\abs{\grad^{1, 0}_{\mcF, h\sigma}P(\sigma)}_{(h\sigma)_\mcF}^2 - 2\lambda\abs{[P(\sigma), a]}^2 \le 0\]
    as required.
\end{proof}

\begin{corollary}
    \(\sup_B \abs{P(\sigma)}^2_{(h\sigma)_\mcF}\) is non-increasing with time.
\end{corollary}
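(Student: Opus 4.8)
The plan is to derive the corollary directly from \Cref{prop:subsolution-to-heat-equation} via the parabolic maximum principle, so that all of the genuinely new analytic content has already been dealt with in establishing the subsolution inequality. Write $\theta(t,b) = \abs{P(\sigma)}^2_{(h\sigma)_\mcF}$ and set $m(t) = \sup_{b \in B}\theta(t,\cdot)$; since $B$ is compact and $\theta$ is smooth, the supremum is attained, and $m$ is well-defined and continuous. The key structural observation is the sign convention: the operator appearing in the inequality is $\Delta_B^{1,0} = \del_B^*\del_B = i\Lambda_B\delbar_B\del_B$, which is a non-negative self-adjoint operator on functions, with positive principal symbol. Consequently, at any point $b_0 \in B$ where $\theta(t,\cdot)$ attains its spatial maximum the first-order terms vanish and the Hessian is negative semidefinite, so $\Delta_B^{1,0}\theta(t,b_0) \ge 0$.

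First I would record the pointwise consequence of \Cref{prop:subsolution-to-heat-equation}: at such a spatial maximum $b_0$,
\[\pdv{\theta}{t}(t,b_0) \le -\Delta_B^{1,0}\theta(t,b_0) \le 0.\]
To convert this into monotonicity of $m(t)$ rigorously I would invoke Hamilton's trick: $m(t)$ is locally Lipschitz, hence differentiable for almost every $t$, and at such $t$ its derivative equals $\pdv{\theta}{t}(t,b_t)$ for any maximiser $b_t$ of $\theta(t,\cdot)$. Combined with the displayed inequality this gives $m'(t) \le 0$ for almost every $t$, and since $m$ is Lipschitz it follows that $m$ is non-increasing. Alternatively, on the closed manifold $B$ one may argue purely by comparison: on any interval $[t_1,t_2]$ the spatially constant function $m(t_1)$ is a supersolution of the heat equation and dominates $\theta(t_1,\cdot)$, so the weak maximum principle for subsolutions yields $\theta(t,\cdot) \le m(t_1)$, hence $m(t_2) \le m(t_1)$.

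When $\partial B \ne \emptyset$, the Dirichlet condition $h = h_\partial$ fixes $\sigma$, and therefore $P(\sigma)$ and $\theta$, to be time-independent along $X \times \partial B$. The weak maximum principle for subsolutions on a manifold with boundary then bounds $\max_{\overline B}\theta(t,\cdot)$ by the maximum of the initial data and the time-constant boundary data, and applying this on subintervals again gives that $\sup_B\theta$ is non-increasing. The only mildly technical point in the whole argument is the non-smoothness in $t$ of the location of the spatial maximum of $\theta(t,\cdot)$, which is precisely what Hamilton's trick circumvents; I do not expect any further obstacle, since the moment-map computation in \Cref{lem:moment-map-evolution} has already absorbed the essential difficulty of controlling the deformation term $\nu(a)$ along the flow, producing the favourable sign $-2\lambda\abs{[P(\sigma),a]}^2 \le 0$ in the subsolution inequality.
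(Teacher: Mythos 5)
Your proof is correct and follows the same route as the paper, which simply invokes the maximum principle for the heat equation applied to the subsolution inequality of \Cref{prop:subsolution-to-heat-equation}; your write-up supplies the standard details (the sign of \(\Delta_B^{1,0}\theta\) at a spatial maximum and Hamilton's trick, or equivalently the comparison argument). The boundary discussion is not needed at this point in the paper, where \(B\) is closed, but it is consistent with how the Dirichlet case is treated later.
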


\begin{proof}
    This follows by the maximum principle for the heat equation.
\end{proof}

\begin{proposition}
    Solutions to the flow are unique.
\end{proposition}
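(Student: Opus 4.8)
The plan is to adapt the uniqueness argument of Donaldson and Simpson \cite{donaldsonSelfdualYangMillsConnections1985,simpsonConstructingVariationsHodge1988} to the family setting, exploiting the fibrewise non-positive curvature recorded in \Cref{lem:metric-on-homogeneous-spaces}. Suppose \(\sigma_1(t)\) and \(\sigma_2(t)\) are two solutions of the family Hermite--Einstein flow with \(\sigma_1(0) = \sigma_2(0)\). For each \(b \in B\), the fibre of \(\msH^+_{\mathrm V}\) over \(b\) is the symmetric space \(G_b/K_b\), which is non-positively curved, so its geodesic distance \(d_b\) is geodesically convex. Set
\[\rho(t, b) = d_b\big(\sigma_1(t, b), \sigma_2(t, b)\big),\]
which is continuous, non-negative, and vanishes identically at \(t = 0\); it suffices to show that \(\rho \equiv 0\), since then \(\sigma_1 = \sigma_2\).

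First I would establish a differential inequality of the form
\[\left(\pdv{}{t} + \Delta_B^{1, 0}\right)\rho \le 0\]
in the weak (barrier) sense. Writing the flow as \(\partial_t \sigma_i = -2\sigma_i P(\sigma_i)\), I would interpret \(P\) as (minus one half times) the gradient of a fibrewise functional whose horizontal part is the Donaldson functional, whose geodesic convexity is the content of \cite[Section 6.3]{kobayashiDifferentialGeometryComplex2014}, and whose deformation part is the moment-map energy \(\tfrac{\lambda}{2}\abs{\nu_{h\sigma}(a)}^2\). The sign of the deformation contribution is exactly the moment-map convexity already exploited in \Cref{lem:moment-map-evolution}: along a fibrewise geodesic the pairing \(\langle \nu_{h\sigma}(a), \cdot\rangle\) is convex, so this term enters with the favourable sign. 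The first variation of distance on a non-positively curved target, together with the coupling of fibres through \(\Delta_H\), then yields the displayed inequality away from the locus \(\{\rho = 0\}\), mirroring the computation in \Cref{prop:subsolution-to-heat-equation}.

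To sidestep the non-smoothness of \(\rho\) along its zero set, I would in fact run the argument with the smooth comparison quantity
\[\eta(t, b) = \int_{X \times B/B}\big(\tr(\sigma_1^{-1}\sigma_2) + \tr(\sigma_2^{-1}\sigma_1) - 2\rank E\big)\omega_X^n,\]
which is non-negative, vanishes exactly when \(\sigma_1 = \sigma_2\), and satisfies \(\left(\pdv{}{t} + \Delta_B^{1,0}\right)\eta \le 0\) by the same computation but without the need for barriers. Since \(\eta(0, \cdot) = 0\) and \(\eta \ge 0\), the maximum principle for the heat equation on \(B\) (used exactly as in the proof of \Cref{prop:subsolution-to-heat-equation}) forces \(\eta \equiv 0\), hence \(\sigma_1(t, b) = \sigma_2(t, b)\) for all \(t\) and all \(b\); in the Dirichlet case the two solutions agree on \(X \times \partial B\), so the maximum principle applies verbatim with \(\partial B \ne \emptyset\). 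The main obstacle is the deformation term: in contrast to the classical Hermite--Einstein flow, one must verify that \(\nu_{h\sigma}(a)\) enters the evolution of \(\eta\) with the correct sign, and this is precisely the point at which the moment-map geometry, rather than a direct curvature estimate, is indispensable.
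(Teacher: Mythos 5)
Your final argument is the paper's argument: you introduce exactly the comparison quantity \(\eta = \beta_{\sigma_1,\sigma_2} + \beta_{\sigma_2,\sigma_1} - 2\vol(X,\omega_X)\rank(E)\) (the paper's \Cref{eqn:flow-aux-distance}), show it is a non-negative subsolution of (a rescaled) heat equation on \(B\), and conclude by the maximum principle. The opening detour through the fibrewise distance \(\rho\) is harmless since you abandon it for the smooth quantity, and you correctly identify the deformation term as the one genuinely new difficulty relative to Donaldson--Simpson.

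However, the one step that is actually new is precisely the step you leave as an assertion, and the mechanism you gesture at is not quite right. You describe the deformation part of the flow as the gradient of the energy \(\tfrac{\lambda}{2}\abs{\nu_{h\sigma}(a)}^2\); the term \(-\lambda i\nu_{h\sigma}(a)\) is not the gradient of the norm-squared of the moment map but of the associated Kempf--Ness-type functional, and ``convexity of \(\langle\nu,\cdot\rangle\) along geodesics'' does not by itself produce the inequality \(\int_{X\times B/B}\tr\bigl(\sigma^{-1}\tau(i\nu_{h\tau}(a) - i\nu_{h\sigma}(a))\bigr)\omega_X^n \le 0\) that the computation requires. The paper closes this by a concrete argument: it rewrites \(\nu_{h\sigma}(a)\) as the \(s^2\)-coefficient of \(p_{h\sigma}(i\Lambda_V F_{h\sigma,\delbar_s})\), applies the same curvature-difference identity \emph{vertically} with \(\gamma = \sigma^{-1}\tau\), observes that the \(\Delta^{1,0}_{V}\gamma\) contribution integrates to zero by self-adjointness of the fibrewise Laplacian, that the remaining \(\Lambda_V\tr(\gamma^{-1}(\grad^{0,1}_s\gamma)\gamma^{-1}\grad^{1,0}_{h\sigma,s}\gamma)\) term has a sign, and then takes \(s \to 0\). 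A second point your sketch glosses over with ``by the same computation'' is that the projections \(p_{h\sigma}\) and \(p_{h\tau}\) must be removed before the curvature identity can be used; this works because \(p_{h\tau}(\alpha) = \gamma^{-1/2}p_{h\sigma}(\gamma^{1/2}\alpha\gamma^{-1/2})\gamma^{1/2}\) and \(\gamma\) is vertically holomorphic. Neither ingredient is difficult, but both are needed for the differential inequality you claim, so as written the proposal has the right architecture with the load-bearing verification missing.
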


\begin{proof}
    Suppose \(\sigma, \tau\) are solutions. Define
    \begin{align*}
        \beta_{\sigma, \tau}(t) &= \int_{X \times B/B}\tr(\sigma^{-1}\tau)\omega_X^n.
    \end{align*}
    We will first show that \(\beta_{\sigma, \tau}(t)\) satisfies
    \begin{equation}
        \label{eqn:flow-uniqueness-aux-heat}
        \pdv{\beta_{\sigma, \tau}}{t} + 2\Delta_B^{1, 0}\beta_{\sigma, \tau} \le 0.
    \end{equation}
    
    First of all,
    \begin{align*}
        \pdv{\beta_{\sigma, \tau}}{t} &= \int_{X \times B/B}\tr(-\sigma^{-1}\pdv{\sigma}{t}\sigma^{-1}\tau + \sigma^{-1}\pdv{\tau}{t})\omega_X^n \\
        &= -2\int_{X \times B/B}\tr(\sigma^{-1}\tau \left(p_{h\tau}(i\Lambda_H F_{h\tau}) - p_{h\sigma}(i\Lambda_H F_{h\sigma})\right))\omega_X^n \\
        &+2\lambda\int_{X \times B/B}\tr(\sigma^{-1}\tau\left(i\nu_{h\tau}(a) - i\nu_{h\sigma}(a)\right))\omega_X^n.
    \end{align*}
    Next, note that for any \(\alpha\) which is \(h\tau\)-Hermitian,
    \[p_{h\tau}(\alpha) = (\sigma^{-1}\tau)^{-1/2}p_{h\sigma}\left((\sigma^{-1}\tau)^{1/2}\alpha (\sigma^{-1}\tau)^{-1/2}\right)(\sigma^{-1}\tau)^{1/2}.\]
    The same expression applies to \(\pi_{h\tau}\) as well. Thus, since \(\sigma^{-1}\tau\) is vertically holomorphic, we obtain that
    \begin{align*}
        \pdv{\beta_{\sigma, \tau}}{t} &= -2\int_{X \times B/B}\tr(\sigma^{-1}\tau\left(i\Lambda_H F_{h\tau} - i\Lambda_H F_{h\sigma}\right))\omega_X^n \\
        &+ 2\lambda\int_{X \times B/B}\tr(\sigma^{-1}\tau\left(i\nu_{h\tau}(a) - i\nu_{h\sigma}(a)\right))\omega_X^n.
    \end{align*}
    We will now understand the two terms separately.

    Set \(\gamma = \sigma^{-1}\tau\). Then
    \[F_{h\tau} = F_{h\sigma} + \grad^{0, 1}(\gamma^{-1}\grad^{1, 0}_{h\sigma}\gamma) = F_{h\sigma} + \gamma^{-1}\left(\grad^{0, 1}\grad^{1, 0}_{h\sigma}\gamma - (\grad^{0, 1}\gamma)\gamma^{-1}\grad^{1, 0}_{h\sigma}\gamma\right).\]
    Thus,
    \[i\Lambda_H(F_{h\tau} - F_{h\sigma}) = \gamma^{-1}\Delta^{1, 0}_{H, h\sigma}\gamma - i\gamma^{-1}\Lambda_H((\grad^{0, 1}\gamma) \gamma^{-1}\grad^{1, 0}_{h\sigma}\gamma).\]
    Substituting this in, we obtain that
    \begin{align*}
        \pdv{\beta_{\sigma, \tau}}{t} &= -2\int_{X \times B/B}\tr(\Delta_{H, h\sigma}^{1, 0}\gamma) + 2i\int_{X \times B/B}\Lambda_H \tr(\gamma^{-1}(\grad^{0, 1}\gamma)\gamma^{-1}\grad^{1, 0}_{h\sigma}\gamma)\omega_X^n \\
        &+ 2\lambda\int_{X \times B/B}\tr(\sigma^{-1}\tau(i\nu_{h\tau}(a) - i\nu_{h\sigma}(a)))\omega_X^n.
    \end{align*}
    The first term is exactly \(-2\Delta_B^{1, 0}\beta_{\sigma, \tau}\), and the second term is non-positive. Thus, what we need to understand is the final term.

    Recall that
    \[p_{h\sigma}(i\Lambda_V F_{h\sigma, \delbar_s}) = -s^2i\nu_{h\sigma}(a) + \order{s^3}.\]
    Thus, we obtain that
    \begin{align*}
        \int_{X \times B/B}\tr(\sigma^{-1}\tau(i\Lambda_VF_{h\tau, s} - i\Lambda_V F_{h\sigma, \delbar_s}))\omega_X^n &= -s^2\int_{X \times B/B}\tr(\sigma^{-1}\tau(i\nu_{h\tau}(a) - i\nu_{h\sigma}(a)))\omega_X^n \\
        &+ \order{s^3}.
    \end{align*}
    Setting \(\gamma = \sigma^{-1}\tau\), and using the same reasoning as above, we obtain that
    \begin{equation}
        \label{eqn:vertical-term-expansion}
        \begin{split}
            \int_{X \times B/B}\tr(\gamma(i\Lambda_V F_{h\tau, s} - i\Lambda_V F_{h\sigma, s}))\omega_X^n &= \int_{X \times B/B}\tr(\Delta^{1, 0}_{V, h\sigma, s}\gamma)\omega_X^n \\
            &- i\int_{X \times B/B}\Lambda_V\tr(\gamma^{-1}(\grad^{0, 1}_s\gamma)\gamma^{-1}\grad^{1, 0}_{h\sigma, s}\gamma)\omega_X^n.
        \end{split}
    \end{equation}
    For the first term, using the fact that the vertical Laplacian is self-adjoint, we have that for any \(b \in B\),
    \begin{align*}
        \int_{X \times \{b\}}\tr(\Delta^{1, 0}_{b, h\sigma, s}\gamma)\omega_X^n &= \int_{X \times \{b\}}\tr(\Delta^{1, 0}_{b, h\sigma, s}\gamma \circ \id)\omega_X^n \\
        &= \int_{X \times \{b\}}\tr(\gamma \circ \Delta^{1, 0}_{b, h\sigma, s}\id)\omega_X^n \\
        &= 0.
    \end{align*}
    The same logic as before shows that the second term on the right hand side of \Cref{eqn:vertical-term-expansion} is nonnegative, and so the left hand side is nonnegative. As the pointwise limit of nonnegative functions is nonnegative, by taking the limit \(s \to 0\), we see that
    \[\int_{X \times B/B}\tr(\sigma^{-1}\tau(i\nu_{h\tau}(a) - i\nu_{h\sigma}(a)))\omega_X^n \le 0.\]
    Putting everything together shows inequality (\ref{eqn:flow-uniqueness-aux-heat}).

    Now notice that
    \[\tr(\sigma^{-1}\tau) + \tr(\tau^{-1}\sigma) - 2\rank(E)\]
    is a nonnegative function, and is zero if and only if \(\sigma = \tau\). But
    \begin{equation}
        \label{eqn:flow-aux-distance}
        \eta(\sigma, \tau) = \beta_{\sigma, \tau} + \beta_{\tau, \sigma} - 2\vol(X, \omega_X)\rank(E)
    \end{equation}
    satisfies (a rescaled version of) the heat equation, and so by the maximum principle, it is non-increasing. Thus, if \(\sigma(0) = \tau(0)\), then \(\sigma(t) = \tau(t)\) for all \(t\), and so we have shown that the solution to the family Hermite--Einstein flow is unique.
\end{proof}

\begin{corollary}
    Suppose a smooth solution \(\sigma(t)\) exists on \([0, T)\) for some \(T > 0\). Then \(\sigma(t)\) converges in \(C^0\) to a continuous limit \(\sigma_T\) as \(t \to T\).
\end{corollary}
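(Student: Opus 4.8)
The plan is to adapt the argument of Donaldson and Simpson \cite{donaldsonSelfdualYangMillsConnections1985,simpsonConstructingVariationsHodge1988}. The two ingredients are the preceding corollary, which gives that $\sup_B\theta = \sup_B\abs{P(\sigma)}^2_{(h\sigma)_\mcF}$ is non-increasing in $t$, and the flow equation $\pdv{\sigma}{t} = -2\sigma P(\sigma)$ itself. From the corollary, $\sup_B\abs{P(\sigma)}^2_{(h\sigma)_\mcF}$ is bounded on $[0, T)$ by its value at $t = 0$; call this bound $C_0$. The strategy is to convert this into a \emph{uniform pointwise} bound on the velocity $\pdv{\sigma}{t}$ over $X \times B$, and then integrate in time to deduce that $\sigma(t)$ is uniformly Cauchy in $C^0$ as $t \to T$, whence the continuous limit $\sigma_T$ exists by completeness.

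First I would upgrade the fibre-integrated bound to a pointwise one. This is the main obstacle, and the point where the family setting genuinely differs from the classical Hermite--Einstein flow: the heat inequality of \Cref{prop:subsolution-to-heat-equation} lives on $B$, so the maximum principle controls only the $L^2(X)$-norm of $P(\sigma)$ on each fibre, rather than its pointwise norm over $X \times B$. However, $P(\sigma)$ is a section of the \emph{finite-dimensional} bundle $H_0 \subseteq \mcF$, whose fibre $H_{0, b}$ consists of fibrewise-holomorphic trace-free endomorphisms of $\mcE_b$. On the finite-dimensional space $H_{0, b}$ the $L^2(X)$- and $C^0(X)$-norms are equivalent, and since this data varies smoothly in $b$ and $B$ is compact, the constant of equivalence may be taken uniform in $b$. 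Hence $\sup_{X \times B}\abs{P(\sigma)} \le C_1\sqrt{C_0}$ for a constant $C_1$ independent of $t$. This is precisely where the finite-dimensionality of $\mcF$, equivalently \Cref{assumption:constant-dimension}, is essential.

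With a uniform pointwise bound $\abs{P(\sigma)} \le C$ in hand, the flow equation controls $\sigma$ directly. Writing $\pdv{\sigma}{t} = -2\sigma P(\sigma)$ and $\pdv{t}\sigma^{-1} = 2P(\sigma)\sigma^{-1}$, the pointwise operator-norm estimates $\abs{\pdv{\sigma}{t}} \le 2C\abs{\sigma}$ and $\abs{\pdv{t}\sigma^{-1}} \le 2C\abs{\sigma^{-1}}$ give, by Gr\"onwall's inequality and finiteness of $T$, uniform upper bounds on $\abs{\sigma}$ and $\abs{\sigma^{-1}}$ on $[0, T)$. In particular the eigenvalues of $\sigma$ are bounded above and bounded below away from zero, uniformly over $X \times B \times [0, T)$, so that $\abs{\pdv{\sigma}{t}} \le 2C\abs{\sigma}$ is in fact bounded by a fixed constant $C'$. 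Integrating in time yields
\[\sup_{X \times B}\abs{\sigma(t_2) - \sigma(t_1)} \le C'\abs{t_2 - t_1}\]
for all $t_1, t_2 \in [0, T)$, so $\sigma(t)$ is uniformly Cauchy in $C^0$ as $t \to T$.

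By completeness of $C^0(X \times B)$, the limit $\sigma_T$ exists and is continuous, while the uniform lower bound on the eigenvalues ensures $\sigma_T$ remains positive definite. The only genuinely new step relative to the classical flow is the passage from the $L^2(X)$-control supplied by the maximum principle on $B$ to the pointwise bound; once this is established, the remainder is the standard Gr\"onwall-and-integrate estimate.
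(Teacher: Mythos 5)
Your route is genuinely different from the paper's: the paper follows Donaldson's Corollary 15 by applying the maximum principle to the two-solution distance function \(\eta(\sigma(t), \sigma(t+s))\) from \Cref{eqn:flow-aux-distance} (whose heat inequality is established in the uniqueness proposition), deducing that \(\sup_B\eta(\sigma(t),\sigma(t+s)) \le \sup_B\eta(\sigma(0),\sigma(s))\) and then comparing \(\eta\) with the complete Riemannian distance on the space of fibrewise Hermite--Einstein metrics. You instead try to bound the velocity of the flow pointwise and integrate in time. That strategy can be made to work, but as written it has a gap at its central step.

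The gap is in the upgrade from the fibre-integrated bound to a pointwise one. The quantity controlled by the maximum principle is \(\theta = \abs{P(\sigma)}^2_{(h\sigma)_{\mcF}} = \int_{X}\tr(P(\sigma)^2)\,\omega_X^n\), which is the \(L^2(X)\)-norm of \(P(\sigma)\) taken with respect to the \emph{evolving} metric \(h\sigma\) (indeed \(H_{0,b}\) itself, as a real subspace of \(\mcF_b\), depends on the metric, since it is the Hermitian part). Norm equivalence on the finite-dimensional space \(\mcF_b\) gives a uniform constant only between two \emph{fixed} norms; the constant comparing the \(h\sigma\)-based \(L^2\)-norm with a fixed \(C^0\)-norm depends on \(\sigma\) through \(\sigma^{\pm 1/2}\), which is exactly what you are trying to bound --- so the argument is circular as stated. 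The cleanest repair does not use norm equivalence at all: since \(h\sigma\) is fibrewise Hermite--Einstein and \(P(\sigma)\) is fibrewise holomorphic, \Cref{lem:hermite-einstein-laplacians} shows \(P(\sigma)\) is parallel on each fibre for the \(h\sigma\)-Chern connection, so its eigenvalues and \(\tr(P(\sigma)^2)\) are constant on \(X\times\{b\}\); hence \(\theta \le C_0\) already gives the pointwise bound \(\norm{P(\sigma)}_{\mathrm{op}, h\sigma} \le \sqrt{C_0/\vol(X)}\). One then runs Gr\"onwall on the metric-independent quantities \(\tr\sigma\) and \(\tr\sigma^{-1}\) (using \(\abs{\tr(\sigma P)} \le \norm{P}_{\mathrm{op},h\sigma}\tr\sigma\), valid since both are \(h\sigma\)-Hermitian and \(\sigma > 0\)) to bound the eigenvalues of \(\sigma\) above and below on \([0,T)\), after which the conversion to fixed-\(h\) norms and the final integration in time go through as you describe. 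With that replacement your argument is correct and, unlike the paper's, directly produces a Lipschitz-in-\(t\) bound; the paper's route avoids the circularity entirely by never needing a pointwise bound on \(P(\sigma)\).
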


\begin{remark}
    Here and below, \(C^k\)-norms of \(\sigma\) refers to \(C^k\)-norms as sections of \(\mcF \to B\). In particular, by definition \(\sigma_T\) will be smooth in the \(X\)-directions, and the claim is that it is continuous in the \(B\)-directions.
\end{remark}

\begin{proof}
    The space of fibrewise Hermite--Einstein metrics embeds into the space of all Hermitian metrics as a totally geodesic submanifold. In particular, one proof follows by directly appealing to the result in \cite[Corollary 15]{donaldsonSelfdualYangMillsConnections1985} for the Hermite--Einstein flow. The proof below instead adapts the proof of \cite[Corollary 15]{donaldsonSelfdualYangMillsConnections1985}.

    Recall from \Cref{subsec:space-of-vertically-he-metrics} that the space of fibrewise Hermite--Einstein metrics is the space of sections of a fibre bundle. Over \(b \in B\), we may identify the fibre with the homogeneous space
    \[\frac{\Aut(\mcE_b)}{\Aut(\mcE_b) \cap U(E_b, h)}.\]
    In particular, we have a complete Riemannian metric \(d\) on this space. This metric compares uniformly with \(\eta\) as defined in \Cref{eqn:flow-aux-distance}. That is, there exists monotone functions \(f_1, f_2\) such that \(\eta \le f_1(d)\) and \(d \le f_2(\eta)\). This follows from the corresponding results for Hermitian inner products on vector spaces, as used in \cite[Corollary 15]{donaldsonSelfdualYangMillsConnections1985}, combined with \Cref{lem:metric-on-homogeneous-spaces}.
    
    It then suffices to show that given \(\epsilon > 0\), we can find \(\delta > 0\) such that
    \[\sup_B \eta(\sigma(t), \sigma(t')) < \epsilon\]
    for all \(t, t' > T - \delta\), since this implies that the \(\sigma(t)\) are uniformly Cauchy and so converge to a non-degenerate limiting metric \(\sigma_T\). By continuity at \(t = 0\), we can find \(\delta > 0\) such that
    \[\sup_B \eta (\sigma(t), \sigma(t')) < \epsilon\]
    for \(0 < t, t' < \delta\). The result then follows by the maximum principle for the heat equation applied to \(\eta\).
\end{proof}

\begin{lemma}
    \label{lem:flow-c1-bound}
    Suppose \(\sigma(t)\) are solutions to the family Hermite--Einstein flow on \([0, T)\), with
    \begin{enumerate}[(i)]
        \item \(\sigma(t) \to \sigma_T\) in \(C^0\), and
        \item \[\sup_B\abs{P(\sigma)}_{(h\sigma)_\mcF}\]
        uniformly bounded in \(t\). 
    \end{enumerate}
    Then \(\sigma(t)\) is bounded in \(C^1\) and in \(L^p_2\). Moreover, the curvature \(F_{h\sigma}\) and \(P(\sigma)\) are uniformly bounded in \(L^p\) for all \(p < \infty\).
\end{lemma}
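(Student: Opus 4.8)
The plan is to adapt the regularity arguments of Donaldson \cite{donaldsonSelfdualYangMillsConnections1985} and Simpson \cite{simpsonConstructingVariationsHodge1988} for the Hermite--Einstein flow, transported along the analogy of \Cref{tab:analogy}: the family Hermite--Einstein flow is a nonlinear heat flow for the section \(\sigma\) of \(\msH^+_{\mathrm V} \to B\), playing the role of the Hermite--Einstein flow for a metric on a bundle over \(X\), with \(\End\mcE \to X\) replaced by \(\mcF \to B\). The genuinely new feature to manage is the deformation term \(i\nu_{h\sigma}(a)\) together with the projection \(p_{h\sigma}\); however, as seen in the proof of \Cref{lem:moment-map-evolution}, both depend on \(\sigma\) only to zeroth order (algebraically through \(\sigma^{\pm 1/2}\), with \(a\) fixed and no \(B\)-derivatives of \(\sigma\) appearing). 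Thus the hypothesis \(\sigma \to \sigma_T\) in \(C^0\) already bounds \(\nu_{h\sigma}(a)\) and \(p_{h\sigma}\) in \(C^0\), and neither term affects the order of any estimate.

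First I would convert the bound on \(P(\sigma)\) into a second-order equation for \(\sigma\) on \(B\). Recalling \(F_{h\sigma} = F_h + \grad^{0,1}(\sigma^{-1}\grad^{1,0}_h\sigma)\) and that \(\sigma\) is fibrewise holomorphic, so \(\grad_V\sigma = 0\) by \Cref{lem:hermite-einstein-laplacians}, contraction with \(\Lambda_H\) gives
\[
i\Lambda_H F_{h\sigma} = i\Lambda_H F_h + \sigma^{-1}\Delta^{1,0}_{H,h}\sigma - i\sigma^{-1}\Lambda_H\!\left((\grad^{0,1}_H\sigma)\,\sigma^{-1}\grad^{1,0}_{H,h}\sigma\right).
\]
Projecting onto the Hermitian part of \(\mcF\) and using that \(p_{h\sigma}(i\Lambda_H F_{h\sigma}) = P(\sigma) + \lambda i\nu_{h\sigma}(a)\) is bounded in \(C^0\) (the remaining \(\id\)-component being fixed along the flow, since \(\tr P(\sigma) = 0\) forces the fibrewise determinant to be constant in \(t\)), this is an equation whose leading term \(\Delta^{1,0}_{H,h}\sigma\) is elliptic in the \(B\)-directions by the argument of \Cref{prop:adiabatic-linearisation-elliptic}, whose zeroth-order data is \(C^0\)-bounded, and whose only dangerous term is the quadratic gradient nonlinearity \(\Lambda_H((\grad^{0,1}_H\sigma)\,\sigma^{-1}\grad^{1,0}_{H,h}\sigma)\).

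The crux is the \(C^1\) bound. Here I would follow \cite{donaldsonSelfdualYangMillsConnections1985,simpsonConstructingVariationsHodge1988} closely, with \(X\) replaced by \(B\): using the uniform \(C^0\) bounds on \(\sigma\) and \(\sigma^{-1}\) to control the coefficients, I would run the parabolic/elliptic estimate (Moser iteration, working on \(\Int(B)\) and using the fixed boundary value \(h_\partial\) when \(\partial B \neq \emptyset\)) to absorb the quadratic gradient term and deduce a uniform bound on \(\grad_\mcF\sigma\). With \(\grad_\mcF\sigma \in L^\infty\) the quadratic term lies in every \(L^p\), so \(L^p\)-elliptic regularity applied to \(\Delta^{1,0}_{H,h}\sigma = (\text{bounded})\) yields the \(L^p_2\) bound, and Sobolev embedding re-confirms \(C^1\). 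Finally, feeding \(\sigma \in C^1 \cap L^p_2\) back into the displayed formula bounds \(F_{h\sigma}\), and hence \(P(\sigma)\), in \(L^p\) for all \(p < \infty\).

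The main obstacle is exactly this quadratic gradient term, as in the classical case: the hypotheses supply only \(C^0\) control of \(\sigma\) and a bound on a zeroth-order curvature contraction, so the passage to \(C^1\) cannot be achieved by linear elliptic estimates alone and requires the nonlinear (Moser-type) argument. The only genuinely new verifications beyond \cite{donaldsonSelfdualYangMillsConnections1985,simpsonConstructingVariationsHodge1988,sektnanHermitianYangMillsConnections2024} are that \(\nu_{h\sigma}(a)\) contributes no derivatives of \(\sigma\), and that the \(\sup_B\) of the fibrewise \(L^2\)-norm of \(P(\sigma)\) is adequate zeroth-order input; both follow from the moment-map formula for \(\nu\) and the compactness of \(X\) and \(B\).
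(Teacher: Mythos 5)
Your setup is right --- the reduction to the second--order equation
\[
i\Lambda_H F_{h\sigma} = i\Lambda_H F_h + \sigma^{-1}\Delta^{1,0}_{H,h}\sigma - i\sigma^{-1}\Lambda_H\!\left((\grad^{0,1}_H\sigma)\,\sigma^{-1}\grad^{1,0}_{H,h}\sigma\right),
\]
the observation that \(p_{h\sigma}\) and \(\nu_{h\sigma}(a)\) depend on \(\sigma\) only algebraically and are therefore controlled by the \(C^0\) hypothesis, and the identification of the quadratic gradient term as the crux all match the paper. The bootstrap from \(C^1\) to \(L^p_2\) and the \(L^p\) curvature bounds at the end are also the same.

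However, at the crux you have a genuine gap. The quadratic gradient term is exactly \emph{critical} for a second--order elliptic operator: from \(\Delta^{1,0}_{H,h}\sigma = (\text{bounded}) + Q(\sigma,\grad\sigma)\) with \(Q\) quadratic in \(\grad\sigma\) and only \(C^0\) control of \(\sigma\), no linear elliptic or Moser--type iteration closes, because any \(L^p\) bound you feed in on \(\grad\sigma\) returns only an \(L^{p/2}\) bound on the right--hand side; Moser iteration gives \(L^\infty\) control of (sub)solutions from integral bounds, not gradient bounds in the presence of natural growth, and for matrix--valued \(\sigma\) the scalar De Giorgi--Nash--Moser machinery is not available anyway. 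The paper (following Donaldson's Lemma 19 and Simpson's Lemma 6.4, which also do \emph{not} use Moser iteration here) argues by contradiction and rescaling: if \(\sup_B\abs{\grad\sigma_i} = m_i \to \infty\), one rescales local coordinates by \(m_i\) about the maximising points so that \(\sup\abs{\grad\widetilde\sigma_i} = 1\) is attained at the origin; the quadratic term is then bounded, \(L^p\) elliptic estimates and the compact embedding \(L^p_2 \hookrightarrow C^1\) produce a \(C^1\)--convergent subsequence, and --- this is the essential use of hypothesis (i), which your proposal only invokes to bound coefficients --- the \(C^0\) convergence \(\sigma(t_i)\to\sigma_T\) forces the blow--up limit to be \emph{constant}, contradicting \(\abs{\grad\widetilde\sigma}(0)=1\). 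Without this rescaling step (or an equivalent substitute), your argument does not establish the \(C^1\) bound, and everything downstream of it is unsupported. As a minor further point, the lemma in this section is stated for closed \(B\); the boundary modifications you allude to belong to the later discussion of the Dirichlet problem.
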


\begin{proof}
    Suppose for contradiction that \(\sigma(t)\) was unbounded in \(C^1\). Then for some sequence \(t_i \to T\), with \(\sigma_i = \sigma(t_i)\),
    \[\sup_B \abs{\grad \sigma_i} = m_i\]
    is unbounded. By compactness, there exists \(b_i \in B\) such that
    \[\abs{\grad\sigma_i}(b_i) = m_i.\]
    Passing to a subsequence, we may assume that \(b_i \to b\). By working in a local trivialisation, we have reduced this to a purely local problem with local coordinates \(z_\alpha\) in the polydisc \(\{\abs{z_\alpha} < 1\}\), and with local sections \(\sigma_i(z)\). By translating the coordinates, we may assume that
    \[\sup_z\abs{\grad\sigma_i(z)} = m_i\]
    is attained at \(z = 0\).

    Next, we will rescale the coordinates. Define \(w_\alpha = m_i z_\alpha\), and then pullback \(\sigma_i\) via the map
    \[\{\abs{w_\alpha} < 1\} \to \{\abs{z_\alpha} < m_i^{-1}\},\]
    we obtain matrices \(\widetilde\sigma_i(w)\) defined on \(\abs{w_\alpha} < 1\), such that
    \[\sup_w\abs{\grad \widetilde\sigma_i} = 1\]
    is attained at \(w = 0\).

    We will now show that this contradicts our hypotheses. First, recall that
    \[F_{h\sigma} = F_h + \grad^{0, 1}(\sigma^{-1}\grad_h^{1, 0}\sigma),\]
    and so
    \[i\Lambda_H F_{h\sigma} = \sigma^{-1}\Delta_{H,h}^{1, 0}\sigma + Q(\sigma, \grad\sigma)\]
    where \(Q\) is (at most) quadratic in \(\grad\sigma\).

    By choosing a holomorphic local frame \(e_i\) for \(\mcF\), and writing \(\sigma = \sum_i \sigma^i e_i\), we obtain that
    \begin{equation}
        \label{eqn:expansion-laplacian-quadratic}
        P(\sigma) = \sigma^{-1}\sum_i(\Delta^{1, 0}_B \sigma^i)e_i + \widetilde Q(\sigma^i, \grad \sigma^i).
    \end{equation}
    We would like to apply this to the \(\widetilde\sigma_i\). By (ii), we know that \(P(\widetilde\sigma)\) is bounded, and by (i), we know \(\widetilde\sigma, \widetilde\sigma^{-1}\) are bounded. But we also have that \(\grad\widetilde\sigma_i\) is bounded, and so \(\widetilde Q(\widetilde \sigma, \grad\widetilde \sigma)\) is bounded as well. Thus, we obtain a bound in \(L^\infty\), and so \(L^p\), on
    \[\sum_i (\Delta_B^{1, 0}\widetilde\sigma^i)e_i\]
    over \(\{\abs{w_\alpha} < 1\}\), which is uniform on \(i\). Since \(\Delta_B^{1, 0}\) is elliptic, this means that \(\widetilde\sigma_i\) are bounded in \(L^p_2\). For \(p\) sufficiently large, the embedding \(L^p_2 \to C^1\) is compact, and so a subsequence of the \(\widetilde\sigma_i\) converges in \(C^1\), to some limit \(\widetilde\sigma\) say. By (i) and the definition of the \(\widetilde\sigma_i\), we know that \(\widetilde\sigma\) is in fact constant. On the other hand,
    \[\sup_w \abs{\grad\widetilde\sigma} = \lim_{i \to \infty}\abs{\grad\widetilde\sigma_i} = 1.\]
    Thus, we have a contradiction, and so the \(\sigma(t)\) are bounded in \(C^1\). Using the same estimates as before, we obtain that the \(\sigma(t)\) are bounded in \(L^p_2\). The curvature result follows immediately.
\end{proof}

\begin{theorem}
    \label{thm:hhe-flow-exists-for-all-time}
    The family Hermite--Einstein flow admits a unique smooth solution for all time.
\end{theorem}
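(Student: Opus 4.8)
The plan is to argue by contradiction, using the \emph{a priori} estimates already established to show that a solution on a maximal finite interval extends past its endpoint. By the short-time existence corollary, the flow admits a smooth solution on a maximal interval $[0, T)$ with $T > 0$, and uniqueness has already been proven. It therefore remains to show $T = \infty$, so suppose instead that $T < \infty$.

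First I would extract the necessary bounds. Since $\theta = \abs{P(\sigma)}^2_{(h\sigma)_\mcF}$ is a subsolution to the heat equation on $B$ by \Cref{prop:subsolution-to-heat-equation}, the maximum principle shows that $\sup_B \abs{P(\sigma)}^2_{(h\sigma)_\mcF}$ is non-increasing in $t$, hence uniformly bounded on $[0, T)$; this is precisely hypothesis (ii) of \Cref{lem:flow-c1-bound}. The same non-increasing bound feeds the convergence corollary, yielding $C^0$-convergence $\sigma(t) \to \sigma_T$ to a continuous, non-degenerate limit, which is hypothesis (i). Applying \Cref{lem:flow-c1-bound}, I then obtain uniform $C^1$ and $L^p_2$ bounds on $\sigma(t)$, together with uniform $L^p$ bounds on $F_{h\sigma}$ and $P(\sigma)$ for every $p < \infty$.

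The heart of the argument is a regularity bootstrap upgrading these to smooth convergence. Viewing the expansion \eqref{eqn:expansion-laplacian-quadratic}, $P(\sigma) = \sigma^{-1}\sum_i (\Delta_B^{1,0}\sigma^i)e_i + \widetilde Q(\sigma^i, \grad\sigma^i)$, as an elliptic equation for $\sigma$ in the $B$-directions with $P(\sigma)$ and the quadratic term as inhomogeneous data, the $C^1$-bound on $\sigma$ and the $L^p$-bound on $P(\sigma)$ give, via elliptic estimates for $\Delta_B^{1,0}$, an $L^p_2$-bound, which Sobolev embedding promotes to $C^{1,\alpha}$. With $C^{1,\alpha}$ coefficients and bounded right-hand side, Schauder estimates yield $C^{2,\alpha}$; iterating produces uniform $C^k$-bounds for all $k$. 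The flow equation $\pdv{\sigma}{t} = -2\sigma P(\sigma)$ then controls all time derivatives, so $\sigma(t)$ converges in every $C^k$ to a smooth, non-degenerate limit $\sigma_T$ as $t \to T$.

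Finally, treating $\sigma_T$ as a smooth vertically Hermite--Einstein initial metric, the short-time existence corollary produces a solution on $[T, T + \epsilon)$, which by uniqueness agrees with $\sigma$ on the overlap, extending the solution smoothly beyond $T$ and contradicting maximality; hence $T = \infty$. The main obstacle is the regularity bootstrap: one must keep all elliptic and Schauder estimates uniform in $t$ as $t \to T$, which hinges on the $C^0$-limit $\sigma_T$ remaining non-degenerate so that $\sigma^{-1}$ stays bounded --- exactly the non-degeneracy recorded in the convergence corollary --- and on checking that the family structure, namely the $B$-dependence of the fibrewise projection and of the Laplacian, does not spoil the uniformity of the constants, much as in the earlier compactness arguments over the compact base $B$.
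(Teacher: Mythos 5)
Your overall skeleton --- work on the maximal interval $[0,T)$, use the maximum principle applied to the subsolution $\theta$ from \Cref{prop:subsolution-to-heat-equation} to get hypothesis (ii) of \Cref{lem:flow-c1-bound}, use the $C^0$-convergence to a non-degenerate limit for hypothesis (i), and extend past $T$ by short-time existence --- matches the paper, and your assembly of the a priori estimates is correct. The gap is in the regularity bootstrap. You propose to treat \Cref{eqn:expansion-laplacian-quadratic} as an \emph{elliptic} equation at each frozen time, with $P(\sigma)$ as inhomogeneous data, and to iterate Schauder estimates. But the only uniform control you have on $P(\sigma)$ is the $L^\infty$ bound from the maximum principle. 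Since $P(\sigma)$ contains $\Delta_H^{1,0}\sigma$, i.e.\ second derivatives of $\sigma$ in the $B$-directions (the fibre-integral projection $p_{h\sigma}$ smooths in the $X$-directions but not in $B$), once $\sigma$ is only known in $C^{1,\alpha}$ or $L^p_2$ you have no H\"older bound on $P(\sigma)$. A merely bounded right-hand side yields $W^{2,p}$ or $C^{1,\alpha}$, not $C^{2,\alpha}$: Schauder requires a $C^{0,\alpha}$ right-hand side. So your iteration stalls at exactly the regularity that \Cref{lem:flow-c1-bound} already provides, and the frozen-time elliptic equation is not closed --- improving $P(\sigma)$ requires using the evolution equation itself.

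This is precisely why the paper does not argue elliptically: it writes the flow as $\partial_t\sigma = L(\sigma) + Q(\sigma,\grad\sigma)$ with $L$ linear elliptic and $Q$ a polynomial differential operator of type $(2,1)$ in Hamilton's sense, and invokes Hamilton's \emph{parabolic} bootstrapping (as in Simpson's Corollary 6.5) to upgrade the uniform $L^p_2$ bound to $L^p_k$ bounds for all $k$, whence $C^k$-convergence to $\sigma_T$ for all $k$. If you want to keep your structure, replace the elliptic iteration by a parabolic one: with the right-hand side in $L^p$ of space-time, parabolic $L^p$ theory and parabolic Sobolev embedding give $\sigma\in C^{1+\alpha,(1+\alpha)/2}$, after which the right-hand side becomes H\"older in space-time and parabolic Schauder estimates can be iterated. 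Your closing remark that ``the flow equation then controls all time derivatives'' has the logic reversed: the coupling to the time derivative is not an afterthought to a spatial bootstrap but the mechanism that makes any bootstrap beyond $L^p_2$ possible.
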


\begin{proof}
    The family Hermite--Einstein flow may be written as
    \[\pdv{\sigma}{t} = L(\sigma) + Q(\sigma, \grad\sigma),\]
    where \(L\) is linear elliptic, and \(Q\) is a \emph{polynomial differential operator} of type \((2, 1)\), in the sense of Hamilton \cite{hamiltonHarmonicMapsManifolds1975}. Thus, as in \cite[Corollary 6.5]{simpsonConstructingVariationsHodge1988}, we may appeal directly to Hamilton's method in \cite[Section V.12]{hamiltonHarmonicMapsManifolds1975}, where we improve the \(L^p_2\)-estimate from \Cref{lem:flow-c1-bound} to an \(L^p_k\) estimate for all \(k \ge 2\).

    As Hamilton's method relies in improving an \(L^p_k\)-estimate to an \(L^p_{k+\delta}\) estimate, where \(0 < \delta < 1\), we need to define the Sobolev spaces \(L^p_k\) where \(k\) is a real number. This is done in \cite[Section 6.5]{kobayashiDifferentialGeometryComplex2014}.
    
    With all of this in mind, suppose we have a solution \(\sigma(t)\) on \([0, T)\). By what we have just discussed, there exists \(\sigma_T\) such that \(\sigma(t) \to \sigma_T\) in \(C^k\) for all \(k\). In particular, we may extend the flow to \([0, T]\). Short time existence then shows that a solution exists on \([0, T + \epsilon)\), and so a solution exists for all time.
\end{proof}

\subsection{Dirichlet problem for the family Hermite--Einstein equation}

\label{subsec:dirichlet-problem}

In \cite[Section 6]{simpsonConstructingVariationsHodge1988}, Simpson generalises the results of Donaldson in \cite{donaldsonSelfdualYangMillsConnections1985} to the case of manifolds with boundary, and considers the Dirichlet and Neumann problems for the Hermite--Einstein flow. The same modifications that Simpson makes apply in our case. To be more precise, let \((X, \omega_X)\) be a compact K\"ahler manifold, and let \((B, \omega_B)\) be a compact K\"ahler manifold with non-empty boundary \(\partial B\). Let \(\mcE \to X \times B\) be a smooth vector bundle, which is holomorphic over \(X \times \Int(B)\), and for each \(b \in \partial B\), is holomorphic over \(X \times \{b\}\). Let \(h\) be a Hermitian metric, such that for all \(b \in B\), \((\mcE_b, h)\) is Hermite--Einstein.

In particular, one needs to modify \Cref{lem:flow-c1-bound} to the case when \(B\) has boundary, where one must use boundary elliptic estimates as well as interior elliptic estimates. The same modifications as in \cite[Lemma 6.4]{simpsonConstructingVariationsHodge1988} apply, and so we may consider the Dirichlet problem for the family Hermite--Einstein flow as well.

Let \(\sigma_\partial\) be a smooth section of \(\End E\) over \(X \times \partial B\), such that for all \(b \in \partial B\), \(h\sigma_\partial\) is a Hermite--Einstein metric on \(\mcE_b\). Let \(\sigma_0\) be a smooth section of \(\mcF\) over \(B\), such that for all \(b \in B\), \(h\sigma_0\) is a Hermite--Einstein metric on \(\mcE_b\), and such that \(\sigma_0|_{\partial B} = \sigma_\partial\). The Dirichlet initial-boundary-value problem for the family Hermite--Einstein flow is
\begin{equation}
    \label{eqn:dirichlet-problem-family-Hermite--Einstein-flow}
    \begin{cases}
        \pdv{\sigma}{t} = -2\sigma(p_{h\sigma}(i\Lambda_H F_{h\sigma}) - i\lambda \nu_{h\sigma}(a)), & t > 0, b \in \Int(B), \\
        \sigma(0) = \sigma_0, & b \in B, \\
        \sigma(t) = \sigma_\partial, & b \in \partial B.
    \end{cases}
\end{equation}

Using the same proof as in \Cref{thm:hhe-flow-exists-for-all-time}, we obtain the following result.

\begin{theorem}
    The Dirichlet problem for the family Hermite--Einstein flow admits a unique solution.
\end{theorem}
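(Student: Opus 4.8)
The plan is to run the argument establishing \Cref{thm:hhe-flow-exists-for-all-time} essentially verbatim, replacing the maximum principle on the closed manifold \(B\) with the maximum principle on the parabolic boundary of \(B \times [0, T)\), and incorporating boundary elliptic estimates into the a priori bounds. First I would establish short-time existence: the initial-boundary-value problem \Cref{eqn:dirichlet-problem-family-Hermite--Einstein-flow} is parabolic with Dirichlet boundary data, since its linearisation is elliptic, so the standard theory for parabolic initial-boundary-value problems yields a smooth solution on some interval \([0, \epsilon)\).

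Next I would control the quantity \(\theta = \abs{P(\sigma)}_{(h\sigma)_\mcF}^2\). The computation in \Cref{prop:subsolution-to-heat-equation}, which rests on the moment map evolution of \Cref{lem:moment-map-evolution}, is purely interior and so still shows that \(\theta\) is a subsolution to the heat equation on \(\Int(B)\). The crucial difference is that the maximum principle now applies on the parabolic boundary: \(\sup_B \theta\) is bounded by its supremum over the initial slice \(\{t = 0\}\) together with the spatial boundary \(\partial B \times [0, T)\). Since the Dirichlet datum \(\sigma_\partial\) is fixed in time and smooth, \(\theta\) is uniformly bounded there, and hence \(\sup_B \abs{P(\sigma)}^2\) is bounded uniformly in \(t\). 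Uniqueness follows from the same device as in the closed case: for two solutions \(\sigma, \tau\) with the same initial and boundary data, the quantity \(\eta(\sigma, \tau)\) of \Cref{eqn:flow-aux-distance} again satisfies the heat inequality and vanishes on the parabolic boundary, so the maximum principle forces \(\eta \equiv 0\), that is, \(\sigma = \tau\).

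The heart of the matter is the \(C^1\) and \(L^p_2\) control of the flow, which is where the boundary genuinely enters. I would adapt \Cref{lem:flow-c1-bound}: its blow-up argument produces a sequence of points \(b_i\) maximising \(\abs{\grad \sigma_i}\), and after passing to a subsequence one has \(b_i \to b \in B\). If \(b \in \Int(B)\) the interior elliptic estimate for \(\Delta_B^{1, 0}\) used in \Cref{lem:flow-c1-bound} applies unchanged; if \(b \in \partial B\) one must instead invoke boundary elliptic estimates for \(\Delta_B^{1, 0}\) together with the fixed Dirichlet data, exactly as in the boundary modification \cite[Lemma 6.4]{simpsonConstructingVariationsHodge1988}. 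This yields uniform \(C^1\), \(L^p_2\), and \(L^p\)-curvature bounds on \([0, T)\), and in particular \(C^0\) convergence \(\sigma(t) \to \sigma_T\) as \(t \to T\). The regularity bootstrap then proceeds as in \Cref{thm:hhe-flow-exists-for-all-time} via Hamilton's method \cite{hamiltonHarmonicMapsManifolds1975}, now with boundary Schauder estimates, upgrading the \(L^p_2\) bound to \(L^p_k\) for all \(k\), so that \(\sigma(t) \to \sigma_T\) in \(C^k\). The flow therefore extends to \([0, T]\), and short-time existence reopens it on \([0, T + \epsilon)\), giving a solution for all time.

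I expect the main obstacle to be precisely the boundary version of \Cref{lem:flow-c1-bound}: the rescaling in the blow-up must be performed compatibly with the boundary, and near \(\partial B\) one needs boundary elliptic regularity for the Laplacian \(\Delta_B^{1, 0}\) appearing in \Cref{eqn:expansion-laplacian-quadratic} rather than the interior estimate, with the fixed Dirichlet data ensuring the rescaled limit remains constant. All the remaining ingredients — the subsolution property, the uniqueness device, and the Hamilton bootstrap — transfer to the boundary setting with only the standard replacement of interior by boundary estimates, exactly as Simpson carries out for the Hermite--Einstein flow.
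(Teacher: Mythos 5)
Your proposal is correct and follows essentially the same route as the paper, which likewise proves this by rerunning the proof of \Cref{thm:hhe-flow-exists-for-all-time} after modifying \Cref{lem:flow-c1-bound} with boundary elliptic estimates in the manner of \cite[Lemma 6.4]{simpsonConstructingVariationsHodge1988}. In fact your write-up spells out the parabolic-boundary maximum principle and the uniform bound on \(\theta\) coming from the time-independent Dirichlet datum in more detail than the paper does.
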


In \cite{donaldsonBoundaryValueProblems1992}, Donaldson proves that the Dirichlet problem for the Hermite--Einstein equation,
\[\begin{cases}
    i\Lambda_\omega F_h = 0 & \text{on } \Int(B) \\
    h = h_\partial & \text{on } \partial B,
\end{cases}\]
admits a unique smooth solution.

\begin{proposition}
    \label{prop:heat-equation-decay}
    Let \(Z\) be a compact manifold with non-empty boundary \(\partial Z\). Suppose \(\theta \ge 0\) is a subsolution to the heat equation on \(Z \times [0, \infty)\). That is,
    \[\pdv{\theta}{t} + \Delta\theta \le 0.\]
    If \(\theta = 0\) on \(\partial Z\) for all \(t\), then \(\theta\) decays exponentially, with
    \[\sup_Z\theta(z, t) \le Ce^{-\mu t}.\]
\end{proposition}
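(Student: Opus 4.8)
The plan is to compare $\theta$ against an explicit barrier built from the first Dirichlet eigenfunction of $\Delta$ on $Z$, and to conclude by the parabolic maximum principle. Since $Z$ is compact with $\partial Z \neq \emptyset$, the first Dirichlet eigenvalue $\mu = \lambda_1(\Delta, Z)$ is \emph{strictly} positive; this is precisely where the hypothesis $\partial Z \neq \emptyset$ enters, as on a closed manifold the bottom of the spectrum is $0$ (with constant eigenfunction) and no decay could be expected. Let $\phi \in C^\infty(Z)$ be a corresponding eigenfunction, normalised so that $\phi > 0$ on $\Int(Z)$, $\phi = 0$ on $\partial Z$, and $\Delta\phi = \mu\phi$.

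First I would set up the barrier $w(z, t) = A e^{-\mu t}\phi(z)$ for a constant $A > 0$ to be chosen. A direct computation gives $\partial_t w + \Delta w = -\mu A e^{-\mu t}\phi + A e^{-\mu t}\mu\phi = 0$, so $w$ solves the heat equation exactly. Consequently $v = \theta - w$ satisfies $\partial_t v + \Delta v = (\partial_t\theta + \Delta\theta) - (\partial_t w + \Delta w) \le 0$, i.e. $v$ is again a subsolution. The second step is to arrange the comparison on the parabolic boundary $(Z \times \{0\}) \cup (\partial Z \times [0, \infty))$: on $\partial Z$ both $\theta$ and $\phi$ vanish for all $t$, so $v = 0$ there, while at $t = 0$ I claim $A$ may be chosen so large that $\theta(\cdot, 0) \le A\phi$, whence $v(\cdot, 0) \le 0$.

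This last claim is the one genuinely analytic point, and the main obstacle. By the Hopf boundary point lemma the outward normal derivative of $\phi$ is strictly negative along $\partial Z$, so $\phi \ge c\,\operatorname{dist}(\cdot, \partial Z)$ near the boundary for some $c > 0$; since $\theta(\cdot, 0)$ is smooth and vanishes on $\partial Z$, it is bounded by $C'\operatorname{dist}(\cdot, \partial Z)$ near the boundary, while away from the boundary $\phi$ is bounded below by a positive constant and $\theta(\cdot,0)$ is bounded above. Taking $A$ larger than both ratios yields $\theta(\cdot, 0) \le A\phi$ on all of $Z$. If one prefers to avoid invoking regularity of $\theta$ at $t = 0$, one can instead start the comparison at a time $t_0 > 0$, using that $\theta$ is smooth for $t > 0$ and that $\sup_Z\theta(\cdot, t)$ is non-increasing (itself a consequence of the maximum principle, since $\theta = 0 \le \theta$ on the parabolic boundary), at the cost of absorbing a factor $e^{\mu t_0}$ into the final constant.

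With the boundary comparison in hand, I would apply the parabolic maximum principle to $v$ on $Z \times [0, T]$ for arbitrary $T$: the maximum of the subsolution $v$ is attained on the parabolic boundary, where $v \le 0$, so $v \le 0$ throughout, i.e. $\theta \le A e^{-\mu t}\phi$. Taking the supremum over $Z$ gives $\sup_Z \theta(\cdot, t) \le C e^{-\mu t}$ with $C = A \sup_Z \phi$, as required. As a remark, an alternative route bypassing the barrier is the energy method: multiplying $\partial_t\theta + \Delta\theta \le 0$ by $\theta \ge 0$ and integrating by parts (the boundary term vanishes since $\theta = 0$ on $\partial Z$) gives
\[\frac{d}{dt}\int_Z\theta^2 \le -2\int_Z\abs{\grad\theta}^2 \le -2\mu\int_Z\theta^2\]
by the Dirichlet--Poincar\'e inequality, yielding exponential $L^2$-decay; this can then be upgraded to $L^\infty$-decay by a standard parabolic smoothing estimate. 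I expect the barrier argument to be cleaner to write, as it produces the $\sup$-bound directly.
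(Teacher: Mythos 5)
Your argument is correct, and it takes a mildly different route from the paper: the paper does not prove the proposition at all, but defers to Donaldson's argument via the spectral decomposition of the Dirichlet heat operator (expand in eigenfunctions, dominate by the heat semigroup, and use that the bottom Dirichlet eigenvalue is strictly positive). Your barrier construction $w = Ae^{-\mu t}\phi$ with $\phi$ the first Dirichlet eigenfunction rests on exactly the same analytic fact --- positivity of $\lambda_1$ on a compact manifold with non-empty boundary --- but packages it so that the parabolic maximum principle delivers the $\sup$-bound directly, with no need for the $L^2$-to-$L^\infty$ smoothing step that the spectral route (or your alternative energy argument) would require. The one genuinely delicate point, the initial comparison $\theta(\cdot, t_0) \le A\phi$, you handle correctly via the Hopf boundary point lemma (note $\phi$ is superharmonic in the analyst's convention, so Hopf applies at its boundary minima and gives $\partial_\nu\phi < 0$), together with the Lipschitz bound $\theta(\cdot, t_0) \le C'\operatorname{dist}(\cdot, \partial Z)$; your fallback of starting the comparison at $t_0 > 0$ is also the right move here, since in the application in the paper the boundary condition $\theta\vert_{\partial B} = 0$ is only asserted for $t > 0$. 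The constant $\mu$ you obtain is the first Dirichlet eigenvalue, which is the optimal rate, matching what the spectral argument gives. No gaps.
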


This result follows from the spectral decomposition of the heat operator, and we refer to \cite[98]{donaldsonBoundaryValueProblems1992} for details. Donaldson's proof for the Dirichlet problem follows from applying \Cref{prop:heat-equation-decay} to \(\theta = \abs{i\Lambda_\omega F_h}^2\).

Using our results on the family Hermite--Einstein flow, we deduce the same result for the family Hermite--Einstein equation.

\begin{theorem}
    \label{thm:dirichlet-problem}
    Suppose \(\partial B \ne \emptyset\). The Dirichlet problem for the family Hermite--Einstein equation
    \[\begin{cases}
        p_h(i\Lambda_H F_h) - i\lambda \nu_h(a) = 0 & \text{on } X \times \Int(B) \\
        h = h_\partial & \text{on } X \times \partial B,
    \end{cases}\]
    admits a unique solution.
\end{theorem}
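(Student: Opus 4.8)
The plan is to solve the Dirichlet problem by running the family Hermite--Einstein flow with fixed Dirichlet boundary data and showing that it converges as \(t \to \infty\), following Donaldson's method in \cite{donaldsonBoundaryValueProblems1992}. Work inside the space \(\msH^+_{\mathrm V}\) of vertically Hermite--Einstein metrics, which by \eqref{eqn:Hermite--Einstein-vertical-metrics-diffeomorphism} is the section space of a bundle over \(B\) whose fibres are the non-positively curved, hence contractible, homogeneous spaces \(\Aut(\mcE_b)/(\Aut(\mcE_b)\cap U(E_b,h))\). Consequently the boundary datum \(h_\partial\) extends to a smooth fibrewise Hermite--Einstein metric \(h\sigma_0\) over all of \(B\) with \(h\sigma_0|_{\partial B} = h_\partial\); this serves as the initial datum. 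Solving the Dirichlet initial-boundary-value problem \eqref{eqn:dirichlet-problem-family-Hermite--Einstein-flow} via the long-time existence result for the Dirichlet flow established above then produces a smooth solution \(\sigma(t)\) for all \(t \ge 0\), with \(h\sigma(t)\) fibrewise Hermite--Einstein and \(\sigma(t)|_{\partial B} = \sigma_\partial\).

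The crucial observation is that \(\theta = \abs{P(\sigma)}^2_{(h\sigma)_\mcF}\) vanishes on the boundary for positive time. Indeed, for \(t > 0\) the solution is smooth up to \(X \times \partial B\) by the boundary parabolic estimates of Simpson \cite[Section 6]{simpsonConstructingVariationsHodge1988} used in adapting \Cref{lem:flow-c1-bound}, so the flow equation \(\partial_t\sigma = -2\sigma P(\sigma)\) holds by continuity on \(X \times \partial B\); but the Dirichlet condition forces \(\partial_t\sigma|_{\partial B} = 0\), whence \(P(\sigma)|_{\partial B} = 0\) and \(\theta|_{\partial B} = 0\) for all \(t > 0\). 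By \Cref{prop:subsolution-to-heat-equation} the function \(\theta\) is a subsolution of the heat equation on \(B\), so applying \Cref{prop:heat-equation-decay} from time \(t = 1\) (to absorb the possible corner at \(t = 0\)) gives the exponential decay \(\sup_B\theta(\cdot,t) \le Ce^{-\mu t}\).

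With this decay in hand, existence follows by letting \(t \to \infty\). Once \(\sigma(t)\) is bounded in \(C^0\) one has \(\abs{\partial_t\sigma}_{C^0} \le 2\abs{\sigma}\abs{P(\sigma)} \le Ce^{-\mu t/2}\), so \(\int_0^\infty\abs{\partial_t\sigma}\,\dd t\) converges and \(\sigma(t)\) is uniformly Cauchy, converging in \(C^0\) to a non-degenerate limit \(\sigma_\infty\). The uniform bound on \(\sup_B\abs{P(\sigma)}\) supplied by the decay feeds into \Cref{lem:flow-c1-bound} and its higher-order bootstrapped versions (via Hamilton's method, as in the proof of \Cref{thm:hhe-flow-exists-for-all-time}) to yield \(C^k\)-bounds on \(\sigma(t)\) that are uniform in \(t\); interpolating these against the \(C^0\)-convergence upgrades it to \(C^\infty\)-convergence. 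The limit \(\sigma_\infty\) is therefore smooth, satisfies \(P(\sigma_\infty) = 0\) on \(X \times \Int(B)\) since \(\theta \to 0\) and \(P\) is continuous under \(C^2\)-convergence, and retains \(\sigma_\infty|_{\partial B} = \sigma_\partial\); this is the desired solution.

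For uniqueness, suppose \(\sigma_1,\sigma_2\) both solve the Dirichlet problem. Each is a stationary solution of the family Hermite--Einstein flow, so the inequality \(\partial_t\beta_{\sigma_1,\sigma_2} + 2\Delta_B^{1,0}\beta_{\sigma_1,\sigma_2} \le 0\) established in the proof of flow uniqueness applies to the pair; since both metrics are time-independent the first term vanishes, giving \(\Delta_B^{1,0}\beta_{\sigma_1,\sigma_2} \le 0\), and likewise \(\Delta_B^{1,0}\beta_{\sigma_2,\sigma_1} \le 0\). Hence \(\eta = \beta_{\sigma_1,\sigma_2} + \beta_{\sigma_2,\sigma_1} - 2\vol(X,\omega_X)\rank(E)\) of \eqref{eqn:flow-aux-distance} satisfies \(\Delta_B^{1,0}\eta \le 0\); as \(\eta \ge 0\) with equality precisely when \(\sigma_1 = \sigma_2\), and \(\eta|_{\partial B} = 0\) because both solutions equal \(\sigma_\partial\) on \(\partial B\), integration by parts gives \(\int_B\abs{\del_B\eta}^2\omega_B^m = \int_B\eta\,\Delta_B^{1,0}\eta\,\omega_B^m \le 0\), forcing \(\eta \equiv 0\) and so \(\sigma_1 = \sigma_2\). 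I expect the main obstacle to be the convergence step: upgrading the finite-time estimates of \Cref{lem:flow-c1-bound} to bounds uniform on \(t \in [0,\infty)\), for which the exponential decay of \(\theta\) together with Simpson's boundary elliptic estimates is essential.
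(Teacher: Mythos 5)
Your proposal follows essentially the same route as the paper's proof of existence: run the Dirichlet flow, observe that the boundary condition forces \(\theta = \abs{P(\sigma)}^2\) to vanish on \(\partial B\) for \(t>0\), invoke \Cref{prop:subsolution-to-heat-equation} and \Cref{prop:heat-equation-decay} to get exponential decay, and pass to the limit. The one cosmetic difference in the existence step is how convergence of \(\sigma(t)\) is extracted: the paper integrates \(\sqrt{\theta}\), interprets it as the speed of the curve in the complete space \(\msH^+_{\mathrm V}\), and concludes by completeness; your version bounds \(\abs{\partial_t\sigma}\) by \(2\abs{\sigma}\abs{P(\sigma)}\), which as phrased presupposes the \(C^0\) bound on \(\sigma\) you are trying to establish --- this is easily repaired (the flow controls the logarithmic derivative \(\sigma^{-1}\partial_t\sigma = -2P(\sigma)\), which is exactly the speed in the homogeneous metric), but the paper's formulation avoids the circularity. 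Your proposal also adds two things the paper leaves implicit: a justification that \(h_\partial\) extends to an initial datum (via contractibility of the fibres of the space of vertically Hermite--Einstein metrics), and, more substantially, an explicit uniqueness argument. The paper's proof only addresses existence; your observation that two stationary solutions give \(\Delta_B^{1,0}\eta \le 0\) with \(\eta \ge 0\) and \(\eta\vert_{\partial B} = 0\), so that integration by parts (or the elliptic maximum principle) forces \(\eta \equiv 0\), is the correct elliptic version of the parabolic argument used for uniqueness of the flow, and is a worthwhile completion of the proof.
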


\begin{proof}
    Let
    \[\theta = \abs{P(\sigma)}^2\]
    be as before. In \Cref{prop:subsolution-to-heat-equation}, we prove that \(\theta\) is a subsolution to the heat equation. By the boundary condition, we know that \(\theta = 0\) on \(\partial B\) when \(t > 0\). Thus, by \Cref{prop:heat-equation-decay}, \(\theta\) decays exponentially, and so
    \[\int_0^\infty \sqrt{\theta} \dd t < \infty\]
    at each \(b \in B\). But \(\sqrt{\theta}\) is exactly the speed of the curve \(h\sigma(t)\) in the space of vertically Hermite--Einstein metrics. This space is complete, since over \(b \in B\) it is given as a homogeneous space \(G_b/K_b\). Thus, the \(\sigma(t)\) converges to some \(\sigma_\infty\). The Hermitian metric \(h\sigma_\infty\) satisfies the family Hermite--Einstein equation, as \(\theta \to 0\) as \(t \to \infty\).
\end{proof}

\bibliographystyle{alpha}
\bibliography{References.bib}

\end{document}